\theoremstyle{plain}
\newtheorem{theorem}{\bf Theorem}[section]
\newtheorem{lemma}[theorem]{\bf Lemma}
\newtheorem{proposition}[theorem]{\bf Proposition}
\newtheorem{corollary}[theorem]{\bf Corollary}
\theoremstyle{definition}
\newtheorem{definition}[theorem]{\bf Definition}
\newtheorem{example}[theorem]{\bf Example}
\newtheorem{remark}[theorem]{\bf Remark}
\newtheorem{question}[theorem]{\bf {Question}}
\newcommand{\eqa}[1]{
\begin{align*}
#1
\end{align*}}
\newcommand{\nai}[2]{\langle #1,#2\rangle}
\newcommand{\dom}[1]{{{\rm{dom}}{(#1)}}}
  \newcommand{\subsubsubsection}{\@startsection{paragraph}{4}{\z@}%
    {1.0\Cvs \@plus.5\Cdp \@minus.2\Cdp}%
    {.1\Cvs \@plus.3\Cdp}%
    {\reset@font\sffamily\normalsize}
  }
\title{Weyl-von Neumann Theorem and Borel Complexity of Unitary Equivalence Modulo Compacts of Self-Adjoint Operators}
\author{Hiroshi Ando \and Yasumichi Matsuzawa}
\begin{document}
\maketitle

\begin{abstract}
Weyl-von Neumann Theorem asserts that two bounded self-adjoint operators $A,B$ on a Hilbert space $H$ are unitarily equivalent modulo compacts, i.e., $uAu^*+K=B$ for some unitary $u\in \mathcal{U}(H)$ and compact self-adjoint operator $K$, if and only if $A$ and $B$ have the same essential spectra: $\sigma_{\rm{ess}}(A)=\sigma_{\rm{ess}}(B)$. 
In this paper we consider to what extent the above Weyl-von Neumann's result can(not) be extended to unbounded operators using descriptive set theory. We show that if $H$ is separable infinite-dimensional, this equivalence relation for bounded self-adjoin operators is smooth, while  the same equivalence relation for general self-adjoint operators  contains a dense $G_{\delta}$-orbit but does not admit classification by countable structures. On the other hand, apparently related equivalence relation $A\sim B\Leftrightarrow \exists u\in \mathcal{U}(H)\   [u(A-i)^{-1}u^*-(B-i)^{-1}$ is compact], is shown to be smooth. 
Various Borel or co-analytic equivalence relations related to self-adjoint operators are also presented. 
\end{abstract}

\noindent
{\bf Keywords}. Weyl-von Neumann Theorem, Self-adjoint operators, Borel equivalence relation, Turbulence, Operator ranges.

\medskip

\noindent
\tableofcontents
\medskip

\section{Introduction}
The celebrated Weyl-von Neumann Theorem \cite{Weyl,von Neumann} asserts that any bounded self-adjoint operator can be turned into a diagonalizable operator with arbitrarily small compact perturbations. More precisely:
\begin{theorem}[Weyl-von Neumann]\label{thm: Weyl-von Neumann}
Let $A$ be a (not necessarily bounded) self-adjoint operator on a separable Hilbert space $H$ and $\varepsilon>0$, there exists a compact operator $K$ with $\|K\|<\varepsilon$, such that $A+K$ is of the form \[A+K=\sum_{n=1}^{\infty}a_n\nai{\xi_n}{\ \cdot\ }\xi_n,\]
 where $a_n\in \mathbb{R}$ and $\{\xi_n\}_{n=1}^{\infty}$ is a CONS for $H$.
\end{theorem}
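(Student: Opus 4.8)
The plan is to prove the classical bounded case first, by exhibiting many almost‑invariant finite‑dimensional subspaces and running a ``staircase'' construction, and then to deduce the general (unbounded) statement by cutting $A$ along unit‑length spectral windows, on each of which $A$ is bounded.

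\textbf{Bounded case, Step 1 (almost‑invariant subspaces).} For bounded self‑adjoint $A$ I would first establish: for every finite‑dimensional subspace $F\subseteq H$ and every $\delta>0$ there is a finite‑dimensional $M\supseteq F$ with $\|(1-P_M)AP_M\|<\delta$. To build $M$, use the spectral theorem $A=\int t\,dE(t)$, split $[-\|A\|,\|A\|]$ into $N$ intervals $I_1,\dots,I_N$ of length $2\|A\|/N$, fix $t_j\in I_j$, and take $M=\operatorname{span}\{E(I_j)\xi_i : 1\le j\le N,\ 1\le i\le \dim F\}$ for an orthonormal basis $\{\xi_i\}$ of $F$. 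Since the spectral subspaces $E(I_j)H$ are mutually orthogonal and $(A-t_j)E(I_j)$ has norm at most $2\|A\|/N$, one checks that $\|A\zeta-\sum_j t_jE(I_j)\zeta\|\le (2\|A\|/N)\|\zeta\|$ for $\zeta\in M$; as $\sum_j t_jE(I_j)\zeta\in M$, this gives $\|(1-P_M)AP_M\|\le 2\|A\|/N<\delta$ once $N$ is large.

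\textbf{Bounded case, Step 2 (staircase).} Fix an orthonormal basis $\{e_n\}$ of $H$ and a sequence $\delta_k>0$ with $\sum_k\delta_k^2$ arbitrarily small. Applying Step 1 repeatedly, at stage $k$ inside $N_{k-1}^{\perp}$ (to the compression of $A$ there and to the line through the component of $e_k$ in $N_{k-1}^{\perp}$), produces pairwise orthogonal finite‑dimensional subspaces $M_1,M_2,\dots$ with $e_k\in N_k:=M_1\oplus\cdots\oplus M_k$ and $\|(1-Q_k)AP_k\|<\delta_k$, where $P_k:=P_{M_k}$ and $Q_k:=P_{N_k}$; since the $e_k$ are eventually absorbed, $\bigoplus_k M_k=H$. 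Let $D:=\sum_k P_kAP_k$ be the block‑diagonal part of $A$ for this decomposition. Then $A-D=\sum_k(T_k+T_k^{*})$ with $T_k:=(1-Q_k)AP_k$, and because the $T_k$ have pairwise orthogonal initial spaces $M_k$ one gets $\|A-D\|\le 2\big(\sum_k\delta_k^2\big)^{1/2}$; moreover $A-D$ is the operator‑norm limit of its finite‑rank partial sums, hence compact. Finally $D$ is block‑diagonal self‑adjoint with finite blocks, so diagonalizing each compression $P_kAP_k|_{M_k}$ and concatenating the resulting orthonormal eigenbases yields a CONS $\{\xi_n\}$ of $H$ with $D=\sum_n a_n\nai{\xi_n}{\ \cdot\ }\xi_n$; take $K:=D-A$.

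\textbf{General case.} For self‑adjoint $A$ with spectral measure $E$, set $J_n:=(n,n+1]$ and $H_n:=E(J_n)H$ for $n\in\mathbb{Z}$; each $H_n$ reduces $A$, and $A|_{H_n}$ is bounded with spectrum in $\overline{J_n}$. Apply the bounded case on each $H_n$ with an error bound $\delta_n$ chosen so that $\delta_n\to 0$ as $|n|\to\infty$ and $\sup_n\delta_n<\varepsilon$, obtaining compact self‑adjoint $K_n\in\mathcal{B}(H_n)$ with $\|K_n\|<\delta_n$ and $A|_{H_n}+K_n$ diagonal in a CONS of $H_n$. Then $K:=\bigoplus_n K_n$ is bounded self‑adjoint with $\|K\|=\sup_n\|K_n\|<\varepsilon$, it is compact as the norm limit of its finite partial direct sums, and $A+K$ — self‑adjoint on $\operatorname{dom}(A)$ because $K$ is bounded — is reduced by every $H_n$ with $(A+K)|_{H_n}=A|_{H_n}+K_n$, so the union over $n$ of the eigenbases of the $H_n$ is a CONS of $H$ diagonalizing $A+K$. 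The genuinely delicate point — the content of the theorem — is arranging \emph{compactness} in the bounded case: the errors $\delta_k$ must be square‑summable so that $A-D$ is a norm limit of finite‑rank operators and not merely a small bounded perturbation; the spectral‑calculus estimate in Step 1 and the bookkeeping for the direct sums are routine.
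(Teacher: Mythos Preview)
The paper does not give its own proof of this theorem; it is quoted as a classical result of Weyl and von Neumann, with pointers to \cite{Conway,Putnam,ReedSimonI,AkhiezerGlazman} for details. So there is nothing in the paper to compare against beyond the references.

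Your argument is correct and is essentially the standard textbook proof (as in Conway or Davidson): build almost $A$-invariant finite-dimensional subspaces via a fine spectral partition, run an exhaustion/staircase to get a block decomposition $H=\bigoplus_k M_k$, show that the off-block part $A-D=\sum_k(T_k+T_k^{*})$ is compact and small, and then slice an unbounded $A$ by unit spectral windows to reduce to the bounded case. The one place worth expanding slightly is the norm estimate in Step~2: ``orthogonal initial spaces'' for the $T_k$ gives $T_jT_k^{*}=0$ for $j\neq k$, hence $SS^{*}=\sum_k T_kT_k^{*}$ with each summand positive, so $\|S\|^2=\|SS^{*}\|\le \sum_k\|T_k\|^2\le\sum_k\delta_k^2$; this is the actual mechanism behind your inequality $\|A-D\|\le 2(\sum_k\delta_k^2)^{1/2}$, and it also gives the norm convergence of the finite-rank partial sums needed for compactness. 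In the general case you should also remark that some $H_n=E((n,n+1])H$ may be finite-dimensional (then take $K_n=0$) or zero, but this is harmless.
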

Weyl obtained Theorem \ref{thm: Weyl-von Neumann} for bounded operators without norm estimates on $K$, and the present form of the theorem was obtained by von Neumann. Moreover, he also proved that the $K$ can be chosen to be of Hilbert-Schmidt class (in fact $K$ can be chosen to be Schatten $p$-class for any $p>1$ by \cite{Kuroda58}, but $p=1$ is impossible by \cite{Kato57,Rosenblum}. See \cite{Conway,Putnam,ReedSimonI,AkhiezerGlazman} for details). 
Berg \cite{Berg} generalized Theorem \ref{thm: Weyl-von Neumann} to (unbounded) normal operators. 

On the other hand, Weyl \cite{Weyl10} proved that the essential spectra of a self-adjoint operator is invariant under compact perturbations. 
Here, the essential spectra $\sigma_{\rm{ess}}(A)$ of a self-adjoint operator $A$ is the set of all $\lambda$ in the spectral set $\sigma(A)$ of $A$ which is either an eigenvalue of infinite multiplicity or an accumulation point in $\sigma(A)$. 
Based on Theorem \ref{thm: Weyl-von Neumann}, von Neumann showed ((1)$\Rightarrow $(2) below) that up to unitary conjugation, the converse to Weyl's compact perturbation Theorem holds:
\begin{theorem}[Weyl-von Neumann]\label{thm: von Neumann theorem}
Let $A,B$ be bounded self-adjoint operators on $H$. Then the following conditions are equivalent:
\begin{list}{}{}
\item[{\rm{(1)}}] $\sigma_{\rm{ess}}(A)=\sigma_{\rm{ess}}(B)$. 
\item[{\rm{(2)}}] $A$ and $B$ are unitarily equivalent modulo compacts. More precisely, there exists a compact self-adjoint operator $K$ on $H$ and a unitary operator $u$ on $H$, such that 
\[uAu^*+K=B.\]
\end{list}
\end{theorem}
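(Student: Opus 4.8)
\textit{Proof proposal.}
The implication $(2)\Rightarrow(1)$ is the elementary half and records Weyl's invariance of the essential spectrum under compact perturbations. The plan is to use the singular-sequence criterion: for a self-adjoint $T$, one has $\lambda\in\sigma_{\rm{ess}}(T)$ if and only if there is an orthonormal sequence $(\xi_n)$ with $\|(T-\lambda)\xi_n\|\to 0$. Given $uAu^*+K=B$ with $u$ unitary and $K$ compact self-adjoint, conjugation by $u$ preserves spectrum, eigenvalues and multiplicities, so $\sigma_{\rm{ess}}(uAu^*)=\sigma_{\rm{ess}}(A)$; and if $(\xi_n)$ is a singular sequence for $uAu^*$ at $\lambda$, then, since $(\xi_n)$ tends weakly to $0$ and $K$ is compact, $\|K\xi_n\|\to 0$, so $(\xi_n)$ is a singular sequence for $B=uAu^*+K$ at $\lambda$ as well. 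Running the same argument with $-K$ in place of $K$ gives the reverse inclusion, hence $\sigma_{\rm{ess}}(A)=\sigma_{\rm{ess}}(B)$.

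For $(1)\Rightarrow(2)$ I would first observe that ``unitarily equivalent modulo compacts'' is an equivalence relation (transitivity and symmetry use only that $uKu^*$ is compact whenever $K$ is), and then reduce to diagonal operators through Theorem \ref{thm: Weyl-von Neumann}: write $A=D_A+K_A$ and $B=D_B+K_B$ with $D_A,D_B$ diagonal and $K_A,K_B$ compact. Then $A$ is equivalent to $D_A$ and $B$ to $D_B$, and by the direction already proved $\sigma_{\rm{ess}}(D_A)=\sigma_{\rm{ess}}(A)=\sigma_{\rm{ess}}(B)=\sigma_{\rm{ess}}(D_B)=:S$, a nonempty compact subset of $\mathbb{R}$ since $H$ is infinite-dimensional. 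So it suffices to prove that two diagonal self-adjoint operators with the same essential spectrum $S$ are unitarily equivalent modulo compacts.

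Write $D_A=\operatorname{diag}(a_n)$, $D_B=\operatorname{diag}(b_n)$, and recall $\sigma_{\rm{ess}}(\operatorname{diag}(d_n))=\bigcap_{N}\overline{\{d_n:n\ge N\}}$, i.e.\ the set of values repeated infinitely often together with the accumulation points of the sequence. The eigenvalues of $D_A$ lying outside $S$ are isolated points of $\sigma(D_A)$ of finite multiplicity and accumulate, if infinite in number, only at $S$; listing them with multiplicity and replacing each by a nearest point of $S$ alters $D_A$ by a diagonal operator whose entries tend to $0$, hence by a compact operator, and it leaves the essential spectrum unchanged. After this harmless perturbation we may assume $a_n,b_n\in S$ for all $n$ while still $\bigcap_N\overline{\{a_n:n\ge N\}}=\bigcap_N\overline{\{b_n:n\ge N\}}=S$. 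The crux is then a combinatorial lemma: there is a bijection $\pi\colon\mathbb{N}\to\mathbb{N}$ with $|a_n-b_{\pi(n)}|\to 0$. Granting it, and writing $(\xi_n),(\eta_n)$ for the eigenbases of $D_A,D_B$, the unitary $u$ with $u\eta_{\pi(n)}=\xi_n$ satisfies $uD_Bu^*=\operatorname{diag}_\xi(b_{\pi(n)})$, so $D_A-uD_Bu^*=\operatorname{diag}_\xi(a_n-b_{\pi(n)})$ is compact, which is precisely $(2)$.

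The lemma --- equivalently, the statement that the ``canonical model'' $D_S=\operatorname{diag}$(each point of a countable dense subset of $S$, repeated infinitely often) is unique up to unitary equivalence modulo compacts --- is the one point I expect to require genuine care. The idea is a back-and-forth construction over successively finer finite covers of $S$ by balls of radius $\varepsilon\to 0$: since $S$ is the essential limit set of both sequences, every ball meeting $S$ absorbs infinitely many $a_n$ and infinitely many $b_n$, and this matching condition lets one extend the partial bijection at each stage while forcing $|a_n-b_{\pi(n)}|<\varepsilon$ from some point on. The delicate part is the bookkeeping that guarantees simultaneously that the limiting map is a bijection of all of $\mathbb{N}$ and that the error genuinely tends to $0$; an alternative organization via the absorption identities $D_A\cong D_A\oplus D_S$ and $D_S\oplus D_S\cong D_S$ (modulo compacts) isolates the same matching problem.
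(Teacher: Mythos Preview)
The paper does not itself prove this classical theorem (it is stated in the introduction with references to \cite{AkhiezerGlazman} and others), but the method it invokes---and partially reproduces in Lemma~\ref{lem: permutation lemma} and the proof of Theorem~\ref{thm: unbounded case: comeager orbit exists}(2)---is exactly yours: diagonalize both operators via Theorem~\ref{thm: Weyl-von Neumann}, then produce a permutation $\pi$ with $a_n-b_{\pi(n)}\to 0$, so that the unitary permuting the eigenbases makes the difference compact. Your preprocessing step of projecting each discrete eigenvalue onto its nearest point of $S$ is a harmless variant of the classical organization (which instead absorbs the distance-to-$S$ into the error bound of the matching, as in \cite{AkhiezerGlazman}, \S 94), and your flagging of the back-and-forth bookkeeping as the one genuinely delicate point is accurate.
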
 
Theorem \ref{thm: von Neumann theorem} states that the essential spectra is a complete invariant for the classification problem of all bounded self-adjoint operators up to unitary equivalence modulo compacts. 
On the other hand, Theorem \ref{thm: Weyl-von Neumann} and Weyl's Theorem \ref{thm: von Neumann theorem} (2)$\Rightarrow $(1) above also holds for unbounded self-adjoint operators. It is therefore of interest to know whether Theorem \ref{thm: von Neumann theorem} holds true for general unbounded self-adjoint operators. However, a simple example (Example \ref{ex: ess spec=0 counterexample}) 
clarifies that von Neumann's Theorem \ref{thm: von Neumann theorem} (1)$\Rightarrow $(2) cannot be generalized verbatim for unbounded operators. Moreover, further examples (Examples \ref{ex: non-unitarily equivalent but have same ess spec} and \ref{ex: unitarily quivalent domains but not uAu*+K=B})  show that it seems impossible to find a reasonable complete invariant characterizing this equivalence which is assigned to each self-adjoint operators in a constructible way. 

It is the purpose of the present paper to show that there is a sharp contrast between the complexity of the above classification problem for bounded operators and that for unbounded operators by descriptive set theoretical method, especially the turbulence theorem established by Hjorth \cite{Hjorth00}. 
More precisely, we prove the following: let $H$ be a separable infinite-dimensional Hilbert space, and ${\rm{SA}}(H)$ be the Polish space of all (possibly unbounded) self-adjoint operators equipped with the strong resolvent topology (SRT, see $\S$\ref{sec: SA(H) is Polish}). Then the set $\mathbb{B}(H)_{\rm{sa}}$ of bounded self-adjoint operators on $H$ is a Borel subset of ${\rm{SA}}(H)$ (Lemma \ref{lem: B(H)_sa is standard}). Consider the semidirect product Polish group $G=\mathbb{K}(H)_{\rm{sa}}\rtimes \mathcal{U}(H)$, where $\mathbb{K}(H)_{\rm{sa}}$ is the additive Polish group of compact self-adjoint operators with the norm topology, and we equip the unitary group $\mathcal{U}(H)$ of $H$ with the strong operator topology. The action of $\mathcal{U}(H)$ on $\mathbb{K}(H)_{\rm{sa}}$ is given by conjugation. Then we consider the orbit equivalence relation $E_G^{{\rm{SA}}(H)}$ of the $G$-action on ${\rm{SA}}(H)$ given by $(K,u)\cdot A:=uAu^*+K\  (u\in \mathcal{U}(H),K\in \mathbb{K}(H)_{\rm{sa}})$. Since $\mathbb{B}(H)_{\rm{sa}}$ is a $G$-invariant Borel subset , and we may consider the restricted equivalence relation $E_G^{\mathbb{B}(H)_{\rm{sa}}}$ as well. Therefore, the difference of the complexity of the above classification for bounded vs unbounded operators should be understood as the difference of the complexities of $E_G^{{\rm{SA}}(H)}$ and $E_G^{\mathbb{B}(H)_{\rm{sa}}}$. In this respect, let us now state our main theorem (if the reader is not familiar with operator theory or Borel equivalence relations, basic facts are summarized in $\S$\ref{sec: preliminaries} and all the necessary notions are defined there):
\begin{theorem}
Denote by $\mathcal{F}(\mathbb{R})$ the Effros Borel space of closed subsets of $\mathbb{R}$ (see $\S$\ref{subsec: Effros Borel space}). 
The following statements hold:
\begin{list}{}{}
\item[{\rm{(1)}}] ${\rm{SA}}(H)\ni A\mapsto \sigma_{\rm{ess}}(A)\in \mathcal{F}(\mathbb{R})$ is Borel.
In particular, $E_G^{\mathbb{B}(H)_{\rm{sa}}}$ is smooth. 
\item[{\rm{(2)}}] There exists a dense $G_{\delta}$ orbit of the $G$-action on ${\rm{SA}}(H)$. In particular, the action is not generically turbulent.  
\item[{\rm{(3)}}] $E_G^{{\rm{SA}}(H)}$ does not admit classification by countable structures.
\end{list}
\end{theorem}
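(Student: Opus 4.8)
The plan is to reduce Borel measurability of $A\mapsto \sigma_{\rm{ess}}(A)$ to a statement about spectral projections. For rationals $a<b$ one has $\sigma_{\rm{ess}}(A)\cap(a,b)\neq\emptyset$ precisely when the spectral projection $E_A((a,b))$ has infinite rank, and the sets $\{S\in\mathcal{F}(\mathbb{R}):S\cap(a,b)\neq\emptyset\}$ generate the Effros Borel structure, so it suffices to show $\{A\in{\rm{SA}}(H):{\rm{rank}}\,E_A((a,b))=\infty\}$ is Borel. Here I would use that strong resolvent convergence $A_n\to A$ forces $f(A_n)\to f(A)$ strongly for every $f\in C_b(\mathbb{R})$, hence $A\mapsto f(A)$ is SOT-continuous; squeezing the indicator of $(a,b)$ between such $f$'s exhibits $A\mapsto E_A((a,b))$ as a Borel map into the projections with SOT, and at the (co-countably many) continuity points the rank is lower semicontinuous, so the set above is Borel once one intersects over a suitable cofinal family of subintervals. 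Then Theorem \ref{thm: von Neumann theorem} says $E_G^{\mathbb{B}(H)_{\rm{sa}}}$ is exactly equality of essential spectra, so $A\mapsto\sigma_{\rm{ess}}(A)$ is a Borel reduction of $E_G^{\mathbb{B}(H)_{\rm{sa}}}$ to equality on the standard Borel space $\mathcal{F}(\mathbb{R})$, i.e.\ it is smooth.

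\textbf{Part (2).} I would fix a model operator $A_0$ with $\sigma_{\rm{ess}}(A_0)=\mathbb{R}$ and ``maximally homogeneous'' spectrum --- for instance the diagonal operator whose eigenvalue multiset enumerates $\mathbb{Q}$ with each rational of infinite multiplicity, or $M_x\otimes 1$ on $L^2(\mathbb{R})\otimes\ell^2$ --- and show that its $G$-orbit is dense $G_{\delta}$. For density: given $B\in{\rm{SA}}(H)$ and a basic SRT-neighborhood of $B$ pinned down by finitely many vectors, I would first apply Theorem \ref{thm: Weyl-von Neumann} to replace $B$, up to $G$, by a diagonal operator agreeing with $B$ to the required accuracy on the relevant finite-dimensional subspace, then add an honest self-adjoint summand supported far away to enlarge the essential spectrum to all of $\mathbb{R}$ without moving the prescribed resolvent values, and finally invoke Theorem \ref{thm: von Neumann theorem} in a large ``corner'' to land a member of $G\cdot A_0$ in the neighborhood. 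Once density is known, by Effros' theorem (a dense orbit of a continuous Polish group action is $G_{\delta}$ iff it is non-meager) it remains to check non-meagerness, and I would do this by showing the orbit is in fact comeager: realize it as the intersection of countably many dense open-type sets expressing genericity conditions (such as $\sigma_{\rm{ess}}(A)=\mathbb{R}$, absence of eigenvalues, uniform multiplicity), each dense by a perturbation/Weyl--von Neumann argument and each $G_{\delta}$ via the semicontinuity facts from Part (1). The existence of a non-meager orbit contradicts the definition of generic turbulence, which gives the final clause of (2).

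\textbf{Part (3).} Since (2) rules out generic turbulence of the $G$-action on all of ${\rm{SA}}(H)$, I would pass to a $G$-invariant Borel subset on which turbulence survives: a natural candidate is the set $Y$ of self-adjoint operators with compact resolvent (equivalently $\sigma_{\rm{ess}}(A)\subseteq\{0\}$), which is $G$-invariant because conjugation and compact perturbation preserve the essential spectrum, and Borel by Part (1). Modulo the $G$-action the nontrivial members of $Y$ are the diagonal operators $\bigoplus_n a_n$ with $|a_n|\to\infty$, and $\bigoplus_n a_n\sim_G\bigoplus_n b_n$ translates into an equivalence of the associated operator ranges / form domains $\{x:\sum_n a_n^2|x_n|^2<\infty\}$ controlled by the asymptotics of the weight sequences. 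The plan is to show that this induced equivalence relation on weights Borel-reduces a known generically turbulent orbit equivalence relation --- of the $\mathbb{R}^{\mathbb{N}}/c_0$ or $\mathbb{R}^{\mathbb{N}}/\ell^{p}$ type in the spirit of Hjorth and Kechris--Sofronidis --- into $E_G^{{\rm{SA}}(H)}\!\restriction Y$, hence into $E_G^{{\rm{SA}}(H)}$. By Hjorth's theorem a generically turbulent action is not classifiable by countable structures, and this property is inherited upward along Borel reductions, so $E_G^{{\rm{SA}}(H)}$ admits no classification by countable structures.

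\textbf{Main obstacle.} The routine part is Part (1); the real difficulty lies in (2)--(3): choosing the model operator $A_0$ and the precise genericity conditions so that the orbit is provably comeager (delicate because spectral projections are only SOT-Borel, not SOT-continuous, so density and $G_{\delta}$-ness in the strong resolvent topology must be argued by hand), and, for (3), constructing an explicit Borel reduction of a turbulent relation into the equivalence of operator ranges with all three turbulence conditions verified --- this is where operator-range theory and sharp perturbation estimates are needed.
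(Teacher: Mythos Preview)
Your approach via spectral projections is different from the paper's and arguably more direct. The paper instead proves the identity $\sigma_{\rm{ess}}(A)=\bigcap_{K\in\mathcal{K}}\sigma(A+K)$ for a countable norm-dense $\mathcal{K}\subset\mathbb{K}(H)_{\rm{sa}}$ (Lemma~\ref{lem: intersection of compact perturbations=essential}), uses that $A\mapsto\sigma(A)$ is Borel, and then invokes Christensen's theorem that the intersection map on $\mathcal{F}(\mathbb{R})$ is Borel, together with compactness of $[a,b]$ to reduce the countable intersection to finite ones. Your route avoids Christensen entirely; in fact the paper itself uses essentially your argument later (Proposition~\ref{prop: essential spec contains lamda is dnese G_delta}, the closedness of the sets $S_{n,k}$) to show each $\{A:\lambda\in\sigma_{\rm{ess}}(A)\}$ is $G_\delta$. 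So both work; yours is more elementary, the paper's highlights the role of Christensen's theorem.

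\textbf{Part (2).} You are overcomplicating this. The key fact you are missing is that the $G$-orbit is \emph{exactly} the set $\{A:\sigma_{\rm{ess}}(A)=\mathbb{R}\}$, with no further genericity conditions needed: any two $A,B$ with $\sigma_{\rm{ess}}(A)=\sigma_{\rm{ess}}(B)=\mathbb{R}$ are $G$-equivalent, because von Neumann's permutation argument (Lemma~\ref{lem: permutation lemma}) goes through verbatim once the essential spectrum has no isolated points. So one only needs that $\{A:\sigma_{\rm{ess}}(A)=\mathbb{R}\}=\bigcap_{q\in\mathbb{Q}}\{A:q\in\sigma_{\rm{ess}}(A)\}$ is dense $G_\delta$, which follows from your Part~(1) analysis. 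Your proposed extra conditions (``absence of eigenvalues, uniform multiplicity'') would carve out a proper subset of the orbit and are unnecessary; invoking Effros' theorem is also not needed once the orbit is identified set-theoretically.

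\textbf{Part (3).} Here there is a genuine gap. First, a small error: compact resolvent is equivalent to $\sigma_{\rm{ess}}(A)=\emptyset$, not $\sigma_{\rm{ess}}(A)\subseteq\{0\}$. More seriously, your claim that $G$-equivalence of diagonal operators in $Y$ ``translates into an equivalence of the associated operator ranges / form domains controlled by the asymptotics of the weight sequences'' is not correct: Example~\ref{ex: unitarily quivalent domains but not uAu*+K=B} exhibits a continuum $\{B_t\}_{t\in[0,1]}$ of operators with \emph{identical} domains and essential spectra that are pairwise $G$-inequivalent. So the invariant you propose does not separate $G$-orbits, and your reduction scheme from an $\ell^p$- or $c_0$-type relation is left unspecified at exactly the point where it matters. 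The paper takes a different route: it equips $Y={\rm{EES}}(H)$ with the \emph{norm} resolvent topology, shows this is Polish and that the restricted $G$-action is continuous (Propositions~\ref{prop: EES with NRT is Polish}, \ref{prop: G on EES is continuous}), and then proves generic turbulence of this action directly (Theorem~\ref{thm: action is weakly turbulent}) via a hands-on local-orbit argument using the elementary estimate of Lemma~\ref{lem: nonlinearity of resolvent does not appear if ab>-1}. Since NRT refines SRT, the inclusion is a continuous (hence Borel) reduction into $E_G^{{\rm{SA}}(H)}$, and Hjorth's theorem finishes. If you want to salvage your strategy, you would need an explicit Borel map from a turbulent Polish $G'$-space into ${\rm{SA}}(H)$ that you can prove is a reduction; the paper's direct turbulence computation is likely easier than finding such a map.
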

Proofs of (1), (2) and (3) are given in Theorem \ref{thm: taking ess spectrum is Borel}, Theorem \ref{thm: unbounded case: comeager orbit exists} and  Theorem \ref{thm: E_G^SA is unclassifiable by countable structures}, respectively.  In the proof of (1), Christensen's Theorem \cite{Christensen71} asserting the Borelness of $\mathcal{F}(\mathbb{R})\times \mathcal{F}(\mathbb{R})\ni (K_1,K_2)\mapsto K_1\cap K_2\in \mathcal{F}(\mathbb{R})$ plays an important role (cf. \cite{DingGao}). 
Regarding (3), we prove more precisely that the subspace ${\rm{EES}}(H)=\{A\in {\rm{SA}}(H);\sigma_{\rm{ess}}(A)=\emptyset\}$, equipped with the norm resolvent topology (NRT, see $\S$\ref{para: EES, NRT is Polish}) is shown to be a Polish $G$-space (with respect to the restricted action), and the $G$-action on ${\rm{EES}}(H)$ is  generically turbulent (Theorem \ref{thm: action is weakly turbulent}). Since $A\mapsto \sigma_{\rm{ess}}(A)$ is constant ($=\emptyset$) on ${\rm{EES}}(H)$, this shows that the essential spectra is very far from a complete invariant even in this small subspace of ${\rm{SA}}(H)$. Since NRT is stronger than SRT, this shows that $E_G^{{\rm{EES}}(H)}$ is Borel reducible (in fact continuously embeddable) to $E_G^{{\rm{SA}}(H)}$, whence (3) holds by Hjorth turbulence Theorem \cite{Hjorth00}. On the other hand, there is a related equivalence relation: define an equivalence relation $E_{\rm{u.c.res}}^{{\rm{SA}}(H)}$ on ${\rm{SA}}(H)$ by 
\[AE_{\rm{u.c.res}}^{{\rm{SA}}(H)}B\Leftrightarrow \exists u\in \mathcal{U}(H)\  \ 
[u(A-i)^{-1}u^*-(B-i)^{-1}\in \mathbb{K}(H)].\]  
$E_G^{{\rm{SA}}(H)}$ is stronger than $E_{\rm{u.c.res}}^{{\rm{SA}}(H)}$ in the sense that $E_G^{{\rm{SA}}(H)}\subset E_{\rm{u.c.res}}^{{\rm{SA}}(H)}$ (Lemma \ref{lem: von Neuman is stronger than E_res.c}), and $ E_{\rm{u.c.res}}^{{\rm{SA}}(H)}$ restricted to $\mathbb{B}(H)_{\rm{sa}}$ agrees with $E_G^{\mathbb{B}(H)_{\rm{sa}}}$ (Lemma \ref{lem: Matsuzawa relation coincides with von Neumann's on B(H)}). Therefore  $E_{\rm{u.c.res}}^{{\rm{SA}}(H)}$ is considered to be another extension of $E_G^{\mathbb{B}(H)_{\rm{sa}}}$  to ${\rm{SA}}(H)$. We show that unlike $E_G^{{\rm{SA}}(H)}$, $E_{\rm{u.c.res}}^{{\rm{SA}}(H)}$ is actually smooth (Theorem \ref{thm: Matsuzawa relation is smooth}), although the essential spectra cannot be a complete invariant (Example \ref{ex: bounded vs unbounded}).  We also consider other various equivalence relations related to operator theory: we show that the domain equivalence relation given by 
\[AE_{\rm{dom}}^{{\rm{SA}}(H)}B\Leftrightarrow \dom{A}=\dom{B},\] is co-analytic (Proposition \ref{prop: E_dom is coanalytic}), and each class is dense and meager (Proposition \ref{prop: Edom class is dense and meager}).  We do not know if it is Borel. On the other hand we show that the unitary equivalence of domains given by \[AE_{\rm{dom},u}^{{\rm{SA}}(H)}B\Leftrightarrow \exists u\in \mathcal{U}(H)\ [u\cdot \dom{A}=\dom{B}],\] is Borel (Proposition \ref{prop: E_{dom,u} is Borel}), thanks to the theory of operator ranges \cite{FillmoreWilliams}. Furthermore, the action of the additive Polish group $\mathbb{K}(H)_{\rm{sa}}$ on ${\rm{SA}}(H)$ by $K\cdot A:=A+K\ (K\in \mathbb{K}(H)_{\rm{sa}},A\in {\rm{SA}}(H))$, is shown to be generically turbulent (Theorem \ref{thm: action of K(H) on SA(H) is turbulent}). Note that this is in contrast to the fact that the action of the larger group $G$ on ${\rm{SA}}(H)$ is not generically turbulent.  

The connection between descriptive set theory and other areas of mathematics such as ergodic theory or operator algebra theory have been proved to be very fruitful (see e.g. \cite{FarahTomsTornquist,KechrisTuckerDrob13,KechrisSofronidis01,KerrLiPichot10,SasykTornquist08}, and references therein). 
However, apart from the pioneering work of Simon \cite{Simon95} (see also \cite{ChokskiNadkarni,Israel04}) for a special class of self-adjoint operators, apparently no descriptive study has been carried out for the space ${\rm{SA}}(H)$. We hope that the present work not only shows the usefulness of the descriptive set theoretical viewpoint but also verifies that the theory of (unbounded) self-adjoint operators gives us rich examples of interesting equivalence relations. 
\section{Preliminaries}\label{sec: preliminaries}
\subsection{Operator Theory}\label{subsec: operator theory}
Here we recall basic notions from spectral theory. Details can be found e.g. in \cite{ReedSimonI,Schmudgen}. 
 Let $H$ be a separable infinite-dimensional Hilbert space. The group of unitary operators on $H$ is denoted $\mathcal{U}(H)$. We denote $\mathbb{B}(H)$ (resp. $\mathbb{B}(H)_{\rm{sa}}$) the space of all bounded (resp. bounded self-adjoint) operators on $H$, and $\mathbb{K}(H)$ (resp. $\mathbb{K}(H)_{\rm{sa}}$) the space of all compact (resp. compact self-adjoint) operators on $H$. 
The convergence of bounded operators with respect to the strong operator topology (SOT for short) is denoted $x_n\stackrel{\text{SOT}}{\to}x$ or $x_n\to x$ (SOT), which means $\|x_n\xi-x\xi\|\stackrel{n\to \infty}{\to}0$ for each $\xi \in H$. $(\mathcal{U}(H),{\rm{SOT}})$ is a Polish group (i.e., a topological group whose topology is Polish. See $\S$\ref{subsec: Borel equivalence relations}). 
The {\it domain} (resp. {\it range}) of a linear operator $A$ is denoted $\dom{A}$ (resp. $\text{Ran}(A)$). 
$A$ is called {\it densely defined} if $\dom{A}$ is dense.  An operator $B$ on $H$ is called an {\it extension} of $A$, denoted $A\subset B$, if $\dom{A}\subset \dom{B}$ and $A\xi=B\xi$ for all $\xi \in \dom{A}$. For a densely defined operator $A$, the {\it adjoint} $A^*$ of $A$ is defined as follows: its domain is 
 \[\dom{A^*}:=\{\eta\in H;\ \dom{A}\ni \xi\mapsto \nai{\eta}{A\xi}\text{ is continuous}\}.\]
 By Riesz representation Theorem, for $\eta \in \dom{A^*}$ there exists a unique vector $\zeta$, such that $\nai{\eta}{A\xi}=\nai{\zeta}{\xi}$ holds for $\xi\in \dom{A}$. We then define $A^*\eta$ to be $\zeta$.  
The {\it graph} of $A$, denoted as $G(A)$ is the subspace $\{(\xi, A\xi);\ \xi \in \dom{A}\}$ of $H\oplus H$. In $\dom{A}$, we define the {\it graph-norm} of $A$ by
\[\|\xi\|_A:=\sqrt{\|\xi\|^2+\|A\xi\|^2},\ \ \ \xi \in \dom{A}.\]

\begin{definition}Let $A$ be a densely defined operator on $H$.
\begin{list}{}{}
\item[(1)] $A$ is called {\it closed} if the graph $G(A)$ is closed in $H \oplus H$. This is equivalent to say that if $\{\xi_n\}_{n=1}^{\infty}\subset \dom{A}$ converges to $\xi\in H$ and $\{A\xi_n\}_{n=1}^{\infty}$ converges to $\eta$, then $\xi\in \dom{A}$ and $A\xi=\eta$ holds. 
\item[(2)] $A$ is called {\it closable} if it admits an extension $B$ as a closed operator. 
This is equivalent to say that if $\{\xi_n\}_{n=1}^{\infty}\subset \dom{A}$ converges to 0 and $\{A\xi_n\}_{n=1}^{\infty}$ converges to $\eta \in H$, then $\eta=0$. $B$ is called a {\it closed extension} of $A$.
\end{list}
If an operator $A$ is closable, it has the smallest closed extension $\overline{A}$, called the {\it closure} of $A$:
\eqa{
\dom{\overline{A}}&:=\left \{ \xi \in H; \ \exists \{\xi_n\}_{n=1}^{\infty}\subset \dom{A},\ \xi_n\stackrel{n\to \infty}{\to} \xi \text{ and } \lim_{n\to \infty}T\xi_n\text{ exists }\right \}, \\
\overline{A}\xi&:=\lim_{n\to \infty}A\xi_n, \ \ \xi \in \dom{\overline{A}}.}
$A$ is closable if and only if $A^*$ is densely defined, in which case $\overline{A}=(A^*)^*$ holds.
\begin{list}{}{}
\item[(3)] $A$ is called {\it symmetric}, if $A\subset A^*$ i.e., $\nai{A\xi}{\eta}=\nai{\xi}{A\eta}$ holds for $\xi,\eta\in \dom{A}$. 
\item[(4)] $A$ is called {\it self-adjoint}, if $A=A^*$. 
\end{list}
\end{definition}
\begin{definition}
We define ${\rm{SA}}(H)$ to be the space of all (possibly unbounded) self-adjoint operators on $H$. The {\it strong resolvent topology} (SRT for short) is the weakest topology on ${\rm{SA}}(H)$ for which ${\rm{SA}}(H)\ni A\mapsto (A-i)^{-1}\xi\in H$ is continuous for every $\xi \in H$. 
\end{definition}
In other words, a sequence $\{A_n\}_{n=1}^{\infty}$ in ${\rm{SA}}(H)$ converges to $A\in {\rm{SA}}(H)$ in SRT, if and only if ${\rm{SOT}}-\lim_{n\to \infty}(A_n-i)^{-1}=(A-i)^{-1}$. It will be shown (Proposition \ref{prop: SA(H) is Polish}) that ${\rm{SA}}(H)$ equipped with SRT is Polish (see $\S$\ref{subsec: Borel equivalence relations}). 
Let us remark an important difference between $\mathbb{B}(H)_{\rm{sa}}$ and ${\rm{SA}}(H)$: the latter  is {\it not} a vector space. Recall that the {\it sum} and the {\it multiplication} of two densely defined operators $A,B$ on $H$ are defined as 
\eqa{
\dom{A+B}&=\dom{A}\cap \dom{B},\ \ \ \ \ \ \ \ \ \ \ \ \ (A+B)\xi:=A\xi+B\xi,\  \ \ \ \xi\in \dom{A+B}.\\
\dom{AB}&=\{\xi\in \dom{B};B\xi\in \dom{A}\},\ \ \ (AB)\xi:=A(B\xi),\ \ \ \ \ \ \ \xi \in \dom{AB}.
}
In particular, $\dom{uAu^*}=u\cdot \dom{A}$ for $u\in \mathcal{U}(H)$. 
Note that even if $A,B$ are densely defined, it is very likely that $\dom{A+B}$ (or $\dom{AB}$) is not dense, whence the sum of self-adjoint operators may fail to be (essentially) self-adjoint. In fact, Israel \cite{Israel04} has shown that if $A$ has $\sigma_{\rm{ess}}(A)=\emptyset$ (see below for definition), then $\{u\in \mathcal{U}(H); \dom{A}\cap u\cdot \dom{A}=\{0\}\}$ is a norm-dense $G_{\delta}$ subset of $\mathcal{U}(H)$. 
Therefore for (norm-) generic $u$, $A+uAu^*$ is not even densely defined: $\dom{A+uAu^*}=\{0\}$.  However, if $A,B$ are self-adjoint and $B$ is bounded, then $A+B$ is always self-adjoint (with domain $\dom{A}$). 

For $A\in {\rm{SA}}(H)$, the {\it spectra} (resp. {\it point spectra}) of $A$ is denoted $\sigma(A)$ (resp. $\sigma_{\rm{p}}(A)$). The {\it essential spectra} of  $A$, denoted $\sigma_{\rm{ess}}(A)$, is the set of all $\lambda \in \sigma(A)$ which is either (i) an eigenvalue of $A$ of infinite multiplicity or (ii) an accumulation point in $\sigma(A)$. Its complement $\sigma_{\rm{d}}(A):=\sigma(A)\setminus \sigma_{\rm{ess}}(A)$ is called the {\it discrete spectra}, which is the set of all isolated eigenvalues of finite multiplicity.
The next theorem is due to Weyl (see \cite[Proposition 8.11]{Schmudgen}):
\begin{theorem}[Weyl's criterion]\label{thm: Weyl criterion} 
Let $A\in {\rm{SA}}(H)$ and $\lambda \in \mathbb{R}$. Then $\lambda\in \sigma_{\rm{ess}}(A)$ if and only if there exists a sequence $\{\xi_n\}_{n=1}^{\infty}\subset \dom{A}$ of unit vectors which converges weakly to 0, such that $\displaystyle \lim_{n\to \infty}\|A\xi_n-\lambda \xi_n\|=0$. 
\end{theorem}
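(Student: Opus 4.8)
The plan is to derive Weyl's criterion from the spectral theorem. Write $A=\int_{\mathbb{R}}t\,dE(t)$, where $E$ is the projection-valued spectral measure of $A$. The first step I would carry out is to establish the reformulation
\[\lambda\in\sigma_{\rm ess}(A)\ \Longleftrightarrow\ {\rm rank}\,E\big((\lambda-\varepsilon,\lambda+\varepsilon)\big)=\infty\ \text{ for every }\varepsilon>0.\]
For ``$\Leftarrow$'' I argue by contraposition: if $\lambda\notin\sigma_{\rm ess}(A)$ then either $\lambda\notin\sigma(A)$, so $E\big((\lambda-\varepsilon,\lambda+\varepsilon)\big)=0$ for small $\varepsilon$, or $\lambda$ is an isolated eigenvalue of finite multiplicity, so that for small $\varepsilon$ one has $E\big((\lambda-\varepsilon,\lambda+\varepsilon)\big)=E(\{\lambda\})$, whose range is $\ker(A-\lambda)$, finite-dimensional. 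For ``$\Rightarrow$'', if $\lambda$ is an eigenvalue of infinite multiplicity then already $E(\{\lambda\})\le E\big((\lambda-\varepsilon,\lambda+\varepsilon)\big)$ has infinite rank; and if $\lambda$ is an accumulation point of $\sigma(A)$, pick $\lambda_k\in\sigma(A)$, $\lambda_k\neq\lambda$, with $\lambda_k\to\lambda$, pass to a subsequence lying in $(\lambda-\varepsilon,\lambda+\varepsilon)$, and choose pairwise disjoint open intervals $\Delta_k\subset(\lambda-\varepsilon,\lambda+\varepsilon)$ with $\lambda_k\in\Delta_k$; the projections $E(\Delta_k)$ are nonzero, mutually orthogonal, and dominated by $E\big((\lambda-\varepsilon,\lambda+\varepsilon)\big)$, which is therefore of infinite rank.

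Granting this reformulation, for the implication ``$\lambda\in\sigma_{\rm ess}(A)\Rightarrow$ existence of such a sequence'' I would argue as follows. For each $n$ the subspace ${\rm Ran}\,E\big((\lambda-\tfrac1n,\lambda+\tfrac1n)\big)$ is infinite-dimensional, so one may inductively choose a unit vector $\xi_n$ in it orthogonal to $\xi_1,\dots,\xi_{n-1}$. The sequence $\{\xi_n\}$ is then orthonormal, hence converges weakly to $0$; since each $\xi_n$ lies in the range of the spectral projection of a bounded interval, $\xi_n\in\dom A$ and $A\xi_n$ is bounded in norm; and by the functional calculus
\[\|A\xi_n-\lambda\xi_n\|^2=\int_{(\lambda-1/n,\,\lambda+1/n)}|t-\lambda|^2\,d\langle E(t)\xi_n,\xi_n\rangle\le\frac1{n^2}\longrightarrow 0.\]

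For the converse, suppose $\{\xi_n\}\subset\dom A$ is a sequence of unit vectors converging weakly to $0$ with $\|A\xi_n-\lambda\xi_n\|\to0$. First, $\lambda\in\sigma(A)$, for otherwise $(A-\lambda)^{-1}$ is bounded and $1=\|\xi_n\|\le\|(A-\lambda)^{-1}\|\,\|(A-\lambda)\xi_n\|\to0$. Assume toward a contradiction that $\lambda\notin\sigma_{\rm ess}(A)$; the reformulation gives $\varepsilon>0$ with $P:=E\big((\lambda-\varepsilon,\lambda+\varepsilon)\big)$ of finite rank, hence compact, so $\|P\xi_n\|\to0$ and $\|(1-P)\xi_n\|\to1$. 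Since $P$ commutes with $A$ and $\xi_n,P\xi_n,(1-P)\xi_n\in\dom A$, the vectors $(A-\lambda)P\xi_n\in{\rm Ran}\,P$ and $(A-\lambda)(1-P)\xi_n\in{\rm Ran}(1-P)$ are orthogonal and sum to $(A-\lambda)\xi_n$, while the functional calculus gives $\|(A-\lambda)\eta\|\ge\varepsilon\|\eta\|$ for all $\eta\in{\rm Ran}(1-P)\cap\dom A$. Therefore $\|A\xi_n-\lambda\xi_n\|^2\ge\|(A-\lambda)(1-P)\xi_n\|^2\ge\varepsilon^2\|(1-P)\xi_n\|^2\to\varepsilon^2>0$, contradicting $\|A\xi_n-\lambda\xi_n\|\to0$. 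Hence $\lambda\in\sigma_{\rm ess}(A)$.

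The only delicate point is the bookkeeping with the unbounded functional calculus: verifying that the vectors constructed above genuinely lie in $\dom A$, that the Pythagorean identity $\|(A-\lambda)\xi_n\|^2=\|(A-\lambda)P\xi_n\|^2+\|(A-\lambda)(1-P)\xi_n\|^2$ is legitimate, and that $A-\lambda$ is bounded below by $\varepsilon$ on ${\rm Ran}(1-P)\cap\dom A$. Each of these follows directly from the spectral theorem, so once the reformulation of $\sigma_{\rm ess}(A)$ is in place there is no genuinely hard step remaining.
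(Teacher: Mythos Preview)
Your proof is correct and follows the standard textbook argument via the spectral theorem. Note, however, that the paper does not actually prove this statement: it is recorded in the preliminaries section as a classical result due to Weyl, with a reference to \cite[Proposition 8.11]{Schmudgen} in lieu of a proof. Your argument is essentially the one found in such references, so there is nothing to compare.
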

The spectral measure of $A$ is denoted $E_A(\cdot)$, and we write the spectral resolution of $A$ as $A=\int_{\mathbb{R}}\lambda dE_A(\lambda)$. 
$A$ is called {\it diagonalizable} if there exists a CONS $\{\xi_n\}_{n=1}^{\infty}$ consisting of eigenvectors of $A$. Let $a_n\in \mathbb{R}$ be the eigenvalue of $A$ corresponding to $\xi_n\ (n\in \mathbb{N})$. Then $\dom{A}=\{\xi \in H; \sum_{n=1}^{\infty}a_n^2|\nai{\xi_n}{\xi}|^2<\infty\}$, and  $A\xi=\sum_{n=1}^{\infty}a_n\nai{\xi_n}{\xi}\xi_n\ (\xi \in \dom{A})$. Therefore the spectral resolution of $A$ is written as 
\[A=\sum_{n=1}^{\infty}a_n\nai{\xi_n}{\ \cdot\ }\xi_n=\sum_{n=1}^{\infty}a_ne_n,\]
where $e_n$ is the projection onto $\mathbb{C}\xi_n\ (n\in \mathbb{N})$. 
Finally, we will also need results about operator ranges:
\begin{definition}\cite{FillmoreWilliams}
We say that a subspace $\mathcal{R}\subset H$ is an {\it operator range} in $H$, if  $\mathcal{R}=\text{Ran}(T)$ for some $T\in \mathbb{B}(H)$. 
We may choose $T$ to be self-adjoint with $0\le T\le 1$. If we put $H_n:=E_T((2^{-n-1},2^{-n}])H\ (n=0,1,\cdots)$, then $H_n$ are pairwise orthogonal closed subspaces of $H$ with $H=\bigoplus_{n=0}^{\infty}H_n$ (by the density of $\mathcal{R}$). 
We call $\{H_n\}_{n=0}^{\infty}$ the {\it associated subspaces for} $T$ (see \cite[$\S$3]{FillmoreWilliams}  for details). 
\end{definition}
In this paper, we only consider dense operator ranges. The relevance of dense operator ranges to our study is the fact that a dense subspace $\mathcal{R}$ is an operator range, if and only if $\mathcal{R}=\dom{A}$ for some $A\in {\rm{SA}}(H)$ (\cite[Theorem 1.1]{FillmoreWilliams}). Every operator range (not equal to $H$) is a meager $F_{\sigma}$ subspace of $H$. 
We use the following result on the unitary equivalence of operator ranges. The essential idea of the next result, which is one of the key ingredients in the classification of operator ranges, is due to K\"othe \cite{Koethe}, and it is formulated in this form by Fillmore-Williams \cite[Theorem 3.3]{FillmoreWilliams}.  
\begin{theorem}[K\"othe, Fillmore-Williams]\label{thm: unitary equivalence of operator ranges}
Let $\mathcal{R}$, $\mathcal{S}$ be dense orator ranges in $H$ with associated closed subspaces $\{H_n\}_{n=0}^{\infty}$ and $\{K_n\}_{n=0}^{\infty}$, respectively. Then there exists $u\in \mathcal{U}(H)$ such that $u\mathcal{R}=\mathcal{S}$, if and only if the following condition is satisfied:\\
There exists $k\ge 0$ such that for each $n\ge 0$ and $l\ge 0$,
\eqa{
\dim(H_n\oplus  \cdots \oplus H_{n+l})& \le \dim (K_{n-k}\oplus \cdots \oplus K_{n+l+k})\\
\dim(K_n\oplus  \cdots \oplus K_{n+l})& \le \dim (H_{n-k}\oplus \cdots \oplus H_{n+l+k}),
}
where $H_m=K_m=\{0\}$ for $m<0$.
\end{theorem}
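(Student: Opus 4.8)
The plan is to replace $T$ by the canonical block‑diagonal model of an operator range and then argue the two implications separately; the sufficiency is an explicit construction of a ``banded'' unitary, while the necessity, where the real content lies, will force the dimension inequalities by a band‑localization argument. Writing $P_n$ and $Q_n$ for the orthogonal projections onto $H_n$ and $K_n$, the first step is to record that $\mathcal R=\{\xi\in H:\sum_{n\ge0}4^n\|P_n\xi\|^2<\infty\}=\mathrm{Ran}(D_{\mathcal R})$ for the bounded injective operator $D_{\mathcal R}:=\sum_n2^{-n}P_n$, and likewise $\mathcal S=\mathrm{Ran}(D_{\mathcal S})$ with $D_{\mathcal S}:=\sum_n2^{-n}Q_n$. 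Indeed, if $\mathcal R=\mathrm{Ran}(T)$ with $T$ as in the definition, then $T$ is block‑diagonal for $H=\bigoplus_nH_n$ with $2^{-n-1}\le T|_{H_n}\le2^{-n}$, so $\|T^{-1}\xi\|^2=\sum_n\|(T|_{H_n})^{-1}P_n\xi\|^2$ lies between $\sum_n4^n\|P_n\xi\|^2$ and $4\sum_n4^n\|P_n\xi\|^2$; in particular $\mathrm{Ran}(D_{\mathcal R})$ depends only on the associated subspaces. Thus it suffices to decide when $u\,\mathrm{Ran}(D_{\mathcal R})=\mathrm{Ran}(D_{\mathcal S})$ for some $u\in\mathcal U(H)$.

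For sufficiency, assume the inequalities hold with a fixed $k$. I would construct a \emph{$k$-banded} unitary, meaning $u(H_n)\subset\bigoplus_{|j-n|\le k}K_j$ and $u^*(K_j)\subset\bigoplus_{|n-j|\le k}H_n$. Such a $u$ exists once one can choose cardinals $c_{n,j}\in\{0,1,\dots,\aleph_0\}$, vanishing unless $|n-j|\le k$, with $\sum_jc_{n,j}=\dim H_n$ and $\sum_nc_{n,j}=\dim K_j$: then split $H_n=\bigoplus_jH_{n,j}$ and $K_j=\bigoplus_nK_{n,j}$ with $\dim H_{n,j}=\dim K_{n,j}=c_{n,j}$, pick arbitrary unitaries $u_{n,j}\colon H_{n,j}\to K_{n,j}$, and put $u=\bigoplus_{n,j}u_{n,j}$. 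The existence of $(c_{n,j})$ is a transportation problem on the locally finite bipartite graph with edge set $\{(n,j):|n-j|\le k\}$, and by the max‑flow–min‑cut (cardinal Hall‑type) theorem it is feasible precisely when $\sum_{n\in A}\dim H_n\le\sum_{j\in N(A)}\dim K_j$ and symmetrically for every finite set $A$ of indices, where $N(A)=\bigcup_{n\in A}[n-k,n+k]$; since neighbourhoods of intervals are intervals and the general case reduces to intervals, these are exactly the displayed inequalities. Finally, for a $k$-banded $u$ and $\xi\in\mathrm{Ran}(D_{\mathcal R})$ one computes $\sum_j4^j\|Q_j(u\xi)\|^2=\sum_n\sum_{|j-n|\le k}4^j\|P_{H_{n,j}}\xi\|^2\le4^k\sum_n4^n\|P_n\xi\|^2<\infty$, so $u\xi\in\mathrm{Ran}(D_{\mathcal S})$, and the symmetric estimate for the ($k$-banded) $u^*$ gives the reverse inclusion; hence $u\mathcal R=\mathcal S$.

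For necessity, suppose $u\mathcal R=\mathcal S$, so $u$ carries $\mathrm{Ran}(D_{\mathcal R})$ bijectively onto $\mathrm{Ran}(D_{\mathcal S})$. Then $W:=D_{\mathcal S}^{-1}uD_{\mathcal R}$ is everywhere defined and bijective with inverse $W^{-1}=D_{\mathcal R}^{-1}u^*D_{\mathcal S}$, and both are closed, hence bounded. The heart of the proof is a \emph{uniform band‑localization}: there is a single $k$, depending only on $\|W\|$ and $\|W^{-1}\|$, such that for all $0\le a\le b$ the unitary $u$ sends the spectral band $\mathbb{H}_{[a,b]}:=\overline{H_a\oplus\dots\oplus H_b}$ of $D_{\mathcal R}$ essentially into the band $\overline{K_{a-k}\oplus\dots\oplus K_{b+k}}$ of $D_{\mathcal S}$, in the sense that for $\xi\in\mathbb{H}_{[a,b]}$ the tails $\|E_{D_{\mathcal S}}((0,2^{-b-k-1}])u\xi\|$ and $\|E_{D_{\mathcal S}}((2^{-(a-k)},1])u\xi\|$ are each at most $\tfrac14\|\xi\|$. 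For the high‑frequency tail: write $\xi=D_{\mathcal R}\eta$ with $\eta\in\mathbb{H}_{[a,b]}$ and $\|\eta\|\le2^b\|\xi\|$; then $D_{\mathcal S}^{-1}u\xi=W\eta$, so $\sum_j4^j\|Q_j(u\xi)\|^2=\|W\eta\|^2\le\|W\|^24^b\|\xi\|^2$, and summing over $j\ge b+k+1$ gives $\|E_{D_{\mathcal S}}((0,2^{-b-k-1}])u\xi\|\le2^{-k-1}\|W\|\,\|\xi\|\le\tfrac14\|\xi\|$ as soon as $2^{k+1}\ge4\|W\|$, a condition on $k$ independent of $a,b$. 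For the low‑frequency tail, run the same argument for $u^*$ (which carries $\mathrm{Ran}(D_{\mathcal S})$ onto $\mathrm{Ran}(D_{\mathcal R})$, with $W$ replaced by $W^{-1}$), obtaining $\|E_{D_{\mathcal R}}((0,2^{-m}])u^*\zeta\|\le\tfrac14\|\zeta\|$ for $\zeta$ in the band $\overline{K_0\oplus\dots\oplus K_{m-k-1}}$, and pass to adjoints. Combining, $\|E_{D_{\mathcal S}}((2^{-b-k-1},2^{-(a-k)}])u\xi\|\ge\tfrac12\|\xi\|$ for $\xi\in\mathbb{H}_{[a,b]}$, so the band projection onto $\overline{K_{a-k}\oplus\dots\oplus K_{b+k}}$ is injective on $u(\mathbb{H}_{[a,b]})$, whence $\dim(H_a\oplus\dots\oplus H_b)\le\dim(K_{a-k}\oplus\dots\oplus K_{b+k})$. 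The opposite family of inequalities follows by interchanging the roles of $\mathcal R$ and $\mathcal S$ (equivalently $u$ and $u^*$); the same $k$ works for all of them.

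The nontrivial part is entirely the necessity, and the main obstacle is precisely the band‑localization with a \emph{uniform} $k$. One cannot get away with comparing the singular value (min–max) sequences of $D_{\mathcal R}$ and $D_{\mathcal S}$: from $u\mathcal R=\mathcal S$ one only extracts a two‑sided quadratic‑form comparison between $D_{\mathcal R}^2$ and the congruent operator $W^*D_{\mathcal S}^2W$, and such a bound does not confine a vector, or even an entire subspace, to a spectral band, so it does not by itself yield the block dimension inequalities. (Already when some $\dim H_n$ is infinite, so that $D_{\mathcal R}$ need not be compact, the singular‑value comparison is strictly weaker than the theorem: the operator ranges with dimension sequences $(\aleph_0,1,1,1,\dots)$ and $(\aleph_0,2,2,2,\dots)$ have identical singular‑value sequences but are not unitarily equivalent.) What rescues the argument is that $u$ genuinely maps a whole spectral band into a spectral band — verified for high frequencies from the boundedness of $W$ and for low frequencies from the boundedness of $W^{-1}$ via the adjoint device above — and that one window width $k$ serves every band at once, which is exactly where the hypothesis that $W$ is bounded with bounded inverse is used in full.
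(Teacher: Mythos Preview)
The paper does not give its own proof of this theorem; it is quoted as a background result from Fillmore--Williams (their Theorem~3.3, with the underlying idea attributed to K\"othe), so there is nothing in the paper itself to compare your argument against.

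That said, your outline is essentially the standard one and tracks the Fillmore--Williams proof closely. The necessity direction is correct: recognizing that $W=D_{\mathcal S}^{-1}uD_{\mathcal R}$ and $W^{-1}$ are everywhere defined and bounded (an instance of Douglas' range-inclusion lemma) is exactly what drives the uniform band localization, and your high- and low-frequency tail estimates are accurate. The passage from ``band projection bounded below on $u(\mathbb H_{[a,b]})$'' to the dimension inequality is clean.

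For sufficiency, the $k$-banded unitary is the right object to build, but the appeal to a ``max-flow--min-cut (cardinal Hall-type) theorem'' is underspecified. Hall's marriage condition alone does \emph{not} guarantee perfect matchings in general infinite bipartite graphs, and when some $\dim H_n=\aleph_0$ the matching graph on basis vectors is not locally finite, so K\"onig's locally-finite version does not apply directly. One has to do a little more work: either first absorb the infinite-dimensional blocks (the hypotheses force each $H_n$ with $\dim H_n=\aleph_0$ to have some $K_j$, $|j-n|\le k$, with $\dim K_j=\aleph_0$, and symmetrically, so these can be matched off and one reduces to a locally finite residual problem), or give a direct inductive construction of the $c_{n,j}$ as Fillmore--Williams do. This is a gap in the write-up rather than in the overall strategy.
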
   
We remark that the classification of dense operator ranges up to unitary equivalence is completed by Lassner-Timmermann \cite{LassnerTimmermann}. 

\subsection{Borel Equivalence Relations}\label{subsec: Borel equivalence relations}
Here we recall basic notions from (classical) descriptive set theory. The details can be found e.g., in \cite{Gao09,Hjorth00,Kechris96}. 
A topological space $X$ is called {\it Polish} if it is separable and completely metrizable. 
Let $X$ be a Polish space. We say $A\subset X$ is {\it nowhere-dense} if its closure $\overline{A}$ has empty interior. $A$ is called {\it meager} (resp. {\it comeager}), if it is (resp. its complement is) a countable union of nowhere-dense subsets of $X$. By Baire category Theorem, countable intersection of open dense subsets of $X$ is dense. 
We always assume that a Polish space is equipped with the $\sigma$-algebra $\mathcal{B}(X)$ generated by open subsets of $X$. Elements of $\mathcal{B}(X)$ are called {\it Borel} sets. 
A measurable space $(X,\mathcal{S})$ is called a {\it standard Borel space}, if it is isomorphic as a measurable space to some $(Y,\mathcal{B}(Y))$ where $Y$ is a Polish space.  A map between standard Borel spaces is called {\it Borel} if every inverse image of a Borel set is Borel. Let $X$ be a standard Borel space. If $A\subset X$ is a Borel subset, then the measurable space $(A,\mathcal{B}(X)\cap A)$ is again a standard Borel space. 
A subset $A\subset X$ is called {\it analytic} (resp. {\it co-analytic}) if it is  (resp. its complement is) the continuous image of a Polish space. An equivalence relation $E$ on $X$ is called {\it Borel}, {\it analytic} or {\it co-analytic}, respectively, if $E$ is a Borel, analytic or co-analytic subset of $X\times X$ respectively, where we  equip $X\times X$ with the product Borel structure. For an equivalence relation $E$ on $X$ and $x,y\in X$, we denote $xEy$ if $(x,y)\in E$. The set $[x]_E:=\{y\in X;\ xEy\}$ is called the $E$-{\it class} (or $E$-{\it orbit}) of $x$. Typical equivalence relations are given by group actions. Suppose  $G$ is a Polish group acting on a Polish (resp. standard Borel) space $X$. We say that the action is {\it continuous} (resp. {\it Borel}) if the action map $G\times X\ni (g,x)\mapsto g\cdot x\in X$ is continuous (resp. {\it Borel}). In this case we say that the space $X$ equipped with this action is a {\it Polish $G$-space} (resp. {\it Borel $G$-space}). The action induces an equivalence relation, called the {\it orbit equivalence relation} (denoted $E_G^X$) on $X$ given by $xE_G^Xy\ (x,y\in X)$ if and only if $y=g\cdot x$ for some $g\in G$.  In this case we write the class $[x]_{E_G^X}$ of $x\in X$ as $[x]_G$ and call it the {\it $G$-orbit} of $x\in X$. The {\it identity equivalence relation} on $X$, denoted $\text{id}_X$ is defined as $x\ \text{id}_X\ y\Leftrightarrow x=y$. 
\begin{definition}
Let $E$ (resp. $F$) be an equivalence relation on a standard Borel space $X$ (resp. $Y$). We say that $E$ is {\it Borel reducible} to $F$, in symbols $E\le_BF$, if there is a Borel map $f\colon X\to Y$ such that $x_1Ex_2\Leftrightarrow f(x_1)Ff(x_2)$ holds for every $x_1,x_2\in X$. 
\end{definition}
\begin{definition}
Let $E$ be an equivalence relation on a standard Borel space $X$. We say that $E$ is {\it smooth}, if $E$ is Borel reducible to the identity relation ${\text{id}}_Y$ on some Polish space $Y$.
\end{definition}
Finally, let $X,Y$ be Polish spaces. We say that a subset $A\subset X$ has the {\it Baire property} if there exists an open set $U\subset X$ such that the symmetric difference $A\bigtriangleup U$ is meager. 
A map $f\colon X\to Y$ is {\it Baire measurable} if $f^{-1}(V)$ has the Baire property for every open $V\subset Y$. We will use the fact that analytic sets have the Baire property (see \cite{Kechris96}).  
\subsection{Hjorth Turbulence Theorem}
The notion of classification by countable structures lies at a higher level of complexity than smoothness. 
In order to avoid introducing concepts from logic, let us informally give its definition. We refer the reader to \cite[$\S$2]{Hjorth00} for the details.  However, let us remind the reader that basic knowledge about Baire category methods and operator theory suffice to follow the arguments in later sections. 
\begin{definition}
We say that an equivalence relation $E$ admits {\it classification by countable structures}, if there exists a countable language $L$ such that $E$ is Borel reducible to the isomorphism relation on the space $X_L$ of countable $L$-structures induced by the logic action of the group $S_{\infty}$ of all permutations of $\mathbb{N}$ on $X_L$.
\end{definition}
It is known that every $S_{\infty}$-orbit equivalence relation is Borel reducible to some $E_{S_{\infty}}^{X_L}$.   Therefore $E$ admits classification by countable structures, if and only if $E\le_BE_{S_{\infty}}^X$ for some Polish $S_{\infty}$-space $X$. 
Recall also the notion of generic ergodicity of equivalence relations:
\begin{definition}
Let $E$ (resp. $F$) be an equivalence relation on a Polish space $X$ (resp. $Y$). A {\it homomorphism} from $E$ to $F$ is a map $f\colon X\to Y$ such that $xEy\Rightarrow f(x)Ff(y)$ for all $x,y\in X$. We say that $E$ is {\it generically $F$-ergodic}, if for every Baire measurable homomorphism $f$ from $E$ to $F$, there is a comeager set $A\subset X$ which $f$ maps into a single $F$-class. 
\end{definition}
Hjorth's notion of turbulence provides us with a convenient criterion for finding an obstruction of a given equivalence relation to be classifiable by countable structures. 
Below we use a category quantifier\ $\forall^*$. Suppose that we are given a Polish space $X$ and for each point $x\in X$ a proposition $P(x)$. We say that $P(x)$ holds for generic $x\in X$, denoted $\forall^*x\ (P(x))$, if $\{x\in X;P(x)\}$ is comeager in $X$.
\begin{definition}
 Let $G$ be a Polish group and $X$ a Polish $G$-space.
\begin{list}{}{}
\item[(1)]
Let $x\in X$. For an open neighborhoods $U$ of $x$ in $X$ and $V$ of $1$ in $G$, the {\it local }$U$-$V${\it orbit} of $x$, denoted $\mathcal{O}(x,U,V)$, is the set of all $y\in U$ for which there exist $l\in \mathbb{N}$, $x=x_0,x_1,\cdots,x_l=y\in U$, and $g_0,\cdots,g_{l-1}\in V$, such that $x_{i+1}=g_i\cdot x_i$ for all $0\le i\le l-1$. 
\item[(2)] The action $\alpha$ is {\it turbulent} at $x\in X$ if the local orbits $\mathcal{O}(x,U,V)$ of $x$ are somewhere dense (i.e., its closure has nonempty interior) for every open $U\subset X$ and $V\subset G$ with $x\in U$ and $1\in V$.
\item[(3)] The action $\alpha$ is said to be {\it generically turbulent} if 
\begin{list}{}{}
\item[(a)] There is a dense orbit.
\item[(b)] Every orbit is meager.
\item[(c)] $\forall^*x\in X$ [The action is turbulent at $x$]. 
\end{list}
 
\end{list}
\end{definition}
We use an apparently weaker notion of weak generic turbulence, which is actually equivalent to generic turbulence:
\begin{definition}\label{def: weak turbulence} Let $G$ be a Polish group and $X$ a Polish $G$-space. We say that the action is {\it weakly generically turbulent}, if 
\begin{itemize}
\item[(a)] Every orbit is meager.
\item[(b)] $\forall^*x\in X\ \forall^*y\in X\ \forall (\emptyset \neq)U\stackrel{\text{open}}{\subset}X\ (1\in )\forall V\stackrel{\text{open}}{\subset}G$ $[x\in U\Rightarrow \overline{\mathcal{O}(x,U,V)}\cap [y]_G\neq \emptyset].$
\end{itemize}
\end{definition}
Next theorem is called Hjorth turbulence Theorem. Proof can be found in \cite{Hjorth00}.
\begin{theorem}[Hjorth]\label{thm: weak turbulence=turbulence}
Let $G$ be a Polish group and $X$ a Polish $G$-space with every orbit meager and some orbit dense. Then the following statements are equivalent:
\begin{list}{}{}
\item[{\rm{(i)}}] $X$ is weakly generically turbulent.
\item[{\rm{(ii)}}] For any Borel $S_{\infty}$-space $Y$, $E_G^X$ is generically $E_{S_{\infty}}^Y$-ergodic.
\item[{\rm{(iii)}}] $X$ is generically turbulent.
\end{list}
\end{theorem}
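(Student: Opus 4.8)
The plan is to establish the cycle $(\mathrm{iii})\Rightarrow(\mathrm{i})\Rightarrow(\mathrm{ii})\Rightarrow(\mathrm{iii})$ under the standing hypotheses that every $G$-orbit is meager and some $G$-orbit is dense; throughout I would freely use the two standard consequences of these hypotheses, namely that the action is topologically transitive (every nonempty invariant open set is dense), so that every $G$-invariant set with the Baire property is meager or comeager, and that the $G$-saturation of a meager set with the Baire property is again meager. For $(\mathrm{iii})\Rightarrow(\mathrm{i})$ I would note that generic turbulence gives clause (a) of Definition~\ref{def: weak turbulence} verbatim, and argue clause (b) as follows: fix the comeager set of turbulent points and enumerate a countable basis $\{U_n\}$ of $X$ and a countable basis $\{V_m\}$ at $1\in G$; if $x$ is turbulent then each $\overline{\mathcal{O}(x,U_n,V_m)}$ has nonempty interior, and since the set of points with dense orbit is comeager, for generic $y$ the orbit $[y]_G$ meets every one of these countably many interiors, which is exactly clause (b).

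The substance is $(\mathrm{i})\Rightarrow(\mathrm{ii})$. Using the reduction recalled after the definition of classification by countable structures I would assume $Y=X_L$ is the space of countable $L$-structures with $E^Y_{S_\infty}$ the isomorphism relation $\cong$, and let $f\colon X\to X_L$ be a Baire-measurable homomorphism from $E^X_G$ to $\cong$; the goal is a comeager set on which $f$ is $\cong$-constant. First I would pass to a dense $G_\delta$ set $C$ on which $f$ is continuous and on which the turbulence clause (b) holds with the witnessing points also ranging over $C$; this is possible because only countably many category conditions are involved. Since $f$ is a homomorphism it is $\cong$-constant along every $G$-orbit, hence along every local orbit, so continuity on $C$ forces $f(z)\in\overline{[f(x)]_{\cong}}$ whenever $x\in C$ and $z\in C\cap\overline{\mathcal{O}(x,U,V)}$. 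The heart of the matter is to upgrade membership in the closure of the isomorphism class to genuine isomorphism, and I would do this by a Scott-rank induction: for each countable ordinal $\alpha$ let $E_\alpha$ be the ($\cong$-invariant, Borel) relation on $X_L$ of satisfying the same $L_{\infty\omega}$-sentences of rank at most $\alpha$, so that $\cong=\bigcap_\alpha E_\alpha$ and, by the boundedness theorem, $\cong=E_{\alpha_0}$ on the range of $f$ for a single countable $\alpha_0$. By the Lopez--Escobar theorem each pullback $(f\times f)^{-1}(E_\alpha)$ is invariant under the coordinatewise action of $G\times G$ on $X\times X$, which is again topologically transitive, and has the Baire property, hence is meager or comeager; one proves by induction on $\alpha$ that it is comeager, the base step being routine, the limit step a countable intersection of comeager sets, and the successor step being precisely where weak turbulence enters, realizing the ``for every element there is a matching element'' clause defining $E_{\alpha+1}$ by moving along a turbulent local-orbit path in $X$ along which $f$ keeps its $\cong$-class. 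Taking $\alpha=\alpha_0$ and applying Kuratowski--Ulam then yields a comeager $X_0\subseteq X$ on which $f$ is constant up to $\cong$; in particular $E^X_G$ is generically $E^Y_{S_\infty}$-ergodic.

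Finally, for $(\mathrm{ii})\Rightarrow(\mathrm{iii})$ I would argue by contraposition. If the action is not generically turbulent, then, since every orbit is meager and some orbit is dense, the only clause that can fail is turbulence at a generic point, so the set $N$ of points at which turbulence fails is non-meager; for $x\in N$ there are basic $U_n\ni x$ and $V_m\ni 1$ with $\mathcal{O}(x,U_n,V_m)$ nowhere dense. Ranging over the countably many scales $(U_n,V_m)$ I would encode, in a Borel way, the pattern of local-orbit classes around canonically chosen approximants to $x$ into a countable structure $\Phi(x)$; because the relevant local orbits are nowhere dense, $\Phi$ is an honest $E^X_G$-invariant whose induced homomorphism $X\to X_{L'}$ is not $\cong$-constant on any comeager set, since it already separates non-meagerly many points of $N$, contradicting generic $E^{X_{L'}}_{S_\infty}$-ergodicity; this closes the cycle. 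The hard part is clearly the successor step of the induction in $(\mathrm{i})\Rightarrow(\mathrm{ii})$: controlling how iterated passage to topological closures interacts with the rank filtration $\{E_\alpha\}$ and realizing the ``back'' moves of the back-and-forth by actual local-orbit paths in $X$. This is exactly the place where turbulence, rather than mere topological transitivity, is indispensable, and carrying it out cleanly requires the Vaught-transform calculus for sets with the Baire property.
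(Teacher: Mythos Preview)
The paper does not prove this theorem; it states it as Hjorth's turbulence theorem and refers the reader to \cite{Hjorth00} for the proof. So there is no ``paper's own proof'' to compare against, and your proposal must be assessed on its own merits.

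Your outline of $(\mathrm{iii})\Rightarrow(\mathrm{i})$ and of $(\mathrm{i})\Rightarrow(\mathrm{ii})$ is broadly in the spirit of Hjorth's argument: the Scott-analysis filtration $\{E_\alpha\}$ and the inductive transfer of comeagerness along it, with the successor step driven by local-orbit paths, is indeed the mechanism. Two cautions, though. First, your appeal to a boundedness theorem to fix a single $\alpha_0$ with $\cong=E_{\alpha_0}$ on $f(X)$ is not quite right as stated: $f$ is only Baire-measurable, so $f(X)$ need not be analytic, and in any case Hjorth's proof does not pin down a uniform bound in advance but carries the induction through all countable ordinals and then uses that each individual structure has countable Scott rank. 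Second, your successor step is asserted rather than argued; the actual work is to show that the back-and-forth clause defining $E_{\alpha+1}$ can be witnessed \emph{generically}, and this requires the Vaught-transform/category calculus you allude to but do not carry out.

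The genuine gap is $(\mathrm{ii})\Rightarrow(\mathrm{iii})$. Saying you will ``encode, in a Borel way, the pattern of local-orbit classes around canonically chosen approximants to $x$ into a countable structure $\Phi(x)$'' is not a construction: you have not said what the language is, what the universe of $\Phi(x)$ is, how the relations are defined, why $\Phi$ is $G$-invariant up to isomorphism, or why it separates a non-meager set. In Hjorth's proof this direction is a substantial model-theoretic construction (building, from a nowhere-dense local orbit, an $S_\infty$-classification on an invariant non-meager set), and nothing in your sketch indicates how to carry it out. As written, this step is a promissory note rather than an argument.
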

It follows from Theorem \ref{thm: weak turbulence=turbulence} that orbit equivalence relations of generically turbulent actions do not admit classification by countable structures.  
\subsection{The Effros Borel Space of Closed Subsets of a Polish Space}\label{subsec: Effros Borel space}
Let $X$ be a topological space. We denote by $\mathcal{F}(X)$ the set of closed subsets of $X$. 
\begin{definition}
The {\it Effros Borel structure} of $\mathcal{F}(X)$ is the $\sigma$-algebra on $\mathcal{F}(X)$ generated by the sets
\[\{F\in \mathcal{F}(X);\ F\cap U\neq \emptyset.\}\]
If $X$ is second countable and $\{U_n\}_{n=1}^{\infty}$ is an open basis for $X$, it is sufficient to consider $U$ in this basis.
\end{definition}
If $X$ is Polish, then the Effros Borel space of $\mathcal{F}(X)$ is a standard Borel space \cite[Theorem 12.6]{Kechris96}. 
Next result is important for our analysis (see \cite{Christensen71,Christensen74} for more details):
\begin{theorem}[Christensen]\label{thm: Christensen}
Let $X$ be a Polish space. Then the intersection map $I_2\colon \mathcal{F}(X)\times \mathcal{F}(X)\ni (K_1,K_2)\mapsto K_1\cap K_2\in \mathcal{F}(X)$ is Borel if and only if $X$ is $\sigma$-compact.
\end{theorem}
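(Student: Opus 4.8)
The plan is to prove the two implications separately.

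\medskip\noindent\emph{The $\sigma$-compact case.} Suppose $X=\bigcup_{m}C_m$ with each $C_m$ compact. Since the Effros Borel structure on $\mathcal{F}(X)$ is generated by the sets $\{F:F\cap U\neq\emptyset\}$ with $U$ ranging over a countable basis, it suffices to check that for each basic open $U$ the set $I_2^{-1}\{F:F\cap U\neq\emptyset\}=\{(K_1,K_2):K_1\cap K_2\cap U\neq\emptyset\}$ is Borel in $\mathcal{F}(X)^2$. An open subset of a compact metric space is $\sigma$-compact, so write $C_m\cap U=\bigcup_j D_{m,j}$ with each $D_{m,j}$ compact; then $K_1\cap K_2\cap U\neq\emptyset$ iff $K_1\cap K_2\cap D\neq\emptyset$ for some $D=D_{m,j}$, and the whole matter reduces to the lemma: for fixed compact $D\subseteq X$, the set $\{(K_1,K_2):K_1\cap K_2\cap D\neq\emptyset\}$ is Borel. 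I would prove this by a diameter-shrinking compactness argument: fixing a complete metric $d$ and a countable basis $\mathcal{V}$, one has $K_1\cap K_2\cap D\neq\emptyset$ if and only if for every $n$ there is $V\in\mathcal{V}$ with $\operatorname{diam}(V)<1/n$, $\overline{V}\cap D\neq\emptyset$, $K_1\cap V\neq\emptyset$ and $K_2\cap V\neq\emptyset$. The nontrivial implication extracts $a_n\in K_1\cap V_n$, $b_n\in K_2\cap V_n$, $c_n\in\overline{V_n}\cap D$, passes to a subsequence $c_{n_k}\to c\in D$ by compactness of $D$, notes $a_{n_k},b_{n_k}\to c$, and uses closedness of the $K_i$ to conclude $c\in K_1\cap K_2\cap D$. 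This characterization displays the set as $\bigcap_n\bigcup_{V}\big(\{K_1:K_1\cap V\neq\emptyset\}\times\{K_2:K_2\cap V\neq\emptyset\}\big)$, a countable Boolean combination of Effros generators, hence Borel.

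\medskip\noindent\emph{The non-$\sigma$-compact case.} Suppose $X$ is not $\sigma$-compact. By a theorem of Hurewicz (see e.g.\ \cite{Kechris96}), $X$ contains a closed subset $C$ homeomorphic to the Baire space $\mathcal{N}=\mathbb{N}^{\mathbb{N}}$. The inclusion $C\hookrightarrow X$ induces a Borel injection $\iota\colon\mathcal{F}(C)\to\mathcal{F}(X)$ onto the Borel set $\{F\in\mathcal{F}(X):F\subseteq C\}$, and $\iota(F_1)\cap\iota(F_2)=\iota(F_1\cap F_2)$; by Lusin--Souslin $\iota$ is a Borel isomorphism onto its range, so if $I_2$ were Borel on $\mathcal{F}(X)$ it would be Borel on $\mathcal{F}(C)\cong\mathcal{F}(\mathcal{N})$, and it suffices to derive a contradiction for $X=\mathcal{N}$. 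Observe that $\{F\in\mathcal{F}(\mathcal{N}):F=\emptyset\}$ is Borel, being the complement of the Effros generator for $U=\mathcal{N}$; hence if $I_2$ were Borel, then $\{(K_1,K_2):K_1\cap K_2\neq\emptyset\}$ would be Borel too. Now, using the homeomorphism $\mathcal{N}\cong\mathcal{N}\times\mathcal{N}$, fix an analytic non-Borel set $A\subseteq\mathcal{N}$ written as $A=\{x:\exists y\,(x,y)\in F\}$ with $F\subseteq\mathcal{N}\times\mathcal{N}$ closed (e.g.\ the set of ill-founded trees). The assignment $x\mapsto\{x\}\times\mathcal{N}$ is continuous, hence Borel, into the Effros space of $\mathcal{F}(\mathcal{N}\times\mathcal{N})$, and $(\{x\}\times\mathcal{N})\cap F\neq\emptyset$ precisely when $x\in A$. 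Composing $x\mapsto(\{x\}\times\mathcal{N},F)$ with $I_2$ and then with the Borel predicate ``$\neq\emptyset$'' would exhibit $A$ as Borel, a contradiction; therefore $I_2$ is not Borel.

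\medskip\noindent The main obstacle is the non-$\sigma$-compact direction: the reduction to $\mathcal{N}$ via Hurewicz's theorem, together with the point that although ``$K_1\cap K_2\neq\emptyset$'' is always merely analytic (it is the projection of the Borel relation $\{(x,K_1,K_2):x\in K_1\wedge x\in K_2\}$), one can nevertheless Borel-parametrize pairs $(K_1,K_2)$ whose intersection computes a $\Sigma^1_1$-complete set. The $\sigma$-compact direction is routine but requires care in the compactness/diameter argument needed to express the intersection condition purely through the Effros generators.
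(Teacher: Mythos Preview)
The paper does not give its own proof of Theorem~\ref{thm: Christensen}; it is quoted as a known result of Christensen with a reference to \cite{Christensen71,Christensen74}, and then only the ``if'' direction is used, in the proof of Theorem~\ref{thm: taking ess spectrum is Borel}. So there is nothing in the paper to compare your argument against.

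That said, your proposal is correct and is essentially the standard proof. The $\sigma$-compact direction via the diameter-shrinking compactness characterization is the right idea; just make explicit in your displayed formula that the inner union is only over those $V\in\mathcal{V}$ satisfying $\operatorname{diam}(V)<1/n$ and $\overline{V}\cap D\neq\emptyset$ (as written, those side conditions on $V$ have silently dropped out). For the converse, the reduction to $\mathcal{N}$ via Hurewicz's theorem and the observation that Borelness of $I_2$ would make the projection of a closed $F\subseteq\mathcal{N}\times\mathcal{N}$ a Borel set (hence force every analytic set to be Borel) is exactly Christensen's argument. The only step worth spelling out in a final version is that $\{F\in\mathcal{F}(X):F\subseteq C\}$ is Borel and that the inclusion $\mathcal{F}(C)\hookrightarrow\mathcal{F}(X)$ is Borel, so that Lusin--Souslin applies; both are immediate from the Effros generators.
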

\section{Polish Space ${\rm{SA}}(H)$}\label{sec: SA(H) is Polish}
In this section, we show that the space ${\rm{SA}}(H)$ of all self-adjoint operators on $H$ equipped with strong resolvent topology (SRT) is a Polish space when $H$ is separable infinite-dimensional. This is probably known, but since we could not find a reference, we add a proof for completeness. 
Fix a CONS $\{\xi_n\}_{n=1}^{\infty}$ for $H$, and define a metric $d$ on ${\rm{SA}}(H)$ by 
\[d(A,B):=\sum_{n=1}^{\infty}\sum_{m=1}^{\infty}\frac{1}{2^{n+m}}\sup_{t\in [-m,m]}\|e^{itA}\xi_n-e^{itB}\xi_n\|,\ \ \ \ A,B\in {\rm{SA}}(H).\]
\begin{proposition}\label{prop: SA(H) is Polish}
$d$ is a complete metric on ${\rm{SA}}(H)$ compatible with SRT, and ${\rm{SA}}(H)$ is separable with respect to SRT. Consequently, ${\rm{SA}}(H)$ is a Polish space.
\end{proposition}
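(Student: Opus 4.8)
The plan is to verify separately the three claims: (i) $d$ is a metric; (ii) $d$ induces SRT; (iii) $(\mathrm{SA}(H),d)$ is separable and complete. The main tool throughout is the relation between the resolvent $(A-i)^{-1}$ and the unitary group $\{e^{itA}\}_{t\in\mathbb{R}}$ generated by a self-adjoint operator $A$ via the spectral theorem / Stone's theorem, together with the standard equivalence (see e.g. \cite[Theorem VIII.21]{ReedSimonI}) of the following forms of convergence $A_n\to A$ for $A_n,A\in\mathrm{SA}(H)$: (a) $(A_n-i)^{-1}\xi\to(A-i)^{-1}\xi$ for all $\xi$; (b) $(A_n-z)^{-1}\xi\to(A-z)^{-1}\xi$ for all $\xi$ and one (hence all) $z\in\mathbb{C}\setminus\mathbb{R}$; (c) $e^{itA_n}\xi\to e^{itA}\xi$ for all $\xi$, uniformly for $t$ in compact subsets of $\mathbb{R}$; (d) $f(A_n)\xi\to f(A)\xi$ for all $\xi$ and all bounded continuous $f$. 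That $d$ is well defined (the sup over $t\in[-m,m]$ is finite, indeed $\le 2$) and symmetric is immediate; the triangle inequality follows termwise from the triangle inequality for $\|\cdot\|$; and $d(A,B)=0$ forces $e^{itA}\xi_n=e^{itB}\xi_n$ for all $n$ and all $t$ in a dense set of reals, hence for all $t$ by strong continuity, hence (differentiating at $t=0$ on the dense domain, or passing to resolvents) $A=B$. This handles (i).

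For (ii), I would show that for a sequence $A_k\to A$, $d(A_k,A)\to 0$ if and only if $A_k\to A$ in SRT. Given $d(A_k,A)\to 0$, each term tends to $0$, so $e^{itA_k}\xi_n\to e^{itA}\xi_n$ uniformly on compacts in $t$ for each basis vector $\xi_n$, hence (by density of $\{\xi_n\}$ and uniform boundedness of the unitaries) for all $\xi\in H$; this is condition (c), which is equivalent to (a), i.e. SRT-convergence. Conversely, if $A_k\to A$ in SRT then (c) holds, so each $\sup_{t\in[-m,m]}\|e^{itA_k}\xi_n-e^{itA}\xi_n\|\to 0$; since these quantities are uniformly bounded by $2$, dominated convergence over the (summable) weights $2^{-(n+m)}$ gives $d(A_k,A)\to 0$. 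Because both $d$ and SRT are first countable (SRT is metrizable a priori only after this argument, but convergence of sequences determines the topology once we know SRT is, say, induced by the countable family of pseudometrics $A\mapsto\|(A-i)^{-1}\xi_n\|$-type seminorms on the bounded operators $(A-i)^{-1}$), sequential agreement suffices; I would phrase this cleanly by noting SRT is itself metrizable via $d'(A,B)=\sum_n 2^{-n}\|(A-i)^{-1}\xi_n-(B-i)^{-1}\xi_n\|$ and that $d,d'$ have the same convergent sequences, hence induce the same topology.

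For separability, I would exhibit a countable SRT-dense set: e.g. the diagonalizable operators $\sum_j a_j\nai{\eta_j}{\cdot}\eta_j$ with rational eigenvalues $a_j$ and $\{\eta_j\}$ ranging over a countable dense (in a suitable sense) family of orthonormal systems built from finite rational combinations of the fixed CONS $\{\xi_n\}$; density follows from the spectral theorem by approximating $E_A$ with finitely many spectral projections onto rational intervals and perturbing the eigenvectors, the convergence being checked against condition (d) with $f(\lambda)=(\lambda-i)^{-1}$. The genuinely substantive step is completeness: given a $d$-Cauchy sequence $\{A_k\}$, the bounded operators $u_t^{(k)}:=e^{itA_k}$ converge in SOT, uniformly for $t$ in compacts, to some $u_t\in\mathcal{U}(H)$; one checks $\{u_t\}_{t\in\mathbb{R}}$ is a strongly continuous one-parameter unitary group (group law and strong continuity pass to the limit using the uniform-on-compacts control built into $d$), so by Stone's theorem $u_t=e^{itA}$ for a unique $A\in\mathrm{SA}(H)$, and then $e^{itA_k}\to e^{itA}$ in the required sense gives $d(A_k,A)\to 0$. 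I expect this completeness argument — specifically verifying strong continuity of the limit group $t\mapsto u_t$ at $t=0$ from the Cauchy condition, and invoking Stone's theorem — to be the main obstacle, though it is routine given the definition of $d$ with its explicit uniform-on-$[-m,m]$ suprema. Combining (i)–(iii) yields that $(\mathrm{SA}(H),d)$ is a separable complete metric space inducing SRT, i.e. $\mathrm{SA}(H)$ is Polish. $\square$
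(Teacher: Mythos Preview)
Your proposal is correct and, for the two substantive parts---compatibility of $d$ with SRT via Trotter's lemma (your (a)$\Leftrightarrow$(c)), and completeness by showing the SOT-limits $u(t)$ of $e^{itA_k}$ form a strongly continuous one-parameter unitary group and invoking Stone's theorem---it follows the paper's proof essentially verbatim, including the identification of strong continuity of $t\mapsto u(t)$ as the point requiring care.

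The one place you diverge is separability. You propose constructing an explicit countable dense set of diagonalizable operators with rational data; this can be made to work but needs some bookkeeping (a countable family of CONSs, density of the resulting diagonal operators in SRT). The paper instead observes that the map $F\colon \mathrm{SA}(H)\to\prod_{n\in\mathbb{N}}H$, $F(A)=((A-i)^{-1}\xi_n)_n$, is a homeomorphism onto its image by the very definition of SRT, and since $\prod_n H$ is Polish any subspace is separable. This is quicker and sidesteps any explicit construction; your route has the minor advantage of producing a concrete dense set, but at the cost of the extra verification you allude to.
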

\begin{remark}
Note that the following apparently suitable compatible metric $d'$ given by 
\[d'(A,B):=\sum_{n=1}^{\infty}\frac{1}{2^n}\|(A-i)^{-1}\xi_n-(B-i)^{-1}\xi_n\|\]
is not complete. Indeed, $A_n=n1\in {\rm{SA}}(H)\ (n\in \mathbb{N})$ is $d'$-Cauchy which has no SRT-limit. 
\end{remark}
We need the following classical result. Proof can be found e.g. in \cite[Theorem VIII.21]{ReedSimonI}. 
\begin{lemma}[Trotter]\label{lem: Trotter lemma}
Let $\{A_k\}_{n=1}^{\infty}\subset {\rm{SA}}(H)$. Then $A_k$ converges to $A\in {\rm{SA}}(H)$ in SRT, if and only if for each $\xi\in H$ and each compact subset $K$ of $\mathbb{R}$, $\sup_{t\in K}\|e^{itA_k}\xi-e^{itA}\xi\|$ tends to 0.
\end{lemma}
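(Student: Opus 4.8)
The plan is to prove the two implications separately, using throughout the fact (noted after the statement) that $A_k\to A$ in SRT means exactly $(A_k-i)^{-1}\to(A-i)^{-1}$ in SOT. For the ``if'' direction, suppose $\sup_{t\in K}\|e^{itA_k}\xi-e^{itA}\xi\|\to 0$ for every $\xi\in H$ and every compact $K\subset\mathbb{R}$. Since $\tfrac{1}{\lambda-i}=i\int_0^\infty e^{-s}e^{-is\lambda}\,ds$ for all $\lambda\in\mathbb{R}$, the spectral theorem together with Stone's theorem (giving strong continuity of $s\mapsto e^{-isB}\xi$) yields the Bochner-integral representation $(B-i)^{-1}\xi=i\int_0^\infty e^{-s}e^{-isB}\xi\,ds$ for every $B\in{\rm{SA}}(H)$, $\xi\in H$. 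Hence
\[\|(A_k-i)^{-1}\xi-(A-i)^{-1}\xi\|\le\int_0^T e^{-s}\,\|e^{-isA_k}\xi-e^{-isA}\xi\|\,ds+2e^{-T}\|\xi\|,\]
and letting $T\to\infty$ and then $k\to\infty$ (applying the hypothesis with $K=[-T,T]$) gives $(A_k-i)^{-1}\xi\to(A-i)^{-1}\xi$, i.e.\ SRT convergence.

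For the ``only if'' direction, assume $(A_k-i)^{-1}\to(A-i)^{-1}$ in SOT and let $\mathcal{C}$ be the set of bounded Borel $f\colon\mathbb{R}\to\mathbb{C}$ with $f(A_k)\to f(A)$ in SOT. Using the uniform bound $\|f(A_k)\|\le\|f\|_\infty$, one checks routinely that $\mathcal{C}$ is a subalgebra containing the constants and closed under uniform limits. It is also closed under complex conjugation: each $f(A_k)$ is normal, and a uniformly bounded SOT-convergent sequence of normal operators $g_k\to g$ satisfies $g_k^{*}\to g^{*}$ in SOT, because $g_k^{*}\xi\to g^{*}\xi$ weakly and $\|g_k^{*}\xi\|=\|g_k\xi\|\to\|g\xi\|=\|g^{*}\xi\|$. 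Since $\lambda\mapsto(\lambda-i)^{-1}$ belongs to $\mathcal{C}$ by hypothesis, is injective on $\mathbb{R}$, and has no zero, the locally compact Stone--Weierstrass theorem shows that the sup-norm closure of the $*$-algebra it generates is all of $C_0(\mathbb{R})$; as $\mathcal{C}$ is a sup-norm-closed $*$-subalgebra, we get $C_0(\mathbb{R})\subset\mathcal{C}$, in particular $f(A_k)\to f(A)$ in SOT for every $f\in C_c(\mathbb{R})$.

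It remains to upgrade this to $e_t(\lambda):=e^{it\lambda}$ with uniformity over $t\in[-m,m]$. Fix $\xi\in H$ and $\varepsilon>0$; by the spectral theorem pick $g\in C_c(\mathbb{R})$ with $0\le g\le1$ and $\|\xi-g(A)\xi\|<\varepsilon$, so that $\|\xi-g(A_k)\xi\|<2\varepsilon$ for $k$ large. Writing $h_t:=e_tg\in C_c(\mathbb{R})$ and using $\|e^{itA_k}\|=1$ together with $e^{itA_k}\xi=h_t(A_k)\xi+e^{itA_k}(\xi-g(A_k)\xi)$ (and likewise for $A$), we obtain
\[\|e^{itA_k}\xi-e^{itA}\xi\|\le\|h_t(A_k)\xi-h_t(A)\xi\|+\|\xi-g(A_k)\xi\|+\|\xi-g(A)\xi\|.\]
The family $\{h_t:|t|\le m\}$ is uniformly bounded, supported in the fixed compact set $\mathrm{supp}(g)$, and equicontinuous (from $|h_t(\lambda)-h_t(\mu)|\le|g(\lambda)-g(\mu)|+m|\lambda-\mu|$), hence relatively compact in $(C_0(\mathbb{R}),\|\cdot\|_\infty)$ by Arzel\`{a}--Ascoli; choosing a finite $\varepsilon$-net $h_{t_1},\dots,h_{t_p}$ and then $k$ large enough that $\max_j\|h_{t_j}(A_k)\xi-h_{t_j}(A)\xi\|<\varepsilon$ bounds the first term by $\varepsilon(1+2\|\xi\|)$ uniformly in $|t|\le m$, and combining the estimates gives $\sup_{|t|\le m}\|e^{itA_k}\xi-e^{itA}\xi\|=O(\varepsilon)$ for $k$ large. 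The main obstacle is precisely this last step: resolvent (hence functional-calculus) convergence only controls functions vanishing at infinity, whereas $e^{it\lambda}$ oscillates there, so one must both cut off in the spectral variable — noting that the tail error $\|\xi-g(A_k)\xi\|$ is $t$-independent because the unitaries have norm one — and invoke Arzel\`{a}--Ascoli compactness of $\{h_t\}$ to turn the pointwise-in-$t$ convergence from Stone--Weierstrass into uniform-in-$t$ convergence.
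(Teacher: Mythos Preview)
The paper does not actually prove this lemma; it simply cites \cite[Theorem~VIII.21]{ReedSimonI}. Your argument, by contrast, is a complete and correct self-contained proof.

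A few minor remarks. In the ``if'' direction, the order of limits should be reversed: for fixed $T$ you have
\[
\limsup_{k\to\infty}\|(A_k-i)^{-1}\xi-(A-i)^{-1}\xi\|\le 2e^{-T}\|\xi\|,
\]
since the integral over $[0,T]$ tends to $0$ by the hypothesis applied with $K=[-T,0]$; then let $T\to\infty$. In the ``only if'' direction, your Stone--Weierstrass step is precisely the content of \cite[Theorem~VIII.20(b)]{ReedSimonI} (SRT convergence implies $f(A_k)\to f(A)$ strongly for $f\in C_0(\mathbb{R})$), which the paper itself invokes later in the proof of Proposition~\ref{prop: essential spec contains lamda is dnese G_delta}; you have effectively reproved it. The final step---spectral cutoff by $g\in C_c(\mathbb{R})$ to make the tail error $t$-independent, followed by Arzel\`a--Ascoli on $\{e_t g:|t|\le m\}$ to pass from pointwise-in-$t$ to uniform-in-$t$ convergence---is the natural way to finish and is essentially how Reed--Simon's argument is organized as well.
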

\begin{proof}[Proof of Proposition \ref{prop: SA(H) is Polish}]
We first show that ${\rm{SA}}(H)$ is separable. Let $F:{\rm{SA}}(H)\to \prod_{n\in \mathbb{N}}H$ be a map defined by $F(A):=((A-i)^{-1}\xi_n)_{n=1}^{\infty}$. 
Then $F$ is injective. Indeed, if $F(A_1)=F(A_2)$ for $A_1,A_2\in {\rm{SA}}(H)$, then as resolvents are bounded, $(A_1-i)^{-1}=(A_2-i)^{-1}$, which implies $A_1=A_2$. 
It is also easy to see that for a net $\{A_{\alpha}\}$ and $A$ in ${\rm{SA}}(H)\ (k\in \mathbb{N})$,
\eqa{
A_{\alpha}\stackrel{{\rm{SRT}}}{\to} A&\Leftrightarrow (A_{\alpha}-i)^{-1}\xi \to (A-i)^{-1}\xi,\ \ \xi\in H\\
&\Leftrightarrow (A_{\alpha}-i)^{-1}\xi_n \to (A-i)^{-1}\xi_n,\ \ n\in \mathbb{N}.
}
Hence $F$ is a homeomorphism of ${\rm{SA}}(H)$ onto its range. Therefore as $\prod_{n\in \mathbb{N}}H$ is Polish, its subspace $F({\rm{SA}}(H))$ is separable and metrizable, whence so is ${\rm{SA}}(H)$. 
Next, we show that $d$ is a metric compatible with SRT.\\
It is easy to see that $d$ is a metric, and note that by Lemma \ref{lem: Trotter lemma} we have the following equivalence (we set $I_m=[-m,m]$):
\eqa{
d(A_k,A)\stackrel{k\to \infty}{\to}0&\Leftrightarrow \sup_{t\in I_m}\|(e^{itA_k}-e^{itA})\xi_n\|\stackrel{k\to \infty}{\to}0,\ \ \ \ n,m\in \mathbb{N}\\
&\Leftrightarrow \sup_{t\in I_m}\|(e^{itA_k}-e^{itA})\xi\|\stackrel{k\to \infty}{\to}0,\ \ \ \ \xi\in H, m\in \mathbb{N}\\
&\Leftrightarrow A_k\stackrel{\rm{SRT}}{\to}A.
}
Therefore $d$ is compatible with SRT. Finally, we show that $d$ is complete. Suppose that $\{A_k\}_{k=1}^{\infty}$ is a $d$-Cauchy sequence in ${\rm{SA}}(H)$. Then for each $n,m\in \mathbb{N}$, we have 
\begin{equation}
\sup_{t\in I_m}\|(e^{itA_k}-e^{itA_l})\xi_n\|\stackrel{k,l\to \infty}{\to}0.\label{eq: d-Cauchy e_n}
\end{equation}
Now fix $t\in \mathbb{R}$ and let $\xi\in H$. We show that $\{e^{itA_k}\xi\}_{k=1}^{\infty}$ is Cauchy in $H$. Given $\varepsilon>0$, find $\xi_0\in \text{span}\{\xi_n;n\ge 1\}$ such that $\|\xi-\xi_0\|<\varepsilon/4$. By (\ref{eq: d-Cauchy e_n}), we see that $\{e^{itA_k}\xi_0\}_{k=1}^{\infty}$ is Cauchy in $H$. Therefore there exists $k_0$ such that $\|e^{itA_k}\xi_0-e^{itA_l}\xi_0\|<\varepsilon/2$ for all $k,l\ge k_0$. Then for $k,l\ge k_0$, 
\eqa{
\|e^{itA_k}\xi-e^{itA_l}\xi\|&\le \|(e^{itA_k}-e^{itA_l})(\xi-\xi_0)\|+\|(e^{itA_k}-e^{itA_l})\xi_0\|\\
&\le 2\|\xi-\xi_0\|+\varepsilon/2<\varepsilon.
}
Therefore $\{e^{itA_k}\xi\}_{k=1}^{\infty}$ is Cauchy, and let $\varphi(t,\xi)\in H$ be its limit. It is easy to see that for a fixed $t\in \mathbb{R}$, $\xi\mapsto \varphi(t,\xi)$ is linear. Moreover, $\|\varphi(t,\xi)\|=\|\xi\|$ for each $t\in \mathbb{R}$, $\xi\in H$. Therefore for each $t\in \mathbb{R}$, there exists an isometry $u(t)\in \mathbb{B}(H)$ such that $\varphi(t,\xi)=u(t)\xi\ (\xi\in H)$. It is clear that $u(0)=1$. Moreover, for $s,t\in \mathbb{R}$ and $\xi\in H$, it holds that
\eqa{
\| u(s)u(t)\xi-u(s+t)\xi \| &=\lim_{k\to \infty}\| e^{isA_k}u(t)\xi-e^{i(s+t)A_k}\xi \|\\
&=\lim_{k\to \infty}\|u(t)\xi-e^{itA_k}\xi\|\\
&=\| u(t)\xi-u(t)\xi \|=0,
}
which implies that $\{u(t)\}_{t\in \mathbb{R}}$ is a one-parameter unitary group. 
We show that $t\mapsto u(t)$ is strongly continuous. 
Since it is a one-parameter unitary group (hence uniformly bounded), it suffices to show that $t\mapsto u(t)\xi_n$ is continuous at $t=0$ for each $n\in \mathbb{N}$. So let $\varepsilon>0$ and $n\in \mathbb{N}$ be given. By (\ref{eq: d-Cauchy e_n}), there exists $k_0\in \mathbb{N}$ such that for each $t\in [-1,1]$ and $k,l\ge k_0$,
\begin{equation}
\|(e^{itA_k}-e^{itA_l})\xi_n\|<\frac{\varepsilon}{2}.\label{eq: Cauchy e_n}
\end{equation}
Letting $l\to \infty$ in (\ref{eq: Cauchy e_n}), we obtain that $\|(e^{itA_k}-u(t))\xi_n\|\le \frac{\varepsilon}{2}$ for each $t\in [-1,1]$ and $k\ge k_0$. 
On the other hand, there exists $(1>)\delta>0$ such that 
$\|e^{itA_{k_0}}\xi_n-\xi_n\|<\frac{\varepsilon}{2}$ for $|t|<\delta$. Therefore for $|t|<\delta$, we obtain
\eqa{
\|u(t)\xi_n-\xi_n\|&\le \|u(t)\xi_n-e^{itA_{k_0}}\xi_n\|+\|e^{itA_{k_0}}\xi_n-\xi_n\|\\
&<\varepsilon.
}
Therefore $t\mapsto u(t)$ is strongly continuous, and by Stone Theorem \cite[Theorem VIII.8]{ReedSimonI} let $A\in {\rm{SA}}(H)$ be such that $u(t)=e^{itA}\ (t\in \mathbb{R})$ holds. We show that $d(A_k,A)\stackrel{k\to \infty}{\to}0$. To this purpose it suffices to show that $\sup_{t\in I_m}\|(e^{itA_k}-e^{itA})\xi_n\|$ tends to 0 for each $n,m\in \mathbb{N}$. Let $\varepsilon>0$. Then there exists $k_0\in \mathbb{N}$ such that for all $k,l\ge k_0$, we have 
\[\sup_{t\in I_m}\|(e^{itA_k}-e^{itA_l})\xi_n\|<\frac{\varepsilon}{2}.\] 
Now for each $t\in I_m$, choose $l=l(t)\ge k_0$ such that $\|(e^{itA}-e^{itA_l})\xi_n\|<\varepsilon/2$. It then follows that for all $k\ge k_0$, 
\eqa{
\sup_{t\in I_m}\|(e^{itA}-e^{itA_k})\xi_n\|&\le \sup_{t\in I_m}\|(e^{itA}-e^{itA_{l(t)}})\xi_n\|+
\sup_{t\in I_m}\|(e^{itA_{l(t)}}-e^{itA_k})\xi_n\|<\varepsilon,
}
whence the claim is proved. Therefore $d$ is complete. 
 \end{proof}
\section{Weyl-von Neumann Equivalence Relation $E_G^{{\rm{SA}}(H)}$}
\subsection{Impossibility of von Neumann's Theorem for Unbounded Self-Adjoint Operators}
Let $H$ be a separable infinite-dimensional Hilbert space. 
von Neumann's Theorem ((1)$\Rightarrow $(2) of Theorem \ref{thm: von Neumann theorem}) asserts that bounded self-adjoint operators $A,B\in \mathbb{B}(H)_{\rm{sa}}$ with the same essential spectra  $\sigma_{\rm{ess}}(A)=\sigma_{\rm{ess}}(B)$ are unitarily equivalent modulo compacts, i.e., $B=uAu^*+K$ for some $u\in \mathcal{U}(H)$ and $K\in \mathbb{K}(H)$. 
In this section we consider the situation for unbounded self-adjoint operators (note that Weyl's Theorem (2)$\Rightarrow $(1) of Theorem \ref{thm: von Neumann theorem} holds in full generality):
\begin{question}\label{quest: weyl-von Neumann equivalence}
Let $A,B\in {\rm{SA}}(H)$ be such that $\sigma_{\rm{ess}}(A)=\sigma_{\rm{ess}}(B)$. 
Are there $u\in \mathcal{U}(H)$ and $K\in \mathbb{K}(H)_{\rm{sa}}$ such that $uAu^*+K=B$?
\end{question}
The answer to the question is negative, as the following simple example shows:
\begin{example}\label{ex: ess spec=0 counterexample}
Let $H_0$ be a separable infinite-dimensional Hilbert space, and let $H=H_0\oplus H_0$. 
 Fix a CONS $\{\xi_n\}_{n=1}^{\infty}$  for $H_0$, and let $A_0:=\sum_{n=1}^{\infty}n\nai{\xi_n}{\ \cdot\ }\xi_n\in {\rm{SA}}(H_0)$, and define 
$A,B\in {\rm{SA}}(H)$ by
\[A:=A_0\oplus 0,\ \ B:=0\oplus 0.\]
Then $\sigma_{\rm{ess}}(A)=\sigma_{\rm{ess}}(B)=\{0\}$, and since $A$ is unbounded, so is $uAu^*+K$ for any $u\in \mathcal{U}(H)$ and $K\in {\rm{SA}}(H)$.  Thus $uAu^*+K\neq B$. 
\end{example}
It is now clear why von Neumann's Theorem fails to hold for unbounded self-adjoint operators: if $A,B$ are unitarily equivalent modulo compacts, then their domains $\dom{A}$ and $\dom{B}$ must be unitarily equivalent, i.e., $u\cdot \dom{A}=\dom{B}$ for some $u\in \mathcal{U}(H)$. In fact there are a lot of unbounded self-adjoint operators with the same essential spectra but have non-unitarily equivalent domains. We give one such example: 
\begin{example}\label{ex: non-unitarily equivalent but have same ess spec}
Let $\{\xi_n\}_{n=0}^{\infty}$ be a fixed CONS for $H$. Let $e_n$ be the projection of $H$ onto $\mathbb{C}\xi_n$. 
Define $\{A_t\}_{t\in (0,1)}\subset {\rm{SA}}(H)$ by 
\[A_t=\sum_{n=1}^{\infty}2^{(n^t)}e_n\ \ \ \ (0<t<1).\]
We show that $\{A_t\}_{t\in (0,1)}$ is a family of self-adjoint operators with $\sigma_{\rm{ess}}(A_t)=\emptyset\ (0<t<1)$ such that $\dom{A_t}$ and $\dom{A_s}$ are not unitarily equivalent for $0<t\neq s<1$. The first assertion is clear, since $2^{n^t}\stackrel{n\to \infty}{\to}\infty$. Fro each $0<t<1$, the domain of $A_t$ is $\dom{A_t}=\text{Ran}(A_t^{-1})$, where $A_t^{-1}=\sum_{n=1}^{\infty}2^{-n^t}e_n$. Therefore the associated subspaces for $A_t^{-1}$ are
\eqa{
H_n^{(t)}&:=E_{A_t^{-1}}((2^{-n-1},2^{-n}])H\\
&=\overline{\text{span}}\{\xi_k;2^{-n-1}<2^{-k^t}\le 2^{-n}\}\\
&=\overline{\text{span}}\{\xi_k; n^{\frac{1}{t}}\le k<(n+1)^{\frac{1}{t}}\}.
}
Let $0<t<s<1$. Then 
\eqa{
\dim(H_n^{(t)})&\ge [(n+1)^{\frac{1}{t}}]-([n^{\frac{1}{t}}]+1)\\
&\ge (n+1)^{\frac{1}{t}}-n^{\frac{1}{t}}-2,\\
\dim(H_n^{(s)})&\le (n+1)^{\frac{1}{s}}-n^{\frac{1}{s}}.
}
Therefore for given $k,l\in \mathbb{N}$, and $n>k$, it holds that 
\eqa{
\dim(H_{n-k}^{(s)}\oplus \cdots H_{n+l+k}^{(s)})&\le \sum_{m=n-k}^{n+l+k}\{(m+1)^{\frac{1}{s}}-m^{\frac{1}{s}}\}\\
&=(n+l+k)^{\frac{1}{s}}-(n-k)^{\frac{1}{s}}\\
&\stackrel{(*)}{\le} (l+2k)s^{-1}(n+l+k)^{s^{-1}-1},\\
\dim(H_n^{(t)}\oplus \cdots \oplus H_{n+l}^{(t)})&\ge \sum_{m=n}^{n+l}\{(m+1)^{\frac{1}{t}}-m^{\frac{1}{t}}-2\}\\
&=(n+l)^{\frac{1}{t}}-n^{\frac{1}{t}}-2(l+1)\\
&\stackrel{(*)}{\ge} lt^{-1}n^{t^{-1}-1}-2(l+1),
}
where we used the mean value Theorem in $(*)$. 
Since $t^{-1}>s^{-1}>1$, it holds that
\[\lim_{n\to \infty}\frac{lt^{-1}n^{t^{-1}-1}-2(l+1)}{(l+2k)s^{-1}(n+l+k)^{s^{-1}-1}}=\infty,\]
which in particular shows that $\dim(H_n^{(t)}\oplus \cdots \oplus H_{n+l}^{(t)})>\dim(H_{n-k}^{(s)}\oplus \cdots H_{n+l+k}^{(s)})$ for large $n$. Since $k,l$ are arbitrary, $\dom{A_t}$ and $\dom{A_s}$ are not unitarily equivalent by Theorem \ref{thm: unitary equivalence of operator ranges}. 
\end{example} 
We next show that unitary equivalence of the domains is still insufficient. Namely we construct 
another continuous family $\{B_t\}_{t\in [0,1]}$ in ${\rm{SA}}(H)$ with the same domain and the essential spectra, yet no two of them are unitarily equivalent modulo compacts. 

\begin{example}\label{ex: unitarily quivalent domains but not uAu*+K=B}
Let $\{\xi_n\}_{n=1}^{\infty}$ and $\{e_n\}_{n=1}^{\infty}$ be as in Example \ref{ex: non-unitarily equivalent but have same ess spec}. Fix a bijection $\nai{\cdot}{\cdot}\colon \mathbb{N}^2\to \mathbb{N}$ given by 
\[\nai{k}{m}:=2^{k-1}(2m-1),\ \ \ \ \ m,k\in \mathbb{N}.\]
and define a family $\{B_t\}_{t\in [0,1]}\subset {\rm{SA}}(H)$ by
\[B_t:=\sum_{n=1}^{\infty}\lambda_n^{(t)}e_n,\ \ \ \lambda_{\nai{k}{m}}^{(t)}:=k+\frac{t}{m+2},\ \ \ \ \ \ t\in [0,1],\ k,m\in \mathbb{N}.\]
It is easy to see that $\dom{B_s}=\dom{B_t}$, and $\sigma_{\rm{ess}}(B_s)=\sigma_{\rm{ess}}(B_t)=\mathbb{N}\ (s,t\in [0,1])$.  
Let $0\le s<t\le 1$. We then show that there are no $u\in \mathcal{U}(H)$ and $K\in \mathbb{K}(H)_{\rm{sa}}$ satisfying $uB_tu^*+K=B_s$. 
Suppose by contradiction that there exist such $u$ and $K$, and put  $\eta_n:=u\xi_n\ (n\in \mathbb{N})$. Then $f_n:=ue_nu^*$ is a projection onto $\mathbb{C}\eta_n$, and
\[\sum_{k,m=1}^{\infty}\left (k+\frac{t}{m+2}\right )f_{\nai{k}{m}}+K=\sum_{k,m=1}^{\infty}\left (k+\frac{s}{m+2}\right )e_{\nai{k}{m}}.\]
Apply the above equality to the vector $\eta_{\nai{k}{1}}\ (k\in \mathbb{N})$ to obtain
\[\left (k+\frac{t}{3}\right )\eta_{\nai{k}{1}}+K\eta_{\nai{k}{1}}=\sum_{l,m=1}^{\infty}\left (l+\frac{s}{m+2}\right )e_{\nai{l}{m}}\eta_{\nai{k}{1}}.\]
Since $K$ is compact and $\nai{k}{1}=2^{k-1}$, $\eta_{\nai{k}{1}}\stackrel{k\to \infty}{\to} 0$ weakly, we have $\|K\eta_{\nai{k}{1}}\|\stackrel{k\to \infty}{\to}0$. 
For $k,l,m\in \mathbb{N}$, let $a(k,l,m):=|k-l+\frac{t}{3}-\frac{s}{m+2}|$. Then $a(k,l,m)\ge |k-l|-|\frac{t}{3}-\frac{s}{m+2}|\ge 1-\frac{2}{3}=\frac{1}{3}$ if $k\neq l$, while $a(k,k,m)=\frac{t}{3}-\frac{s}{m+2}\ge \frac{t-s}{3}$. Therefore 
\[\textstyle a(k,l,m)\ge \delta(t,s):=\frac{1}{3}(t-s)>0,\ \ \ (k,l,m\in \mathbb{N}).\] 
From this we have
\eqa{
\|K\eta_{\nai{k}{1}}\|^2&=\left \|\left (k+\frac{t}{3}\right )\eta_{\nai{k}{1}}-\sum_{l,m=1}^{\infty}\left (l+\frac{s}{m+2}\right )e_{\nai{l}{m}}\eta_{\nai{k}{1}}\right \|^2\\
&=\sum_{l,m=1}^{\infty}\left |k-l+\frac{t}{3}-\frac{s}{m+2}\right |^2\|e_{\nai{l}{m}}\eta_{\nai{k}{1}}\|^2\\
&\ge \delta(t,s)^2\sum_{l,m=1}^{\infty}\|e_{\nai{l}{m}}\eta_{\nai{k}{1}}\|^2=\delta(t,s)^2.
}
This is a contradiction to $\lim_{k\to \infty}\|K\eta_{\nai{k}{1}}\|=0$. 
\end{example} 
Taking all the above examples into account, it seems unlikely that there exists a complete invariant for the von Neumann type classification problem for ${\rm{SA}}(H)$, such that the assignment of the invariant is constructible in some sense.  
\subsection{Orbit Equivalence Relation $E_G^{{\rm{SA}}(H)}$}
To consider the complexity of the classification problem of self-adjoint operators up to unitary equivalence modulo compact perturbations we use ${\rm{SA}}(H)$ as parameter Polish space (see $\S$\ref{sec: SA(H) is Polish}) and regard the equivalence as orbit equivalence of a Polish group: 
\begin{definition}
(1)\ We define the Polish group $G$ to be the semidirect product $\mathbb{K}(H)_{\rm{sa}}\rtimes \mathcal{U}(H)$, where $\mathbb{K}(H)_{\rm{sa}}$ is the additive Polish group of compact self-adjoint operators with the norm topology, and we equip $\mathcal{U}(H)$ with SOT. The action of $\mathcal{U}(H)$ on $\mathbb{K}(H)_{\rm{sa}}$ is given by conjugation: 
\[u\cdot K:=uKu^*,\ \ \ \ \ \ u\in \mathcal{U}(H),\ K\in \mathbb{K}(H)_{\rm{sa}}.\]
Thus $(0,1)$ is the identity of $G$ and the group law on $G$ is given by 
\[(K_1,u_1)\cdot (K_2,u_2)=(K_1+u_1K_2u_1^*,u_1u_2),\ \ \ \ u_i\in \mathcal{U}(H),\ K_i\in \mathbb{K}(H)_{\rm{sa}}\ (i=1,2).\]
(2) We define the action $\alpha\colon G\times {\rm{SA}}(H)\to {\rm{SA}}(H)$ by
\[(K,u)\cdot A:=uAu^*+K,\ \ \ \ \,\ A\in {\rm{SA}}(H),\ u\in \mathcal{U}(H),\ K\in \mathbb{K}(H)_{\rm{sa}}.\]
\end{definition} 
It is easy to see that $\alpha$ is indeed an action. Therefore the classification problem in consideration is nothing but the study of the Borel complexity of the orbit equivalence relation $E_G^{{\rm{SA}}(H)}$.  
\begin{definition}
We call $E_G^{{\rm{SA}}(H)}$ the {\it Weyl-von Neumann equivalence relation}. 
\end{definition}
Next we show that ${\rm{SA}}(H)$ is a Polish $G$-space. 
\begin{proposition}\label{prop: G acts on SA(H) continuously}
The action $\alpha\colon G\curvearrowright {\rm{SA}}(H)$ is continuous. 
\end{proposition}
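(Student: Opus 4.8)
The plan is to prove continuity of $\alpha\colon G\times {\rm{SA}}(H)\to {\rm{SA}}(H)$ by decomposing the action into its two constituent moves—conjugation by a unitary and addition of a compact self-adjoint operator—and verifying sequential continuity of each, using the characterization of SRT-convergence in terms of resolvents (or, via Lemma \ref{lem: Trotter lemma}, in terms of one-parameter unitary groups). Since $G$ and ${\rm{SA}}(H)$ are both metrizable (Proposition \ref{prop: SA(H) is Polish}), it suffices to show: if $(K_n,u_n)\to (K,u)$ in $G$ and $A_n\to A$ in SRT, then $u_nA_nu_n^*+K_n\to uAu^*+K$ in SRT. First I would reduce to resolvents: recall $A_n\xrightarrow{{\rm{SRT}}}A$ iff $(A_n-i)^{-1}\xrightarrow{{\rm{SOT}}}(A-i)^{-1}$, and note the elementary identity $(u_nA_nu_n^*-i)^{-1}=u_n(A_n-i)^{-1}u_n^*$. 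So the conjugation step asks whether $u_n(A_n-i)^{-1}u_n^*\xrightarrow{{\rm{SOT}}}u(A-i)^{-1}u^*$ when $u_n\xrightarrow{{\rm{SOT}}}u$ and $(A_n-i)^{-1}\xrightarrow{{\rm{SOT}}}(A-i)^{-1}$; this is the standard fact that multiplication $\mathcal{U}(H)\times(\text{ball of }\mathbb{B}(H))\times\mathcal{U}(H)\to\mathbb{B}(H)$ is jointly SOT-continuous on norm-bounded sets (the resolvents are uniformly bounded by $1$), together with SOT-continuity of $x\mapsto x^*$ on $\mathcal{U}(H)$, which holds because for unitaries $u_n^*\xi-u^*\xi = u_n^*(u-u_n)u^*\xi\to 0$.

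For the additive step I would handle the passage from $u_nA_nu_n^*$ to $u_nA_nu_n^*+K_n$. Write $B_n:=u_nA_nu_n^*$ and $B:=uAu^*$, so by the previous step $B_n\xrightarrow{{\rm{SRT}}}B$, and we must show $B_n+K_n\xrightarrow{{\rm{SRT}}}B+K$ when $K_n\to K$ in operator norm. Here the resolvent identity is the key tool:
\begin{align*}
(B_n+K_n-i)^{-1}-(B+K-i)^{-1}
&=(B_n+K_n-i)^{-1}\bigl[(B+K)-(B_n+K_n)\bigr](B+K-i)^{-1}\\
&=(B_n+K_n-i)^{-1}(K-K_n)(B+K-i)^{-1}\\
&\quad+(B_n+K_n-i)^{-1}(B-B_n)(B+K-i)^{-1}.
\end{align*}
The first term is bounded in norm by $\|K-K_n\|\to 0$. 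For the second term I would first replace $(B-B_n)(B+K-i)^{-1}$, applied to a fixed vector $\xi$, by a convergent-to-zero sequence: using the second resolvent identity $(B_n-i)^{-1}-(B-i)^{-1}=(B_n-i)^{-1}(B-B_n)(B-i)^{-1}$ together with the fact that $(B+K-i)^{-1}H\subset\dom{B}=\dom{(B+K)}$ and $B_n\xrightarrow{{\rm{SRT}}}B$, one shows $(B-B_n)(B+K-i)^{-1}\xi\to 0$ after multiplying by the uniformly bounded factor $(B_n+K_n-i)^{-1}$; alternatively, and more cleanly, I would expand $(B_n+K_n-i)^{-1}$ via another resolvent identity so that everything is expressed through the resolvents $(B_n-i)^{-1}$ and the norm-convergent compacts, after which SOT-convergence of each factor and uniform boundedness give the conclusion by the usual "$\varepsilon/3$" juggling of one SOT-convergent and several norm-bounded factors.

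The main obstacle I anticipate is the second term in the additive step: unlike the first term it does not converge in norm, only strongly, and one must be careful that the "middle" difference $B-B_n$ is unbounded, so it can only be controlled when sandwiched against a resolvent that maps into the common domain $\dom{B}=\dom{B+K}$ (this is exactly where boundedness of $K$ is essential—it guarantees $\dom{B+K}=\dom{B}$, so the sandwiched operator is well-defined and the resolvent identity applies). Once this is organized so that every appearance of an unbounded operator is flanked by resolvents, the argument becomes a routine manipulation: all resolvents in sight have norm $\le 1$, the relevant factors converge strongly, the compact perturbations converge in norm, and joint SOT-continuity of bounded multiplication on bounded sets finishes the proof. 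A cosmetic alternative, if the resolvent bookkeeping gets heavy, is to invoke Trotter's Lemma \ref{lem: Trotter lemma} and instead prove $e^{it(B_n+K_n)}\xi\to e^{it(B+K)}\xi$ uniformly on compact $t$-intervals via the Duhamel/integral formula $e^{it(B_n+K_n)}-e^{itB_n}=i\int_0^t e^{i(t-s)(B_n+K_n)}K_n e^{isB_n}\,ds$ and dominated convergence, but the resolvent approach is shorter and self-contained given what has already been established.
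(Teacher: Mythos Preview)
Your treatment of the conjugation step is fine and matches the paper exactly: $(u_nA_nu_n^*-i)^{-1}=u_n(A_n-i)^{-1}u_n^*$, together with joint SOT-continuity of multiplication on norm-bounded sets and $u_n^*\to u^*$ strongly, gives $u_nA_nu_n^*\xrightarrow{{\rm{SRT}}}uAu^*$.

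The additive step, however, has a genuine gap. The resolvent identity you write,
\[
(B_n+K_n-i)^{-1}-(B+K-i)^{-1}=(B_n+K_n-i)^{-1}\bigl[(B+K)-(B_n+K_n)\bigr](B+K-i)^{-1},
\]
is \emph{not valid}: for the right-hand side to make sense one needs $(B+K-i)^{-1}H\subset \dom{B_n+K_n}=\dom{B_n}$, but $\dom{B_n}=u_n\cdot\dom{A_n}$ need not equal $\dom{B}=u\cdot\dom{A}$ (indeed, generically their intersection is $\{0\}$). You correctly note that boundedness of $K$ forces $\dom{B+K}=\dom{B}$, but that is not the obstruction; the obstruction is that $\dom{B_n}$ and $\dom{B}$ differ, so the unbounded ``middle'' $B-B_n$ is simply undefined on the range of $(B+K-i)^{-1}$. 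The same objection applies to the auxiliary identity $(B_n-i)^{-1}-(B-i)^{-1}=(B_n-i)^{-1}(B-B_n)(B-i)^{-1}$ you invoke, and to the Duhamel alternative, which likewise requires applying $B_n-B$ to vectors in $\dom{B}$.

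The paper circumvents this by never writing $B-B_n$. It first splits
\[
(B_n+K_n-i)^{-1}-(B+K-i)^{-1}=\bigl[(B_n+K_n-i)^{-1}-(B_n+K-i)^{-1}\bigr]+\bigl[(B_n+K-i)^{-1}-(B+K-i)^{-1}\bigr];
\]
the first bracket is handled by the (now legitimate) resolvent identity on the common domain $\dom{B_n}$ and is bounded by $\|K_n-K\|$. The second bracket is exactly the statement ``$B_n\xrightarrow{{\rm{SRT}}}B\Rightarrow B_n+K\xrightarrow{{\rm{SRT}}}B+K$'' for a \emph{fixed} bounded $K$, and this is the nontrivial Lemma~\ref{lem: Arai's lemma} (Arai): one passes to $z$ with $|\mathrm{Im}\,z|>\|K\|$ and expands $(B_n+K-z)^{-1}=\sum_{k\ge 0}(-1)^k(B_n-z)^{-1}(K(B_n-z)^{-1})^k$ (and similarly for $B$) as a norm-convergent Neumann series. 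Each term involves only $(B_n-z)^{-1}$, $(B-z)^{-1}$, and the bounded operator $K$, so SOT-convergence is immediate term by term, and dominated convergence in $k$ finishes it. Your ``expand $(B_n+K_n-i)^{-1}$ via another resolvent identity'' gestures toward this, but as stated the core step of your argument relies on an identity that does not hold for unbounded operators with unrelated domains.
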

We first show the continuity of the $\mathbb{B}(H)_{\rm{sa}}$-action, where we equip the additive group $\mathbb{B}(H)_{\rm{sa}}$ with the norm topology.  
\begin{proposition}\label{prop: B(H)_sa acts on SA(H) continuously}
The action $\alpha_0\colon \mathbb{B}(H)_{\rm{sa}}\curvearrowright {\rm{SA}}(H)$ given by $(K,A)\mapsto A+K$ is continuous. 
\end{proposition}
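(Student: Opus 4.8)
The plan is to prove \emph{sequential} continuity, which suffices here since $\mathbb{B}(H)_{\rm{sa}}$ (norm topology) and ${\rm{SA}}(H)$ (SRT, metrizable by Proposition \ref{prop: SA(H) is Polish}) are both metrizable, hence so is $\mathbb{B}(H)_{\rm{sa}}\times {\rm{SA}}(H)$. So I would fix sequences $K_n\to K$ in operator norm and $A_n\to A$ in SRT, set $B_n:=A_n+K_n$ and $B:=A+K$ (self-adjoint, with $\dom{B_n}=\dom{A_n}$ and $\dom{B}=\dom{A}$ as recalled in $\S$\ref{subsec: operator theory}), and aim to show $(B_n-i)^{-1}\to(B-i)^{-1}$ in SOT. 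Abbreviate $R_n:=(A_n-i)^{-1}$, $R:=(A-i)^{-1}$, $S_n:=(B_n-i)^{-1}$, $S:=(B-i)^{-1}$, all of norm $\le 1$, and put $C:=\sup_n\|K_n\|<\infty$. Rather than going through $e^{itA}$ and Trotter's Lemma \ref{lem: Trotter lemma}, the resolvent route is cleaner because the perturbation $K_n$ is bounded.

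The key tool is the second resolvent identity $R_n-S_n=R_nK_nS_n$ (legitimate since $K_n$ is bounded and $B_n-i$ maps $\dom{A_n}$ bijectively onto $H$), and likewise $R-S=RKS$. From $R_n-S_n=R_nK_nS_n$ one checks directly that $1+R_nK_n$ is invertible with $(1+R_nK_n)^{-1}=1-S_nK_n$, whence $\|(1+R_nK_n)^{-1}\|\le 1+C$ \emph{uniformly in $n$}. Substituting the two resolvent identities into $S_n-S$ and inserting intermediate terms ($R_nK_nS_n-RKS=R_nK_n(S_n-S)+R_n(K_n-K)S+(R_n-R)KS$) yields
\[(1+R_nK_n)(S_n-S)=(R_n-R)-R_n(K_n-K)S-(R_n-R)KS,\]
and therefore
\[S_n-S=(1-S_nK_n)\bigl[(R_n-R)-R_n(K_n-K)S-(R_n-R)KS\bigr].\]
Applying this to a fixed $\xi\in H$ and using $\|1-S_nK_n\|\le 1+C$: the first summand tends to $0$ because $(R_n-R)\xi\to 0$ is precisely the hypothesis $A_n\to A$ in SRT; the second because $\|R_n\|\le 1$ and $\|(K_n-K)S\xi\|\le\|K_n-K\|\,\|S\xi\|\to 0$; the third because $(R_n-R)$ is evaluated at the fixed vector $KS\xi$. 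Hence $S_n\xi\to S\xi$ for every $\xi\in H$, i.e. $B_n\to B$ in SRT.

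The one point that is not merely bookkeeping is the uniform bound on $(1+R_nK_n)^{-1}$: a naive Neumann-series inversion would require $\|R_nK_n\|<1$, which need not hold, so identifying the closed form $(1+R_nK_n)^{-1}=1-S_nK_n$ is the step that makes the estimate go through. Everything else — verifying the second resolvent identity in the unbounded setting and the algebraic insertion of intermediate terms — is routine. This proposition is then the first half of Proposition \ref{prop: G acts on SA(H) continuously}, to be combined with the joint continuity of the conjugation action $(u,A)\mapsto uAu^*$.
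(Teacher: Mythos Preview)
Your argument is correct and takes a genuinely different route from the paper's. The paper first isolates the ``fixed $K$, varying $A_n$'' case as a separate lemma (Arai's Lemma~\ref{lem: Arai's lemma}), proved by expanding $(A_n+K-z)^{-1}$ as a Neumann series $\sum_k(-1)^k(A_n-z)^{-1}(K(A_n-z)^{-1})^k$; this forces the auxiliary choice $|{\rm Im}\,z|>\|K\|$ and then an appeal to Lemma~\ref{lem: SRT-convergence at two points} to transfer back to $z=i$. Joint continuity is then obtained by splitting $(A_n+K_n-i)^{-1}-(A+K-i)^{-1}$ into a piece controlled by $\|K_n-K\|$ via the resolvent identity and a piece handled by Arai's Lemma. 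Your approach bypasses both the Neumann series and the auxiliary spectral parameter: the closed-form identity $(1+R_nK_n)^{-1}=1-S_nK_n$ gives the uniform bound $1+C$ directly at $z=i$ with no smallness hypothesis on $K_n$, and your single algebraic rearrangement handles the variation in $K_n$ and $A_n$ simultaneously. The paper's decomposition has the virtue of isolating Lemma~\ref{lem: Arai's lemma} as a reusable statement (it is invoked again for the NRT version, Lemma~\ref{lem: Arai Lemma for NRT}); your argument is shorter and more self-contained, and would adapt to NRT just as easily since all estimates are in operator norm except the two applications of $R_n-R$ to fixed vectors.
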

The key point in the proof of Proposition \ref{prop: B(H)_sa acts on SA(H) continuously} is the next lemma, which was communicated to us by Asao Arai. We are grateful to him for allowing us to include his proof.
\begin{lemma}[Arai]\label{lem: Arai's lemma}
Let $K\in \mathbb{B}(H)_{\rm{sa}}$ and let $A_n,A\in {\rm{SA}}(H)\ (n\in \mathbb{N})$ be such that $A_n\stackrel{\rm{SRT}}{\to}A$. Then $A_n+K\stackrel{\rm{SRT}}{\to}A+K$ holds. 
\end{lemma}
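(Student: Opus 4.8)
The plan is to reduce the statement to resolvents and exploit the second resolvent identity, working at a spectral parameter far enough from $\mathbb{R}$ that the relevant Neumann series converges uniformly in $n$. Recall first that for self-adjoint operators, SRT-convergence $B_n\to B$ is equivalent to $(B_n-z)^{-1}\to (B-z)^{-1}$ strongly for some (equivalently, every) $z\in \mathbb{C}\setminus \mathbb{R}$; see e.g. \cite[Theorem VIII.19]{ReedSimonI}. So fix $\lambda>\|K\|$, put $z:=i\lambda$, and write $R_n:=(A_n-z)^{-1}$, $R:=(A-z)^{-1}$, $S_n:=(A_n+K-z)^{-1}$, $S:=(A+K-z)^{-1}$. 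Here $A_n+K$ and $A+K$ are self-adjoint with the same domains as $A_n$ and $A$, so these resolvents are defined and bounded by $1/\lambda$ (spectral theorem). By hypothesis $R_n\to R$ strongly, and it suffices to show $S_n\to S$ strongly.

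Next I would record the second resolvent identity in the form $S_n=R_n-R_nKS_n$, which one checks directly: for $\psi\in H$ the vector $\phi:=S_n\psi$ lies in $\dom{A_n}$, satisfies $(A_n-z)\phi=\psi-K\phi$, and applying $R_n$ gives the identity. Hence $(1+R_nK)S_n=R_n$; since $\|R_nK\|\le \|K\|/\lambda<1$, the operator $1+R_nK$ is invertible and
\[S_n=(1+R_nK)^{-1}R_n=\sum_{j=0}^{\infty}(-R_nK)^jR_n,\]
the series converging in operator norm with the \emph{uniform} bound $\sum_{j\ge 0}(\|K\|/\lambda)^j\cdot\lambda^{-1}=(\lambda-\|K\|)^{-1}$; the analogous formula holds for $S$ with $R$ in place of $R_n$.

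Finally I would pass the strong limit through the series by a three-$\varepsilon$ argument: the tails $\sum_{j>N}(-R_nK)^jR_n$ and $\sum_{j>N}(-RK)^jR$ are small in operator norm uniformly in $n$ because $\|K\|/\lambda<1$, while for each fixed $j$ one has $(R_nK)^jR_n\to (RK)^jR$ strongly, proved by induction on $j$ from the elementary fact that $X_nY_n\to XY$ strongly whenever $X_n\to X$, $Y_n\to Y$ strongly with $\sup_n\|X_n\|<\infty$ (and $R_nK\to RK$ strongly since $K$ is fixed). Combining, $S_n\to S$ strongly, hence $(A_n+K-z)^{-1}\to (A+K-z)^{-1}$ strongly, and therefore $A_n+K\stackrel{\rm{SRT}}{\to}A+K$ by the equivalence recalled above (which, if preferred, can be re-derived by hand with one more resolvent identity relating $z=i\lambda$ to $z=i$). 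The only genuine subtlety — and the reason for not working directly at $z=i$ — is that $\|K\|$ may exceed $\|(A-i)^{-1}\|\le 1$, so the Neumann inversion of $1+RK$ need not converge there; enlarging the imaginary part to $\lambda>\|K\|$ repairs this at no cost, thanks to the point-independence of SRT-convergence. Everything else is routine bookkeeping with uniformly bounded, strongly convergent operators.
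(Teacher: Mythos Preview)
Your proof is correct and follows essentially the same approach as the paper: both choose $z$ with $|\mathrm{Im}\,z|>\|K\|$, expand the perturbed resolvent as a Neumann series (your $\sum_j(-R_nK)^jR_n$ equals the paper's $\sum_k R_n(-KR_n)^k$), pass to the strong limit term-by-term with a uniform geometric tail bound, and invoke the one-point criterion for SRT convergence (Lemma~\ref{lem: SRT-convergence at two points} in the paper). The only cosmetic difference is that you derive the series from the second resolvent identity $S_n=R_n-R_nKS_n$, whereas the paper states the expansion directly.
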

For the proof, we use the following well-known result. 
\begin{lemma}{\rm{\cite[Theorem VIII.19]{ReedSimonI}}}\label{lem: SRT-convergence at two points}
Let $T_n, T\in {\rm{SA}}(H)\ (n\in \mathbb{N})$. If there exists $z\in \mathbb{C}\setminus \mathbb{R}$ such that ${\rm{SOT-}}\lim_{n\to \infty}(T_n-z)^{-1}=(T-z)^{-1}$, then ${\rm{SRT-}}\lim_{n\to \infty}T_n
=T$.
\end{lemma}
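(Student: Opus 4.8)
The plan is to reduce everything to the definition of SRT: since SRT-convergence of $T_n$ to $T$ means precisely $(T_n-i)^{-1}\to(T-i)^{-1}$ in SOT, it suffices to propagate the hypothesised SOT-convergence of resolvents from the point $z$ to the point $i$. Three steps are needed, and the only subtle point is that the resolvent set $\mathbb{C}\setminus\mathbb{R}$ of a self-adjoint operator is disconnected, so one cannot naively move the base point across the real axis.

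\emph{Step 1 (passing to the conjugate point).} Write $R_n:=(T_n-z)^{-1}$ and $R:=(T-z)^{-1}$. Self-adjointness gives the uniform bound $\|R_n\|,\|R\|\le|\mathrm{Im}\,z|^{-1}$ and $(T_n-\bar z)^{-1}=R_n^*$. From $R_n\to R$ in SOT and the uniform bound we get $R_n\to R$ weakly, hence $R_n^*\to R^*$ weakly. The first resolvent identity gives $R_nR_n^*=\tfrac{1}{2i\,\mathrm{Im}\,z}(R_n-R_n^*)$, so
\[
\|R_n^*\xi\|^2=\langle R_nR_n^*\xi,\xi\rangle=\tfrac{1}{2i\,\mathrm{Im}\,z}\bigl(\langle R_n\xi,\xi\rangle-\overline{\langle R_n\xi,\xi\rangle}\bigr)\xrightarrow[n\to\infty]{}\|R^*\xi\|^2
\]
for every $\xi\in H$. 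Weak convergence together with convergence of the norms yields $R_n^*\xi\to R^*\xi$ in $H$, i.e. $(T_n-\bar z)^{-1}\to(T-\bar z)^{-1}$ in SOT. Thus we may assume SOT-convergence of the resolvents at some point $w_0$ with $\mathrm{Im}\,w_0>0$.

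\emph{Step 2 (local propagation via a Neumann series).} Let $|w-w_0|<|\mathrm{Im}\,w_0|$. Using $\|(T_n-w_0)^{-1}\|\le|\mathrm{Im}\,w_0|^{-1}$, the series $(T_n-w)^{-1}=\sum_{k\ge0}(w-w_0)^k(T_n-w_0)^{-(k+1)}$ converges in operator norm with a geometric tail bounded uniformly in $n$, and the same tail bound holds for $T$. Since SOT-convergence of a uniformly bounded sequence is preserved under taking products (hence powers), an induction gives $(T_n-w_0)^{-(k+1)}\to(T-w_0)^{-(k+1)}$ in SOT for each $k$; the uniform tail estimate then lets us interchange the limit with the sum, so $(T_n-w)^{-1}\to(T-w)^{-1}$ in SOT. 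Iterating this over a finite chain of such discs joining $w_0$ to $i$ inside the connected open upper half-plane, we obtain $(T_n-i)^{-1}\to(T-i)^{-1}$ in SOT, which is exactly $T_n\xrightarrow{\mathrm{SRT}}T$, proving Lemma~\ref{lem: SRT-convergence at two points}.

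\emph{Main obstacle.} The genuine content is Step 1: because the resolvent set splits into two half-planes and the radius of the Neumann series shrinks to $0$ near $\mathbb{R}$, a base point $z$ in the \emph{lower} half-plane cannot be propagated to $i$ by Step 2 alone, so the conjugate-point symmetry must be established separately — via the resolvent identity plus the Hilbert-space fact that weak convergence together with convergence of norms implies norm convergence. The remaining steps are routine bookkeeping with uniformly bounded operators and uniformly convergent operator series.
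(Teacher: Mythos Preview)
The paper does not prove this lemma: it is stated with a citation to Reed--Simon \cite[Theorem VIII.19]{ReedSimonI} and used as a black box. So there is no ``paper's own proof'' to compare against.

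Your argument is correct. In Step~1 the resolvent identity indeed gives $R_n-R_n^*=2i\,\mathrm{Im}(z)\,R_nR_n^*$, whence $\|R_n^*\xi\|^2=\mathrm{Im}\langle R_n\xi,\xi\rangle/\mathrm{Im}\,z\to\|R^*\xi\|^2$; combined with the automatic WOT-convergence of adjoints this upgrades to SOT-convergence of $(T_n-\bar z)^{-1}$. Step~2 is the standard Neumann-series propagation: the disc of radius $|\mathrm{Im}\,w_0|$ about $w_0$ lies entirely in the same half-plane, and any two points of the open upper half-plane are joined by a finite chain of such discs (e.g.\ move vertically first, then horizontally), so the iteration terminates. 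Your diagnosis of the ``main obstacle'' is exactly right: without the adjoint trick one cannot cross the real axis, because the Neumann radius collapses there. This is essentially the argument behind the cited Reed--Simon theorem, where the passage to $\bar z$ is handled by the same adjoint observation; you have simply made that step more explicit. One cosmetic remark: you announce ``three steps'' but only label two; the ``Main obstacle'' paragraph is commentary, not a step.
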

\begin{proof}[Proof of Lemma \ref{lem: Arai's lemma}]
For any $z\in \mathbb{C}\setminus \mathbb{R}$, we have
\[\|K(A_n-z)^{-1}\|\le \frac{\|K\|}{|{\rm{Im}}\ z|},\ \ \ \ \|K(A-z)^{-1}\|\le \frac{\|K\|}{|{\rm{Im}}\ z|}.\]
Therefore it holds that if $\|K\|<|{\rm{Im}}\ z|$, 
\eqa{
(A_n+K-z)^{-1}&=\sum_{k=0}^{\infty}(-1)^k(A_n-z)^{-1}(K(A_n-z)^{-1})^k,\\
(A+K-z)^{-1}&=\sum_{k=0}^{\infty}(-1)^k(A-z)^{-1}(K(A-z)^{-1})^k.
}
Therefore for arbitrary $\xi\in H$, we have
\begin{align}
&\|(A_n+K-z)^{-1}\xi-(A+K-z)^{-1}\xi\|\notag \\
&=\sum_{k=0}^{\infty}\|\{(A_n-z)^{-1}(K(A_n-z)^{-1})^k-(A-z)^{-1}(K(A-z)^{-1})^k\}\xi\|.\label{eq: Arai Neumann series}
\end{align}
Since $A_n\stackrel{n\to \infty}{\to}A$ (SRT), $K(A_n-z)^{-1}\stackrel{n\to \infty}{\to}K(A-z)^{-1}$ (SOT) holds. This implies that for each $k\ge 0$, $(A_n-z)^{-1}(K(A_n-z)^{-1})^k\stackrel{n\to \infty}{\to}(A-z)^{-1}(K(A-z)^{-1})^k$ (SOT). Therefore each term in (\ref{eq: Arai Neumann series}) tends to 0 as $n\to \infty$. Furthermore, we see that 
\begin{equation}
\|\{(A_n-z)^{-1}(K(A_n-z)^{-1})^k-(A-z)^{-1}(K(A-z)^{-1})^k\}\xi\|\le 2|{\rm{Im}}\ z|^{-1}\left (\frac{\|K\|}{|{\rm{Im}}\ z|}\right )^k\|\xi\|,\label{eq: Arai estimate 1 with xi}
\end{equation}
and since $\sum_{k=0}^{\infty}(\|K\|/|{\rm{Im}}\ z|)^k<\infty$, we have for $\|K\|<|{\rm{Im}}\ z|$ that
\begin{equation}
\lim_{n\to \infty}(A_n+K-z)^{-1}\xi=(A+K-z)^{-1}\xi.\label{eq: Arai estimate 2 with xi}
\end{equation}
Therefore by Lemma \ref{lem: SRT-convergence at two points}, $A_n+K\stackrel{n\to \infty}{\to}A+K$ (SRT) holds.
\end{proof}
\begin{proof}[Proof of Proposition \ref{prop: B(H)_sa acts on SA(H) continuously}]
Let $\{A_n\}_{n=1}^{\infty}$ (resp. $\{K_n\}_{n=1}^{\infty}$) be a sequence in ${\rm{SA}}(H)$ (resp. in $\mathbb{B}(H)_{\rm{sa}}$) converging to $A\in {\rm{SA}}(H)$ (resp. to $K\in \mathbb{B}(H)_{\rm{sa}}$). 
For any $\xi \in H$, we have 
\begin{align}
\|(A_n+&K_n-i)^{-1}\xi-(A+K-i)^{-1}\xi\|\notag \\
&\le \|\{(A_n+K_n-i)^{-1}-(A_n+K-i)^{-1}\}\xi\|+\|
\{(A_n+K-i)^{-1}-(A+K-i)^{-1}\}\xi\|\label{eq: first term easy second is Arai}.
\end{align}
By the resolvent identity \cite[$\S$2.2, (2.4)]{Schmudgen}, the first term in (\ref{eq: first term easy second is Arai}) is estimated as 
\eqa{
\|\{(A_n+K_n-i)^{-1}-(A_n+K-i)^{-1}\}\xi\|&\le \|(A_n+K_n-i)^{-1}(K_n-K)(A_n+K-i)^{-1}\xi\|\\
 &\le \|K_n-K\|\cdot \|\xi\|\stackrel{n\to \infty}{\to}0.
}
The second term in (\ref{eq: first term easy second is Arai}) also tends to 0 by Lemma \ref{lem: Arai's lemma}. Therefore $A_n+K_n\stackrel{n\to \infty}{\to}A+K$ (SRT) holds.  
\end{proof}
\begin{proof}[Proof of Proposition \ref{prop: G acts on SA(H) continuously}]
Assume that $A_n\in {\rm{SA}}(H)$ (resp. $(K_n,u_n)\in G$) converges to $A\in {\rm{SA}}(H)$ (resp. $(K,u)\in G$). Then $u_nA_nu_n^*\stackrel{n\to \infty}{\to}uAu^*$ (SRT), because the joint SOT-continuity of operator product on bounded sets shows that 
\[(u_nA_nu_n^*-i)=u_n(A_n-i)^{-1}u_n^*\stackrel{n\to \infty}{\to}u(A-i)^{-1}u^*=(uAu^*-i)^{-1}\ ({\rm{SOT}}).\]
Therefore by the continuity of $\alpha_0$ (Proposition \ref{prop: B(H)_sa acts on SA(H) continuously}), we have 
\[(K_n,u_n)\cdot A_n=u_nA_nu_n^*+K_n\stackrel{n\to \infty}{\longrightarrow } uAu^*+K=(K,u)\cdot A\ \ ({\rm{SRT}}). \]
\end{proof}

\subsection{Smoothness: Bounded Case}\label{subsec: Smoothness: Bounded Case}
Recall from $\S$\ref{subsec: Effros Borel space} that the Effros Borel structure on the space $\mathcal{F}(\mathbb{R})$ of all closed subsets of $\mathbb{R}$ is the $\sigma$-algebra generated by sets of the form $\{F\in \mathcal{F}(\mathbb{R});\ F\cap U\neq \emptyset\}$, where $U$ is an open subset of $\mathbb{R}$.  In this section, we show that Weyl-von Neumann equivalence relation restricted on $\mathbb{B}(H)_{\rm{sa}}$ is smooth by showing that ${\rm{SA}}(H)\ni A\mapsto \sigma_{\rm{ess}}(A)\in \mathcal{F}(\mathbb{R})$ is Borel. 
\begin{lemma}\label{lem: B(H)_sa is standard}
$\mathbb{B}(H)_{\rm{sa}}$ is a meager $F_{\sigma}$ subset of ${\rm{SA}}(H)$. In particular, it is Borel.  
\end{lemma}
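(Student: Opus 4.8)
\textbf{Proof plan for Lemma \ref{lem: B(H)_sa is standard}.}
The plan is to exhibit $\mathbb{B}(H)_{\rm{sa}}$ as a countable union of SRT-closed (in fact SRT-nowhere-dense) subsets of ${\rm{SA}}(H)$, which gives both the $F_\sigma$ and the meagerness assertions at once, whence Borelness is immediate. For each $m\in \mathbb{N}$ put
\[
C_m:=\{A\in {\rm{SA}}(H);\ \sigma(A)\subset [-m,m]\}=\{A\in {\rm{SA}}(H);\ \|A\|\le m\}.
\]
Then $\mathbb{B}(H)_{\rm{sa}}=\bigcup_{m=1}^{\infty}C_m$, so it suffices to show each $C_m$ is closed and has empty interior in $({\rm{SA}}(H),{\rm{SRT}})$.

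\textbf{Closedness of $C_m$.} First I would recall that for a bounded self-adjoint $A$ one has $\|A\|\le m$ if and only if $\sigma(A)\subseteq[-m,m]$, equivalently $(A-i)^{-1}$ maps $H$ onto $\dom{A}=H$ with $(A-i)^{-1}$ having range all of $H$ and $\|(A-i)^{-1}\xi\|\ge (m^2+1)^{-1/2}\|\xi\|$ for all $\xi$; more usefully, $A\in C_m$ iff the resolvent $R:=(A-i)^{-1}$ satisfies the spectral condition that $z=(t-i)^{-1}$ with $|t|\le m$ for all $t\in\sigma(A)$, i.e. $\sigma(R)\setminus\{0\}\subset\{(t-i)^{-1}:|t|\le m\}$. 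Rather than chase this, the cleanest route is via the Cayley-type bound: for $A\in{\rm{SA}}(H)$ and $\xi\in H$ one computes $\|(A-i)^{-1}\xi\|^2 = \int_{\mathbb R}\frac{1}{t^2+1}\,d\|E_A(t)\xi\|^2$, and $A\in C_m$ iff $E_A([-m,m])=1$, which is equivalent to $\|(A-i)^{-1}\xi\|^2\ge \frac{1}{m^2+1}\|\xi\|^2$ for every $\xi\in H$. Now if $A_n\in C_m$ and $A_n\xrightarrow{\rm SRT}A$, then $(A_n-i)^{-1}\xi\to(A-i)^{-1}\xi$ in norm for each $\xi$, so $\|(A-i)^{-1}\xi\|^2=\lim_n\|(A_n-i)^{-1}\xi\|^2\ge\frac{1}{m^2+1}\|\xi\|^2$, giving $A\in C_m$. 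Hence $C_m$ is SRT-closed.

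\textbf{Empty interior of $C_m$.} Here I would use separability and the explicit metric $d$ from Proposition \ref{prop: SA(H) is Polish}, or more simply the fact that a basic SRT-neighborhood of any $A\in C_m$ is determined by finitely many vectors $\xi_1,\dots,\xi_N$ and a tolerance $\varepsilon$. Given $A\in C_m$, I want to produce $B\notin C_m$ (say $B$ unbounded, or merely $\|B\|>m$) with $(B-i)^{-1}\xi_j$ close to $(A-i)^{-1}\xi_j$ for $j\le N$. Take a unit vector $\eta$ in the (infinite-dimensional) orthogonal complement of $\mathrm{span}\{\xi_1,\dots,\xi_N, A\xi_1,\dots\}$—or, since $A$ is bounded, just slightly perturb: let $p$ be a rank-one projection onto a unit vector $\eta$ nearly orthogonal to each $\xi_j$, and set $B:=A + Rp$ for large $R>0$ when $A$ commutes appropriately with $p$; cleaner is to diagonalize, replacing one far-out spectral chunk of $A$ of tiny weight against the $\xi_j$ by an eigenvalue $>m$ (or $+\infty$ in the sense of adding an unbounded diagonal summand supported on a subspace nearly orthogonal to the $\xi_j$). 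Because the resolvent is uniformly bounded in norm by $1$ and continuous, a rank-one (or finite-rank) modification supported on a vector $\varepsilon$-orthogonal to all $\xi_j$ changes $(A-i)^{-1}\xi_j$ by $O(\varepsilon)$ in norm, while the modification itself can push the norm (or unboundedness) outside $C_m$. This shows every SRT-neighborhood of every point of $C_m$ meets the complement, i.e. $C_m$ has empty interior.

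\textbf{Main obstacle.} The genuinely delicate point is the empty-interior step: one must verify quantitatively that a spectral modification of $A$ which ejects it from $C_m$ can be made SRT-small, i.e. invisible to finitely many test vectors up to $\varepsilon$. The natural fix is to work in the spectral representation of $A$, choose a Borel set $S\subset\sigma(A)$ with $\|E_A(S)\xi_j\|<\delta$ for all $j\le N$ (possible since the measures $\|E_A(\cdot)\xi_j\|^2$ are finite, so they are small on sets of small measure, and one can find such $S$ of positive $E_A$-measure using infinite-dimensionality to guarantee $E_A(S)\neq 0$), and then replace $A|_{E_A(S)H}$ by a self-adjoint operator with spectrum outside $[-m,m]$ (even unbounded). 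A short resolvent estimate using $(B-i)^{-1}-(A-i)^{-1}=(B-i)^{-1}(A-B)(A-i)^{-1}$ restricted to $E_A(S)H$, together with $\|(A-i)^{-1}\|\le 1$, $\|(B-i)^{-1}\|\le 1$ and $(A-i)^{-1}\xi_j$ lying within $\delta$ of $(1-E_A(S))H$, bounds the change on each $\xi_j$ by a constant times $\delta$. Everything else—countable union, $F_\sigma$, Borel, meager—is then formal.
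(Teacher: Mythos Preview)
Your decomposition $\mathbb{B}(H)_{\rm sa}=\bigcup_m C_m$ and the closedness argument match the paper's proof essentially verbatim (the paper also uses the inequality $\|(A-i)^{-1}\xi\|^2\ge(m^2+1)^{-1}\|\xi\|^2$ and passes to the SRT limit). That part is fine.

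The empty-interior step, however, has a real gap in the version you commit to in the ``Main obstacle'' paragraph. Your proposed fix is to find a Borel set $S\subset\sigma(A)$ with $E_A(S)\neq 0$ but $\|E_A(S)\xi_j\|<\delta$ for all $j$, and then modify $A$ on $E_A(S)H$. This is impossible in general: take $A=0$. Then $E_A(S)$ is either $0$ or $1$ depending on whether $0\in S$, so no nontrivial spectral projection has small overlap with the $\xi_j$. The parenthetical justification (``the measures are small on sets of small measure'') tacitly assumes a reference measure that need not exist, and infinite-dimensionality of $H$ does not translate into the existence of such an $S$ when $\sigma(A)$ is a single point.

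Your earlier, discarded idea is actually the one that works cleanly and avoids Weyl--von Neumann. Choose a unit vector $\eta$ orthogonal to the finitely many vectors $(A-i)^{-1}\xi_1,\dots,(A-i)^{-1}\xi_N$ (not to the $\xi_j$ themselves), set $p=\langle\eta,\cdot\rangle\eta$ and $B=A+Rp$ with $R>2m$. The resolvent identity gives
\[
(B-i)^{-1}\xi_j-(A-i)^{-1}\xi_j=-R\,(B-i)^{-1}\eta\cdot\langle\eta,(A-i)^{-1}\xi_j\rangle=0,
\]
so $B$ lies in the given basic SRT-neighborhood, while $\langle\eta,B\eta\rangle\ge R-m>m$ forces $B\notin C_m$. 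The paper instead invokes the Weyl--von Neumann Theorem to reduce to a diagonal $A$ and then replaces the tail eigenvalues by $n\to\infty$; this is a heavier tool but makes the approximation by genuinely unbounded operators explicit. Either route closes the argument, but the spectral-set method you emphasized does not.
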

\begin{proof}
Let $F_n:=\{A\in \mathbb{B}(H)_{\rm{sa}};\ \|A\|\le n\}\ (n\in \mathbb{N})$. Then $\mathbb{B}(H)_{\rm{sa}}=\bigcup_{n=1}^{\infty}F_n$.  We show that each $F_n$ is SRT-closed. 
Let $A_k\in F_n$ and assume that $A_k\stackrel{k\to \infty}{\to}A\in {\rm{SA}}(H)$. We show that $A\in F_n$: let $\xi \in H$, and let $A_k=\int_{-n}^n\lambda dE_{A_k}(\lambda)$ be the spectral resolution of $A_k$($k\in \mathbb{N}$). Then for each $k\ge 1$ we have
\begin{align}
\|(A_k-i)^{-1}\xi\|^2&=\int_{[-n,n]}\frac{1}{|\lambda-i|^2}d\|E_{A_k}(\lambda)\xi\|^2\ge \frac{1}{n^2+1}\|\xi\|^2.\label{eq: bounded operator and resolvent}
\end{align}
Therefore 
\begin{equation}
\|(A-i)^{-1}\xi\|^2=\lim_{k\to \infty}\|(A_k-i)^{-1}\xi\|^2\ge \frac{1}{n^2+1}\|\xi\|^2,\ \ \ \ \xi \in H.\label{eq: A is bounded}
\end{equation} 
If there exists $\lambda \in \sigma(A)\cap \mathbb{R}\setminus [-n,n]$, choose $\varepsilon>0$ such that $|\lambda|-\varepsilon>n$, and $\xi \in \dom{A}$ such that $\|A\xi-\lambda \xi\|<\varepsilon \|\xi\|$. Then $\|A\xi\|\ge \|\lambda \xi \|-\|A\xi-\lambda \xi\|>(|\lambda| -\varepsilon )\|\xi\|$, so that 
\eqa{
\|(A-i)\xi\|^2&=\nai{A\xi-i\xi}{A\xi-i\xi}=\|A\xi\|^2+\|\xi\|^2\\
&>\{(|\lambda|-\varepsilon)^2+1\}\|\xi\|^2,
}
which by (\ref{eq: A is bounded}) implies that 
\eqa{
\|\xi\|^2&=\|(A-i)^{-1}(A-i)\xi\|^2\ge \frac{1}{n^2+1}\{(|\lambda|-\varepsilon)^2+1\}\|\xi\|^2>\|\xi\|^2,
}
a contradiction. Therefore $\sigma(A)\subset [-n,n]$, and $A\in F_n$. Therefore $\mathbb{B}(H)_{\rm{sa}}$ is $F_{\sigma}$ in SA$(H)$.

Finally, we show that $F_n$ has empty interior in ${\rm{SA}}(H)$, whence $\mathbb{B}(H)_{\rm{sa}}$ is meager. Assume by contradiction that there is $A_0\in {\rm{Int}}(F_n)$. 
Then by Weyl-von Neumann Theorem \ref{thm: Weyl-von Neumann}, there exists $K_0\in \mathbb{K}(H)_{\rm{sa}}$ such that $A_0+K_0$ is in ${\rm{Int}}(F_n)$ and has the form $\sum_{m=1}^{\infty}\lambda_me_m$, where $\{e_m\}_{m=1}^{\infty}$ is a sequence of mutually orthogonal rank one projections with sum equal to 1, and $\{\lambda_m\}_{m=1}^{\infty}\subset \mathbb{R}$. 
Let $A_k:=\sum_{m=1}^{k}\lambda_me_m+\sum_{m=k+1}^{\infty}me_m$. 
Then for each $\xi\in H$, we have
\begin{align}
\|(A_k-i)^{-1}\xi-(A_0+K_0-i)^{-1}\xi\|^2&=\sum_{m=k+1}^{\infty}\left |\frac{1}{m-i}-\frac{1}{\lambda_m-i}\right |^2\|e_n\xi\|^2\notag \\
&\le \sum_{m=k+1}^{\infty}4\|e_m\xi\|^2\stackrel{k\to \infty}{\to}0.\label{eq: SRT approximation by unbounded operators}
\end{align}
Since $A_0+K_0\in \text{Int}(F_n)$, this shows that $A_k\in F_n$ for large enough $k$, which is a contradiction because each $A_k$ is unbounded.  
\end{proof}
By Lemma \ref{lem: B(H)_sa is standard}, $\mathbb{B}(H)_{\rm{sa}}$ is a standard Borel space with respect to the subspace Borel structure. Since $\mathbb{B}(H)_{\rm{sa}}$ is $G$-invariant, we may consider the restricted action of $G$ on $\mathbb{B}(H)_{\rm{sa}}$ and its orbit equivalence relation $E_G^{\mathbb{B}(H)_{\rm{sa}}}$. 
\begin{theorem}\label{thm: E_G^B(H)_sa is smooth}
$E_G^{\mathbb{B}(H)_{\rm{sa}}}$ is a smooth equivalence relation. 
\end{theorem}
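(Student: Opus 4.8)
The plan is to exhibit an explicit Borel reduction of $E_G^{\mathbb{B}(H)_{\rm{sa}}}$ to the identity relation on a standard Borel space, with the essential spectrum as complete invariant. By the Weyl--von Neumann Theorem~\ref{thm: von Neumann theorem}, for $A,B\in \mathbb{B}(H)_{\rm{sa}}$ one has $A\,E_G^{\mathbb{B}(H)_{\rm{sa}}}\,B$ if and only if $\sigma_{\rm{ess}}(A)=\sigma_{\rm{ess}}(B)$ (recall $(K,u)\cdot A=uAu^*+K$ and that $\mathbb{B}(H)_{\rm{sa}}$ is $G$-invariant). Since $\mathcal{F}(\mathbb{R})$ with its Effros Borel structure is a standard Borel space, smoothness follows once the map $A\mapsto \sigma_{\rm{ess}}(A)$ is Borel; and as $\mathbb{B}(H)_{\rm{sa}}$ is a Borel subset of ${\rm{SA}}(H)$ by Lemma~\ref{lem: B(H)_sa is standard}, it is enough to prove Borelness of $A\mapsto \sigma_{\rm{ess}}(A)$ on all of ${\rm{SA}}(H)$ --- which is the content of Theorem~\ref{thm: taking ess spectrum is Borel}, and I now sketch the argument I would give for it.

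By definition of the Effros Borel structure it suffices to show that $\{A:\sigma_{\rm{ess}}(A)\cap U\neq\emptyset\}$ is Borel for $U$ ranging over a countable basis of $\mathbb{R}$, say for $U=(p,q)$ with $p<q$ rational. I would use the reduction
\[
\sigma_{\rm{ess}}(A)\cap(p,q)\neq\emptyset
\iff
\dim E_A([a,b])H=\infty\ \text{for some rationals }a<b\text{ with }[a,b]\subseteq(p,q).
\]
The implication $\Leftarrow$ holds because $\dim E_A([a,b])H=\infty$ forces, by compactness of $[a,b]$, either an accumulation point of $\sigma(A)$ in $[a,b]$ or a point of $\sigma(A)\cap[a,b]$ of infinite multiplicity, in either case a point of $\sigma_{\rm{ess}}(A)\cap[a,b]\subseteq\sigma_{\rm{ess}}(A)\cap(p,q)$; the implication $\Rightarrow$ holds because any $\lambda\in\sigma_{\rm{ess}}(A)\cap(p,q)$ lies in the interior of some rational $[a,b]\subseteq(p,q)$, and an open window around an essential point always carries infinite-dimensional spectral subspace, so $\dim E_A([a,b])H\ge\dim E_A((a,b))H=\infty$. (Note one cannot shortcut via $\dim E_A([a,b])H=\inf_k\dim E_A((a-\tfrac1k,b+\tfrac1k))H$: this is false, e.g.\ for simple eigenvalues accumulating at $a$ from outside $[a,b]$.) So it remains to prove that $\{A:\dim E_A([a,b])H=\infty\}$ is Borel for each fixed rational $a<b$.

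For this I would show, first, that $A\mapsto E_A([a,b])$ is a Borel map from $({\rm{SA}}(H),{\rm{SRT}})$ into $(\mathbb{B}(H),{\rm{SOT}})$: choosing continuous $f_k$ with $0\le f_k\le 1$ and $f_k\downarrow \mathbf 1_{[a,b]}$ pointwise, we get $f_k(A)\to E_A([a,b])$ in ${\rm{SOT}}$ for every $A$ by the bounded-convergence property of the spectral calculus, so $A\mapsto E_A([a,b])$ is a pointwise ${\rm{SOT}}$-limit of the maps $A\mapsto f_k(A)$, each of which is ${\rm{SOT}}$-continuous on $({\rm{SA}}(H),{\rm{SRT}})$ (the Trotter--Kato property that $f(A_n)\to f(A)$ strongly for $f\in C_0(\mathbb{R})$, which follows from the definition of ${\rm{SRT}}$ by Stone--Weierstrass since $(A-i)^{-1}$ generates $C_0(\mathbb{R})$ under functional calculus), hence is Borel. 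Second, inside the ${\rm{SOT}}$-closed set of orthogonal projections the finite-rank ones form an $F_\sigma$ set, because each $\{P=P^*=P^2:\dim PH\le n\}$ is ${\rm{SOT}}$-closed --- it is the common zero set of the ${\rm{SOT}}$-continuous functions $P\mapsto\det(\langle P\eta_i,\eta_j\rangle)_{0\le i,j\le n}$ over a countable dense family of tuples $(\eta_0,\dots,\eta_n)$. Hence $\{P:\dim PH=\infty\}$ is Borel, and precomposing with $A\mapsto E_A([a,b])$ shows $\{A:\dim E_A([a,b])H=\infty\}$ is Borel, completing the sketch. Running the reductions backwards gives that $A\mapsto\sigma_{\rm{ess}}(A)$ is Borel, and with the Weyl--von Neumann equivalence this proves $E_G^{\mathbb{B}(H)_{\rm{sa}}}$ smooth.

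The only genuine obstacle is this Borelness of $A\mapsto\sigma_{\rm{ess}}(A)$: one must extract the \emph{infinite}-dimensionality of a spectral subspace from $A$ in a Borel fashion, and one must be careful to lodge the relevant spectral mass strictly inside a closed window rather than let it escape to the boundary. An alternative route --- the one the paper highlights --- is to prove $A\mapsto\sigma(A)$ Borel first and then recover $\sigma_{\rm{ess}}(A)$ via a countable-intersection-type formula in $\mathcal{F}(\mathbb{R})$, which is legitimate precisely because $\mathbb{R}$ is $\sigma$-compact, so that by Christensen's Theorem~\ref{thm: Christensen} the intersection operation on $\mathcal{F}(\mathbb{R})$ is Borel.
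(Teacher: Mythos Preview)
Your proposal is correct, and the top-level architecture matches the paper's: invoke Weyl--von~Neumann to identify the essential spectrum as a complete invariant, then prove that ${\rm SA}(H)\ni A\mapsto\sigma_{\rm ess}(A)\in\mathcal{F}(\mathbb{R})$ is Borel. Where you diverge is in the proof of this Borelness (Theorem~\ref{thm: taking ess spectrum is Borel}). The paper first shows $A\mapsto\sigma(A)$ is Borel (Lemma~\ref{lem: spectrum is Borel}), then uses the identity $\sigma_{\rm ess}(A)=\bigcap_{n}\sigma(A+K_n)$ for a countable norm-dense $\{K_n\}\subset\mathbb{K}(H)_{\rm sa}$ (Lemma~\ref{lem: intersection of compact perturbations=essential}), and finally appeals to Christensen's Theorem~\ref{thm: Christensen} on the Borelness of the intersection map on $\mathcal{F}(\mathbb{R})$ together with the finite intersection property of compact $[a,b]$. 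Your route is instead a direct spectral-projection argument: characterize $\sigma_{\rm ess}(A)\cap(p,q)\neq\emptyset$ by the existence of a rational closed window $[a,b]\subset(p,q)$ with $\dim E_A([a,b])H=\infty$, prove $A\mapsto E_A([a,b])$ is Borel as an ${\rm SOT}$-pointwise limit of the ${\rm SRT}$-continuous maps $A\mapsto f_k(A)$ with $f_k\in C_c(\mathbb{R})$, and check that $\{P:\mathrm{rank}\,P\le n\}$ is ${\rm SOT}$-closed via Gram determinants. This is more elementary and self-contained --- it avoids Christensen's theorem entirely --- and is in fact close in spirit to what the paper later does around Lemmata~\ref{lem: strong limit does not increase rank}--\ref{lem: rank of indicator function less than k} and Proposition~\ref{prop: essential spec contains lamda is dnese G_delta} (where the closedness of the sets $S_{n,k}$ is proved). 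The paper's approach, on the other hand, is more structural: it isolates $A\mapsto\sigma(A)$ as a reusable Borel map and packages the passage to $\sigma_{\rm ess}$ through the Effros space, which clarifies exactly where $\sigma$-compactness of $\mathbb{R}$ enters. Either argument is sound; you even flag the paper's route in your final paragraph.
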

\begin{lemma}\label{lem: spectrum is Borel}
The map ${\rm{SA}}(H)\ni A\mapsto \sigma(A)\in \mathcal{F}(\mathbb{R})$ is Borel.
\end{lemma}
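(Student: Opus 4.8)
The plan is to detect spectral values of $A$ through the operator norms of the shifted resolvents $(A-\lambda-i)^{-1}$ with $\lambda\in\mathbb{Q}$. First, by the definition of the Effros Borel structure on $\mathcal{F}(\mathbb{R})$ and second countability of $\mathbb{R}$, it suffices to show that $\{A\in {\rm{SA}}(H): \sigma(A)\cap U\neq\emptyset\}$ is Borel for every $U$ in the countable basis of open intervals with rational endpoints. Writing $(p,q)=\bigcup_{n\ge 1}[p+1/n,\,q-1/n]$, this reduces to showing that $\{A: \sigma(A)\cap[c,d]\neq\emptyset\}$ is Borel for all rationals $c<d$.

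The analytic heart of the argument is the elementary spectral-theoretic identity
\[ \|(A-\lambda-i)^{-1}\| \;=\; \operatorname{dist}(\lambda+i,\sigma(A))^{-1} \;=\; \bigl(1+\operatorname{dist}(\lambda,\sigma(A))^2\bigr)^{-1/2}, \qquad \lambda\in\mathbb{R}, \]
valid for every self-adjoint $A$ because $\sigma(A)\subseteq\mathbb{R}$; equivalently $\operatorname{dist}(\lambda,\sigma(A))=\bigl(\|(A-\lambda-i)^{-1}\|^{-2}-1\bigr)^{1/2}$. The next step is to check that $A\mapsto\|(A-\lambda-i)^{-1}\|$ is Borel on $({\rm{SA}}(H),{\rm{SRT}})$. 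For this I would use the classical fact (see \cite[Ch.~VIII]{ReedSimonI}) that SRT-convergence $A_k\to A$ forces $(A_k-w)^{-1}\to (A-w)^{-1}$ in SOT for every non-real $w$ (apply it with $w=\lambda+i$). Consequently, for each unit vector $\zeta\in H$ the map $A\mapsto\|(A-\lambda-i)^{-1}\zeta\|$ is SRT-continuous, and taking the supremum over $\zeta$ in a countable dense subset of the closed unit ball of $H$ exhibits $A\mapsto\|(A-\lambda-i)^{-1}\|$ as a countable supremum of continuous functions, hence lower semicontinuous, in particular Borel. Therefore $A\mapsto\operatorname{dist}(\lambda,\sigma(A))$ is Borel for every $\lambda\in\mathbb{Q}$.

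Finally I would assemble the pieces. Since a self-adjoint operator has nonempty spectrum, $\lambda\mapsto\operatorname{dist}(\lambda,\sigma(A))$ is a finite $1$-Lipschitz function on $\mathbb{R}$; using compactness of $[c,d]$ and closedness of $\sigma(A)$, one has $\sigma(A)\cap[c,d]\neq\emptyset$ if and only if $\inf_{\lambda\in[c,d]}\operatorname{dist}(\lambda,\sigma(A))=0$, and by continuity this infimum equals $\inf_{\lambda\in[c,d]\cap\mathbb{Q}}\operatorname{dist}(\lambda,\sigma(A))$. Hence
\[ \{A: \sigma(A)\cap[c,d]\neq\emptyset\} \;=\; \bigcap_{m\ge 1}\bigcup_{\lambda\in[c,d]\cap\mathbb{Q}}\bigl\{A: \operatorname{dist}(\lambda,\sigma(A))<1/m\bigr\}, \]
which is Borel by the previous paragraph, completing the reduction and hence the proof.

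I do not anticipate a genuine obstacle: the substantive inputs are just the resolvent-norm formula (pure spectral theory) and the routine SRT-continuity fact recalled above, with the rest being bookkeeping of the Effros structure via the closed-interval reduction. An alternative route would go through the bounded normal operator $(A-i)^{-1}$, using $\sigma((A-i)^{-1})\setminus\{0\}=\{(\lambda-i)^{-1}:\lambda\in\sigma(A)\}$ together with Borelness of the spectrum map on norm-bounded balls of $\mathbb{B}(H)$ with SOT; but the direct computation of $\operatorname{dist}(\lambda,\sigma(A))$ above seems the most economical.
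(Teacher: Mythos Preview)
Your proof is correct, but it takes a different and considerably longer route than the paper's. The paper dispatches the lemma in one line: by the definition of the Effros structure it suffices to show that $\{A:\sigma(A)\cap(a,b)=\emptyset\}$ is Borel for every open interval $(a,b)$, and in fact this set is SRT-\emph{closed} by a standard stability result for spectral gaps under strong resolvent convergence (cited as \cite[Theorem VIII.24(a)]{ReedSimonI} or \cite[Lemma 1.6]{Simon95}). Thus the complement $\{A:\sigma(A)\cap(a,b)\neq\emptyset\}$ is actually open, not merely Borel.

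Your argument instead computes $\operatorname{dist}(\lambda,\sigma(A))$ explicitly from the resolvent norm, shows that $A\mapsto\|(A-\lambda-i)^{-1}\|$ is lower semicontinuous (hence $A\mapsto\operatorname{dist}(\lambda,\sigma(A))$ is upper semicontinuous, though you only claim Borel), and then passes through closed intervals and a compactness argument. This is entirely sound and has the merit of being self-contained: you essentially re-prove the spectral-gap stability result rather than cite it, and you make the Borel complexity explicit. The cost is the extra bookkeeping---the reduction from open to closed intervals and the countable intersection over $m$---none of which is needed once one knows the gap set is closed outright. Your suggested alternative via $\sigma((A-i)^{-1})$ would also work but, as you note, is no shorter.
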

\begin{proof}
It clearly suffices to show that for any $a,b\in \mathbb{R}\ (a<b)$, the set $\mathcal{U}=\{A\in {\rm{SA}}(H);\ \sigma(A)\cap (a,b)=\emptyset\}$ is Borel. But it is well-known that $\mathcal{U}$ is in fact  SRT-closed (see e.g., \cite[Theorem VIII.24 (a)]{ReedSimonI} or \cite[Lemma 1.6]{Simon95}). 
\end{proof}
Next we show the Borelness of $A\mapsto \sigma_{\rm{ess}}(A)$. Note however that $\mathcal{V}=\{A\in {\rm{SA}}(H); \sigma_{\rm{ess}}(A)\cap (a,b)=\emptyset \}$ is neither open nor closed. 
In fact $A\mapsto \sigma_{\rm{ess}}(A)$ behaves quite discontinuously (with respect to any compatible Polish topology on $\mathcal{F}(\mathbb{R})$):
\begin{proposition}
Let $K,L\in \mathcal{F}(\mathbb{R})$ be nonempty. Then there exists $\{A_n\}_{n=1}^{\infty}\subset {\rm{SA}}(H)$ and $A\in {\rm{SA}}(H)$ with the property that 
\[\sigma_{\rm{ess}}(A_n)=K\ (n\in \mathbb{N}),\ \ \sigma_{\rm{ess}}(A)=L,\ \ \ A_n\stackrel{n\to \infty}{\to} A\ \ \ \ {\rm{in}}\ {\rm{SA}}(H).\]
\end{proposition}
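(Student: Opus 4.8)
Here is how I would go about it.

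The plan is to realise $A$ and all the $A_n$ as diagonal operators in one fixed orthonormal basis $\{\xi_\nu\}_{\nu\in\mathbb{N}}$ of $H$, which turns the statement into elementary bookkeeping with multisets of reals. I will use two standard facts about a diagonal operator $D=\sum_\nu d_\nu\nai{\xi_\nu}{\ \cdot\ }\xi_\nu$ with $(d_\nu)_\nu\subset\mathbb{R}$. \emph{(i)} Weyl's criterion (Theorem~\ref{thm: Weyl criterion}), tested against suitable subsequences of $\{\xi_\nu\}$, gives that $\lambda\in\sigma_{\rm{ess}}(D)$ if and only if $\{\nu:|d_\nu-\lambda|<\varepsilon\}$ is infinite for every $\varepsilon>0$; consequently $\sigma_{\rm{ess}}(D)$ is insensitive to altering or deleting finitely many of the entries $d_\nu$, and also to deleting any subfamily of entries that forms a sequence tending to $+\infty$. \emph{(ii)} If $A_k$ and $A$ are diagonal in $\{\xi_\nu\}$ with real entries $a_{k,\nu},a_\nu$ and $a_{k,\nu}\to a_\nu$ for every $\nu$, then $A_k\stackrel{\rm{SRT}}{\to}A$: indeed, for each $\xi\in H$,
\[\bigl\|\bigl((A_k-i)^{-1}-(A-i)^{-1}\bigr)\xi\bigr\|^2=\sum_\nu\Bigl|\tfrac{1}{a_{k,\nu}-i}-\tfrac{1}{a_\nu-i}\Bigr|^2\,|\nai{\xi_\nu}{\xi}|^2,\]
each summand tends to $0$ and is bounded by $4|\nai{\xi_\nu}{\xi}|^2$, so the series tends to $0$ by dominated convergence --- which is exactly the convergence $(A_k-i)^{-1}\xi\to(A-i)^{-1}\xi$ that defines SRT.

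Next I would fix the data. Split $\mathbb{N}$ into two infinite sets, enumerated $P=\{\pi_1,\pi_2,\dots\}$ and $Q=\{\omega_1,\omega_2,\dots\}$. By the classical construction behind \emph{(i)} --- list a countable dense subset of $L$, resp.\ of $K$, repeating each listed point infinitely often --- choose sequences $(b_p)_{p\ge1}$ in $L$ and $(k_m)_{m\ge1}$ in $K$ such that a diagonal operator whose multiset of eigenvalues is $(b_p)_p$, resp.\ $(k_m)_m$, has essential spectrum $L$, resp.\ $K$. Then define $A,A_n\in{\rm{SA}}(H)$ to be the diagonal operators in $\{\xi_\nu\}$ with entries
\[a_{\pi_p}:=b_p,\qquad a_{\omega_q}:=q\qquad(p,q\ge1),\]
\[a_{n,\pi_p}:=\begin{cases}b_p&\text{if }p\le n,\\ p&\text{if }p>n,\end{cases}\qquad\qquad a_{n,\omega_q}:=\begin{cases}q&\text{if }q\le n,\\ k_{q-n}&\text{if }q>n.\end{cases}\]
Having real entries, these are genuine (possibly unbounded) self-adjoint operators.

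Then I would run three verifications. \textbf{Convergence.} For each fixed $\nu$ one has $a_{n,\nu}=a_\nu$ once $n$ is large --- for $n\ge p$ if $\nu=\pi_p$, for $n\ge q$ if $\nu=\omega_q$ --- so $a_{n,\nu}\to a_\nu$ for every $\nu$ and hence $A_n\stackrel{\rm{SRT}}{\to}A$ by \emph{(ii)}. \textbf{The limit.} The multiset of eigenvalues of $A$ is $\{b_p:p\ge1\}\uplus\{1,2,3,\dots\}$; deleting the second block (a sequence $\to+\infty$) leaves $(b_p)_p$, so $\sigma_{\rm{ess}}(A)=L$ by \emph{(i)}. \textbf{The approximants.} The multiset of eigenvalues of $A_n$ is $\{b_1,\dots,b_n\}\uplus\{n+1,n+2,\dots\}\uplus\{1,\dots,n\}\uplus\{k_m:m\ge1\}$; deleting the two finite blocks together with the block $\{n+1,n+2,\dots\}$ (again $\to+\infty$) leaves $(k_m)_m$, so $\sigma_{\rm{ess}}(A_n)=K$ for every $n$ by \emph{(i)}. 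This produces the required family.

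The only step that takes foresight rather than routine computation is the design of the construction, which is forced by two opposing demands. On the one hand the part of $A_n$ that must produce $L$ in the limit has to have \emph{empty} essential spectrum at every finite stage $n$ --- its multiplicities may accumulate only as $n\to\infty$; the toy model is $\sum_{\nu\le n}0\cdot\nai{\xi_\nu}{\ \cdot\ }\xi_\nu+\sum_{\nu>n}\nu\,\nai{\xi_\nu}{\ \cdot\ }\xi_\nu\stackrel{\rm{SRT}}{\to}0$, whose approximants have empty essential spectrum while the limit has essential spectrum $\{0\}$. On the other hand the part of $A_n$ that realises $K$ has to be ``evacuated'' in the limit, each of its entries being eventually overwritten by a value running off to $+\infty$. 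Once this two-block shape is found, there remains the small but genuine point that sending the escaping entries through $\{n+1,n+2,\dots\}$ does no harm even when $K$ or $L$ is unbounded: that block has no finite accumulation point, so by \emph{(i)} it never contributes to $\sigma_{\rm{ess}}(A_n)$, regardless of whether its values fall inside $K$ or $L$.
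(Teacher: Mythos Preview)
Your argument is correct. The two facts (i) and (ii) you invoke are indeed immediate from Weyl's criterion and the dominated--convergence computation, and your three verifications go through without gaps; in particular you are right that the integer ``padding'' blocks contribute at most finitely many indices to $\{\nu:|d_\nu-\lambda|<\varepsilon\}$ for any fixed $\lambda,\varepsilon$, so they never affect the essential spectrum even when $K$ or $L$ happens to contain integers.

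Your route is genuinely different from the paper's. The paper decomposes $H=\bigoplus_{k\ge1}H_k$ into infinitely many infinite-dimensional summands, picks dense sequences $\{\lambda_k\}\subset K$ and $\{\mu_k\}\subset L$, sets $A=\bigoplus_k\mu_k 1_{H_k}$, and on each $H_k$ lets $A_n$ act as $\mu_k$ on the first $n$ basis vectors and as $\lambda_k$ on the rest (for $k\le n$), or simply as $\lambda_k 1_{H_k}$ (for $k>n$). Thus in the paper every $\lambda_k$ already has \emph{infinite} multiplicity in each $A_n$, and every $\mu_k$ has infinite multiplicity in $A$, so the identification of the essential spectra is a one-line appeal to the definition; SRT convergence is checked blockwise. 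Your construction instead lives in a single orthonormal basis and replaces ``infinite multiplicity'' by ``dense sequence with each value repeated infinitely often'', using a stream of integers escaping to $+\infty$ as inert filler that is gradually swapped in and out. The paper's version is marginally more transparent (no need to argue that the filler is harmless), while yours has the pleasant feature of making the entrywise eventual stabilisation $a_{n,\nu}=a_\nu$ completely explicit, so the SRT convergence is immediate from (ii) without a separate blockwise check.
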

\begin{proof}For each $k\in \mathbb{N}$, let $H_k$ be a separable infinite-dimensional Hilbert space with CONS $\{\xi_{k,i}\}_{i=1}^{\infty}$, and let $e_{k,i}$ be the projection of $H_k$ onto $\mathbb{C}\xi_{k,i}\ (k,i\in \mathbb{N})$ Set $H=\bigoplus_{n=1}^{\infty}H_k$, and let $\{\lambda_n\}_{n=1}^{\infty}$ (resp. $\{\mu_n\}_{n=1}^{\infty}$) be a dense subset of $K$ (resp. $L$). 
For each $n\in \mathbb{N}$, define $A_{n,k}\in {\rm{SA}}(H_k)$ by 
\[A_{n,k}:=\begin{cases}
\displaystyle \mu_k\sum_{i=1}^{n}e_{k,i}+\lambda_k\sum_{i=n+1}^{\infty}e_{k,i} & (1\le k\le n)\\
\ \ \ \ \ \ \ \ \ \ \ \lambda_k1_{H_k} & (k>n).
\end{cases}\]
Define $A:=\bigoplus_{k=1}^{\infty}\mu_k1_{H_k}$, and $A_n:=\bigoplus_{k=1}^{\infty}A_{n,k}$. 
It holds that $\sigma_{\rm{ess}}(A)=\overline{\{\mu_k\}_{k=1}^{\infty}}=L$. 
On the other hand, $A_n$ is diagonalizable with eigenvalues $\{\lambda_k\}_{k=1}^{\infty}$ of infinite multiplicity and $\{\mu_1,\cdots,\mu_n\}$ of finite multiplicity (possibly some $\mu_i$ and $\lambda_j$ are equal). Therefore $\sigma_{\rm{ess}}(A_n)=\overline{\{\lambda_k\}_{k=1}^{\infty}}=K$. 
By construction, it holds that for each $k\in \mathbb{N}$, we have $A_{n,k}\stackrel{n\to \infty}{\to} \mu_k1_{H_k}$ (SRT). Therefore $A_n\stackrel{n\to \infty}{\to}A$ (SRT) holds. 
\end{proof}
\begin{theorem}\label{thm: taking ess spectrum is Borel}
The map $\Phi\colon {\rm{SA}}(H)\ni A\mapsto \sigma_{\rm{ess}}(A)\in \mathcal{F}(\mathbb{R})$ is Borel.
\end{theorem}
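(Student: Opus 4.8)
The plan is to exhibit $\sigma_{\rm{ess}}(A)$ as a countable intersection of closed subsets of $\mathbb{R}$ depending on $A$ in a Borel fashion, and then to invoke Christensen's Theorem \ref{thm: Christensen}: since $\mathbb{R}$ is $\sigma$-compact, binary intersection on $\mathcal{F}(\mathbb{R})$ is Borel, and (as explained in the last paragraph) this upgrades to Borelness of the countable-intersection map $\mathcal{F}(\mathbb{R})^{\mathbb{N}}\ni (F_k)_{k\ge1}\mapsto \bigcap_{k\ge1}F_k\in\mathcal{F}(\mathbb{R})$. Fix once and for all a countable norm-dense subset $\mathcal{K}=\{K_n\}_{n\ge1}$ of the norm-separable Banach space $\mathbb{K}(H)_{\rm{sa}}$. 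The heart of the argument is the identity
\[\sigma_{\rm{ess}}(A)=\bigcap_{n\ge1}\sigma(A+K_n),\qquad A\in {\rm{SA}}(H).\]

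For the inclusion $\subseteq$, recall Weyl's theorem --- valid for all self-adjoint operators --- that $\sigma_{\rm{ess}}$ is invariant under compact self-adjoint perturbations; hence $\sigma_{\rm{ess}}(A)=\sigma_{\rm{ess}}(A+K_n)\subseteq\sigma(A+K_n)$ for every $n$. For $\supseteq$ it suffices, given $\mu\in\mathbb{R}\setminus\sigma_{\rm{ess}}(A)$, to produce a single $K_0\in\mathbb{K}(H)_{\rm{sa}}$ with $\delta:={\rm{dist}}(\mu,\sigma(A+K_0))>0$: for then, choosing $n$ with $\|K_n-K_0\|<\delta$, the elementary stability estimate $\sigma(S+T)\subseteq\{\lambda:{\rm{dist}}(\lambda,\sigma(S))\le\|T\|\}$, valid whenever $S$ and $S+T$ are self-adjoint and $T$ is bounded, gives $\mu\notin\sigma(A+K_n)$. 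If $\mu\notin\sigma(A)$ we take $K_0=0$. Otherwise $\mu\in\sigma_{\rm{d}}(A)$ is an isolated eigenvalue of finite multiplicity; letting $P:=E_A(\{\mu\})$ (a finite-rank projection) and fixing any real $\nu\neq\mu$, put $K_0:=(\nu-\mu)P\in\mathbb{K}(H)_{\rm{sa}}$. Since $P$ commutes with $A$ and $AP=\mu P$, with respect to the decomposition $H=PH\oplus(1-P)H$ we have $A+K_0=\nu P\oplus A'$, where $A':=A|_{(1-P)H}$; as $\mu$ is isolated in $\sigma(A)$ and $P$ is its full spectral projection, $\mu$ lies in the resolvent set of $A'$, so $\mu\notin\{\nu\}\cup\sigma(A')=\sigma(A+K_0)$. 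This proves the identity.

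Granting the identity, Borelness of $\Phi$ follows formally. For each fixed $n$, the map $A\mapsto A+K_n$ is continuous from ${\rm{SA}}(H)$ to ${\rm{SA}}(H)$ by Proposition \ref{prop: B(H)_sa acts on SA(H) continuously}, and $A\mapsto\sigma(A)$ is Borel by Lemma \ref{lem: spectrum is Borel}; hence $A\mapsto\sigma(A+K_n)$ is Borel, so $A\mapsto(\sigma(A+K_n))_{n\ge1}\in\mathcal{F}(\mathbb{R})^{\mathbb{N}}$ is Borel, and composing with the countable-intersection map shows $\Phi(A)=\bigcap_n\sigma(A+K_n)$ is Borel. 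It remains to check that the countable-intersection map is Borel. Iterating Christensen's binary-intersection result gives that $(F_k)_{k\ge1}\mapsto\bigcap_{k\le N}F_k$ is Borel for each $N$. Moreover, for rationals $a<b$ one has $\bigl(\bigcap_{k\ge1}F_k\bigr)\cap(a,b)\neq\emptyset$ if and only if there are rationals $a<c<d<b$ such that $\bigl(\bigcap_{k\le N}F_k\bigr)\cap[c,d]\neq\emptyset$ for every $N$ --- the nontrivial direction being the finite-intersection property of the nested compact sets $F_k\cap[c,d]$. Since $\{G\in\mathcal{F}(\mathbb{R}):G\cap[c,d]\neq\emptyset\}=\bigcap_{m\ge1}\{G:G\cap(c-\tfrac{1}{m},d+\tfrac{1}{m})\neq\emptyset\}$ is Borel in the Effros structure, this exhibits the preimage of each Effros generator $\{G:G\cap(a,b)\neq\emptyset\}$ under the countable-intersection map as a Borel set, as required.

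The main obstacle --- the one genuinely non-formal point --- is the passage from the \emph{uncountable} intersection $\bigcap_{K\in\mathbb{K}(H)_{\rm{sa}}}\sigma(A+K)$ (which already equals $\sigma_{\rm{ess}}(A)$ by Weyl's theorem together with the finite-rank removal trick, but is not visibly Borel in $A$) to a \emph{countable} one. This reduction needs both that every discrete-spectrum point is removed by a single finite-rank perturbation and that the spectrum of a self-adjoint operator is norm-stable, so that a fixed countable norm-dense net of compacts already suffices; and it needs Christensen's Theorem, i.e. the $\sigma$-compactness of $\mathbb{R}$, without which $\bigcap_n\sigma(A+K_n)$ need not be Borel in $(\sigma(A+K_n))_{n}$. (A Christensen-free alternative would be to prove directly that $A\mapsto E_A([c,d])$ is Borel from ${\rm{SA}}(H)$ into $(\mathbb{B}(H),{\rm{SOT}})$ --- approximating $1_{[c,d]}$ from above by continuous functions and using that $A\mapsto f(A)$ is ${\rm{SRT}}$-to-${\rm{SOT}}$ continuous for $f\in C_b(\mathbb{R})$ --- and to note that $\sigma_{\rm{ess}}(A)\cap(a,b)\neq\emptyset$ iff ${\rm{rank}}\,E_A([c,d])=\infty$ for some rationals $a<c<d<b$; but the route above fits better the machinery already in place.)
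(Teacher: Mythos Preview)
Your proof is correct and follows essentially the same approach as the paper: both establish the identity $\sigma_{\rm{ess}}(A)=\bigcap_{n}\sigma(A+K_n)$ for a countable norm-dense $\{K_n\}\subset\mathbb{K}(H)_{\rm{sa}}$ (via Weyl's invariance for $\subseteq$ and a finite-rank shift of an isolated eigenvalue plus norm-stability of the spectrum for $\supseteq$), and then use Christensen's theorem together with the finite-intersection property of compact intervals to pass from finite to countable intersections. The only organizational difference is that you factor the argument through Borelness of the countable-intersection map $\mathcal{F}(\mathbb{R})^{\mathbb{N}}\to\mathcal{F}(\mathbb{R})$, whereas the paper works directly with the sets $\{A:\sigma_{\rm{ess}}(A)\cap[a,b]\neq\emptyset\}$; the underlying compactness argument is identical.
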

\begin{lemma}\label{lem: intersection of compact perturbations=essential}
Let $A\in {\rm{SA}}(H)$, and let $\mathcal{K}$ be a norm-dense subset of $\mathbb{K}(H)_{\rm{sa}}$. Then 
the following equality holds: 
\[\displaystyle \sigma_{\rm{ess}}(A)=\bigcap_{K\in \mathcal{K}}\sigma(A+K).\]
\end{lemma}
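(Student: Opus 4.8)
The plan is to prove the equality $\sigma_{\rm{ess}}(A)=\bigcap_{K\in \mathcal{K}}\sigma(A+K)$ by two inclusions, relying on the stability of the essential spectrum under compact perturbations and on the Weyl--von Neumann Theorem \ref{thm: Weyl-von Neumann}. For the inclusion $\subseteq$, recall that $\sigma_{\rm{ess}}$ is invariant under compact perturbations (this is Weyl's theorem, stated in the introduction and valid for unbounded self-adjoint operators): for every $K\in \mathbb{K}(H)_{\rm{sa}}$ we have $\sigma_{\rm{ess}}(A+K)=\sigma_{\rm{ess}}(A)$, hence $\sigma_{\rm{ess}}(A)=\sigma_{\rm{ess}}(A+K)\subseteq \sigma(A+K)$. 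Intersecting over all $K\in \mathcal{K}$ gives $\sigma_{\rm{ess}}(A)\subseteq \bigcap_{K\in \mathcal{K}}\sigma(A+K)$. Note that here the density of $\mathcal{K}$ is not needed; it will only be used for the reverse inclusion, or alternatively one can first prove the lemma for $\mathcal{K}=\mathbb{K}(H)_{\rm{sa}}$ and then upgrade.

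For the reverse inclusion, I would show that if $\lambda\notin \sigma_{\rm{ess}}(A)$, then there is some $K\in \mathcal{K}$ with $\lambda\notin \sigma(A+K)$. Suppose $\lambda\in \mathbb{R}\setminus \sigma_{\rm{ess}}(A)$. Then $\lambda$ is either in the resolvent set of $A$ (in which case $K=0$ works, assuming $0\in\mathcal{K}$, or a small perturbation does) or $\lambda$ is an isolated eigenvalue of finite multiplicity; in the latter case choose $\varepsilon>0$ so small that $(\lambda-\varepsilon,\lambda+\varepsilon)\cap \sigma(A)=\{\lambda\}$. Let $e:=E_A(\{\lambda\})$, a finite-rank projection. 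The idea is to ``push'' the eigenvalue $\lambda$ away by a finite-rank (hence compact) perturbation: take $K_0:=\varepsilon e$ (or $-\varepsilon e$, or any nonzero scalar times $e$ that moves $\lambda$ out of the gap), so that $A+K_0$ agrees with $A$ on $(I-e)H$ and equals $(\lambda+\varepsilon)e$ on $eH$; then $\lambda\notin \sigma(A+K_0)$, and in fact $(\lambda-\varepsilon,\lambda+\varepsilon)$ contains a smaller gap around $\lambda$ in $\sigma(A+K_0)$. Since $K_0\in \mathbb{K}(H)_{\rm{sa}}$ and $\mathcal{K}$ is norm-dense, pick $K\in \mathcal{K}$ with $\|K-K_0\|$ small enough that $\lambda$ still lies in a spectral gap of $A+K$: this uses the elementary fact that for self-adjoint operators $S,T$, $\operatorname{dist}(\mu,\sigma(T))\ge \operatorname{dist}(\mu,\sigma(S))-\|S-T\|$, so if $\operatorname{dist}(\lambda,\sigma(A+K_0))\ge \delta>0$ and $\|K-K_0\|<\delta$ then $\lambda\notin \sigma(A+K)$. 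Hence $\lambda\notin \bigcap_{K\in \mathcal{K}}\sigma(A+K)$, completing the inclusion $\bigcap_{K\in \mathcal{K}}\sigma(A+K)\subseteq \sigma_{\rm{ess}}(A)$.

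The only genuinely delicate point is handling the case where $\lambda$ is in the resolvent set of $A$ but $0\notin\mathcal{K}$, and more generally making sure the perturbation $K_0$ stays compact (finite-rank, since $e$ is finite rank, so this is automatic) while genuinely removing $\lambda$ from the spectrum with a quantitative gap; the scalar-times-spectral-projection construction handles this cleanly. A subtlety worth a sentence is that one should check $\sigma(A+K_0)\cap(\lambda-\varepsilon',\lambda+\varepsilon')=\emptyset$ for some $\varepsilon'>0$: on $(I-e)H$ the operator is $A|_{(I-e)H}$ whose spectrum avoids $(\lambda-\varepsilon,\lambda+\varepsilon)$ entirely (since we removed the only point $\lambda$), and on the finite-dimensional space $eH$ the operator is $(\lambda+\varepsilon)\,\mathrm{id}$; so taking $\varepsilon'=\varepsilon/2$ works. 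I expect the writing to be short: the forward inclusion is immediate from Weyl's theorem, and the reverse is a one-paragraph perturbation argument using norm-density of $\mathcal{K}$.
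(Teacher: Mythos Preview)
Your proposal is correct and follows essentially the same route as the paper: Weyl's stability theorem for the inclusion $\subseteq$, and for $\supseteq$ a finite-rank perturbation built from the spectral projection $e=E_A(\{\lambda\})$ to eject an isolated eigenvalue, followed by a norm-density approximation in $\mathcal{K}$. The only cosmetic differences are that the paper takes $K_0=e$ (rather than $\varepsilon e$) and verifies that $\lambda\notin\sigma(A+K)$ for nearby $K\in\mathcal{K}$ via an explicit Neumann series instead of the spectral distance inequality you quote; also, you are a bit more careful than the paper in explicitly treating the case $\lambda\in\rho(A)$, which the paper's phrasing (``we show that $\sigma_{\rm d}(A)\cap\bigcap_{K\in\mathcal{K}}\sigma(A+K)=\emptyset$'') glosses over, though the same argument with $K_0=0$ covers it.
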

\begin{proof}
By Weyl's Theorem, the essential spectra are invariant under compact perturbations (Theorem \ref{thm: von Neumann theorem} (2)$\Rightarrow$(1)). Therefore
\[\sigma_{\rm{ess}}(A)=\bigcap_{K\in \mathbb{K}(H)_{\rm{sa}}}\sigma_{\rm{ess}}(A+K)\subset \bigcap_{K\in \mathbb{K}(H)_{\rm{sa}}}\sigma (A+K)\subset \bigcap_{K\in \mathcal{K}}\sigma(A+K).\]
To prove the opposite inclusion, we show that $\sigma_{{\rm{d}}}(A)\cap \bigcap_{K\in \mathcal{K}}\sigma(A+K)=\emptyset$. If $\sigma_{\rm{d}}(A)=\emptyset$, this is obvious, so we assume that $\sigma_{\rm{d}}(A)\neq \emptyset$. Let $E_A(\cdot)$ be the spectral measure of $A$, and let $\lambda \in \sigma_{\rm{d}}(A)$. Then by the definition of the discrete spectra, there exists $\delta>0$ such that $E_A((\lambda-\delta,\lambda+\delta))=E_A(\{\lambda\})$ has rank $n\in \mathbb{N}$. Put $K:=E_A(\{\lambda\})\in \mathbb{K}(H)_{\rm{sa}}$ (which is of finite rank). Then 
\[A+K-\lambda=E_A(\{\lambda\})+(A-\lambda)E_A(\mathbb{R}\setminus (\lambda-\delta,\lambda+\delta)).\] 
This shows that $A+K-\lambda$ has the bounded inverse $(A+K-\lambda)^{-1}$. 
Choose by density an element $K_0\in \mathcal{K}$ such that $\|K-K_0\|<1/\|(A+K-\lambda)^{-1}\|$. Then 
$\|(A+K-\lambda)^{-1}(K_0-K)\|<1$, and $1+(A+K-\lambda)^{-1}(K_0-K)$ is invertible in $\mathbb{B}(H)$. It holds that
\eqa{
A+K_0-\lambda&=A+K-\lambda+(K_0-K)\\
&=(A+K-\lambda)\{1+(A+K-\lambda)^{-1}(K_0-K)\}
}
has the bounded inverse $(A+K_0-\lambda)^{-1}=\{1+(A+K-\lambda)^{-1}(K_0-K)\}^{-1}(A+K-\lambda)^{-1}$. 
Therefore $\lambda\notin \sigma(A+K_0)$, and we obtain 
\[
\sigma_{\rm{d}}(A)\subset \mathbb{R}\setminus \bigcap_{K\in \mathcal{K}}\sigma(A+K).\]
This finishes the proof.  
\end{proof}
\begin{proof}[Proof of Theorem \ref{thm: taking ess spectrum is Borel}]
Let $\mathcal{K}=\{K_n\}_{n=1}^{\infty}$ be a countable norm-dense subset of $\mathbb{K}(H)_{\rm{sa}}$. By Lemma \ref{lem: intersection of compact perturbations=essential}, we have 
\begin{equation}
\sigma_{\rm{ess}}(A)=\bigcap_{n=1}^{\infty}\sigma(A+K_n),\ \ \ \ \ \ A\in {\rm{SA}}(H).\label{eq: countable intersection and ess}
\end{equation}
By Lemma \ref{lem: spectrum is Borel}, the map ${\rm{SA}}(H)\ni A\mapsto \sigma(A)\in \mathcal{F}(\mathbb{R})$ is Borel. To show that $\Phi:A\mapsto \sigma_{\rm{ess}}(A)$ is Borel, we have only to show that the set $\mathcal{B}:=\{A\in {\rm{SA}}(H);\ \sigma_{\rm{ess}}(A)\cap [a,b]\neq \emptyset\}$ is Borel for every closed interval $[a,b]$ in $\mathbb{R}$ (since every open set in a metrizable space is $F_{\sigma}$). 
Now we use the following equivalence:
\begin{equation}
\bigcap_{n=1}^{\infty}\sigma(A+K_n)\cap [a,b]\neq \emptyset\Leftrightarrow \bigcap_{n=1}^N\sigma(A+K_n)\cap [a,b]\neq \emptyset \ \ \ {\rm{for\ all}}\ N\in \mathbb{N}.\label{eq: compactness and ess spec}
\end{equation}
Indeed, $\Rightarrow$ is obvious, and $\Leftarrow$ follows from the finite intersection property of the compact set $[a,b]$. 
Then for each $A\in {\rm{SA}}(H)$, we deduce by (\ref{eq: countable intersection and ess}) and (\ref{eq: compactness and ess spec}) that
\eqa{
\mathcal{B}&=\left \{A\in {\rm{SA}}(H); \bigcap_{n=1}^{\infty}\sigma(A+K_n)\cap [a,b]\neq \emptyset \right \}\\
&=\bigcap_{k=1}^{\infty}\underbrace{\left \{ A\in {\rm{SA}}(H); \bigcap_{n=1}^{k}\sigma(A+K_n)\cap [a,b]\neq \emptyset \right \}}_{=:\mathcal{B}_k}
}
Therefore it is enough to show that $\mathcal{B}_k$ is Borel for each $k\in \mathbb{N}$. Recall that by Christensen's Theorem \ref{thm: Christensen}, the intersection map $I_2\colon \mathcal{F}(\mathbb{R})\times \mathcal{F}(\mathbb{R})\ni (K_1,K_2)\mapsto K_1\cap K_2\in \mathcal{F}(\mathbb{R})$ is Borel. By inductive argument, the $k$-fold intersection map $I_k\colon \mathcal{F}(\mathbb{R})^k\ni (K_1,\cdots,K_k)\mapsto \bigcap_{i=1}^kK_i\in \mathcal{F}(\mathbb{R})$ is Borel. 
Since the addition map $\tau_n\colon {\rm{SA}}(H)\ni A\mapsto A+K_n\in {\rm{SA}}(H)$ is a homeomorphism for each $n\in \mathbb{N}$, the Borelness of $\Phi_0:{\rm{SA}}(H)\ni A\mapsto \sigma(A)\in \mathcal{F}(\mathbb{R})$ implies that the map
\[\Psi_k:=I_k\circ (\Phi_0\circ \tau_1\times\cdots \times \Phi_0\circ \tau_k)\colon {\rm{SA}}(H)\ni A\mapsto \bigcap_{i=1}^k\sigma(A+K_i)\in \mathcal{F}(\mathbb{R})\]
is Borel. Thus $\mathcal{B}_k=\Psi_k^{-1}(\{K\in \mathcal{F}(\mathbb{R});\ K\cap [a,b]\neq \emptyset\})$ is Borel, so $\Phi:A\mapsto \sigma_{\rm{ess}}(A)$ is Borel.  
\end{proof}

\begin{proof}[Proof of Theorem \ref{thm: E_G^B(H)_sa is smooth}]
Let $A,B\in \mathbb{B}(H)_{\rm{sa}}$. By Weyl-von Neumann Theorem \ref{thm: von Neumann theorem}, $AE_G^{\mathbb{B}(H)_{\rm{sa}}}B$ if and only if $\sigma_{\rm{ess}}(A)=\sigma_{\rm{ess}}(B)$. 
Therefore $\Phi\colon {\rm{SA}}(H)\ni A\mapsto \sigma_{\rm{ess}}(A)\in \mathcal{F}(\mathbb{R})$ restricted to $\mathbb{B}(H)_{\rm{sa}}$ is a Borel reduction of $E_G^{\mathbb{B}(H)_{\rm{sa}}}$ to ${\rm{id}}_{\mathcal{F}(\mathbb{R})}$.  
\end{proof}
\subsection{Non-classification: Unbounded Case}\label{subsec: Turbulence: Unbounded Case}
We have shown that $E_G^{\mathbb{B}(H)_{\rm{sa}}}$ is smooth (Theorem \ref{thm: E_G^B(H)_sa is smooth}), therefore bounded self-adjoint operators are concretely classifiable up to Weyl-von Neumann equivalence by their essential spectra. 
In this section, we show that the situation for unbounded operators is rather different: we show that the $G$-action on ${\rm{SA}}(H)$ has a dense $G_{\delta}$ orbit ${\rm{SA}}_{{\rm{full}}}(H):=\{A\in {\rm{SA}}(H); \sigma_{\rm{ess}}(A)=\mathbb{R}\}$ (Theorem \ref{thm: unbounded case: comeager orbit exists}), whence the action is not generically turbulent. On the other hand we also show that it is unclassifiable by countable structures by showing that $E_G^Y\le_BE_G^{{\rm{SA}}(H)}$ for a generically turbulent $G$-action on a Polish space $Y$ (Theorem \ref{thm: action is weakly turbulent}). In fact we choose $Y={\rm{EES}}(H)=\{A\in {\rm{SA}}(H); \sigma_{\rm{ess}}(A)=\emptyset\}$, which is a small part of ${\rm{SA}}(H)$. $Y$ equipped with the norm resolvent topology is Polish (Proposition \ref{prop: EES with NRT is Polish}), and the $G$-action on $Y$ is just the restriction of the original action to $Y$. 
Note that $A\mapsto \sigma_{\rm{ess}}(A)$ is constant on $Y$, and therefore $\sigma_{\rm{ess}}(\cdot)$ is very far from a complete invariant for $E_G^{{\rm{SA}}(H)}$.  
\subsubsection{$E_G^{{\rm{SA}}(H)}$ is Not Generically Turbulent}
We show that the $G$-action on ${\rm{SA}}(H)$ is not generically turbulent, by showing that there exists a comeager $G$-orbit. More precisely:
\begin{theorem}\label{thm: unbounded case: comeager orbit exists}
The following statements hold:
\begin{list}{}{}
\item[{\rm{(1)}}] The set ${\rm{SA}}_{{\rm{full}}}(H):=\{A\in {\rm{SA}}(H); \sigma_{\rm{ess}}(A)=\mathbb{R}\}$ is a dense $G_{\delta}$ subset of ${\rm{SA}}(H)$. 
\item[{\rm{(2)}}] If $A,B\in {\rm{SA}}(H)$ satisfy $\sigma_{\rm{ess}}(A)=\sigma_{\rm{ess}}(B)=\mathbb{R}$, then $AE_G^{{\rm{SA}}(H)}B$. Consequently,  
${\rm{SA}}_{{\rm{full}}}(H)$ is a dense $G_{\delta}$-orbit of the $G$-action. 
\end{list}
In particular, the $G$-action on ${\rm{SA}}(H)$ is not generically turbulent. 
\end{theorem}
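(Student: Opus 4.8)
The plan is to prove the two assertions separately, since (1) is a density/genericity statement and (2) is a unitary-equivalence-modulo-compacts statement, and together they force the non-turbulence conclusion by Definition of generic turbulence.

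For (1), I would show that $\mathrm{SA}_{\mathrm{full}}(H)$ is $G_\delta$ by writing the condition $\sigma_{\mathrm{ess}}(A)=\mathbb{R}$ in the form $\bigcap_{(p,q)}\{A:\sigma_{\mathrm{ess}}(A)\cap(p,q)\neq\emptyset\}$, the intersection taken over rationals $p<q$; by Theorem~\ref{thm: taking ess spectrum is Borel} each such set is Borel, but that is not enough — I need each to be \emph{comeager}, so that the countable intersection stays comeager (hence dense $G_\delta$ after noting that a comeager Borel set contains a dense $G_\delta$). The heart of the matter is therefore: for fixed $p<q$, the set of $A$ with $\sigma_{\mathrm{ess}}(A)\cap(p,q)\neq\emptyset$ is comeager. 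Equivalently, I want to show the set of $A$ whose essential spectrum \emph{misses} some fixed interval is nowhere dense, i.e. closed with empty interior. Closedness follows much as in Lemma~\ref{lem: spectrum is Borel}/\cite[Theorem VIII.24]{ReedSimonI} applied to all compact perturbations simultaneously together with Lemma~\ref{lem: intersection of compact perturbations=essential}; emptiness of the interior is exactly the Weyl--von Neumann perturbation trick already used in Lemma~\ref{lem: B(H)_sa is standard}: given any $A_0$ and any SRT-basic neighborhood, use Theorem~\ref{thm: Weyl-von Neumann} to diagonalize $A_0$ up to small compact perturbation and then insert, far out along the diagonal (where the spectral projection for the chosen basis vectors is tiny), a block with dense spectrum in $(p,q)$ — this changes the SRT picture on the relevant vectors negligibly while injecting accumulation points into $(p,q)$. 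So $\mathrm{SA}_{\mathrm{full}}(H)$ is dense $G_\delta$.

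For (2), suppose $\sigma_{\mathrm{ess}}(A)=\sigma_{\mathrm{ess}}(B)=\mathbb{R}$; I want $u\in\mathcal{U}(H)$ and $K\in\mathbb{K}(H)_{\mathrm{sa}}$ with $uAu^*+K=B$. The idea is to run the Weyl--von Neumann diagonalization on both sides and then match diagonals. By Theorem~\ref{thm: Weyl-von Neumann} choose compact $K_A,K_B$ with $A+K_A=\sum_n a_n\langle\xi_n,\cdot\rangle\xi_n$ and $B+K_B=\sum_n b_n\langle\eta_n,\cdot\rangle\eta_n$ for orthonormal bases $\{\xi_n\},\{\eta_n\}$; since essential spectra are preserved, the eigenvalue sequences $\{a_n\}$ and $\{b_n\}$ are each dense in $\mathbb{R}$ and, crucially, each value in $\mathbb{R}$ that is attained is attained with infinite multiplicity "in the essential sense" — more precisely $\sigma_{\mathrm{ess}}(\sum a_n e_n)=\overline{\{a_n : a_n \text{ attained infinitely often}\}}\cup(\text{accumulation points})=\mathbb{R}$. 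The key combinatorial lemma is: two diagonal operators with eigenvalue sequences $\{a_n\},\{b_n\}$ are unitarily equivalent modulo a compact (indeed a small compact) perturbation iff one can pair up the indices so that $|a_{\pi(n)}-b_n|\to 0$; and when both sequences have full essential spectrum $\mathbb{R}$ one can always build such a pairing by a back-and-forth argument — exhausting both index sets while at stage $n$ matching an unused index on one side to an unused index on the other within distance $2^{-n}$, which is possible precisely because density of the other sequence guarantees arbitrarily good approximants and the "infinite multiplicity" content guarantees we never run out. Composing the three unitary/compact moves ($A\to A+K_A$, relabel basis by the unitary sending $\xi_{\pi(n)}\mapsto\eta_n$, then subtract $K_B$) and collecting all compact terms yields $uAu^*+K=B$ with $K$ self-adjoint compact.

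Finally, (1) and (2) together say $\mathrm{SA}_{\mathrm{full}}(H)$ is a single $G$-orbit which is dense $G_\delta$, hence comeager; so the $G$-action has a comeager orbit. But a generically turbulent action has every orbit meager (condition (b) of generic turbulence), so the action cannot be generically turbulent — indeed it fails condition (b) outright. I expect the main obstacle to be the back-and-forth matching in (2): one must be careful that "$\sigma_{\mathrm{ess}}=\mathbb{R}$" is genuinely strong enough to feed the pairing — an eigenvalue of finite multiplicity that is isolated would obstruct matching, but full essential spectrum rules out isolated points entirely and leaves only points that are either limits of the sequence or attained infinitely often, which is exactly what the back-and-forth consumes. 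A secondary nuisance is bookkeeping the compactness (not just boundedness) of the accumulated perturbation, which follows since $|a_{\pi(n)}-b_n|\to 0$ makes the diagonal difference operator compact, and $K_A,K_B$ are compact by construction.
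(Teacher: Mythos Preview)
Your approach to (2) is correct and essentially matches the paper's: the paper quotes a permutation lemma from Akhiezer--Glazman (Lemma~\ref{lem: permutation lemma}) whose proof is precisely the back-and-forth you describe, and your observation that full essential spectrum rules out isolated points in the eigenvalue sequence is exactly the hypothesis that lemma needs.

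Your argument for (1), however, has a genuine gap. You claim that the set $\{A:\sigma_{\rm ess}(A)\cap(p,q)=\emptyset\}$ is \emph{closed}, but it is not. Take a CONS $\{\xi_k\}$ and let $A_n$ be diagonal with $A_n\xi_k=0$ for $k\le n$ and $A_n\xi_k=2\xi_k$ for $k>n$. Then $\sigma_{\rm ess}(A_n)=\{2\}$, so $A_n$ lies in $\{A:\sigma_{\rm ess}(A)\cap(-1,1)=\emptyset\}$, yet $A_n\to 0$ in SRT and $\sigma_{\rm ess}(0)=\{0\}\subset(-1,1)$. The appeal to Lemma~\ref{lem: intersection of compact perturbations=essential} does not help: the condition $\bigcap_K\sigma(A+K)\cap(p,q)=\emptyset$ is \emph{not} an intersection over $K$ of the closed conditions $\sigma(A+K)\cap(p,q)=\emptyset$; the quantifier goes the wrong way. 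The complement is only $F_\sigma$, and you need to exhibit that structure explicitly to get $G_\delta$ rather than merely Borel. (Your parenthetical ``a comeager Borel set contains a dense $G_\delta$'' does not prove the set \emph{is} $G_\delta$, which is what the theorem asserts.)

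The paper fixes this by working point-by-point rather than interval-by-interval: for fixed $\lambda$ it writes
\[
\{A:\lambda\notin\sigma_{\rm ess}(A)\}=\bigcup_{n,k}\{A:\operatorname{rank}E_A((\lambda-\tfrac1n,\lambda+\tfrac1n))\le k\},
\]
and shows each set on the right is SRT-closed (this is where the real work is: ranks of spectral projections do not increase under SRT limits, via Lemmas~\ref{lem: strong limit does not increase rank} and \ref{lem: rank of indicator function less than k}). Hence $\{A:\lambda\in\sigma_{\rm ess}(A)\}$ is genuinely $G_\delta$, and intersecting over $\lambda\in\mathbb{Q}$ gives ${\rm SA}_{\rm full}(H)$ as a countable intersection of $G_\delta$ sets, hence $G_\delta$. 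Your density argument via Weyl--von Neumann diagonalization is the same as the paper's.
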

Note that the above theorem shows that (1)$\Rightarrow $(2) of Theorem \ref{thm: von Neumann theorem} holds true for ${\rm{SA}}_{\rm{full}}(H)$ (in fact von Neumann's proof itself works verbatim, as we see below), even though elements in ${\rm{SA}}_{\rm{full}}(H)$ are highly unbounded.
The proof of Theorem \ref{thm: unbounded case: comeager orbit exists} (1) is strongly inspired by the work of B. Simon \cite{Simon95}.  We start from the next result whose proof is divided into a series of elementary lemmata:  
\begin{proposition}\label{prop: essential spec contains lamda is dnese G_delta}
Let $\lambda \in \mathbb{R}$. Then the set $\{A\in {\rm{SA}}(H);\ \lambda \in \sigma_{\rm{ess}}(A)\}$ is a dense $G_{\delta}$ set in ${\rm{SA}}(H)$.
\end{proposition}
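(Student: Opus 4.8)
The plan is to show that $\{A\in {\rm{SA}}(H);\ \lambda\in\sigma_{\rm{ess}}(A)\}$ is $G_\delta$ and dense separately, and in fact it is cleaner to reduce to the case $\lambda=0$ by noting that $A\mapsto A+\lambda\cdot 1$ is a homeomorphism of ${\rm{SA}}(H)$ (by Lemma \ref{lem: Arai's lemma}/Proposition \ref{prop: B(H)_sa acts on SA(H) continuously}) carrying $\{A;\ 0\in\sigma_{\rm{ess}}(A)\}$ onto $\{A;\ \lambda\in\sigma_{\rm{ess}}(A)\}$. So fix $\lambda=0$.

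For the $G_\delta$ part, I would use Weyl's criterion (Theorem \ref{thm: Weyl criterion}): $0\in\sigma_{\rm{ess}}(A)$ iff there is a sequence of unit vectors $\xi_n\in\dom{A}$ tending weakly to $0$ with $\|A\xi_n\|\to 0$. The natural way to detect this in SRT is via resolvents: $0\in\sigma_{\rm{ess}}(A)$ should be equivalent to the statement that $(A-i)^{-1}$ maps a ``weakly null orthonormal-ish'' sequence to vectors of norm close to $1$, i.e.\ that for every $\varepsilon>0$ and every finite-dimensional subspace $F\subset H$ there is a unit vector $\eta\perp F$ with $\|(A-i)^{-1}\eta\|>1-\varepsilon$; equivalently (since $\|(A-i)^{-1}\|\le 1$ always) $(A-i)^{-1}$ fails to be ``small plus compact'' near norm $1$. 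Concretely I expect: $0\in\sigma_{\rm{ess}}(A)$ iff for all $m$ and all finite $J\subset\mathbb{N}$ there exist a unit vector $\xi\in\mathrm{span}\{\xi_n\}$ with $\xi\perp\xi_j\ (j\in J)$ and $\|(A-i)^{-1}\xi\|^2>1-\tfrac1m$. Each such clause ``$\exists$ unit $\xi$ in a fixed countable dense set of the relevant sphere with $\|(A-i)^{-1}\xi\|^2>1-\tfrac1m$'' is an open condition on $A$ in SRT (the map $A\mapsto\|(A-i)^{-1}\xi\|$ is SRT-continuous for fixed $\xi$), and $\{A;\ 0\in\sigma_{\rm{ess}}(A)\}$ is then a countable intersection over $m$ and an appropriate countable cofinal family of finite sets $J$, of countable unions of such open sets — hence $G_\delta$. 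The care needed is to restrict the witnessing $\xi$'s to a fixed countable set so that ``$\exists\xi$'' stays an (open) Borel condition, and to verify the equivalence with Weyl's criterion; this is the first technical lemma.

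For density, I would argue as in Simon \cite{Simon95}: fix $A_0\in{\rm{SA}}(H)$ and a basic SRT-neighbourhood, i.e.\ $\varepsilon>0$, $m\in\mathbb{N}$ and finitely many $\xi_{n_1},\dots,\xi_{n_r}$; I must produce $A$ with $0\in\sigma_{\rm{ess}}(A)$ and $\|(A-i)^{-1}\xi_{n_j}-(A_0-i)^{-1}\xi_{n_j}\|<\varepsilon$ (using the metric $d$ or equivalently the resolvent-at-$i$ description). The idea: pick a unit vector $\zeta$ in $\dom{A_0}$ almost orthogonal to the finitely many relevant vectors and to $A_0$ of those vectors, with $\|A_0\zeta\|$ and $\|(A_0-i)^{-1}\zeta\|$ controlled, and more importantly pick an infinite orthonormal sequence $\{\zeta_k\}$ that is ``almost invisible'' to the finite data, then perturb $A_0$ on the closed span of $\{\zeta_k\}$ to an operator having $0$ as an eigenvalue of infinite multiplicity (e.g.\ redefine it to be $0$ there, or build a block-diagonal correction). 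Because the perturbation is supported on a subspace nearly orthogonal to $\xi_{n_1},\dots,\xi_{n_r}$ and their images under the resolvent, the resolvent applied to each $\xi_{n_j}$ moves by less than $\varepsilon$. A clean implementation: write $H=H_1\oplus H_2$ with $H_1$ finite-dimensional containing $\xi_{n_1},\dots,\xi_{n_r}$ and all $(A_0\pm i)^{-1}\xi_{n_j}$ up to small error — actually, since $A_0$ need not have finite-dimensional invariant subspaces, instead use that the finite data only pins down $(A_0-i)^{-1}$ on an $r$-dimensional subspace, and by Weyl–von Neumann (Theorem \ref{thm: Weyl-von Neumann}) approximate $A_0$ in SRT by a diagonalizable operator, then on a cofinite subset of the diagonal basis replace the eigenvalues by ones accumulating at (or equal to) $0$, which changes the resolvent only on the corresponding coordinates; choosing that cofinite set to avoid the finitely many coordinates carrying most of the mass of the fixed $\xi_{n_j}$'s keeps the resolvent change below $\varepsilon$, exactly as in equation (\ref{eq: SRT approximation by unbounded operators}) of Lemma \ref{lem: B(H)_sa is standard}.

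The main obstacle I anticipate is the $G_\delta$ direction: pinning down a countable family of ``test configurations'' (finite subsets $J$ and witnessing unit vectors $\xi$ from a fixed countable set) for which the resulting countable Boolean combination of open sets is provably equal to $\{A;\ 0\in\sigma_{\rm{ess}}(A)\}$ — i.e.\ proving the ``$\Leftarrow$'' (that passing this countable test battery really forces $0\in\sigma_{\rm{ess}}(A)$ via Weyl's criterion), handling the subtlety that $\|(A-i)^{-1}\|\le 1$ with equality only ``at $0$ in an essential way'', and that a unit vector $\xi$ with $\|(A-i)^{-1}\xi\|$ close to $1$ indeed yields, after applying $A-i$, an approximate eigenvector for eigenvalue $0$. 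Density is comparatively routine given Weyl–von Neumann and the resolvent estimate already used in Lemma \ref{lem: B(H)_sa is standard}.
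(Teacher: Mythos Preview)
Your proposal is correct, and for density it is essentially the paper's argument (Weyl--von Neumann diagonalization followed by replacing tail eigenvalues by $\lambda$, with the resolvent estimate from Lemma \ref{lem: B(H)_sa is standard}). For the $G_\delta$ part, however, you take a genuinely different route.

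The paper argues on the complement: it writes $\{A;\ \lambda\notin\sigma_{\rm ess}(A)\}=\bigcup_{n,k}S_{n,k}$ with $S_{n,k}=\{A;\ {\rm rank}\,E_A((\lambda-\tfrac1n,\lambda+\tfrac1n))\le k\}$, and shows each $S_{n,k}$ is SRT-closed via three short lemmata (rank of an SOT-limit does not exceed the ranks along the sequence; the rank condition on the spectral projection is equivalent to a rank condition on $f(A)$ for every continuous $f$ supported in the interval; and $f(A_m)\to f(A)$ in SOT under SRT-convergence). Your approach instead characterizes $\lambda\in\sigma_{\rm ess}(A)$ directly by a resolvent condition (for every $m$ and every finite $J$ there is a unit $\xi\perp\{\xi_j\}_{j\in J}$ with $\|(A-i)^{-1}\xi\|^2>1-\tfrac1m$), which is a countable intersection of open sets in SRT. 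Two remarks: first, you do not actually need to restrict the witnessing $\xi$ to a countable set, since an arbitrary union of open sets is open and the pairs $(m,J)$ with $J\subset\mathbb N$ finite are already countable; second, the equivalence you need is essentially Lemma \ref{lem: Weyl criterion modified} (proved later in the paper), together with the observation that $\|(A-i)^{-1}\xi\|$ close to $1$ forces $\eta:=(A-i)^{-1}\xi$ to satisfy $\|A\eta\|^2=\|\xi\|^2-\|\eta\|^2\to 0$ while $\eta\to 0$ weakly. The paper's route avoids this extra lemma and reuses machinery (the closed sets $S_{n,k}$) needed elsewhere (e.g.\ Lemma \ref{lem: rank function is Borel}); your route is more self-contained but front-loads the analytic equivalence you correctly flag as the main obstacle.
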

\begin{lemma}\label{lem: close to independent vectors is independent}
Let $k\in \mathbb{N}$, and let $\xi_1,\cdots, \xi_k\in H$ be linearly independent vectors. Then there exists $\varepsilon>0$ such that whenever $\eta_1,\cdots, \eta_k\in H$ satisfy $\|\xi_i-\eta_i\|<\varepsilon\ (1\le i\le k)$, then $\eta_1,\cdots ,\eta_k$ are also linearly independent.
\end{lemma}
\begin{proof}
Assume by contradiction that there exist sequences $\{\eta_j^{(l)}\}_{l=1}^{\infty}\ (1\le j\le k)$ such that $\eta_j^{(l)}\stackrel{l\to \infty}{\to}\xi_j\ (1\le j\le k)$ and $(\eta_1^{(l)},\cdots,\eta_k^{(l)})$ are not linearly independent, whence for each $l\in \mathbb{N}$, there exists $(\lambda_1^{(l)},\cdots,\lambda_k^{(l)})\in \mathbb{C}^k\setminus \{(0,\cdots ,0)\}$ such that 
$\sum_{j=1}^k\lambda_j^{(l)}\eta_j^{(l)}=0$. We may assume (by rescaling) that $\max_{1\le j\le k}|\lambda_j^{(l)}|=1$ for each $l\in \mathbb{N}$. Put $I_j:=\{l\in \mathbb{N};|\lambda_j^{(l)}|=1\}$. Then as $\mathbb{N}=\bigcup_{j=1}^kI_j$, there exists $k_0\in \{1,\cdots,k\}$ such that $I_{k_0}$ is an infinite set. Enumerate $I_{k_0}=\{l_n\}_{n=1}^{\infty}\ (l_1<l_2<\cdots )$. Then by compactness, we may find a subsequence in $I_{k_0}$, still denoted as $\{l_n\}_{n=1}^{\infty}$, such that $\lambda_j^{(l_n)}$ converges to some $\lambda_j\ (1\le j\le k)$. In particular, we have
\[\sum_{j=1}^k\lambda_j\xi_j=\lim_{n\to \infty}\sum_{j=1}^{k}\lambda_j^{(l_n)}\eta_j^{(l_n)}=0.\]
Since $\xi_1,\cdots, \xi_k$ are linearly independent, we have $(\lambda_1,\cdots,\lambda_k)=(0,\cdots,0)$. This is a contradiction, as $|\lambda_{k_0}|=\lim_{n\to \infty}|\lambda_{k_0}^{(l_n)}|=1\neq 0$.  
\end{proof}
\begin{lemma}\label{lem: strong limit does not increase rank}
Let $\{a_n\}_{n=1}^{\infty}\subset \mathbb{B}(H)$ be a sequence converging strongly to $a\in \mathbb{B}(H)$. If ${\rm{rank}}(a_n)\le k\ (n\in \mathbb{N})$ holds for some fixed $k\in \mathbb{N}$, then ${\rm{rank}}(a)\le k$ holds. 
\end{lemma}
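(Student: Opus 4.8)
The plan is to argue by contradiction, exploiting the fact that linear independence of a finite tuple is an open condition, which is precisely the content of Lemma \ref{lem: close to independent vectors is independent}. Suppose that ${\rm{rank}}(a)\ge k+1$. Then by definition of the rank there exist vectors $\xi_0,\dots,\xi_k\in H$ such that the images $a\xi_0,\dots,a\xi_k$ are linearly independent in $H$.

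Next I would apply Lemma \ref{lem: close to independent vectors is independent} to the linearly independent $(k+1)$-tuple $(a\xi_0,\dots,a\xi_k)$: there is $\varepsilon>0$ such that any tuple $(\eta_0,\dots,\eta_k)$ with $\|\eta_j-a\xi_j\|<\varepsilon$ for all $0\le j\le k$ is again linearly independent. Now I invoke the hypothesis $a_n\to a$ (SOT): for each of the finitely many fixed vectors $\xi_0,\dots,\xi_k$ we have $a_n\xi_j\to a\xi_j$ in norm, so there is $N\in\mathbb{N}$ with $\|a_N\xi_j-a\xi_j\|<\varepsilon$ for every $j$. Hence $a_N\xi_0,\dots,a_N\xi_k$ are linearly independent, which forces ${\rm{rank}}(a_N)\ge k+1$, contradicting the assumption ${\rm{rank}}(a_n)\le k$ for all $n$. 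Therefore ${\rm{rank}}(a)\le k$.

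I do not anticipate a genuine obstacle here: the only point requiring a little care is that strong convergence yields norm-convergence $a_n\xi\to a\xi$ only for each fixed $\xi$ separately, but since the argument uses merely the finite collection $\xi_0,\dots,\xi_k$, a single index $N$ serves for all of them simultaneously. One could alternatively phrase this as ``rank is lower semicontinuous with respect to SOT,'' but the contradiction form above dovetails most cleanly with Lemma \ref{lem: close to independent vectors is independent} in the form already proved, so that is the route I would take.
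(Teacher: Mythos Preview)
Your proposal is correct and follows essentially the same approach as the paper: pick finitely many vectors whose images under $a$ are linearly independent, invoke Lemma \ref{lem: close to independent vectors is independent} to obtain a nearby linearly independent tuple, and use SOT-convergence on this finite collection to transfer the independence to some $a_N$. The only cosmetic difference is that the paper phrases it directly (taking $m={\rm rank}(a)$ and concluding $m\le {\rm rank}(a_N)\le k$) rather than by contradiction.
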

\begin{proof}
If $m:={\rm{rank}}(a)$ is $0$, there is nothing to prove. If $m\ge 1$, there exist $\xi_1,\cdots,\xi_m\in H$ such that $a\xi_1,\cdots, a\xi_m$ are linearly independent. 
Since $a_n\xi_j\stackrel{n\to \infty}{\to} a\xi_j\ (1\le j\le m)$, by Lemma \ref{lem: close to independent vectors is independent} there exists $N\in \mathbb{N}$ such that $a_N\xi_1,\cdots,a_N\xi_m$ are linearly independent, whence $m\le {\rm{rank}}(a_N)\le k$. 
\end{proof}
\begin{lemma}\label{lem: rank of indicator function less than k}
Let $A\in {\rm{SA}}(H)$, $(a,b)$ be an open interval in $\mathbb{R}$, and let $k\in \mathbb{N}$. Then ${\rm{rank}}E_A((a,b))\le k$ holds if and only if for every continuous real valued function $f$ on $\mathbb{R}$ with ${\rm{supp}}(f)\subset (a,b)$, ${\rm{rank}}f(A)\le k$ holds.  
\end{lemma}
\begin{proof}
$(\Rightarrow)$ Assume that ${\rm{rank}}E_A((a,b))=n\le k$. Then $AE_A((a,b))=\sum_{i=1}^n\lambda_ie_i$, where $\lambda_i\in (a,b)\ (1\le i\le n)$ and $\{e_i\}_{i=1}^n$ is a pairwise orthogonal projections of rank 1. Then for any continuous function $f$ with ${\rm{supp}}(f)\subset (a,b)$, $f(A)=\sum_{i=1}^nf(\lambda_i)e_i$ has rank less than or equal to $k$. \\
$(\Leftarrow)$ We prove the contrapositive: assume that $m:={\rm{rank}}E_A((a,b))\ge k+1$. Since $E_A((a+\frac{1}{n},b-\frac{1}{n}))\stackrel{n\to \infty}{\to}E_A((a,b))$ (SOT), there is a closed subinterval $[\alpha,\beta]\subsetneq (a,b)$ with ${\rm{rank}}E_A([\alpha,\beta])=m$. Let $f$ be a continuous positive valued function whose support contained in $(a,b)$ such that $f|_{[\alpha,\beta]}=1$. Then ${\rm{rank}}f(A)=m\ge k+1$. 
\end{proof}
\begin{proof}[Proof of Proposition \ref{prop: essential spec contains lamda is dnese G_delta}]
We express the complement of $\{A\in {\rm{SA}}(H);\ \lambda\in \sigma_{\rm{ess}}(A)\}$ as follows:
\eqa{
\left \{A\in {\rm{SA}}(H);\ \lambda \notin \sigma_{\rm{ess}}(A)\right \}&=\bigcup_{\varepsilon>0}\left \{A\in {\rm{SA}}(H);\ E_A((\lambda-\varepsilon,\lambda+\varepsilon)){\rm{\ is\ of\ finite\ rank}}\ \right \}\\
&=\bigcup_{n=1}^{\infty}\bigcup_{k=1}^{\infty}\underbrace{\{A\in {\rm{SA}}(H); {\rm{rank}}E_A((\lambda-\tfrac{1}{n},\lambda+\tfrac{1}{n}))\le k\}}_{=:S_{n,k}}
}
We show that $S_{n,k}$ is closed in ${\rm{SA}}(H)$. Suppose $\{A_m\}_{m=1}^{\infty}\subset S_{n,k}$ converges to $A\in {\rm{SA}}(A)$. Then let $f$ be a continuous function with ${\rm{supp}}(f)\subset (\lambda-\frac{1}{n},\lambda+\frac{1}{n})$. 
Then as $A_m\stackrel{m\to \infty}{\to}A$ (SRT), we have $f(A_m)\stackrel{m\to \infty}{\to}f(A)$ (SOT) by \cite[Theorem VIII.20 (b)]{ReedSimonI}. By Lemma \ref{lem: rank of indicator function less than k}, ${\rm{rank}}f(A_m)\le k$ for each $m\in \mathbb{N}$. Therefore by Lemma \ref{lem: strong limit does not increase rank}, ${\rm{rank}}f(A)\le k$. Since this holds for arbitrary such $f$, Lemma \ref{lem: rank of indicator function less than k} shows that ${\rm{rank}}E_A((\lambda-\tfrac{1}{n},\lambda+\tfrac{1}{n}))\le k$. Therefore $A\in S_{n,k}$. This shows that $\left \{A\in {\rm{SA}}(H);\ \lambda \notin \sigma_{\rm{ess}}(A)\right \}$ is $F_{\sigma}$, so its complement $\{A\in {\rm{SA}}(H);\ \lambda\in \sigma_{\rm{ess}}(A)\}$ is $G_{\delta}$. Next we show the density. Let $A\in {\rm{SA}}(H)$, and let $\mathcal{V}$ be an open neighborhood of $A$. Then by Weyl-von Neumann Theorem \ref{thm: Weyl-von Neumann}, we may find $A_0\in \mathcal{V}$ of the form $A_0=\sum_{n=1}^{\infty}\lambda_ne_n$, where $\{e_n\}_{n=1}^{\infty}$ is a mutually orthogonal projections with sum equal to 1, and $\lambda_n\in \mathbb{R}$. Then put $A_k:=\sum_{n=1}^k\lambda_ne_n+\lambda\sum_{n=k+1}^{\infty}e_n$. Clearly $\lambda \in \sigma_{\rm{ess}}(A_k)$. Furthermore, as $q_k=\sum_{n=k+1}^{\infty}e_n$ tends strongly to 0 as $k\to \infty$, we have
\[
(A_k-i)^{-1}-(A_0-i)^{-1}=\sum_{n=k+1}^{\infty}\left (\frac{1}{\lambda-i}-\frac{1}{\lambda_n-i}\right )e_n\stackrel{k\to \infty}{\to}0\ ({\rm{SOT}}).
\]
Therefore the closure of the set $\{A\in {\rm{SA}}(H);\ \lambda\in \sigma_{\rm{ess}}(A)\}$ intersects $\mathcal{V}$, whence $\{A\in {\rm{SA}}(H);\ \lambda\in \sigma_{\rm{ess}}(A)\}$ is a dense $G_{\delta}$ set. 
\end{proof}
\begin{proof}[Proof of Theorem \ref{thm: unbounded case: comeager orbit exists} (1)]
For each $q\in \mathbb{Q}$, the set $G_q:=\{A\in {\rm{SA}}(H);\ q\in \sigma_{\rm{ess}}(A)\}$ is a dense $G_{\delta}$ set in ${\rm{SA}}(H)$ by Proposition \ref{prop: essential spec contains lamda is dnese G_delta}. Therefore as $\sigma_{\rm{ess}}(A)$ is a closed subset in $\mathbb{R}$,
\[\{A\in {\rm{SA}}(H);\ \sigma_{\rm{ess}}(A)=\mathbb{R}\}=\bigcap_{q\in \mathbb{Q}}G_q\]
is also a dense $G_{\delta}$ set.  
\end{proof}
To finish the proof of Theorem \ref{thm: unbounded case: comeager orbit exists} (2), we need the following variant of a well-known argument used in the proof of Theorem \ref{thm: von Neumann theorem}, (1)$\Rightarrow $(2). 
\begin{lemma}[\cite{AkhiezerGlazman}]\label{lem: permutation lemma}
Let $\{\lambda_n\}_{n=1}^{\infty},\{\mu_n\}_{n=1}^{\infty}$ be sequences of real numbers with the same set of accumulation point $M$. If both $\{\lambda_n\}_{n=1}^{\infty},\{\mu_n\}_{n=1}^{\infty}$ have only finitely many isolated points, then there exists a permutation $\pi$ of $\mathbb{N}$ such that $\lim_{n\to \infty}(\lambda_n-\mu_{\pi(n)})=0$ holds. 
\end{lemma}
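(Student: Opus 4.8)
The plan is to peel off the finitely many ``bad'' (isolated) terms and then solve the remaining problem by an explicit back-and-forth matching. Write $M$ for the common set of accumulation points of $\{\lambda_n\}$ and $\{\mu_n\}$. First I would record what the hypothesis means: a term $\lambda_n$ is an isolated point of the sequence exactly when some neighbourhood of it contains only finitely many terms, i.e.\ exactly when $\lambda_n\notin M$; so ``only finitely many isolated points'' says that $F_\lambda:=\{n\in\mathbb{N}:\lambda_n\notin M\}$ and $F_\mu:=\{n:\mu_n\notin M\}$ are finite. Since the infinite sequence $\{\lambda_n\}$ has cofinitely many terms in $M$, necessarily $M\neq\emptyset$; moreover deleting finitely many terms does not change the set of accumulation points, so $(\lambda_n)_{n\notin F_\lambda}$ and $(\mu_n)_{n\notin F_\mu}$ each have $M$ as their set of accumulation points and all their terms in $M$.

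Second, I would equalize the exceptional sets. Let $N:=\max(|F_\lambda|,|F_\mu|)$ and choose finite sets $F_\lambda'\supseteq F_\lambda$ and $F_\mu'\supseteq F_\mu$ in $\mathbb{N}$ with $|F_\lambda'|=|F_\mu'|=N$. The subsequences $(\lambda_n)_{n\notin F_\lambda'}$ and $(\mu_n)_{n\notin F_\mu'}$ still have all terms in $M$ and accumulation set $M$. Granting the matching claim below, one obtains a bijection $\pi_0:\mathbb{N}\setminus F_\lambda'\to\mathbb{N}\setminus F_\mu'$ with $\lambda_n-\mu_{\pi_0(n)}\to 0$ as $n\to\infty$ through $\mathbb{N}\setminus F_\lambda'$. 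Because $|F_\lambda'|=|F_\mu'|$, extend $\pi_0$ by an arbitrary bijection $F_\lambda'\to F_\mu'$ to get a permutation $\pi$ of $\mathbb{N}$; since $F_\lambda'$ is finite, $\pi$ still satisfies $\lim_{n\to\infty}(\lambda_n-\mu_{\pi(n)})=0$, which is the assertion.

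It remains to prove the \emph{matching claim}: if $(a_n)_{n\in I}$, $(b_n)_{n\in J}$ are sequences indexed by infinite $I,J\subseteq\mathbb{N}$ with all terms in $M$ and with set of accumulation points equal to $M$, then there is a bijection $\sigma:I\to J$ with $a_n-b_{\sigma(n)}\to 0$. I would build $\sigma$ by a back-and-forth recursion over stages $t=1,2,\dots$, keeping a partial injection $\sigma_t$ with finite domain $D_t\subseteq I$ and range $R_t\subseteq J$. At stage $t$: (forth) let $i$ be the least element of $I\setminus D_t$; since $a_i\in M$ is an accumulation point of $(b_n)_{n\in J}$, infinitely many $j\in J$ have $|b_j-a_i|<1/t$, so pick such a $j\notin R_t$ and put $\sigma(i):=j$; then (back) let $j'$ be the least element of $J$ not in the current range, and, using that $b_{j'}\in M$ is an accumulation point of $(a_n)_{n\in I}$, pick a not-yet-used $i'\in I$ with $|a_{i'}-b_{j'}|<1/t$ and put $\sigma(i'):=j'$. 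The forth steps force every element of $I$ into the domain and the back steps force every element of $J$ into the range, so $\sigma=\bigcup_t\sigma_t$ is a bijection $I\to J$, injective throughout by construction. Finally, every pair produced at stage $t$ has error $<1/t$, and at most finitely many pairs are produced at stages $<t_0$; hence all but finitely many $n\in I$ satisfy $|a_n-b_{\sigma(n)}|<1/t_0$, and as $t_0$ is arbitrary, $a_n-b_{\sigma(n)}\to 0$.

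The only delicate point is the bookkeeping inside the matching claim: checking that at each forth/back step a fresh partner within tolerance $1/t$ genuinely exists (this is precisely where one uses that $M$ is the common accumulation set of \emph{both} sequences, not merely that each has accumulation points) and that the limiting error is $0$ while $\sigma$ stays bijective. The reduction steps --- interpreting ``isolated point'' and enlarging the exceptional sets to equal size --- are routine.
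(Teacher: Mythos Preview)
Your argument is correct. The interpretation of ``isolated point'' as a term $\lambda_n\notin M$ matches the paper's own reading (the paper explicitly deduces that ``all but finitely many members of $\{\lambda_n\},\{\mu_n\}$ belong to $M$''), and your back-and-forth construction is sound: the crucial point---that at each forth/back step a fresh partner within tolerance $1/t$ exists---follows exactly because every remaining term lies in $M$ and $M$ is the accumulation set of the \emph{other} sequence.

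For comparison, the paper gives almost no proof: it sets $\varepsilon_k=\inf_{t\in M}|\lambda_k-t|+1/k$ and $\eta_k=\inf_{t\in M}|\mu_k-t|+1/k$, observes these tend to $0$ under the hypothesis, and then defers entirely to \cite{AkhiezerGlazman}, \S94. The Akhiezer--Glazman argument is itself a back-and-forth matching, using $\varepsilon_k,\eta_k$ as running tolerances rather than first stripping the finitely many bad indices; so your approach and the referenced one are essentially the same, with the organizational difference that you separate the ``peel off exceptions'' step from the ``match within $M$'' step. Your write-up is therefore more self-contained than what the paper provides.
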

\begin{proof}The setting as well as the proof is almost the same as the one in \cite[$\S$94]{AkhiezerGlazman}, so we only explain the difference of the present setting from \cite[$\S$94]{AkhiezerGlazman}. 
For $k\in \mathbb{N}$, define 
\[\varepsilon_k:=\inf_{t\in M}|\lambda_k-t|+\frac{1}{k},\ \ \ \eta_k:=\inf_{t\in M}|\mu_k-t|+\frac{1}{k}.\]
Since there are only finitely many isolated points in $\{\lambda_n\}_{n=1}^{\infty},\{\mu_n\}_{n=1}^{\infty}$, all but finitely many members of $\{\mu_n\}_{n=1}^{\infty},\ \{\lambda_n\}_{n=1}^{\infty}$ belong to $M$. Therefore $\varepsilon_k\to 0,\ \eta_k\to 0\ (k\to \infty)$.  Now the rest of the proof is the same as the one in \cite[$\S$94]{AkhiezerGlazman}, so we omit the proof.
\end{proof}
\begin{remark}
Note that Lemma \ref{lem: permutation lemma} does not hold in general without assuming some conditions on isolated points of $M$: consider the sequences $\{\lambda_n^{(t)}\}_{n=1}^{\infty}\ (t\in [0,1])$ in Example \ref{ex: non-unitarily equivalent but have same ess spec}. We show that if $0\le s<t\le 1$, then there is no permutation $\pi$ of $\mathbb{N}$ satisfying $\lim_{n\to \infty}(\lambda_{\pi(n)}^{(s)}-\lambda_{n}^{(t)})=0$, although both sequences have accumulation points $M=\mathbb{N}$. Assume by contradiction that such $\pi$ exists. Then there exists $k_0\in \mathbb{N}$ such that $|\lambda_{\pi(k)}^{(s)}-\lambda_k^{(t)}|<\frac{t-s}{8}$ for all $k\ge k_0$. Since $k\le 2^{k}\le 2^{k}(2m-1)\stackrel{\text{def}}{=}\nai{k+1}{m}\ (k,m\in \mathbb{N})$, this implies that
\begin{equation}
|\lambda_{\pi(\nai{k}{m})}^{(s)}-\lambda_{\nai{k}{m}}^{(t)}|<\frac{t-s}{8},\ \ \ \ \ \ \ \ (k\ge k_0+1,m\in \mathbb{N}).\label{eq: lambda^s and lambda^t close 2}
\end{equation}
On the other hand, if $k,k',m,m'\in \mathbb{N}$ with $k\neq k', k,k'\ge k_0+1$, then 
\eqa{
|\lambda_{\nai{k'}{m'}}^{(s)}-\lambda_{\nai{k}{m}}^{(t)}|&=\left |k'+\frac{s}{m'+2}-k-\frac{t}{m+2}\right |\\
&\ge |k'-k|-\left |\frac{s}{m'+2}-\frac{t}{m+2}\right |\\
&\ge 1-\frac{2}{3}>\frac{t-s}{8}.
}
Therefore by (\ref{eq: lambda^s and lambda^t close 2}), for each $k\ge k_0+1$ and $m\in \mathbb{N}$, there exists $\varphi_k(m)\in \mathbb{N}$ such that 
\begin{equation}
\lambda_{\pi(\nai{k}{m})}^{(s)}=\lambda_{\nai{k}{\varphi_k(m)}}^{(s)} \ \ \ \ \ \ \ (k\ge k_0+1,m\in \mathbb{N}).\label{eq: lambda^s vs lambda^t contradiction}
\end{equation}
However, by (\ref{eq: lambda^s and lambda^t close 2}) and (\ref{eq: lambda^s vs lambda^t contradiction}) (for $k=k_0+1,m=1$) it follows that
\eqa{
\frac{t-s}{8}&>|\lambda_{\nai{k_0+1}{\varphi_{k_0+1}(1)}}^{(s)}-\lambda_{\nai{k_0+1}{1}}^{(t)}|=\left |\frac{s}{\varphi_{k_0+1}(1)+2}-\frac{t}{3}\right |\\
&\ge \frac{t-s}{3},
}
which is a contradiction. This completes the proof. 
\end{remark}
\begin{proof}[Proof of Theorem \ref{thm: unbounded case: comeager orbit exists} (2)] 
The proof goes exactly the same as von Neumann's proof: by Weyl-von Neumann Theorem \ref{thm: Weyl-von Neumann}, $[A]_G$, $[B]_G$ contain diagonalizable operators with essential spectra $\mathbb{R}$. Therefore we may assume that $A$, $B$ are of the form $A=\sum_{n=1}^{\infty}a_n\nai{\xi_n}{\ \cdot\ }\xi_n,\ B=\sum_{n=1}^{\infty}b_n\nai{\eta_n}{\ \cdot\ }\eta_n$, where $\{a_n\}_{n=1}^{\infty},\{b_n\}_{n=1}^{\infty}$ are real sequences. Since $\sigma_{\rm{ess}}(A)=\sigma_{\rm{ess}}(B)=\mathbb{R}$ and there are at most countably many isolated eigenvalues, this implies that the set of accumulation points of $\{a_n\}_{n=1}^{\infty}$ and $\{b_n\}_{n=1}^{\infty}$ are both $\mathbb{R}$. By Lemma \ref{lem: permutation lemma}, there exists a permutation $\pi$ of $\mathbb{N}$ such that $\lim_{k\to \infty}(a_{\pi(k)}-b_k)=0$. Define $u\in \mathcal{U}(H)$ by
\[u\xi_k:=\eta_{\pi^{-1}(k)},\ \ \ \ \ k\in \mathbb{N}.\]
Then $uAu^*=\sum_{n=1}^{\infty}a_{\pi(n)}\nai{\eta_n}{\ \cdot\ }\eta_n$, and define $K:=\sum_{n=1}^{\infty}(b_n-a_{\pi(n)})\nai{\eta_n}{\ \cdot\ }\eta_n\in \mathbb{K}(H)_{\rm{sa}}$. 
It holds that $uAu^*+K=B$.  
\end{proof}

\subsubsection{$E_G^{{\rm{EES}}(H)}$ is Generically Turbulent}
As explained in the introduction to $\S$\ref{subsec: Turbulence: Unbounded Case}, we now study the restricted action of $G$ on a subset ${\rm{EES}}(H)=\{A\in {\rm{SA}}(H);\sigma_{\rm{ees}}(A)=\emptyset\}$ equipped with a new Polish topology. 
\paragraph{Norm Resolvent Topology and Polish Space ${\rm{EES}}(H)$}\label{para: EES, NRT is Polish}\ \\
Let ${\rm{EES}}(H):=\{A\in {\rm{SA}}(H);\ \sigma_{\rm{ess}}(A)=\emptyset\}$  (EES stands for Empty Essential Spectrum). 
\begin{definition}
The  {\it norm resolvent topology} (NRT) on ${\rm{SA}}(H)$ is the weakest topology for which the map ${\rm{SA}}(H)\ni A\mapsto (A-i)^{-1}\in \mathbb{B}(H)$ is norm-continuous.
\end{definition}
Note that $({\rm{SA}}(H),{\rm{NRT}})$ is not separable, whence not Polish: to see this, choose a CONS $\{\xi_n\}_{n=1}^{\infty}$ for $H$ and define for each $F\subset \mathbb{N}$ an operator $A_F$ by
\[A_F:=\sum_{n=1}^{\infty}1_F(n)\nai{\xi_n}{\ \cdot\ }\xi_n\in {\rm{SA}}(H),\]
where $1_F$ is the characteristic function on $F$.  
Then $\{A_F\}_{F\subset \mathbb{N}}$ is an uncountable family, and 
\[\|(A_{F_1}-i)^{-1}-(A_{F_2}-i)^{-1}\|=\sup_{n\ge 1}|(1_{F_1}(n)-i)^{-1}-(1_{F_2}(n)-i)^{-1}|=\frac{1}{\sqrt{2}}\text{\ \ \ if\ }F_1\neq F_2.\]
Note that none of them are in ${\rm{EES}}(H)$. 
On the contrary, we show that 
\begin{proposition}\label{prop: EES(H) is NRT-Polish}
$({\rm{EES}}(H),{\rm{NRT}})$ is a Polish $G$-space with respect to the restriction $\beta$ of the action  $\alpha\colon G\curvearrowright {\rm{SA}}(H)$ to ${\rm{EES}}(H)$.
\end{proposition}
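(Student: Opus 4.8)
The plan is to establish three things: (i) NRT restricted to ${\rm{EES}}(H)$ is metrizable by a complete metric; (ii) $({\rm{EES}}(H),{\rm{NRT}})$ is separable; and (iii) the restricted action $\beta$ of $G$ on ${\rm{EES}}(H)$ is continuous with respect to NRT. The natural candidate metric is $\rho(A,B):=\|(A-i)^{-1}-(B-i)^{-1}\|$ (or this quantity symmetrized over $\pm i$, which is automatic). That $\rho$ is a metric is immediate from injectivity of $A\mapsto (A-i)^{-1}$ on ${\rm{SA}}(H)$, and it is clearly compatible with NRT since NRT is by definition the initial topology making $A\mapsto (A-i)^{-1}\in\mathbb{B}(H)$ norm-continuous. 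The content is in completeness and separability, and here the hypothesis $\sigma_{\rm{ess}}(A)=\emptyset$ is crucial — without it the space is not even separable, as the remark preceding the proposition shows.

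\emph{Completeness.} Suppose $\{A_k\}\subset{\rm{EES}}(H)$ is $\rho$-Cauchy. Then $R_k:=(A_k-i)^{-1}$ is norm-Cauchy in $\mathbb{B}(H)$, hence converges in norm to some $R\in\mathbb{B}(H)$; similarly $(A_k+i)^{-1}=R_k^*$ converges to $R^*$. One then argues that $R$ is the resolvent at $i$ of a self-adjoint operator $A$: since each $R_k$ is injective with dense range (being a resolvent), $\ker R=\{0\}$ (an element of $\ker R$ would force a uniform lower bound obstruction — more carefully, $R$ is a norm limit of operators satisfying the resolvent algebraic identities, and one checks $R$ satisfies them too), and the first resolvent identity passes to the limit, so $\{R_k\}$ converges to the resolvent family of a self-adjoint $A$ with $(A-i)^{-1}=R$; equivalently invoke the standard fact (e.g. \cite[Theorem VIII.23]{ReedSimonI}) that norm resolvent Cauchy sequences of self-adjoint operators converge in NRT provided the limit resolvent is injective, and use that $A_k\in{\rm{EES}}(H)$ makes each $(A_k-i)^{-1}$ \emph{compact} — indeed $\sigma_{\rm{ess}}(A_k)=\emptyset$ is equivalent to $(A_k-i)^{-1}\in\mathbb{K}(H)$ — so $R=\lim R_k\in\mathbb{K}(H)$, whence $A$ also has empty essential spectrum and lies in ${\rm{EES}}(H)$. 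Thus $A_k\to A$ in NRT and ${\rm{EES}}(H)$ is $\rho$-complete. The point where I expect the main work is verifying that the norm limit $R$ is genuinely a resolvent (injectivity of $R$), and this is exactly where compactness of the $R_k$ helps: the spectra $\sigma((A_k-i)^{-1})$ are sequences accumulating only at $0$, and norm convergence of compact operators controls these spectra well enough to prevent $R$ from acquiring a kernel in a way that destroys self-adjointness of the candidate $A$.

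\emph{Separability.} The diagonalizable operators $A=\sum_n a_n\langle\xi_n,\cdot\rangle\xi_n$ with $a_n\in\mathbb{Q}$, $|a_n|\to\infty$, and $\{\xi_n\}$ ranging over a countable dense family of orthonormal bases (say with rational coordinates in a fixed CONS), form a countable set; one shows it is NRT-dense in ${\rm{EES}}(H)$ by approximating a given $A\in{\rm{EES}}(H)$ first by a diagonalizable operator with discrete spectrum (Weyl--von Neumann, Theorem \ref{thm: Weyl-von Neumann}, noting that a compact perturbation of an element of ${\rm{EES}}(H)$ stays in ${\rm{EES}}(H)$), then perturbing its eigenvalues to rationals and truncating/adjusting the tail — since the eigenvalues tend to $\infty$ in modulus, such adjustments change the resolvent by an operator of small norm. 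Combined with completeness, $({\rm{EES}}(H),{\rm{NRT}})$ is Polish.

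\emph{Continuity of $\beta$.} Finally, for $(K,u)\in G$ and $A\in{\rm{EES}}(H)$ one has $((K,u)\cdot A - i)^{-1}$ expressed, as in the proof of Proposition \ref{prop: B(H)_sa acts on SA(H) continuously}, via a norm-convergent Neumann series in $K$ around $u(A-i)^{-1}u^*$. The estimates \eqref{eq: Arai estimate 1 with xi} there were stated for a fixed vector $\xi$, but since $\|K(A-i)^{-1}\|\le\|K\|$ and the conjugation $R\mapsto uRu^*$ is a norm-isometry, the same Neumann-series manipulation gives a bound in \emph{operator norm}, not merely strong, so $K\mapsto (A+K-i)^{-1}$ is norm-continuous uniformly in the relevant parameters. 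For the $\mathcal{U}(H)$-part one must be slightly careful: conjugation $u\mapsto u(A-i)^{-1}u^*$ is SOT-continuous but generally not norm-continuous — \emph{except} that $(A-i)^{-1}$ is compact, and on compact operators SOT-convergence of unitaries does yield norm-convergence of the conjugates ($u_n\to u$ SOT implies $u_nRu_n^*\to uRu^*$ in norm for $R$ compact). This is the one spot where I would flag the need for care, and it is precisely resolved by $A\in{\rm{EES}}(H)$. Assembling these, $\beta\colon G\times{\rm{EES}}(H)\to{\rm{EES}}(H)$ is jointly NRT-continuous, and the proposition follows.
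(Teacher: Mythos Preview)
Your continuity argument (iii) is essentially the paper's: a Neumann-series estimate upgrades Lemma~\ref{lem: Arai's lemma} to NRT, and the compactness of $(A-i)^{-1}$ is what makes $u\mapsto u(A-i)^{-1}u^*$ norm-continuous under SOT-convergence of unitaries. Separability is also fine, and in fact comes for free once one observes that $A\mapsto (A-i)^{-1}$ embeds ${\rm{EES}}(H)$ homeomorphically into the separable Banach space $\mathbb{K}(H)$.

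The gap is in your completeness argument: the metric $\rho(A,B)=\|(A-i)^{-1}-(B-i)^{-1}\|$ is \emph{not} complete on ${\rm{EES}}(H)$. Take a CONS $\{\xi_n\}$ and set $A_k:=k\,\nai{\xi_1}{\,\cdot\,}\xi_1+\sum_{n\ge 2}n\,\nai{\xi_n}{\,\cdot\,}\xi_n\in{\rm{EES}}(H)$. Then $\|(A_k-i)^{-1}-(A_l-i)^{-1}\|=|(k-i)^{-1}-(l-i)^{-1}|\to 0$, so $\{A_k\}$ is $\rho$-Cauchy, but the norm limit $R=\sum_{n\ge 2}(n-i)^{-1}\nai{\xi_n}{\,\cdot\,}\xi_n$ has $\xi_1\in\ker R$ and is therefore not a resolvent. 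So compactness of the $R_k$ does \emph{not} prevent $R$ from acquiring a kernel; injective compact operators can converge in norm to non-injective ones, and your ``uniform lower bound obstruction'' does not exist.

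The paper circumvents this by not trying to prove $\rho$ is complete. Instead it shows that the image of ${\rm{EES}}(H)$ under $A\mapsto (A-i)^{-1}$ is a $G_\delta$ subset of $\mathbb{K}(H)$: first the set $\mathcal{D}$ of compact normal operators with dense range and dense adjoint range is $G_\delta$ in $\mathbb{K}(H)$ (Lemma~\ref{lem: range of resolvents is Polish}), and then the resolvent image $\mathcal{D}_0=\{x\in\mathcal{D}:x^{-1}+i\in{\rm{SA}}(H)\}$ is closed in $\mathcal{D}$ (via Kato--Trotter). A $G_\delta$ subset of a Polish space is Polish in the subspace topology --- but with a \emph{different} complete metric than the one inherited from the ambient space. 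In the example above, $R\notin\mathcal{D}$ (its range is not dense), so the sequence escapes $\mathcal{D}$; the $G_\delta$ metric penalizes this escape and the sequence is no longer Cauchy for it.
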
 
We first show
\begin{proposition}\label{prop: EES with NRT is Polish}
$({\rm{EES}}(H),{\rm{NRT}})$ is a Polish space.
\end{proposition}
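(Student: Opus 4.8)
The plan is to realize $({\rm{EES}}(H),{\rm{NRT}})$ as a $G_\delta$ subset of a Polish space and invoke the classical theorem that a $G_\delta$ subspace of a Polish space is Polish. The natural ambient space is the unit ball $(\mathbb B(H)_{\le 1},\|\cdot\|)$ of $\mathbb B(H)$ in the norm topology — no, that is not separable; instead I would embed via $R\colon {\rm{SA}}(H)\ni A\mapsto (A-i)^{-1}$ into the \emph{Polish} space $\mathbb K(H)$ of compact operators with the norm topology. The point is that $A\in{\rm{EES}}(H)$ if and only if $\sigma_{\rm ess}(A)=\emptyset$, which by the spectral mapping relation for the resolvent is equivalent to $\sigma_{\rm ess}((A-i)^{-1})\subseteq\{0\}$, i.e. $(A-i)^{-1}$ is a \emph{compact} operator (a normal operator is compact iff its essential spectrum is contained in $\{0\}$). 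So $R$ maps ${\rm{EES}}(H)$ into $\mathbb K(H)$, it is injective (resolvents are bounded, so $(A_1-i)^{-1}=(A_2-i)^{-1}\Rightarrow A_1=A_2$ as in the proof of Proposition \ref{prop: SA(H) is Polish}), and by the very definition of NRT it is a homeomorphism onto its image. Hence it suffices to show that $R({\rm{EES}}(H))$ is a $G_\delta$ subset of $\mathbb K(H)$.

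First I would identify the range. An operator $T\in\mathbb K(H)$ lies in $R({\rm{EES}}(H))$ iff $T=(A-i)^{-1}$ for some self-adjoint $A$, which happens exactly when: (i) $T$ is injective with dense range (so that $T^{-1}$ is densely defined and $A:=T^{-1}+i$ makes sense), and (ii) $T^{-1}+i$ is self-adjoint, equivalently $0\notin\sigma_{\rm p}(T)$ and the Cayley-type condition characterizing resolvents of self-adjoint operators holds. Concretely, $T=(A-i)^{-1}$ for self-adjoint $A$ iff $T$ is injective and $T-T^*=2i\,TT^*$ (the standard resolvent identity $(A-i)^{-1}-(A+i)^{-1}=2i(A-i)^{-1}(A+i)^{-1}$ rewritten, using $(A+i)^{-1}=T^*$); together with injectivity of $T$ this is exactly the condition that $T$ is the resolvent at $i$ of a self-adjoint operator. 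So
\[
R({\rm{EES}}(H))=\{T\in\mathbb K(H):\ T\text{ injective},\ T-T^*=2i\,TT^*\}.
\]
The equation $T-T^*=2i\,TT^*$ is norm-closed in $\mathbb K(H)$, hence an $F_\sigma$ (indeed closed) condition. The remaining step is injectivity: $\{T\in\mathbb K(H): \ker T\ne\{0\}\}$ needs to be an $F_\sigma$, equivalently $\{T:\ker T=\{0\}\}$ a $G_\delta$. This is where the main work lies.

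The injectivity condition I would handle as follows. Fix a CONS $\{\xi_n\}$. For a compact operator $T$, $\ker T=\{0\}$ iff for every unit vector $\eta$, $T\eta\ne 0$; but we must turn this into a countable Borel (indeed $G_\delta$) description. Write $\ker T\ne\{0\}$ as: there is a unit vector $\eta$ with $\|T\eta\|=0$. Compactness lets us replace "there exists a unit vector" by a compactness-of-the-ball argument in the weak topology, but a cleaner route is: $\ker T=\{0\}$ iff $T^*T$ is injective iff the positive compact operator $|T|:=(T^*T)^{1/2}$ has trivial kernel iff $0$ is not an eigenvalue of $|T|$. For a positive compact operator $S$, $0\notin\sigma_{\rm p}(S)$ iff $S$ has dense range iff $\overline{\mathrm{Ran}(S)}=H$, and one checks that $\{S\in\mathbb K(H)_+:\ \overline{\mathrm{Ran}(S)}=H\}=\bigcap_{m}\{S:\ \exists \zeta,\ \|S\zeta-\xi_m\|<1\}$... but this last set is merely analytic, not obviously $G_\delta$. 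The \emph{hardest point}, and the one I would spend the most care on, is therefore producing an honest $G_\delta$ (or at least Borel, and then invoking that a Borel injective image is standard Borel — but here we want Polish, so we genuinely want $G_\delta$) description of the injective compact operators; the trick is to use that for $T=(A-i)^{-1}$ coming from a self-adjoint $A$, injectivity is automatic (since $(A-i)^{-1}$ is always injective: $(A-i)^{-1}\eta=0\Rightarrow\eta=(A-i)(A-i)^{-1}\eta=0$), so on the closed set $\{T-T^*=2i\,TT^*\}$ the injectivity condition is \emph{not an extra constraint cutting out a proper subset} — rather, I should verify directly that every solution $T$ of $T-T^*=2i\,TT^*$ in $\mathbb K(H)$ is automatically injective, so that $R({\rm{EES}}(H))$ equals the closed set $\{T\in\mathbb K(H):T-T^*=2i\,TT^*\}$ with no $G_\delta$ gymnastics needed. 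Indeed if $T\eta=0$ then $T^*\eta=T\eta-2i\,TT^*\eta=-2i\,TT^*\eta$, and pairing with $\eta$ gives $\langle T^*\eta,\eta\rangle=\langle\eta,T\eta\rangle=0$, while $\langle -2i\,TT^*\eta,\eta\rangle=-2i\|T^*\eta\|^2$, forcing $T^*\eta=0$, and then $T\eta=T^*\eta=0$ and $\ker T\subseteq\ker T^*$, reducing to showing $\ker T\cap\ker T^*$ (the reducing kernel) is $\{0\}$ — and on its orthocomplement the operator $A=T^{-1}+i$ is genuinely self-adjoint, so $R({\rm{EES}}(H))$ is exactly the closed set of injective solutions, and we only need density-of-range, handled symmetrically via $T^*$. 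Once this algebraic verification is complete, $R({\rm{EES}}(H))$ is a closed — a fortiori $G_\delta$ — subset of the Polish space $\mathbb K(H)$, hence Polish, and via the homeomorphism $R$ so is $({\rm{EES}}(H),{\rm{NRT}})$, completing the proof of Proposition \ref{prop: EES with NRT is Polish}.
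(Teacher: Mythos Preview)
Your overall strategy matches the paper's: embed ${\rm EES}(H)$ into the Polish space $(\mathbb K(H),\|\cdot\|)$ via $R\colon A\mapsto (A-i)^{-1}$, identify the image as a $G_\delta$ set, and conclude. The use of the resolvent identity $T-T^*=2iTT^*$ in place of the paper's description ``normal with $x^{-1}+i$ self-adjoint'' is a pleasant variation.

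However, there is a genuine gap. Your final claim that $R({\rm EES}(H))$ equals the \emph{closed} set $\{T\in\mathbb K(H):T-T^*=2iTT^*\}$ is false: $T=0$ satisfies the equation trivially, and it is an honest limit point of the image (take $A_n\in{\rm EES}(H)$ with eigenvalues $n,n+1,n+2,\dots$; then $\|(A_n-i)^{-1}\|=(n^2+1)^{-1/2}\to 0$). Your own algebra shows only $\ker T\subseteq\ker T^*$; the step ``reducing to showing $\ker T\cap\ker T^*=\{0\}$'' is exactly where the argument stops, and this cannot be extracted from the equation alone (indeed $T_0\oplus 0$ satisfies it whenever $T_0$ does). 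So the injectivity/dense-range condition is genuinely needed.

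The second gap is that you dismissed the dense-range condition as ``merely analytic, not obviously $G_\delta$''. This is the key oversight: replace the existential over all $\zeta\in H$ by an existential over a fixed countable dense sequence $\{\xi_n\}$. Since $T$ is bounded, $\overline{{\rm Ran}(T)}=H$ if and only if for every $k,l$ there exists $m$ with $\|T\xi_m-\xi_l\|<1/k$; each set $\{T:\|T\xi_m-\xi_l\|<1/k\}$ is norm-open, so
\[
\{T\in\mathbb K(H):\overline{{\rm Ran}(T)}=H\}=\bigcap_{k,l}\bigcup_{m}\{T:\|T\xi_m-\xi_l\|<\tfrac1k\}
\]
is $G_\delta$. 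This is exactly the paper's Lemma~\ref{lem: range of resolvents is Polish}. Intersecting with the closed set $\{T-T^*=2iTT^*\}$ (and, if you like, the symmetric condition for $T^*$) gives a $G_\delta$ set which, together with your characterization, equals $R({\rm EES}(H))$; the paper then checks this last identification via a short Kato--Trotter argument. With this correction your proof goes through and is essentially the paper's.
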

We need preparations. The first lemma is well-known. 
\begin{lemma}\label{lem: ess=empty iff compact resolvent}
Let $A\in {\rm{SA}}(H)$. Then $\sigma_{\rm{ess}}(A)=\emptyset$ if and only if $(A-i)^{-1}\in \mathbb{K}(H)$. 
\end{lemma}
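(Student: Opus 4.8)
The plan is to establish the two implications separately, using the spectral theorem for the ``only if'' part and Weyl's criterion (Theorem \ref{thm: Weyl criterion}) for the ``if'' part. Throughout one uses that $\|(A-i)^{-1}\|\le 1$.

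First I would treat $\sigma_{\rm ess}(A)=\emptyset\Rightarrow (A-i)^{-1}\in\mathbb{K}(H)$. If $\sigma_{\rm ess}(A)=\emptyset$, then for every $R>0$ the set $\sigma(A)\cap[-R,R]$ is compact and every one of its points is isolated in $\sigma(A)$; a compact set of isolated points is finite, so $\sigma(A)\cap[-R,R]=\{\mu_1,\dots,\mu_m\}$ with each $\mu_j$ an eigenvalue of finite multiplicity. Hence $P_R:=E_A([-R,R])=\sum_{j=1}^m E_A(\{\mu_j\})$ has finite rank. By the functional calculus,
\[
\bigl\|(A-i)^{-1}-(A-i)^{-1}P_R\bigr\|=\bigl\|(A-i)^{-1}E_A(\mathbb{R}\setminus[-R,R])\bigr\|\le \sup_{|\lambda|>R}\frac{1}{|\lambda-i|}\le \frac{1}{\sqrt{R^2+1}}\xrightarrow[R\to\infty]{}0,
\]
so $(A-i)^{-1}$ is a norm-limit of the finite-rank operators $(A-i)^{-1}P_R$, hence compact.

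For the converse $(A-i)^{-1}\in\mathbb{K}(H)\Rightarrow\sigma_{\rm ess}(A)=\emptyset$, I would argue by contradiction. Suppose $\lambda\in\sigma_{\rm ess}(A)$; by Theorem \ref{thm: Weyl criterion} there is a sequence $\{\xi_n\}_{n=1}^\infty\subset\dom{A}$ of unit vectors with $\xi_n\to 0$ weakly and $\|A\xi_n-\lambda\xi_n\|\to 0$. Since $\xi_n\in\dom{A}$ one may rearrange $(A-i)^{-1}(A-i)\xi_n=\xi_n$ to obtain the identity
\[
(A-i)^{-1}\xi_n-\frac{1}{\lambda-i}\,\xi_n=-\frac{1}{\lambda-i}\,(A-i)^{-1}(A\xi_n-\lambda\xi_n),
\]
whence $\bigl\|(A-i)^{-1}\xi_n-(\lambda-i)^{-1}\xi_n\bigr\|\le |\lambda-i|^{-1}\|A\xi_n-\lambda\xi_n\|\to 0$. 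On the other hand, compactness of $(A-i)^{-1}$ together with $\xi_n\to 0$ weakly gives $\|(A-i)^{-1}\xi_n\|\to 0$. Combining the two, $|\lambda-i|^{-1}\|\xi_n\|\to 0$, contradicting $\|\xi_n\|=1$. Therefore $\sigma_{\rm ess}(A)=\emptyset$.

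The argument is essentially routine; the only two points needing a moment's care are the finiteness of $\sigma(A)\cap[-R,R]$ in the first implication (a compact set consisting of isolated points is finite) and the justification that the displayed resolvent identity is valid on vectors of $\dom{A}$ in the second, both of which are standard. No genuine obstacle is expected.
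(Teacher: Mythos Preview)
Your proof is correct. Both implications are argued cleanly; the resolvent identity you display is valid on $\dom{A}$ exactly as you say, and the compactness-plus-weak-null argument gives the contradiction.

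The paper takes a different, more compressed route: it observes that $(A-i)^{-1}$ is normal, so by the spectral theorem for compact normal operators $(A-i)^{-1}\in\mathbb{K}(H)$ if and only if $(A-i)^{-1}$ is diagonalizable with eigenvalues $(a_n-i)^{-1}\to 0$, which is equivalent to $A$ being diagonalizable with eigenvalues $|a_n|\to\infty$, i.e.\ $\sigma_{\rm ess}(A)=\emptyset$. This handles both directions in one stroke but relies on the full structure theorem for compact normal operators. Your approach is more elementary and self-contained: the forward direction uses only finite-rank approximation via spectral projections, and the reverse direction uses Weyl's criterion (Theorem~\ref{thm: Weyl criterion}), which the paper has already stated. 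In fact your reverse argument essentially anticipates the (ii)$\Rightarrow$(i) half of Lemma~\ref{lem: Weyl criterion modified}, which the paper proves later by a closely related spectral-measure estimate. Either route is fine here; the paper's is shorter, yours avoids invoking the compact-normal diagonalization theorem.
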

\begin{proof}
Since $(A-i)^{-1}$ is normal, by spectral theory $(A-i)^{-1}\in \mathbb{K}(H)$, if and only if $(A-i)^{-1}$ is diagonalizable with eigenvalues $\{\frac{1}{a_n-i}\}_{n=1}^{\infty}\ (a_n\in \mathbb{R})$ satisfying $|1/(a_n-i)|\stackrel{n\to \infty}{\to}0$, if and only if $A$ is diagonalizable with eigenvalues $\{a_n\}_{n=1}^{\infty}$ satisfying $|a_n|\stackrel{n\to \infty}{\to}\infty$, i.e., $A\in {\rm{EES}}(H)$. 
\end{proof}
\begin{lemma}\label{lem: range of the resolvent map}
Let $x\in \mathbb{B}(H)$ be normal. Then there exists $A\in {\rm{SA}}(H)$ such that $x=(A-i)^{-1}$ holds, if and only if both ${\rm{Ran}}(x)$ and ${\rm{Ran}}(x^*)$ are dense in $H$, and $(x^{-1}+i)^*=x^{-1}+i$.
\end{lemma}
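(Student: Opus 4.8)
The plan is to prove both implications by the single direct construction $A:=x^{-1}+i$, where $x^{-1}$ denotes the (generally unbounded) inverse of $x$ defined on ${\rm{Ran}}(x)$, and to translate the two range conditions into "$x$ injective" and "$A$ densely defined".

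First, for the forward direction I would assume $x=(A-i)^{-1}$ with $A\in{\rm{SA}}(H)$. Since $A=A^{*}$, the operator $A-i$ is a bijection of $\dom{A}$ onto $H$ with bounded inverse and $(A-i)^{*}=A+i$, so $x^{*}=\bigl((A-i)^{-1}\bigr)^{*}=(A+i)^{-1}$; being resolvents of the same self-adjoint operator, $x$ and $x^{*}$ commute, so $x$ is normal. Next ${\rm{Ran}}(x)={\rm{Ran}}(x^{*})=\dom{A}$ is dense, and $x$ is injective, so $x^{-1}$ is densely defined with $\dom{x^{-1}}=\dom{A}$ and $x^{-1}\xi=(A-i)\xi$ there; hence $x^{-1}+i=A$, which is self-adjoint, giving $(x^{-1}+i)^{*}=x^{-1}+i$.

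For the converse I would put $A:=x^{-1}+i$. Density of ${\rm{Ran}}(x^{*})$ gives $\ker x=\bigl({\rm{Ran}}(x^{*})\bigr)^{\perp}=\{0\}$, so $x$ is injective and $x^{-1}$ is a genuine densely defined operator with $\dom{x^{-1}}={\rm{Ran}}(x)$, which is dense by hypothesis; hence $A$ is densely defined, and by assumption $A^{*}=A$, so $A\in{\rm{SA}}(H)$. Then $A-i=x^{-1}$, and since $x$ is injective and bounded this operator is invertible with inverse $x$; because $A$ is self-adjoint, $A-i$ has a bounded inverse, which must therefore equal $x$, i.e. $(A-i)^{-1}=x$.

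The step that needs a modicum of care — but is not a genuine obstacle — is the bookkeeping that adding the bounded operator $i\cdot 1$ to the unbounded $x^{-1}$ leaves the domain unchanged, so that the hypothesis $(x^{-1}+i)^{*}=x^{-1}+i$ really is the statement that $A$ is self-adjoint, and that passing between $x$, $x^{-1}$, $A-i$ and their inverses is legitimate thanks to the injectivity of $x$. I would also remark that the right-hand conditions alone force $x$ to be normal (since they exhibit $x$ as the resolvent of a self-adjoint operator), so the normality hypothesis merely matches the intended application, where $x$ is already a resolvent.
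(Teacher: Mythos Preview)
Your proof is correct and follows essentially the same route as the paper: in both directions you set $A:=x^{-1}+i$, use density of ${\rm Ran}(x^{*})$ to get $\ker x=\{0\}$ and density of ${\rm Ran}(x)$ to get $A$ densely defined, and read off self-adjointness from the hypothesis $(x^{-1}+i)^{*}=x^{-1}+i$. Your additional commentary (that the conditions force normality, and the domain bookkeeping for adding $i\cdot 1$) is helpful but does not depart from the paper's argument.
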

\begin{proof}
($\Rightarrow $) Suppose $x=(A-i)^{-1}$ for $A\in {\rm{SA}}(H)$. Then ${\rm{ker}}(x)=\{0\}={\rm{Ran}}(x^*)^{\perp}$, so ${\rm{Ran}}(x^*)$ is dense in $H$. Also, ${\rm{Ran}}(x)=\dom{A-i}=\dom{A}$ is dense in $H$. Since $x^{-1}=A-i$, so that $x^{-1}+i=A=A^*=(x^{-1}+i)^*$ holds.\\
($\Leftarrow $) By assumption, $\text{ker}(x)=\text{ker}(x^*)=\{0\}$, and $x^{-1}$ is densely defined. Then by assumption, $A:=x^{-1}+i$ is self-adjoint, and $x=(A-i)^{-1}$.  
\end{proof}
\begin{lemma}\label{lem: range of resolvents is Polish}
Let $\mathcal{D}$ be a subspace of $\mathbb{K}(H)$ consisting of those normal elements $x$ such that ${\rm{Ran}}(x)$ and ${\rm{Ran}}(x^*)$ are both dense in $H$. Then $\mathcal{D}$ is a $G_{\delta}$ subset of $\mathbb{K}(H)$ with respect to the norm topology. In particular, $\mathcal{D}$ is Polish.
\end{lemma}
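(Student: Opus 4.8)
The plan is to realize $\mathcal{D}$ as the intersection of a norm-closed set with a countable intersection of norm-open sets inside the Polish space $(\mathbb{K}(H),\|\cdot\|)$ (recall that $\mathbb{K}(H)$, being a separable Banach space, is Polish), and then to quote the classical fact that a $G_{\delta}$ subset of a Polish space is Polish.

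\emph{Step 1: the normality constraint is closed.} Put $N:=\{x\in \mathbb{K}(H): x^*x=xx^*\}$. Since $x\mapsto x^*$ is a norm isometry and multiplication is jointly norm-continuous on $\mathbb{B}(H)\times \mathbb{B}(H)$ (from $\|xy-x_0y_0\|\le \|x\|\,\|y-y_0\|+\|x-x_0\|\,\|y_0\|$), the map $x\mapsto x^*x-xx^*$ is norm-continuous, so $N$ is closed, hence $G_{\delta}$.

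\emph{Step 2: the dense-range constraints are $G_{\delta}$.} Fix a CONS $\{e_k\}_{k\ge 1}$ of $H$. For $x\in \mathbb{K}(H)$ one has $\overline{\text{Ran}(x)}=H$ if and only if $e_k\in \overline{\text{Ran}(x)}$ for every $k$, i.e.\ if and only if for all $k,m\in \mathbb{N}$ there is $\eta\in H$ with $\|x\eta-e_k\|<1/m$. For fixed $k,m$ set
\[
U_{k,m}:=\{x\in \mathbb{K}(H):\ \exists\,\eta\in H,\ \|x\eta-e_k\|<1/m\}.
\]
The map $\mathbb{K}(H)\times H\ni (x,\eta)\mapsto \|x\eta-e_k\|$ is jointly continuous (again by the estimate $\|x\eta-x_0\eta_0\|\le \|x\|\,\|\eta-\eta_0\|+\|x-x_0\|\,\|\eta_0\|$), so $\{(x,\eta):\|x\eta-e_k\|<1/m\}$ is open, and $U_{k,m}$ is its image under the coordinate projection $\mathbb{K}(H)\times H\to \mathbb{K}(H)$, which is an open map; hence $U_{k,m}$ is open. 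Therefore $R_1:=\{x:\overline{\text{Ran}(x)}=H\}=\bigcap_{k,m}U_{k,m}$ is $G_{\delta}$. Applying the same reasoning after precomposing with the norm isometry $x\mapsto x^*$, the set $R_2:=\{x:\overline{\text{Ran}(x^*)}=H\}$ is $G_{\delta}$ as well.

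\emph{Step 3: conclude.} Then $\mathcal{D}=N\cap R_1\cap R_2$ is a $G_{\delta}$ subset of $\mathbb{K}(H)$, and a $G_{\delta}$ subset of a Polish space is Polish. There is no real obstacle; the only point requiring (routine) care is Step 2 — namely the joint norm-continuity of operator multiplication and the openness of the coordinate projection, which together give that each $U_{k,m}$ is open. One could instead run the density argument over a countable dense subset of the unit sphere of $H$ in place of a CONS, but the CONS version is the cleanest. Note that normality is used only in Step 1; for normal $x$, density of $\text{Ran}(x)$ and of $\text{Ran}(x^*)$ are each equivalent to injectivity of $x$, but this equivalence is not needed for the argument.
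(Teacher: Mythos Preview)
Your proof is correct and follows essentially the same approach as the paper: write $\mathcal{D}$ as the intersection of the closed set of normals with two $G_{\delta}$ dense-range conditions. The only cosmetic difference is in Step~2: the paper restricts the approximating vector $\eta$ to a fixed countable dense subset $\{\xi_m\}$ of $H$, so that each $U_{k,m}$ becomes a \emph{countable} union $\bigcup_m\{x:\|x\xi_m-e_k\|<1/m\}$ of manifestly open sets, whereas you let $\eta$ range over all of $H$ and invoke the openness of the coordinate projection---both are fine.
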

\begin{proof}
It is clear that $\mathcal{D}_1:=\{x\in \mathbb{K}(H); xx^*=x^*x\}$ is closed. Let $\{\xi_n\}_{n=1}^{\infty}$ be a dense subset of $H$. Then it is easy to see that 
\[\textstyle {\rm{Ran}}(x){\rm{\ is\ dense\ }}\Leftrightarrow \forall k\in \mathbb{N}\ \forall l\in \mathbb{N}\ \exists m\in \mathbb{N}\ \ [\ \|x\xi_m-\xi_l\|<\frac{1}{k}\ ].\]
Therefore
\eqa{
\mathcal{D}_2:=\{x\in \mathbb{K}(H);\ {\rm{Ran}}(x){\rm{\ is\ dense\ }}\}&=\bigcap_{k=1}^{\infty}\bigcap_{l=1}^{\infty}\bigcup_{m=1}^{\infty}
\underbrace{\{x\in \mathbb{K}(H);\ \|x\xi_m-\xi_l\|<\tfrac{1}{k}\}}_{{\rm{open}}},
}
which is $G_{\delta}$ in $\mathbb{K}(H)$. Similarly, $\mathcal{D}_3:=\{x\in \mathbb{K}(H); {\rm{Ran}}(x^*)\}$ is $G_{\delta}$ in $\mathbb{K}(H)$, and so is $\mathcal{D}=\mathcal{D}_1\cap \mathcal{D}_2\cap \mathcal{D}_3$.
\end{proof}

\begin{proof}[Proof of Proposition \ref{prop: EES with NRT is Polish}]
Let $\varphi\colon ({\rm{ESS}}(H),{\rm{NRT}})\to (\mathbb{K}(H),\|\cdot \|)$ be a map given by $\varphi(A)=(A-i)^{-1},\ (A\in {\rm{SA}}(H))$. By the definition of NRT and the injectivity, $\varphi$ is a homeomorphism of ${\rm{ESS}}(H)$ onto its range. We see that
\[\varphi({\rm{EES}}(H))=\mathcal{D}_0:=\{x\in \mathcal{D};\ x^{-1}+i\in {\rm{SA}}(H)\},\]
where $\mathcal{D}$ is as in Lemma \ref{lem: range of resolvents is Polish}. 
Indeed, if $x=\varphi(A)\in \varphi({\rm{EES}}(H))$, then $x$ is compact by Lemma \ref{lem: ess=empty iff compact resolvent}. Moreover, ${\rm{Ran}}(x)$ and ${\rm{Ran}}(x^*)$ are dense in $H$, and $x^{-1}+i\in {\rm{SA}}(H)$ by Lemma \ref{lem: range of the resolvent map}. This shows that $x\in \mathcal{D}_0$. Conversely, if $x\in \mathcal{D}$ is such that $A:=x^{-1}+i\in {\rm{SA}}(H)$, then by Lemma \ref{lem: ess=empty iff compact resolvent}, $A\in {\rm{EES}}(H)$, and $\varphi(A)=x$. This shows that $\varphi({\rm{EES}}(H))=\mathcal{D}_0$.

We next show that $\mathcal{D}_0$ is closed in $\mathcal{D}$. Once this is proved, Lemma \ref{lem: range of resolvents is Polish} implies that $\mathcal{D}_0$ is also Polish, and so is ${\rm{EES}}(H)$. Let $\{x_n\}_{n=1}^{\infty}$ be a sequence in $\mathcal{D}_0$ converging in norm to $x\in \mathcal{D}$. Put $A_n:=x_n^{-1}+i\in {\rm{SA}}(H)$. Then $x_n=(A_n-i)^{-1}\stackrel{\|\cdot \|}{\longrightarrow }x,\ \ \ x_n^*=(A_n+i)^{-1}\stackrel{\|\cdot \|}{\longrightarrow }x^*$.  
Since $x\in \mathcal{D}$, $x$ and $x^*$ has dense ranges, whence by Kato-Trotter Theorem \cite[Theorem VIII.22]{ReedSimonI}, there exists $A\in {\rm{SA}}(H)$ such that $(A_n-i)^{-1}\stackrel{{\rm{SOT}}}{\to}(A-i)^{-1}$. Since $(A_n-i)^{-1}\stackrel{{\rm{SOT}}}{\to} x$ also, we have $x=(A-i)^{-1}$ and $x^{-1}+i=A\in {\rm{EES}}(H)$. Therefore $\mathcal{D}_0$ is closed in $\mathcal{D}$. This finishes the proof. 
\end{proof}
We now show that ${\rm{EES}}(H)$ is a Polish $G$-space. 
\begin{proposition}\label{prop: G on EES is continuous}
The action $\beta \colon G\times {\rm{EES}}(H)\to {\rm{EES}}(H)$ is continuous.
\end{proposition}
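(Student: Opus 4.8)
The plan is to reduce the continuity of $\beta$ to facts already established, using that NRT on ${\rm{EES}}(H)$ is, via the homeomorphism $\varphi\colon A\mapsto (A-i)^{-1}$, just the restriction of the norm topology on $\mathbb{K}(H)$. So I would work entirely on the resolvent side. Given $(K_n,u_n)\to (K,u)$ in $G$ and $A_n\to A$ in $({\rm{EES}}(H),{\rm{NRT}})$, write $B_n:=(K_n,u_n)\cdot A_n=u_nA_nu_n^*+K_n$ and $B:=(K,u)\cdot A=uAu^*+K$; all of these lie in ${\rm{EES}}(H)$ since ${\rm{EES}}(H)$ is $G$-invariant (by Lemma \ref{lem: ess=empty iff compact resolvent}, compactness of the resolvent is preserved under unitary conjugation and compact perturbation). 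The goal is $\|(B_n-i)^{-1}-(B-i)^{-1}\|\to 0$.

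First I would handle the unitary conjugation. Since $(u_nA_nu_n^*-i)^{-1}=u_n(A_n-i)^{-1}u_n^*$ and $(A_n-i)^{-1}\to (A-i)^{-1}$ \emph{in norm}, I claim $u_n(A_n-i)^{-1}u_n^*\to u(A-i)^{-1}u^*$ in norm. Indeed, estimate
\begin{align*}
\|u_n(A_n-i)^{-1}u_n^*-u(A-i)^{-1}u^*\|&\le \|(A_n-i)^{-1}-(A-i)^{-1}\|\\
&\quad+\|u_n(A-i)^{-1}u_n^*-u(A-i)^{-1}u^*\|,
\end{align*}
and the first term $\to 0$ by hypothesis. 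For the second term, use that $(A-i)^{-1}$ is a \emph{fixed compact} operator: multiplication $\mathbb{B}(H)\times \mathbb{K}(H)\times \mathbb{B}(H)\to \mathbb{K}(H)$, $(v,x,w)\mapsto vxw$, is jointly continuous from SOT$\times$SOT$\times$SOT on bounded sets to the norm topology when $x$ is a fixed compact operator (approximate $x$ in norm by finite-rank operators and use SOT-convergence on each rank-one piece). Since $u_n\to u$ and $u_n^*\to u^*$ in SOT, this term $\to 0$ as well. Hence $u_nA_nu_n^*\to uAu^*$ in NRT.

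Next, the compact perturbation. I want $(u_nA_nu_n^*+K_n-i)^{-1}\to (uAu^*+K-i)^{-1}$ in norm, knowing $u_nA_nu_n^*\to uAu^*$ in NRT and $\|K_n-K\|\to 0$. Split exactly as in the proof of Proposition \ref{prop: B(H)_sa acts on SA(H) continuously}:
\begin{align*}
\|(u_nA_nu_n^*+K_n-i)^{-1}-(uAu^*+K-i)^{-1}\|&\le \|(u_nA_nu_n^*+K_n-i)^{-1}-(u_nA_nu_n^*+K-i)^{-1}\|\\
&\quad+\|(u_nA_nu_n^*+K-i)^{-1}-(uAu^*+K-i)^{-1}\|.
\end{align*}
The first term is $\le \|K_n-K\|\cdot\|(u_nA_nu_n^*+K_n-i)^{-1}\|\cdot\|(u_nA_nu_n^*+K-i)^{-1}\|\le \|K_n-K\|\to 0$ by the resolvent identity and $\|(T-i)^{-1}\|\le 1$ for self-adjoint $T$. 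For the second term, I would prove the NRT analogue of Lemma \ref{lem: Arai's lemma}: if $T_n\to T$ in NRT and $K\in\mathbb{B}(H)_{\rm{sa}}$ is fixed, then $T_n+K\to T+K$ in NRT. This is immediate from the Neumann-series argument of Lemma \ref{lem: Arai's lemma}: for $|{\rm{Im}}\,z|>\|K\|$ one has the norm-convergent expansions displayed there, and since $(T_n-z)^{-1}\to (T-z)^{-1}$ \emph{in norm}, each term $(T_n-z)^{-1}(K(T_n-z)^{-1})^k\to (T-z)^{-1}(K(T-z)^{-1})^k$ in norm, with the same summable majorant $2|{\rm{Im}}\,z|^{-1}(\|K\|/|{\rm{Im}}\,z|)^k$, so the Neumann series converge in norm to the right limit; then Lemma \ref{lem: SRT-convergence at two points} (whose proof in fact gives norm convergence of resolvents at every non-real point once it holds at one, by the resolvent identity $(T_n-z)^{-1}=(T_n-i)^{-1}+(z-i)(T_n-z)^{-1}(T_n-i)^{-1}$ and boundedness) upgrades this to NRT-convergence of $T_n+K$. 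Applying this with $T_n=u_nA_nu_n^*\to uAu^*=T$ kills the second term. Combining the two displays gives $\|(B_n-i)^{-1}-(B-i)^{-1}\|\to 0$, i.e. $B_n\to B$ in NRT, so $\beta$ is continuous.

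The main obstacle is the bookkeeping in the unitary-conjugation step: one must be careful that norm convergence survives conjugation, which works \emph{only} because the conjugated operator $(A-i)^{-1}$ is compact — for a general bounded operator SOT-convergence of $u_n,u_n^*$ would not suffice. Everything else is a transcription of the SRT arguments (Arai's lemma, Proposition \ref{prop: B(H)_sa acts on SA(H) continuously}) into the norm topology, where the estimates are in fact cleaner because no vector $\xi$ needs to be tracked.
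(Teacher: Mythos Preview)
Your proof is correct and follows essentially the same approach as the paper: reduce to showing $u_nA_nu_n^*\to uAu^*$ in NRT via the compactness of the resolvent, then apply an NRT version of Arai's lemma for the compact perturbation. The paper isolates your two key ingredients as separate lemmas (Lemma~\ref{lem: Arai Lemma for NRT} for the NRT Arai statement with varying $K_n$, and Lemma~\ref{lem: norm convergence of compacts and sot unitary} for $\|u_nx_n-ux\|\to 0$ when $x_n\to x$ in $\mathbb{K}(H)$ and $u_n\to u$ in SOT), and uses a slightly different splitting for the conjugation step, but the substance is identical; you have also correctly identified that compactness of $(A-i)^{-1}$ is the crucial point for the conjugation step.
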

We need preparations. The proof of the next lemma is almost identical to that of Proposition \ref{prop: B(H)_sa acts on SA(H) continuously} (simply drop $\xi$ in (\ref{eq: Arai Neumann series}), (\ref{eq: Arai estimate 1 with xi}) and (\ref{eq: Arai estimate 2 with xi}) in the proof of Lemma \ref{lem: Arai's lemma} and use the joint norm-continuity of the operator product to get NRT-version of Lemma \ref{lem: Arai's lemma}). Therefore we omit the poof.
\begin{lemma}\label{lem: Arai Lemma for NRT}
Let $A_n,A\in {\rm{SA}}(H)$ and let $K_n,K\in \mathbb{B}(H)_{\rm{sa}}\ (n\in \mathbb{N})$. If  $A_n\stackrel{{\rm{NRT}}}{\rightarrow }A$ and $\|K_n-K\|\stackrel{n\to \infty}{\to}0$, then $A_n+K_n\stackrel{{\rm{NRT}}}{\rightarrow }A+K$ holds. 
\end{lemma}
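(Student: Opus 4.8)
The plan is to transcribe, at the operator-norm level, the argument already carried out for SRT in Lemma~\ref{lem: Arai's lemma} and Proposition~\ref{prop: B(H)_sa acts on SA(H) continuously}: drop the test vector $\xi$ from every estimate and replace strong limits by norm limits, using the joint norm-continuity of operator multiplication. Note first that $A_n+K_n$ and $A+K$ are self-adjoint, with domains $\dom{A_n}$ and $\dom{A}$ respectively (a bounded self-adjoint perturbation of a self-adjoint operator is self-adjoint), so the statement makes sense; here NRT-convergence $A_n+K_n\to A+K$ means $\bigl\|(A_n+K_n-i)^{-1}-(A+K-i)^{-1}\bigr\|\stackrel{n\to \infty}{\to}0$.

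\emph{Step 1 (reduction to fixed $K$).} Fix $M>\max(\|K\|,1)$. Since the resolvent of any self-adjoint operator at $iM$ has norm $\le M^{-1}$, the resolvent identity (cf. \cite[$\S$2.2, (2.4)]{Schmudgen}) gives
\[
(A_n+K_n-iM)^{-1}-(A_n+K-iM)^{-1}=(A_n+K_n-iM)^{-1}(K-K_n)(A_n+K-iM)^{-1},
\]
whose norm is $\le M^{-2}\|K_n-K\|\stackrel{n\to \infty}{\to}0$. Hence it suffices to prove that $(A_n+K-iM)^{-1}\to(A+K-iM)^{-1}$ in norm for the fixed bounded self-adjoint operator $K$; this is the NRT-analogue of Arai's Lemma~\ref{lem: Arai's lemma}, and once it is combined with Step~3 below, the NRT-analogue of Lemma~\ref{lem: SRT-convergence at two points} (Step~2) will finish the proof.

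\emph{Step 2 (base-point independence).} I first record the standard fact, the norm analogue of Lemma~\ref{lem: SRT-convergence at two points}, that for self-adjoint $T_n,T$ norm-convergence $(T_n-z_0)^{-1}\to(T-z_0)^{-1}$ at a single non-real $z_0$ already implies norm-convergence of the resolvents at every non-real point, hence NRT-convergence (see \cite{ReedSimonI}). The proof expands $(T_n-z)^{-1}$ as a power series in $(z-z_0)$, norm-convergent on the disc $|z-z_0|<|\mathrm{Im}\,z_0|$, dominated uniformly in $n$ via $\|(T_n-z_0)^{-1}\|\le|\mathrm{Im}\,z_0|^{-1}$; one then reaches any prescribed non-real point from $z_0$ through finitely many such discs (the imaginary part of the centre roughly doubling at each step), passing between the upper and lower half-planes, if ever necessary, by taking adjoints. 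In particular, the hypothesis $A_n\to A$ in NRT yields $(A_n-iM)^{-1}\to(A-iM)^{-1}$ in norm, which feeds Step~3; and, conversely, norm-convergence of the resolvents of $A_n+K_n$ at $iM$ yields $A_n+K_n\to A+K$ in NRT, which is the final step.

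\emph{Step 3 (Neumann series).} Since $\|K(A_n-iM)^{-1}\|\le\|K\|/M<1$ and similarly for $A$, we have the norm-convergent expansions
\[
(A_n+K-iM)^{-1}=\sum_{k=0}^{\infty}(-1)^k(A_n-iM)^{-1}\bigl(K(A_n-iM)^{-1}\bigr)^k,
\]
and the analogous one for $A$. Writing $R_n=(A_n-iM)^{-1}$ and $R=(A-iM)^{-1}$, the $k$-th term $R_n(KR_n)^k$ is a finite product of factors each converging in norm (by Step~2 and with $K$ fixed) to the corresponding factor of $R(KR)^k$, so $R_n(KR_n)^k\to R(KR)^k$ in norm as $n\to\infty$; moreover $\bigl\|R_n(KR_n)^k-R(KR)^k\bigr\|\le 2M^{-1}(\|K\|/M)^k$, which is summable. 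Dominated convergence for series then gives
\[
\bigl\|(A_n+K-iM)^{-1}-(A+K-iM)^{-1}\bigr\|\le\sum_{k=0}^{\infty}\bigl\|R_n(KR_n)^k-R(KR)^k\bigr\|\stackrel{n\to \infty}{\to}0 .
\]
Combining this with Step~1 yields $(A_n+K_n-iM)^{-1}\to(A+K-iM)^{-1}$ in norm, and Step~2 then gives $A_n+K_n\to A+K$ in NRT. The only point that is not a purely mechanical rewriting of the SRT arguments is Step~2 — verifying that norm-resolvent convergence is insensitive to the choice of non-real base point, which is needed because the Neumann series forces us to work at $iM$ with $M>\|K\|$ rather than at $i$; everything else is the verbatim norm-level counterpart of Lemma~\ref{lem: Arai's lemma} and Proposition~\ref{prop: B(H)_sa acts on SA(H) continuously}.
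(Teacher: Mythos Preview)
Your proposal is correct and follows essentially the same route as the paper: the paper simply says to drop the vector $\xi$ from the estimates in Lemma~\ref{lem: Arai's lemma} and Proposition~\ref{prop: B(H)_sa acts on SA(H) continuously} and to use joint norm-continuity of multiplication, which is exactly your Steps~1 and~3. The one point you rightly flag and spell out --- Step~2, the NRT analogue of Lemma~\ref{lem: SRT-convergence at two points} --- is tacitly assumed by the paper's instruction to mimic the SRT proof; your making it explicit is a welcome clarification rather than a departure from the paper's argument.
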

The next lemma is known in operator theory. We add the proof for completeness.
\begin{lemma}\label{lem: norm convergence of compacts and sot unitary}
Let $x_n,x\in \mathbb{K}(H)$ and $u_n,u\in \mathcal{U}(H)\ (n\in \mathbb{N})$ be such that $\|x_n-x\|\stackrel{n\to \infty}{\to}0$ and $u_n\stackrel{n\to \infty}{\to}u$ {\rm{(SOT)}}. Then we have $\|u_nx_n-ux\|\stackrel{n\to \infty}{\to}0$.
\end{lemma}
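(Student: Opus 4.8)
The plan is to split the difference as $u_n x_n - ux = u_n(x_n - x) + (u_n - u)x$ and to control the two pieces separately. The first piece is immediate: since $u_n$ is unitary, $\|u_n(x_n - x)\| \le \|x_n - x\| \to 0$ as $n\to\infty$. Hence everything reduces to showing $\|(u_n - u)x\| \to 0$, i.e. that strong-operator convergence of the uniformly bounded sequence $\{u_n - u\}_{n=1}^{\infty}$ upgrades to norm convergence once a fixed compact operator is attached on the right.

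For this I would exploit compactness of $x$. Writing $B_1 := \{\xi \in H;\ \|\xi\| \le 1\}$, the set $C := \overline{x(B_1)}$ is norm-compact. Given $\varepsilon > 0$, choose finitely many $\eta_1,\dots,\eta_m \in H$ with $C \subset \bigcup_{j=1}^m \{\zeta \in H;\ \|\zeta - \eta_j\| < \varepsilon\}$. For $\xi \in B_1$ pick $j$ with $\|x\xi - \eta_j\| < \varepsilon$; since $\|u_n - u\| \le 2$,
\[
\|(u_n-u)x\xi\| \le \|(u_n - u)(x\xi - \eta_j)\| + \|(u_n - u)\eta_j\| \le 2\varepsilon + \max_{1\le j \le m}\|(u_n - u)\eta_j\|.
\]
Taking the supremum over $\xi \in B_1$ gives $\|(u_n - u)x\| \le 2\varepsilon + \max_{1\le j \le m}\|(u_n - u)\eta_j\|$, and since $u_n \to u$ in SOT and there are only finitely many $\eta_j$, the maximum tends to $0$ as $n\to\infty$. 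Thus $\limsup_n \|(u_n - u)x\| \le 2\varepsilon$, and letting $\varepsilon \downarrow 0$ yields $\|(u_n - u)x\| \to 0$.

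Combining the two estimates, $\|u_n x_n - ux\| \le \|x_n - x\| + \|(u_n - u)x\| \to 0$, which is the assertion. The only genuinely non-formal step is the passage from pointwise to uniform convergence over the relatively compact set $C$, realized above by a standard finite $\varepsilon$-net argument; the remainder is bookkeeping with the triangle inequality.
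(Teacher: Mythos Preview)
Your proof is correct and takes a different, more direct route than the paper's. You decompose $u_nx_n-ux=u_n(x_n-x)+(u_n-u)x$ and handle the second term via a finite $\varepsilon$-net for the totally bounded set $\overline{x(B_1)}$, thereby upgrading SOT convergence of the uniformly bounded family $\{u_n-u\}$ to uniform convergence on that set. The paper instead argues by contradiction: assuming the norm does not tend to zero, it extracts unit vectors $\xi_k$ witnessing $\|u_{n_k}x_{n_k}\xi_k-ux\xi_k\|>\varepsilon$, passes to a weakly convergent subsequence $\xi_{k_p}\rightharpoonup\xi$, and uses the complete continuity of $x$ (compact operators send weakly convergent sequences to norm convergent ones) to force $u_{n_{k_p}}x_{n_{k_p}}\xi_{k_p}\to ux\xi$ and $ux\xi_{k_p}\to ux\xi$, a contradiction. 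Both proofs hinge on compactness of $x$, but through different characterizations: you use total boundedness of the image of the unit ball, the paper uses complete continuity. Your argument is shorter and avoids the subsequence bookkeeping; the paper's approach has the minor advantage of making transparent exactly where the hypothesis $x\in\mathbb{K}(H)$ (as opposed to merely $x_n\to x$ in norm) enters.
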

\begin{proof}
 Assume by contradiction that $u_nx_n-ux$ does not converge to 0 in norm. Then we may find a subsequence $n_1<n_2<\cdots$ and $\varepsilon>0$ such that $\|u_{n_k}x_{n_k}-ux\|>\varepsilon$ for each $k\in \mathbb{N}$. Therefore for each $k\in \mathbb{N}$, there exists a unit vector $\xi_k\in H$ such that
\begin{equation}
\|u_{n_k}x_{n_k}\xi_k-ux\xi_k\|>\varepsilon,\ \ \ \ \ k\in \mathbb{N}. \label{eq: estimate in xi_k}
\end{equation}
Since the unit ball of $H$ is weakly compact and $H$ is separable, there is a subsequence $k_1<k_1<\cdots$ such that $\xi_{k_p}$ converges weakly to some $\xi\in H$ as $p\to \infty$. Then it holds that 
\eqa{
\|x_{n_{k_p}}\xi_{k_p}-x\xi\|&\le \|x_{n_{k_p}}\xi_{k_p}-x\xi_{k_p}\|+\|x\xi_{k_p}-x\xi\|\\
&\le \|x_{n_{k_p}}-x\|+\|x(\xi_{k_p}-\xi)\|.
}
In the right hand side of the last inequality, the first term tends to 0 as $p\to \infty$. Since $\xi_{k_p}-\xi$ converges weakly to 0, and $x$ is compact, the second term also tends to 0. 
Therefore we have
\begin{equation}
\lim_{p\to \infty}\|x_{n_{k_p}}\xi_{k_p}-x\xi\|=0.\label{eq: x_k_pxi_p tends to xxi}
\end{equation}
It then follows that
\eqa{
\|u_{k_p}x_{n_{k_p}}\xi_{k_p}-ux\xi\|&\le \|u_{n_{k_p}}x_{n_{k_p}}\xi_{k_p}-u_{n_{k_p}}x\xi\|+\|(u_{n_{k_p}}-u)x\xi\|\\
&\le \|x_{n_{k_p}}\xi_{k_p}-x\xi\|+\|(u_{n_{k_p}}-u)x\xi\|,
}
and by $u_n\stackrel{{\rm{SOT}}}{\to}u$ and Eq. (\ref{eq: x_k_pxi_p tends to xxi}), we obtain
\begin{equation}
\lim_{p\to \infty}\|u_{k_p}x_{n_{k_p}}\xi_{k_p}-ux\xi\|=0. \label{eq: u_n_k_px_n_k_pxi_k_p converes to uxxi}
\end{equation}
Combining  (\ref{eq: u_n_k_px_n_k_pxi_k_p converes to uxxi}) 
and $\|x\xi_{k_p}-x\xi\| \stackrel{p\to \infty}{\to}0$, we obtain
\eqa{
\|u_{n_{k_p}}x_{n_{k_p}}\xi_{k_p}-ux\xi_{k_p}\|&\le \|u_{n_{k_p}}x_{n_{k_p}}\xi_{k_p}-ux\xi\|+\|ux\xi-ux\xi_{k_p}\|\\
&\stackrel{p\to \infty}{\rightarrow }0,
}
contradicting (\ref{eq: estimate in xi_k}). Therefore $\|u_nx_n-ux\|\stackrel{n\to \infty}{\to}0$ holds. 
\end{proof}
\begin{proof}[Proof of Proposition \ref{prop: G on EES is continuous}]
Let $u_n,u\in \mathcal{U}(H)$, $K_n,K\in \mathbb{K}(H)_{\rm{sa}}$, and $A_n,A\in {\rm{EES}}(H)\ (n\in \mathbb{N})$ be such that $u_n\stackrel{{\rm{SOT}}}{\to}u$, $A_n\stackrel{\rm{NRT}}{\to}A$ and $K_n\stackrel{\|\cdot\|}{\to}K$, respectively. We show that $u_nA_nu_n^*+K_n\stackrel{{\rm{NRT}}}{\to}uAu^*+K$. By Lemma \ref{lem: Arai Lemma for NRT}, it suffices to prove that $u_nA_nu_n^*\stackrel{\rm{NRT}}{\to}uAu^*$. We compute the resolvent as follows:
\begin{align}
\|(u_nA_nu_n^*-i)^{-1}-(uAu^*-i)^{-1}\|&=\|u_n(A_n-i)^{-1}u_n^*-u(A-i)^{-1}u^*\| \notag \\
&\le \|\{u_n(A_n-i)^{-1}-u(A-i)^{-1}\}u_n^*\| \notag \\
&\hspace{2.0cm}+\|u\{(A-i)^{-1}u^*-(A-i)^{-1}u_n^*\}\| \notag \\
&=\|u_n(A_n-i)^{-1}-u(A-i)^{-1}\| \notag \\
&\hspace{2.0cm}+\|u(A+i)^{-1}-u_n(A+i)^{-1}\|.\label{eq: u_nA_nu_n*}
\end{align}
Since $(A_n-i)^{-1},\ (A\pm i)^{-1}$ are compact (Lemma \ref{lem: ess=empty iff compact resolvent}), the assumptions on $u_n$ and $A_n$ implies that (\ref{eq: u_nA_nu_n*}) converges to 0 by Lemma \ref{lem: norm convergence of compacts and sot unitary}. Therefore $u_nA_nu_n^*\stackrel{\rm{NRT}}{\to}uAu^*$. This finishes the proof. 
\end{proof}
\paragraph{Generic Turbulence}\ \\
Finally, we show the generic turbulence of $G\curvearrowright {\rm{EES}}(H)$. 
\begin{theorem}\label{thm: action is weakly turbulent}
The restricted action $\beta$ of $G$ on ${\rm{EES}}(H)$ is generically turbulent.
\end{theorem}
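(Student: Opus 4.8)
\textit{Proof plan.} The plan is to verify directly the three clauses of generic turbulence — a dense orbit, meager orbits, and turbulence at comeager-many points — with Hjorth's Theorem \ref{thm: weak turbulence=turbulence} available as a backstop. Throughout I use Lemma \ref{lem: ess=empty iff compact resolvent}: every $A\in{\rm EES}(H)$ is diagonalisable, $A=\sum_n a_n\nai{\xi_n}{\cdot}\xi_n$ with $|a_n|\to\infty$, and the elementary fact that a deep eigenvalue (i.e. $|a_n|$ large) contributes at most $|a_n|^{-1}$ to any resolvent difference. For a dense orbit I show \emph{every} orbit is dense: given $A=\sum_n a_n\nai{\xi_n}{\cdot}\xi_n$, a target $B=\sum_m b_m\nai{f_m}{\cdot}f_m$ and $\varepsilon>0$, choose $N$ so large that $|b_m|>3/\varepsilon$ for $m>N$ and that $\{n:|a_n|\le 3/\varepsilon\}$ has at most $N$ elements, pick a bijection $\sigma$ of $\mathbb N$ with $\sigma^{-1}(\{m>N\})\subseteq\{n:|a_n|>3/\varepsilon\}$, set $u\xi_n:=f_{\sigma(n)}$ and $K:=\sum_{m\le N}(b_m-a_{\sigma^{-1}(m)})\nai{f_m}{\cdot}f_m\in\mathbb K(H)_{\rm sa}$ (finite rank); then $C:=uAu^*+K\in[A]_G$ is diagonal in $\{f_m\}$ with $c_m=b_m$ for $m\le N$ and $c_m=a_{\sigma^{-1}(m)}$ for $m>N$, so $\|(C-i)^{-1}-(B-i)^{-1}\|\le 2\varepsilon/3<\varepsilon$.

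Next I identify the orbit relation, which yields meagerness and guides the turbulence step. The min--max principle together with the fact that $\|PKP\|\to0$ whenever $K\in\mathbb K(H)$ and the projections $P$ tend strongly to $0$ gives $\lambda_n(A+K)=\lambda_n(A)+o(1)$ for $A\in{\rm EES}(H)$, $K\in\mathbb K(H)_{\rm sa}$ (eigenvalues listed increasingly with multiplicity when $A$ is bounded below, with an evident two-sided variant otherwise); conversely, matching eigenvectors by a unitary and correcting by a diagonal compact shows that asymptotic equality of the eigenvalue sequences implies $G$-equivalence. Hence $A\,E_G^{{\rm EES}(H)}\,B$ iff the eigenvalue sequences of $A$ and $B$ agree up to a null sequence (and a finite index shift). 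Each such class is meager: given $B$ and $\varepsilon>0$, replacing $\lambda_n(B)$ for $n$ beyond a large $N$ by $2\lambda_n(B)$ is an NRT-$\varepsilon$-perturbation that forces $\limsup_n|\lambda_n(\cdot)-\lambda_n(A)|=\infty$, and the set where this $\limsup$ is infinite is comeager (using that the eigenvalue functionals are continuous on the comeager set where all eigenvalues are simple and $0\notin\sigma$); therefore every $G$-orbit is meager.

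The main step is turbulence. Fix $A\in{\rm EES}(H)$, a basic neighbourhood $U=\{B:\|(B-i)^{-1}-(A-i)^{-1}\|<r\}$ of $A$ and a basic neighbourhood $V=\{(K,u):\|K\|<\delta,\ u\in W\}$ of $1_G$; put $R:=3/r$ and let $P$ be the finite-rank spectral projection of $A$ for $|\lambda|\le R$. The key points are: (i) for any unitary $v$ acting only on $(1-P)H$ one has $\|(vAv^*-i)^{-1}-(A-i)^{-1}\|\le 2/R<r$, and $v$ factors as a product of elements $e^{itH}$ ($H=H^*$ on $(1-P)H$, $|t|\,\|H\|$ small) lying in $W$ whose partial products again fix $PH$, so a chain in $U$ conjugates $A$ by an arbitrary unitary of $(1-P)H$; (ii) by a chain of norm-$<\delta$ rank-one compacts one moves an eigenvalue $a_n>R$ to any value $>R$ (resp.\ $a_n<-R$ to any value $<-R$), the intermediate resolvents staying within $1/R<r$ of $(A-i)^{-1}$; (iii) combining (i) and (ii) one redistributes the deep eigenvalues freely among all dyadic ranges above $R$ on either side of $0$. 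Thus $\mathcal O(A,U,V)$ contains every $A'$ equal to $PAP$ on $PH$ (up to a norm-$<\delta$ self-adjoint perturbation) and, on $(1-P)H$, an arbitrary self-adjoint operator with compact resolvent, spectrum in $\{|\lambda|>R\}$, and the same number ($\le\aleph_0$) of eigenvalues tending to $+\infty$ and to $-\infty$ as $A$ has deep ones. For a comeager set of $A$ — namely those unbounded both above and below (being bounded above, resp.\ below, is meager, since from such an operator one passes to an unbounded one by flipping the signs of infinitely many deep negative, resp.\ positive, eigenvalues, an NRT-small move) — this family is dense in the NRT-ball $\{B:\|(B-i)^{-1}-(A-i)^{-1}\|<r/3\}$: such a $B$ is itself generically unbounded both ways, its spectral subspace for $|\lambda|\le R/2$ lies close to $PAP$, and its complementary part is precisely of the admissible form, hence approximable by a reachable $A'$. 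Therefore $\overline{\mathcal O(A,U,V)}$ has nonempty interior, i.e.\ the action is turbulent at every such $A$, and together with the first two clauses this gives generic turbulence.

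The hard part is this last step: organising chains that stay inside $U$ while realising arbitrarily large unitary rotations and eigenvalue displacements using only small generators — which succeeds precisely because such motions, when confined to the deep spectral subspace, cost arbitrarily little in NRT — and keeping track of the invariant ``number of eigenvalues tending to $\pm\infty$'' so that the reachable set is genuinely dense in an NRT-ball around a generic $A$. The semicontinuity points in the meagerness argument and the matching of spectral subspaces in the density argument are routine but need care.
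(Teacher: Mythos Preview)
Your density argument for orbits is essentially the paper's, and your meagerness route via the orbit characterisation ``$\lambda_n(A+K)=\lambda_n(A)+o(1)$'' is plausible but considerably more delicate than the one-line justification you give (min--max plus $\|PKP\|\to 0$ does not yield $o(1)$ directly; one must pass through a finite-rank approximation and control the residual index shift). The paper instead shows meagerness by a direct spectral-gap argument: $[A]_G\subset\bigcup_{q\in\mathbb Q_{>0}}\bigcap_{\lambda\in\sigma_{\rm p}(A)}\{B:\sigma_{\rm p}(B)\cap[\lambda-q,\lambda+q]\neq\emptyset\}$, and each inner intersection is closed and nowhere dense.

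The genuine gap is in your turbulence step. Every move you list in (i)--(iii) preserves the fixed splitting $H=PH\oplus(1-P)H$: the unitaries act only on $(1-P)H$, and the rank-one compacts move deep eigenvalues along deep eigenvectors. Consequently the family you exhibit inside $\mathcal O(A,U,V)$ lies (up to an $O(\delta)$ thickening) in the closed set $\{B:P(B-i)^{-1}(1-P)=0\}$ of block-diagonal operators with $B|_{PH}=A|_{PH}$. This family is \emph{nowhere dense} in ${\rm EES}(H)$, not dense in any NRT-ball: for instance, if $u$ rotates a shallow eigenvector $\xi\in PH$ into a deep eigenvector $\eta\in(1-P)H$ by a small angle $\theta$, then $uAu^*$ sits in the $r/3$-ball around $A$, but $\|P(uAu^*-i)^{-1}(1-P)\|\sim\theta\cdot|1/(a-i)-1/(b-i)|$, so no block-diagonal $A'$ approximates it to better than order $\theta$. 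Since $\delta$ can be taken arbitrarily small, the $\delta$-thickening does not rescue the claim. Thus your conclusion ``$\overline{\mathcal O(A,U,V)}$ has nonempty interior'' is not established.

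The paper sidesteps this by proving \emph{weak} generic turbulence instead: for $A,B\in{\rm EES}_{\pm\infty}(H)$ one shows $\overline{\mathcal O(A,U,V)}\cap[B]_G\neq\emptyset$. The flexibility of $[B]_G$ is exactly what is missing from your argument: one first conjugates $B$ (inside its orbit) to share $A$'s eigenbasis and then permutes so that corresponding eigenvalues have the \emph{same sign} (possible precisely because both are unbounded above and below). The key estimate (Lemma~\ref{lem: nonlinearity of resolvent does not appear if ab>-1}) --- that for $ab\ge -1$ one has $|((1-s)a+sb-i)^{-1}-(a-i)^{-1}|\le |(b-i)^{-1}-(a-i)^{-1}|$ for all $s\in[0,1]$ --- then guarantees that the linear path in eigenvalue space from $A$ to the adjusted $B$, broken into norm-$<\delta$ steps, stays in $U$. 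Your observation that the ``$\pm\infty$-count'' matters is exactly right, but it should be used to enable this sign-matching rather than to constrain a reachable set you then try to show is somewhere dense.
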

Before we prove Theorem \ref{thm: action is weakly turbulent}, let us state an immediate consequence:
\begin{theorem}\label{thm: E_G^SA is unclassifiable by countable structures}
$E_G^{{\rm{SA}}(H)}$ does not admit classification by countable structures.
\end{theorem}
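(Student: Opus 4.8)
The plan is to obtain the statement as a short formal consequence of Theorem \ref{thm: action is weakly turbulent} (which carries the real content), Hjorth's turbulence theorem, and the downward stability of classification by countable structures under Borel reducibility.

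First I would verify that the inclusion $\iota\colon ({\rm{EES}}(H),{\rm{NRT}})\hookrightarrow ({\rm{SA}}(H),{\rm{SRT}})$ is a continuous embedding that serves as a Borel reduction of $E_G^{{\rm{EES}}(H)}$ to $E_G^{{\rm{SA}}(H)}$. Continuity is immediate: if $\|(A_n-i)^{-1}-(A-i)^{-1}\|\to 0$, then a fortiori $(A_n-i)^{-1}\to (A-i)^{-1}$ in SOT, so NRT refines SRT and $\iota$ is continuous, hence Borel. Since ${\rm{EES}}(H)$ is a $G$-invariant subset of ${\rm{SA}}(H)$ and the action $\beta$ is the restriction of $\alpha$, for all $A,B\in {\rm{EES}}(H)$ we have $A\,E_G^{{\rm{EES}}(H)}\,B$ if and only if $A\,E_G^{{\rm{SA}}(H)}\,B$. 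Thus $\iota$ reduces (indeed continuously embeds) $E_G^{{\rm{EES}}(H)}$ into $E_G^{{\rm{SA}}(H)}$.

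Second, by Proposition \ref{prop: EES(H) is NRT-Polish} the space $({\rm{EES}}(H),{\rm{NRT}})$ is a Polish $G$-space, and by Theorem \ref{thm: action is weakly turbulent} the $G$-action $\beta$ on it is generically turbulent. By Hjorth's turbulence theorem (Theorem \ref{thm: weak turbulence=turbulence}) and the remark following it, the orbit equivalence relation of a generically turbulent Polish group action does not admit classification by countable structures; hence $E_G^{{\rm{EES}}(H)}$ does not.

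Finally, I would invoke the elementary fact that classification by countable structures descends along Borel reducibility: if $E\le_B F$ and $F$ admits classification by countable structures, then composing with a Borel reduction of $F$ to some $E_{S_\infty}^{X_L}$ shows $E$ does as well. Taking the contrapositive with $E=E_G^{{\rm{EES}}(H)}$ and $F=E_G^{{\rm{SA}}(H)}$, and using the reduction from the first step, we conclude that $E_G^{{\rm{SA}}(H)}$ does not admit classification by countable structures. The only genuine obstacle in this chain is Theorem \ref{thm: action is weakly turbulent} itself; once it is in hand the deduction is routine, the sole subtlety being the passage from the finer topology NRT on ${\rm{EES}}(H)$ to SRT on ${\rm{SA}}(H)$, which is harmless precisely because refining a Polish topology keeps the identity map continuous and therefore Borel.
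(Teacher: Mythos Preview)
Your proposal is correct and follows essentially the same approach as the paper: the paper's proof likewise observes that since NRT refines SRT the inclusion $\iota\colon ({\rm{EES}}(H),{\rm{NRT}})\to ({\rm{SA}}(H),{\rm{SRT}})$ is a continuous (hence Borel) reduction of $E_G^{{\rm{EES}}(H)}$ to $E_G^{{\rm{SA}}(H)}$, and then invokes Theorem~\ref{thm: action is weakly turbulent} together with Hjorth's turbulence theorem. Your write-up simply spells out in more detail the downward stability of classification by countable structures under $\le_B$, which the paper leaves implicit.
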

\begin{proof}
By Theorem \ref{thm: action is weakly turbulent}, $E_G^{{{\rm{EES}}(H)}}$ is generically turbulent, and since NRT is stronger than SRT, we see that $E_G^{{{\rm{EES}}(H)}}$ is Borel reducible (in fact continuously embeddable) to $E_G^{{\rm{SA}}(H)}$ by the inclusion map $\iota\colon ({\rm{EES}}(H),{\rm{NRT}})\to ({\rm{SA}}(H),{\rm{SRT}})$.
\end{proof}
We now show that the action $\beta$ is weakly generically turbulent and use Theorem \ref{thm: weak turbulence=turbulence}. 
\begin{proposition}\label{prop: yasu G-orbit is dense meager}
For any $A\in {\rm{EES}}(H)$, the orbit $[A]_G$ is ${\rm{NRT}}$-dense and meager in ${\rm{EES}}(H)$. 
\end{proposition}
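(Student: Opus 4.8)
We sketch the two halves separately; the density part does the real work, and meagerness then follows abstractly.

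\medskip\noindent\emph{NRT-density.} The plan is to approximate an arbitrary $B\in{\rm{EES}}(H)$ by elements of $[A]_G$ in the operator norm of resolvents. By Lemma \ref{lem: ess=empty iff compact resolvent} both $A$ and $B$ are diagonalizable, so we may write $A=\sum_{n}a_n\nai{\xi_n}{\ \cdot\ }\xi_n$ and $B=\sum_{n}b_n\nai{\eta_n}{\ \cdot\ }\eta_n$ with $\{\xi_n\}$, $\{\eta_n\}$ CONS and, after reindexing, $|a_1|\le|a_2|\le\cdots\to\infty$ and $|b_1|\le|b_2|\le\cdots\to\infty$. Given $\varepsilon>0$, choose $N$ with $(a_n^2+1)^{-1/2}<\varepsilon$ and $(b_n^2+1)^{-1/2}<\varepsilon$ for all $n>N$. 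Let $u\in\mathcal{U}(H)$ be the unitary with $u\xi_n=\eta_n$, and put $K:=\sum_{n=1}^{N}(b_n-a_n)\nai{\eta_n}{\ \cdot\ }\eta_n\in\mathbb{K}(H)_{\rm{sa}}$, a finite-rank self-adjoint operator. Then $B':=uAu^*+K\in[A]_G$ is diagonal in $\{\eta_n\}$ with eigenvalues $b_1,\dots,b_N,a_{N+1},a_{N+2},\dots$; in particular $B'\in{\rm{EES}}(H)$, and $(B'-i)^{-1}$ and $(B-i)^{-1}$ coincide on $\mathrm{span}\{\eta_1,\dots,\eta_N\}$ while both are diagonal in $\{\eta_n\}_{n>N}$ on its orthogonal complement, so that $\|(B'-i)^{-1}-(B-i)^{-1}\|=\sup_{n>N}\bigl|(a_n-i)^{-1}-(b_n-i)^{-1}\bigr|<2\varepsilon$. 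As $\varepsilon$ and $B$ are arbitrary, $[A]_G$ is NRT-dense; since the argument applies to every $A\in{\rm{EES}}(H)$, in fact \emph{every} $G$-orbit in ${\rm{EES}}(H)$ is NRT-dense.

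\medskip\noindent\emph{Meagerness.} Here I would argue abstractly, feeding back the density just obtained. The space $({\rm{EES}}(H),{\rm{NRT}})$ is a Polish $G$-space (Propositions \ref{prop: EES with NRT is Polish} and \ref{prop: G on EES is continuous}), so by the Effros dichotomy a non-meager orbit of a Polish group action is locally closed, hence relatively open in its closure (see e.g.\ \cite{Gao09}). Suppose $[A]_G$ were non-meager. By the density just proved its closure is all of ${\rm{EES}}(H)$, so $[A]_G$ would be open in ${\rm{EES}}(H)$ and ${\rm{EES}}(H)\setminus[A]_G$ would be closed with empty interior, i.e.\ nowhere dense. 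But ${\rm{EES}}(H)$ carries more than one orbit: by Example \ref{ex: non-unitarily equivalent but have same ess spec} there is $B\in{\rm{EES}}(H)$ with $\dom{B}$ not unitarily equivalent to $\dom{A}$, and since $\dom{uAu^*+K}=u\cdot\dom{A}$ for every $u\in\mathcal{U}(H)$ and $K\in\mathbb{K}(H)_{\rm{sa}}$, the orbit $[B]_G$ is disjoint from $[A]_G$. Then $[B]_G\subset{\rm{EES}}(H)\setminus[A]_G$ would lie in a nowhere dense set, contradicting the NRT-density of $[B]_G$ established above. Hence $[A]_G$ is meager.

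\medskip\noindent\emph{Where the difficulty lies.} The substantive point is the density step, and its engine is the special feature of ${\rm{EES}}(H)$: emptiness of the essential spectrum forces $(A-i)^{-1}$ to be compact, hence norm-small on the span of the eigenvectors with large eigenvalue, so only finitely many eigenvalues need to be adjusted, and that adjustment is realized by an honest finite-rank compact perturbation after a unitary is used to line up the two eigenbases. Once density holds for all orbits, meagerness is essentially formal, needing only that ${\rm{EES}}(H)$ support at least two orbits, which is exactly the content of Example \ref{ex: non-unitarily equivalent but have same ess spec} together with the elementary remark on domains. I do not expect to need any finer eigenvalue-distribution invariant; in principle one could instead try to construct a dense $G_\delta$ set disjoint from $[A]_G$ directly from the ordered eigenvalue moduli, but the abstract route above avoids that bookkeeping.
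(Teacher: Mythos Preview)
Your density argument is correct and is essentially the paper's own proof: diagonalize both operators, conjugate to align the eigenbases, then adjust finitely many eigenvalues by a finite-rank perturbation and control the tail of the resolvent difference using $|a_n|,|b_n|\to\infty$.

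Your meagerness argument, however, has a genuine gap. The step ``by the Effros dichotomy a non-meager orbit of a Polish group action is locally closed, hence relatively open in its closure'' is not what the Effros theorem asserts. Effros gives the equivalence of \emph{non-meager in its closure}, \emph{$G_\delta$}, and \emph{openness of the orbit map $G\to G\cdot x$}; it does \emph{not} yield that the orbit is locally closed or open in its closure. In fact the very paper you are working in supplies a counterexample: by Theorem~\ref{thm: unbounded case: comeager orbit exists} the set ${\rm SA}_{\rm full}(H)=\{A:\sigma_{\rm ess}(A)=\mathbb{R}\}$ is a single dense $G_\delta$ (hence non-meager) $G$-orbit in ${\rm SA}(H)$, but it is \emph{not} open, since by the discontinuity of $A\mapsto\sigma_{\rm ess}(A)$ one can approximate any $A$ with $\sigma_{\rm ess}(A)=\mathbb{R}$ by operators with $\sigma_{\rm ess}=\{0\}$. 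So a non-meager dense orbit need not be open, and your contradiction (``$[B]_G$ would lie in a nowhere dense set'') does not go through. The most you can extract along these lines is that at most one orbit is comeager; you still need a separate reason why \emph{no} orbit is comeager, and ``there exist at least two dense orbits'' is not enough (dense and meager are compatible).

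The paper avoids this by a direct, constructive argument: using Lemma~\ref{lem: spectrum and perturbation by K} it shows
\[
[A]_G\ \subset\ \bigcup_{q\in\mathbb{Q}_{>0}}\ \bigcap_{\lambda\in\sigma_{\rm p}(A)}\ \bigl\{B\in{\rm EES}(H):\ \sigma_{\rm p}(B)\cap[\lambda-q,\lambda+q]\neq\emptyset\bigr\},
\]
then proves each set $S_{q,\lambda}$ on the right is NRT-closed, and that each intersection $\mathcal{S}_q=\bigcap_\lambda S_{q,\lambda}$ has empty interior by exhibiting, near any $B\in\mathcal{S}_q$, an operator $C$ whose spectrum has a gap around some $a_{n_1}\in\sigma_{\rm p}(A)$ of width $>2q$. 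This pins the meagerness to a concrete spectral obstruction rather than to an abstract orbit-structure principle.
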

For the proof, we use an easy lemma.
\begin{lemma}\label{lem: spectrum and perturbation by K}
Let $A\in {\rm{SA}}(H)$, $\lambda\in \sigma(A)$, $K\in \mathbb{K}(H)_{\rm{sa}}$ and $c>1$. Then $\sigma(A+K)\cap [\lambda-c\|K\|,\lambda+c\|K\|]\neq \emptyset$. 
\end{lemma}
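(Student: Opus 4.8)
The plan is to derive the statement from the stronger elementary bound $\mathrm{dist}(\lambda,\sigma(A+K))\le \|K\|$; note that $K$ is bounded self-adjoint, so $A+K$ is self-adjoint with $\dom{A+K}=\dom{A}$, and since $H\neq\{0\}$ its spectrum $\sigma(A+K)$ is a nonempty closed subset of $\mathbb{R}$. Hence, once that distance bound is in hand, the infimum defining $\mathrm{dist}(\lambda,\sigma(A+K))$ is attained at some $\mu\in\sigma(A+K)$ with $|\mu-\lambda|\le\|K\|\le c\|K\|$, so $\mu\in\sigma(A+K)\cap[\lambda-c\|K\|,\lambda+c\|K\|]$ and we are done. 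In particular the hypothesis $c>1$ is pure slack (it even absorbs the degenerate case $K=0$ automatically), so the real content is the case-free inequality $\mathrm{dist}(\lambda,\sigma(A+K))\le\|K\|$.

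To prove that inequality I would use approximate eigenvectors. Since $A$ is self-adjoint and $\lambda\in\sigma(A)$, the operator $A-\lambda$ fails to be bounded below, so there is a sequence of unit vectors $\xi_n\in\dom{A}$ with $\|(A-\lambda)\xi_n\|\xrightarrow{n\to\infty}0$ (this is the ``only if'' direction of the approximate-eigenvalue characterization of the spectrum; it is the whole-spectrum analogue of Weyl's criterion, Theorem \ref{thm: Weyl criterion}, and follows directly from the spectral theorem). Each such $\xi_n$ lies in $\dom{A}=\dom{A+K}$, and the triangle inequality gives $\|(A+K-\lambda)\xi_n\|\le\|(A-\lambda)\xi_n\|+\|K\xi_n\|\le\|(A-\lambda)\xi_n\|+\|K\|$.

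Finally I would invoke the standard spectral-theoretic estimate that for any self-adjoint operator $B$ and $\mu\in\mathbb{R}$ one has $\|(B-\mu)\eta\|\ge\mathrm{dist}(\mu,\sigma(B))\,\|\eta\|$ for every $\eta\in\dom{B}$ (immediate from $B=\int\lambda\,dE_B(\lambda)$, or from $\|(B-\mu)^{-1}\|=\mathrm{dist}(\mu,\sigma(B))^{-1}$ when $\mu\notin\sigma(B)$, and trivial when $\mu\in\sigma(B)$). Applying this with $B=A+K$, $\mu=\lambda$, $\eta=\xi_n$ yields $\mathrm{dist}(\lambda,\sigma(A+K))\le\|(A-\lambda)\xi_n\|+\|K\|$ for all $n$; letting $n\to\infty$ gives $\mathrm{dist}(\lambda,\sigma(A+K))\le\|K\|$, as required. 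I do not expect a genuine obstacle here: the only points needing a little care are that the approximate-eigenvector argument must be run inside the common domain $\dom{A}$ because $A$ may be unbounded, and that we never use compactness of $K$ (only boundedness and self-adjointness) — both of which are routine.
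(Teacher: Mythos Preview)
Your argument is correct, but it follows a genuinely different route from the paper's proof. The paper argues by contradiction: assuming $\sigma(B)\cap[\lambda-c\|K\|,\lambda+c\|K\|]=\emptyset$ for $B=A+K$, one gets $\|(B-\lambda)^{-1}\|\le (c\|K\|)^{-1}$, then writes
\[
A-\lambda=(B-\lambda)\bigl(1-(B-\lambda)^{-1}K\bigr)
\]
and uses $\|(B-\lambda)^{-1}K\|\le c^{-1}<1$ to invert the second factor via a Neumann series, forcing $\lambda\in\rho(A)$ and contradicting $\lambda\in\sigma(A)$. Your approach instead produces approximate eigenvectors $\xi_n$ for $A$ at $\lambda$, transfers them to $A+K$ by the triangle inequality, and reads off the distance bound $\mathrm{dist}(\lambda,\sigma(A+K))\le\|K\|$ from the spectral estimate $\|(B-\mu)\eta\|\ge\mathrm{dist}(\mu,\sigma(B))\|\eta\|$. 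Your route is slightly more economical: it yields the sharp constant $c=1$ directly (so, as you observe, the hypothesis $c>1$ is slack), and it makes transparent that only boundedness and self-adjointness of $K$ are used, not compactness. The paper's resolvent/Neumann-series argument, on the other hand, is in the same spirit as several other perturbation arguments used elsewhere in the paper (e.g.\ Lemma~\ref{lem: Arai's lemma} and Lemma~\ref{lem: intersection of compact perturbations=essential}), which may explain the authors' choice.
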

\begin{proof}
Suppose by contradiction that $B:=A+K$ satisfies $\sigma(B)\cap [\lambda-c\|K\|,\lambda+c\|K\|]= \emptyset$. Then for $\mu \in \sigma(B)$, $|\mu-\lambda|\ge c\|K\|$, and hence $\|(B-\lambda)^{-1}\|\le (c\|K\|)^{-1}$. It follows that
\[A-\lambda=B-K-\lambda=(B-\lambda)(1-(B-\lambda)^{-1}K).\]
Since $\|(B-\lambda)^{-1}K\|\le c^{-1}<1$, $1-(B-\lambda)^{-1}K$ is invertible with bounded inverse, whence $A-\lambda$ also has the bounded inverse $(A-\lambda)^{-1}=(1-(B-\lambda)^{-1}K)^{-1}(B-\lambda)^{-1}$, which contradicts $\lambda \in \sigma(A)$. 
\end{proof}

\begin{proof}[Proof of Proposition \ref{prop: yasu G-orbit is dense meager}]
First we show that the orbit $[A]_G$ is dense in ${\rm{EES}}(H)$. Let $B\in {\rm{EES}}(H)$. Then there exists CONS $\{\xi_n\}_{n=1}^{\infty}$ (resp. $\{\eta_n\}_{n=1}^{\infty}$) for $H$ and a real sequence $\{a_n\}_{n=1}^{\infty}$ (resp. $\{b_n\}_{n=1}^{\infty}$) such that $A=\sum_{n=1}^{\infty}a_n\nai{\xi_n}{\ \cdot\ }\xi_n$ and $B=\sum_{n=1}^{\infty}b_n\nai{\eta_n}{\ \cdot\ }\eta_n$. Find $u\in \mathcal{U}(H)$ such that $u\xi_n=\eta_n\ (n\in \mathbb{N})$. Put $K_N:=\sum_{n=1}^N(b_n-a_n)\nai{\eta_n}{\ \cdot\ }\eta_n\ (N\in \mathbb{N})$. Then 
\[uAu^*+K_N=\sum_{n=1}^Nb_n\nai{\eta_n}{\ \cdot\ }\eta_n+\sum_{n=N+1}^{\infty}a_n\nai{\eta_n}{\ \cdot\ }\eta_n.\]
Since $A,B\in {\rm{EES}}(H)$, $|a_n|,|b_n|\to \infty$ as $n\to \infty$. 
Therefore 
\eqa{
\|(uAu^*+K_N-i)^{-1}-(B-i)^{-1}\|&=\left \|\sum_{n=N+1}^{\infty}\left (\frac{1}{a_n-i}-\frac{1}{b_n-i}\right )\nai{\eta_n}{\ \cdot\ }\eta_n\right \|\\
&=\sup_{n\ge N+1}\left |\frac{1}{a_n-i}-\frac{1}{b_n-i}\right |\\
&\stackrel{N\to \infty}{\rightarrow }0.
}
Therefore $B$ is in the NRT-closure of $[A]_G$. Thus every orbit is dense.

Next we show that $[A]_G$ is meager. 
Let $0\neq K\in \mathbb{K}(H)_{\rm{sa}}$. Then choose a constant $c>1$ such that $q:=c\|K\|\in \mathbb{Q}_{>0}$. By Lemma \ref{lem: spectrum and perturbation by K}, we have $\sigma(A+K)\cap [\lambda-q,\lambda+q]\neq \emptyset$ for each $\lambda\in \sigma(A)=\sigma_{\rm{p}}(A)$. Thus we have (note that since $H$ is separable, $\sigma_{\rm{p}}(A)$ is at most countable)
\begin{equation}
[A]_G\subset \bigcup_{q\in \mathbb{Q}_{>0}}\bigcap_{\lambda \in \sigma_{\rm{p}}(A)} \underbrace{\left \{B\in {\rm{EES}}(H);\ \sigma_{\rm{p}}(B)\cap [\lambda-q,\lambda+q]\neq \emptyset \right \}}_{=:S_{q,\lambda}}.\label{eq: S_q,lambda}
\end{equation}

We show that the right hand side of (\ref{eq: S_q,lambda}) is meager. This is done in two steps:\\ \\
\textbf{Step 1.} $S_{q,\lambda}$ is NRT-closed for each $q\in \mathbb{Q}_{>0}, \lambda\in \sigma_{\rm{p}}(A)$.\\
Let $S_{q,\lambda}\ni B_n\stackrel{n\to \infty}{\to}B\in {\rm{EES}}(H)$ (NRT). Assume that $\sigma_{\rm{p}}(B)\cap [\lambda-q,\lambda+q]=\emptyset$. 
Therefore $\lambda\pm q\notin \sigma (B)$. Since $\mathbb{C}\setminus \sigma(B)$ is open, there exists $\varepsilon>0$ such that $[\lambda-q-\varepsilon,\lambda+q+\varepsilon]\cap \sigma(B)=\emptyset$. By \cite[Theorem VIII.23]{ReedSimonI}, $P_n:=E_{B_n}((\lambda-q-\frac{\varepsilon}{2},\lambda+q+\frac{\varepsilon}{2}))$ converges to $E_B((\lambda-q-\frac{\varepsilon}{2},\lambda+q+\frac{\varepsilon}{2})=0$ in norm. Since $P_n\ (n\in \mathbb{N})$ is a projection, this shows that there exists $n_0\ge 1$ such that $P_n=0\ (n\ge n_0)$. 
This means in particular that $\sigma_{\rm{p}}(B_{n_0})\cap [\lambda-q,\lambda+q]=\emptyset$, a contradiction. Therefore $S_{q,\lambda}$ is NRT-closed. \\ \\
\textbf{Step 2.} $\mathcal{S}_q:=\bigcap_{\lambda \in \sigma_{\rm{p}}(A)}S_{q,\lambda}$ is a (closed) nowhere-dense subset of ${\rm{EES}}(H)$.\\
Assume by contradiction that there exists $B\in \mathcal{S}_q$ and $\varepsilon>0$ such that $\mathcal{S}_q$ contains an open neighborhood $\{C\in {\rm{EES}}(H);\ \|(B-i)^{-1}-(C-i)^{-1}\|<\varepsilon\}$ of $B$. 
Let 
\[
A=\sum_{n=1}^{\infty}a_n\nai{\xi_n}{\ \cdot\ }\xi_n,\ \ \ \ 
B=\sum_{n=1}^{\infty}b_n\nai{\eta_n}{\ \cdot\ }\eta_n,
\]
where $\{\xi_n\}_{n=1}^{\infty},\ \{\eta_n\}_{n=1}^{\infty}$ are CONSs for $H$, and $|a_n|,|b_n|\nearrow \infty$.
Since $|b_n|\nearrow \infty$, there is $n_0\in \mathbb{N}$ such that $(|b_n|^2+1)^{-1/2}<\varepsilon/2$ for $n>n_0$.  Since $|a_n|\nearrow \infty$, there is $n_1\in \mathbb{N}$ such that $|a_{n_1}|>q$ and $|b_{n_0}|<|a_{n_1}|-q$ holds. Then we may also find $n_2\in \mathbb{N}$ such that 
$|a_{n_1}|+q<|b_{n_2}|$ and $n_2>n_0$ hold. Now define $C\in {\rm{EES}}(H)$ by

\[C:=\sum_{n=1}^{\infty}c_n\nai{\eta_n}{\ \cdot\ }\eta_n,\ \ c_n:=\begin{cases}
\ \ \ \ \ b_n & (1\le n\le n_0)\\
b_{n_2+(n-n_0)} & (n>n_0),
\end{cases}.\]
By construction, we have 
\begin{equation}
|c_n|\le |b_{n_0}|<|a_{n_1}|-q\ (n\le n_0),\ \ \ \  |c_n|\ge |b_{n_2}|>|a_{n_1}|+q\ (n>n_0).\label{eq: c_n has a gap}
\end{equation}
We compute 
\eqa{
\|(C-i)^{-1}-(B-i)^{-1}\|&\le \left \|\sum_{n=n_0+1}^{\infty}\left (\frac{1}{b_n-i}-
\frac{1}{b_{n_2+(n-n_0)}-i}\right )\nai{\eta_n}{\ \cdot\ }\eta_n \right \|\\
&\le \sup_{n\ge n_0+1}\left (\frac{1}{\sqrt{|b_n|^2+1}}+\frac{1}{\sqrt{|b_{n_2+(n-n_0)}|^2+1}}\right )\\
&<\frac{\varepsilon}{2}+\frac{\varepsilon}{2}=\varepsilon,
}
which shows by our assumption that $C\in \mathcal{S}_{q}$. However, we have $\sigma(C)\cap [a_{n_1}-q,a_{n_1}+q]=\emptyset$ by (\ref{eq: c_n has a gap}), which is a contradiction. Therefore $\mathcal{S}_{q}$ is nowhere-dense. By Step 1 and Step 2, we have shown that $[A]_G$ is meager.  
\end{proof}

Finally, we show that the action of $G$ on ${\rm{EES}}(H)$ satisfies condition (b) of Definition \ref{def: weak turbulence}.
We need the following elementary but useful estimate.
\begin{lemma}\label{lem: nonlinearity of resolvent does not appear if ab>-1}
Let $a,b\in \mathbb{R}$ and let $0\le s\le 1$. 
If $ab\ge -1$, then 
\[\left |\frac{1}{(1-s)a+sb-i}-\frac{1}{a-i}\right |\le \left |\frac{1}{b-i}-\frac{1}{a-i}\right |.\]
\end{lemma}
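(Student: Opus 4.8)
The plan is to reduce the claimed inequality to an elementary quadratic inequality in the real parameters $a,b,s$, and then dispose of that inequality using only the hypotheses $ab\ge -1$ and $0\le s\le 1$.

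First I would set $c:=(1-s)a+sb$ and compute the two resolvent differences by hand. Using $\frac{1}{x-i}-\frac{1}{a-i}=\frac{a-x}{(x-i)(a-i)}$ together with the identity $a-c=s(a-b)$, one obtains
\[
\frac{1}{c-i}-\frac{1}{a-i}=\frac{s(a-b)}{(c-i)(a-i)},\qquad
\frac{1}{b-i}-\frac{1}{a-i}=\frac{a-b}{(b-i)(a-i)}.
\]
If $a=b$ both sides of the asserted inequality vanish, so assume $a\neq b$. Taking absolute values and cancelling the common nonzero factor $|a-b|/|a-i|$, then squaring and using $|x-i|^2=x^2+1$, the inequality is seen to be equivalent to $s^2(b^2+1)\le c^2+1$.

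Next I would expand $c^2=(1-s)^2a^2+2s(1-s)ab+s^2b^2$ and cancel $s^2b^2$, reducing the goal to $s^2-1\le (1-s)^2a^2+2s(1-s)ab$. For $s=1$ both sides vanish; for $s<1$ I would divide by $1-s>0$ and rewrite $s^2-1=-(1-s)(1+s)$, which turns the goal into
\[
(1-s)a^2+2sab+(1+s)\ge 0.
\]
Finally, since $s\ge 0$ and $ab\ge -1$ we have $2sab\ge -2s$, so the left-hand side is at least $(1-s)a^2-2s+(1+s)=(1-s)(a^2+1)\ge 0$, which finishes the proof.

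I do not anticipate a genuine obstacle here: the only things requiring a little care are the bookkeeping of the degenerate cases ($a=b$, $s=0$, and $s=1$, where the inequality is trivial or an equality) and checking that the factor cancelled in the reduction step is indeed nonzero. It is nonetheless worth recording where the hypothesis $ab\ge -1$ enters: it is exactly what allows the cross term $2sab$ to be absorbed in the last display. Geometrically, $x\mapsto (x-i)^{-1}$ maps $\mathbb{R}$ onto the circle of radius $\tfrac12$ about $\tfrac{i}{2}$, the points $(a-i)^{-1}$ and $(b-i)^{-1}$ are antipodal on that circle precisely when $ab=-1$, and $ab\ge -1$ guarantees that the image of the segment $[a,b]$ is an arc subtending a central angle of at most $\pi$, along which distances from $(a-i)^{-1}$ are monotone.
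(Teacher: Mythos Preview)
Your proof is correct. Both you and the paper begin by writing the two resolvent differences as $\frac{s(a-b)}{(c-i)(a-i)}$ and $\frac{a-b}{(b-i)(a-i)}$ and reducing (after dispensing with $a=b$) to the inequality $s^2(b^2+1)\le c^2+1$, but from there the arguments diverge.

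The paper rewrites $f(s)=\bigl|\frac{1}{c-i}-\frac{1}{a-i}\bigr|^2$ as $\frac{(a-b)^2}{a^2+1}g(s)$, substitutes $t=s^{-1}$ so that $g(s)=1/h(t)$ with $h(t)=(1+a^2)t^2-2(a-b)at+(a-b)^2$, and completes the square to locate the vertex of this upward parabola at $t_0=\frac{(a-b)a}{a^2+1}$; the hypothesis $ab\ge -1$ is exactly $t_0\le 1$, whence $h$ is increasing on $[1,\infty)$ and $f(s)\le f(1)$. Your route is more direct: you expand $c^2$, divide by $1-s$, and bound the cross term $2sab\ge -2s$ to obtain $(1-s)(a^2+1)\ge 0$. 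Your argument is shorter and avoids the change of variable; the paper's parabola analysis, on the other hand, immediately yields the supplementary remark that when $ab<-1$ the left-hand side attains the maximal value $1$ at $s=\frac{1+a^2}{a^2-ab}$, which your approach does not surface without further work. Your closing geometric observation about the Cayley image of $\mathbb{R}$ is a nice complement to both.
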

\begin{remark}
If $ab< -1$, then the left hand side in the above, regarded as a function in $s\in [0,1]$ attains the maximum value 1 at $s=\frac{1+a^2}{(a^2-ab)}$. 
\end{remark}
\begin{proof}[Proof of Lemma \ref{lem: nonlinearity of resolvent does not appear if ab>-1}]
We may and do assume that $a\neq b$. Define $f\colon [0,1]\to \mathbb{R}$ by
\eqa{f(s)&=\left |\frac{1}{(1-s)a+sb-i}-\frac{1}{a-i}\right |^2\\
&=\frac{s^2(a-b)^2}{[\{(1-s)a+sb\}^2+1](a^2+1)}\\
&=:\frac{(a-b)^2}{a^2+1}g(s).
}
Since $0\le s\le 1$ and $f(0)=0$, we consider the case $0<s\le 1$. We compute
\eqa{
g(s)
&=\frac{1}{(a-b)^2-2(a-b)as^{-1}+(1+a^2)s^{-2}}\\
&\stackrel{s^{-1}=t}{=}\frac{1}{(a-b)^2-2(a-b)at+(1+a^2)t^2}\\
&=:\frac{1}{h(t)}.
}
Therefore we consider the minimum of $h(t)$. Since $0<s\le 1$, $t\ge 1$. We see that
\eqa{
h(t)&=(1+a^2)\left \{t^2-\frac{2(a-b)a}{a^2+1}t+\frac{(a-b)^2}{a^2+1}\right \}\\
&=(1+a^2)\left \{\left (t-\frac{(a-b)a}{a^2+1}\right )^2-\frac{a^2(a-b)^2}{(a^2+1)^2}+\frac{(a-b)^2}{a^2+1}\right \}.
}
Since $ab\ge -1\Leftrightarrow t_0:=\frac{(a-b)a}{a^2+1}\le 1$, $h\colon [1,\infty)\to \mathbb{R}$ takes the minimum value at $t=1$, whence $f(s)\le f(1)\ (0\le s\le 1)$ holds. 
\end{proof}

\begin{lemma}\label{lem: unbounded below/above comeager}
${\rm{EES}}_{\pm \infty}(H):=\{A\in {\rm{EES}}(H);\ \inf \sigma(A)=-\infty,\sup \sigma(A)=\infty\}$ is a $G$-invariant dense $G_{\delta}$ subset of ${\rm{EES}}(H)$.
\end{lemma}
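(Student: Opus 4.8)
The plan is to check the three assertions of the lemma in turn: $G$-invariance, the $G_{\delta}$ property with respect to NRT, and density. Throughout I will use that, by Lemma \ref{lem: ess=empty iff compact resolvent} and the spectral theorem, an operator $A\in {\rm{EES}}(H)$ has the form $A=\sum_{k}a_k\nai{\xi_k}{\ \cdot\ }\xi_k$ with $|a_k|\to \infty$ along some CONS $\{\xi_k\}$, so that $\sigma(A)=\{a_k\}_k$ is a closed {\it discrete} subset of $\mathbb{R}$ (only finitely many $a_k$ lie in any bounded interval), and every $\lambda\in\sigma(A)$ is an isolated eigenvalue.

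For $G$-invariance, fix $A\in {\rm{EES}}_{\pm\infty}(H)$ and $(K,u)\in G$, and set $B:=uAu^*+K$. Since the essential spectrum is invariant under unitary conjugation and under compact perturbations, $\sigma_{\rm{ess}}(B)=\sigma_{\rm{ess}}(A)=\emptyset$, so $B\in {\rm{EES}}(H)$. It remains to see that $\sup\sigma(B)=+\infty$ and $\inf\sigma(B)=-\infty$. Because $\sigma(uAu^*)=\sigma(A)$ is unbounded both above and below, I would invoke Lemma \ref{lem: spectrum and perturbation by K} (with any fixed $c>1$; the case $K=0$ is immediate): for each $\lambda\in\sigma(A)$ one gets $\sigma(B)\cap[\lambda-c\|K\|,\lambda+c\|K\|]\neq\emptyset$, and letting $\lambda\to\pm\infty$ within $\sigma(A)$ forces $\sup\sigma(B)=+\infty$ and $\inf\sigma(B)=-\infty$. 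Hence $B\in {\rm{EES}}_{\pm\infty}(H)$.

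Next, for the $G_{\delta}$ property I would write ${\rm{EES}}_{\pm\infty}(H)=\bigcap_{n=1}^{\infty}(U_n\cap V_n)$, where $U_n:=\{A\in {\rm{EES}}(H);\ \sigma(A)\cap(n,\infty)\neq\emptyset\}$ and $V_n:=\{A\in {\rm{EES}}(H);\ \sigma(A)\cap(-\infty,-n)\neq\emptyset\}$, and show each $U_n$ (hence, symmetrically, each $V_n$) is NRT-open. Given $A\in U_n$, pick $\lambda\in\sigma(A)\cap(n,\infty)$; since $\lambda$ is an isolated eigenvalue of $A$, one may choose $\varepsilon>0$ with $[\lambda-\varepsilon,\lambda+\varepsilon]\subset(n,\infty)$ and $\lambda\pm\varepsilon\notin\sigma(A)$, so that $E_A((\lambda-\varepsilon,\lambda+\varepsilon))\neq 0$. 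If $B_m\to A$ in NRT, then exactly as in Step 1 of the proof of Proposition \ref{prop: yasu G-orbit is dense meager}, using \cite[Theorem VIII.23]{ReedSimonI}, the projections $E_{B_m}((\lambda-\varepsilon,\lambda+\varepsilon))$ converge in norm to $E_A((\lambda-\varepsilon,\lambda+\varepsilon))\neq 0$; a norm-convergent sequence of projections is eventually nonzero, so $\sigma(B_m)\cap(\lambda-\varepsilon,\lambda+\varepsilon)\neq\emptyset$ and thus $B_m\in U_n$ for large $m$. This gives openness of $U_n$, and the $G_{\delta}$ claim follows.

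Finally, for density I would use that ${\rm{EES}}_{\pm\infty}(H)$ is $G$-invariant (just proved) together with Proposition \ref{prop: yasu G-orbit is dense meager}, which says every $G$-orbit is NRT-dense in ${\rm{EES}}(H)$: it then suffices to exhibit one point, e.g. $A_0:=\sum_{k=1}^{\infty}a_k\nai{\xi_k}{\ \cdot\ }\xi_k$ for a CONS $\{\xi_k\}$ and $a_{2k}:=k$, $a_{2k-1}:=-k$; then $|a_k|\to\infty$ so $A_0\in {\rm{EES}}(H)$, and $\sup\sigma(A_0)=+\infty$, $\inf\sigma(A_0)=-\infty$, so $A_0\in {\rm{EES}}_{\pm\infty}(H)$, whence $[A_0]_G\subset {\rm{EES}}_{\pm\infty}(H)$ is dense. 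The only step requiring any care is the openness of $U_n$ (equivalently, lower semicontinuity of $A\mapsto\sup\sigma(A)$ for NRT), but this is a routine consequence of norm-resolvent convergence of the kind already used in the paper, so I do not expect a genuine obstacle here.
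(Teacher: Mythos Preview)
Your proof is correct, and it reaches the same conclusion as the paper by a somewhat different route. For the $G_{\delta}$ property the paper works with the complement: it writes ${\rm{EES}}(H)\setminus {\rm{EES}}_{\pm\infty}(H)$ as a countable union of sets of the form $\{A;\ \inf\sigma(A)\ge n\}$ and $\{A;\ \sup\sigma(A)\le n\}$, and shows each of these is NRT-closed using that $\{A;\ \sigma(A)\cap(a,b)=\emptyset\}$ is closed (\cite[Theorem VIII.24(a)]{ReedSimonI}). Your direct argument, writing ${\rm{EES}}_{\pm\infty}(H)=\bigcap_n(U_n\cap V_n)$ and showing openness via norm convergence of spectral projections (\cite[Theorem VIII.23]{ReedSimonI}), is essentially the dual of this and equally clean. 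For density the paper goes further and shows each of the closed pieces above is nowhere-dense by an explicit NRT approximation, so density falls out at the same time as $G_{\delta}$. Your route is more economical: once $G$-invariance is established, Proposition \ref{prop: yasu G-orbit is dense meager} (every $G$-orbit is dense) plus a single explicit point in ${\rm{EES}}_{\pm\infty}(H)$ already yields density. Your treatment of $G$-invariance via Lemma \ref{lem: spectrum and perturbation by K} is also more explicit than the paper's ``easy to see''.
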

\begin{proof}
It is easy to see the $G$-invariance of ${\rm{EES}}_{\pm \infty}(H)$. 
Define subsets of ${\rm{EES}}(H)$ by
\eqa{
{\rm{EES}}_{>-\infty}(H)=\{A;\ \inf \sigma(A)>-\infty\},\ \ \ \ &{\rm{EES}}_{\ge n}(H)=\{A;\ \inf \sigma(A)\ge n\}\\
{\rm{EES}}_{<\infty}(H)=\{A;\ \sup \sigma(A)<\infty\},\ \ \ \ &{\rm{EES}}_{\le n}(H)=\{A;\ \sup \sigma(A)\le n\}
}
Then 
\eqa{
{\rm{EES}}_{\pm \infty}(H)&={\rm{EES}}(H)\setminus ({\rm{EES}}_{>-\infty}(H)\cup {\rm{EES}}_{<\infty}(H)),\\
{\rm{EES}}_{>-\infty}(H)&=\bigcup_{n\in \mathbb{Z}}{\rm{EES}}_{\ge n}(H),\ \ \ \ {\rm{EES}}_{<\infty}(H)=\bigcup_{n\in \mathbb{Z}}{\rm{EES}}_{\le n}(H).
}
Therefore it suffices to show that ${\rm{EES}}_{\ge n}(H),\ {\rm{EES}}_{\le n}(H)$ are closed, nowhere-dense subsets for every $n\in \mathbb{Z}$. Let $n\in \mathbb{Z}$, and we first prove that  ${\rm{EES}}_{\ge n}(H)$ is closed: note that 
\eqa{
{\rm{EES}}_{\ge n}(H)&=\{A\in {\rm{EES}}(H); \sigma(A)\cap (-\infty,n)=\emptyset\}\\
&=\bigcap_{m=1}^{\infty}\underbrace{\{A\in {\rm{EES}}(H); \sigma(A)\cap (n-m,n)=\emptyset\}}_{=:F_{n,m}}.
}
Hence it suffices to show that each $F_{n,m}\ (m\in \mathbb{N})$ is closed in ${\rm{EES}}(H)$. But this follows from \cite[Theorem VIII.24 (a)]{ReedSimonI} because SRT is weaker than NRT. 

Next, we show that ${\rm{EES}}_{\ge n}(H)$ is nowhere-dense: let $A\in {\rm{EES}}_{\ge n}(H)$.  Then we have the spectral resolution $A=\sum_{m=1}^{\infty}a_m\nai{\xi_m}{\ \cdot\ }\xi_m$ with $a_m\ge n\ (m\in \mathbb{N})$ and $\{\xi_m\}_{m=1}^{\infty}$ a CONS for $H$. Then for each $k\in \mathbb{N}$, define
\[A_k:=\sum_{m\neq k}a_m\nai{\xi_m}{\ \cdot\ }\xi_m+(n-k)\nai{\xi_k}{\ \cdot\ }\xi_k\in {\rm{EES}}(H).\]
Then $A_k\notin {\rm{EES}}_{\ge n}(H)$, but
\[\|(A-i)^{-1}-(A_k-i)^{-1}\|=\left |\frac{1}{a_k-i}-\frac{1}{n-k-i}\right |\stackrel{k\to \infty}{\to}0,\]
so $A\notin {\rm{Int}}({\rm{EES}}_{\ge n}(H))$. Therefore ${\rm{Int}}({\rm{EES}}_{\ge n}(H))=\emptyset$. Similarly, it can be shown that ${\rm{EES}}_{\le n}(H)$ is closed and nowhere-dense for each $n\in \mathbb{Z}$. Therefore both ${\rm{EES}}_{>-\infty}(H)$ and ${\rm{EES}}_{<\infty}(H)$ are meager $F_{\sigma}$ sets, and the proof is completed. 
\end{proof}
\begin{proof}[Proof of Theorem \ref{thm: action is weakly turbulent}]
By Theorem \ref{thm: weak turbulence=turbulence}, it is enough to show that the action is weakly generically turbulent. We have shown that all orbits are dense and meager (Proposition \ref{prop: yasu G-orbit is dense meager}). Therefore we have only to prove (b) in Definition \ref{def: weak turbulence}.  Let $A,B\in {\rm{EES}}_{\pm \infty}(H)$, and let $U$ be an open neighborhood of $A$ in ${\rm{EES}}(H)$, $V$ be an open neighborhood of 1 in $G$. We may and do assume that $U,V$ are of the form $U=\{C\in {\rm{EES}}(H);\ \|(A-i)^{-1}-(C-i)^{-1}\|<\delta\}$, and $V=W_1\times W_2$, where $W_1=\{K\in \mathbb{K}(H)_{\rm{sa}};\ \|K\|<r\}$ and $W_2$ is an open neighborhood of 1 in $\mathcal{U}(H)$. We prove that 
$\overline{\mathcal{O}(A,U,V)}\cap [B]_G\neq \emptyset$, which shows (b) because by Lemma \ref{lem: unbounded below/above comeager}, ${\rm{EES}}_{\pm \infty}(H)$ is comeager in ${\rm{EES}}(H)$. 
Let $A=\sum_{n=1}^{\infty}a_n\nai{\xi_n}{\ \cdot\ }\xi_n,\ B=\sum_{n=1}^{\infty}b_n\nai{\eta_n}{\ \cdot\ }\eta_n$ be the spectral resolutions of $A,B$ respectively. 
Define $v\in \mathcal{U}(H)$ by $v\eta_n:=\xi_n\ (n\in \mathbb{N})$. Then 
\[B_1:=vBv^*=\sum_{n=1}^{\infty}b_n\nai{\xi_n}{\ \cdot\ }\xi_n\in [B]_G.\]
Let $I_A:=\{n\in \mathbb{N};a_n\ge 0\},\ J_A:=\{n\in \mathbb{N}; a_n<0\}$ and define $I_B,J_B\subset \mathbb{N}$ analogously. By assumption, all $I_A,J_A,I_B,J_B$ are infinite, so write
\eqa{
I_A=\{n_1<n_2<\cdots\},\ \ \ \ &J_A=\{n_1'<n_2'<\cdots\}\\
I_B=\{m_1<m_2<\cdots,\},\ \ \ \ &J_B=\{m_1'<m_2'<\cdots \}.
}
Define a permutation $\pi$ of $\mathbb{N}$ by $\pi(n_k):=m_k,\ \pi(n_k')=m_k'$, and define $u_{\pi}\in \mathcal{U}(H)$ by $u_{\pi}\xi_n:=\xi_{\pi^{-1}(n)}\ (n\in \mathbb{N})$. Then for each $k\in \mathbb{N}$, $u_{\pi}B_1u_{\pi}^*\xi_{n_k}=b_{m_k}\xi_{n_k},\ u_{\pi}B_1u_{\pi}^*\xi_{n_k'}=b_{m_k'}\xi_{n_k'}$, 
and
\[B_2:=u_{\pi}B_1u_{\pi}^*=\sum_{k=1}^{\infty}b_{m_k}\nai{\xi_{n_k}}{\ \cdot\ }\xi_{n_k}+\sum_{k=1}^{\infty}b_{m_k'}\nai{\xi_{n_k'}}{\ \cdot\ }\xi_{n_k'}\in [B]_G.\]
Then by the choice of $I_A,J_A,I_B,J_B$, we now have $a_{n_k}b_{m_k}\ge 0,\ a_{n_k'}b_{m_k'}\ge 0$, 
so that if we write $B_2=\sum_{n=1}^{\infty}\tilde{b}_n\nai{\xi_n}{\ \cdot\ }\xi_n$, we have
\begin{equation}
a_n\tilde{b}_n\ge 0\ \ \ \ (n\in \mathbb{N}). \label{eq: product is always positive}
\end{equation}
Next, let $K_N:=\sum_{n=1}^N(-\tilde{b}_n+a_n)\nai{\xi_n}{\ \cdot\ }\xi_n\in \mathbb{K}(H)_{\rm{sa}}$, and consider $C_N:=B_2+K_N\in [B]_G$. Then as $|\tilde{b}_n|, |a_n|\to \infty\ (n\to \infty)$, we have
\[\|(C_N-i)^{-1}-(A-i)^{-1}\|=\sup_{n\ge N+1}\left |\frac{1}{\tilde{b}_n-i}-\frac{1}{a_n-i}\right |\\
\stackrel{N\to \infty}{\to}0,\]
so that there exists $N\in \mathbb{N}$ for which $\|(C_N-i)^{-1}-(A-i)^{-1}\|<\delta.\label{eq: C_N is in U}$ holds. In particular, $C_N\in U$.\\ \\
\textbf{Claim.} $C_N\in \overline{\mathcal{O}(A,U,V)}\cap [B]_G$.\\
The proof of the claim would conclude that (b) holds. To show that $C_N\in \overline{\mathcal{O}(A,U,V)}$, define for each $p\ge N+1$ an operator 
\[C_{N,p}:=\sum_{n=1}^Na_n\nai{\xi_n}{\ \cdot\ }\xi_n+\sum_{n=N+1}^p\tilde{b}_n\nai{\xi_n}{\ \cdot\ }\xi_n+\sum_{n=p+1}^{\infty}a_n\nai{\xi_n}{\ \cdot\ }\xi_n.\]
Then we have
\eqa{
\|(C_{N,p}-i)^{-1}-(A-i)^{-1}\|&=\sup_{N+1\le n\le p}\left |\frac{1}{\tilde{b}_n-i}-\frac{1}{a_n-i}\right |\\ &\le \|(C_N-i)^{-1}-(A-i)^{-1}\|\\
&<\delta,
}
so $C_{N,p}\in U\ (p\ge N+1)$ holds. 
Moreover, we see that
\[
\|(C_N-i)^{-1}-(C_{N,p}-i)^{-1}\|=\sup_{n\ge p+1}\left |\frac{1}{\tilde{b}_n-i}-\frac{1}{a_n-i}\right |
\stackrel{p\to \infty}{\to}0.\]
We now show that $C_{N,p}\in \mathcal{O}(A,U,V)$, which implies $C_N\in \overline{\mathcal{O}(A,U,V)}$.  
Put $m_p:=\max_{N+1\le n\le p}|\tilde{b}_n-a_n|$, and choose $L\in \mathbb{N}$ so that $m_p<rL$. Define
\[K:=\sum_{n=N+1}^p\frac{\tilde{b}_n-a_n}{L}\nai{\xi_n}{\ \cdot\ }\xi_n\in \mathbb{K}(H)_{\rm{sa}}.\]
Then $\|K\|=\frac{m_p}{L}<r$, whence $K\in W_1$. Therefore $g=(K,1)\in V$. For each $0\le j\le L$, define 
$A_j:=A+jK=g^j\cdot A$. In particular, $A_0=A,A_L=C_{N,p}$. Now by (\ref{eq: product is always positive}) and Lemma \ref{lem: nonlinearity of resolvent does not appear if ab>-1}, we have
\eqa{
\|(A_j-i)^{-1}-(A-i)^{-1}\|&=\left \|\sum_{n=N+1}^p\left (\frac{1}{a_n+\frac{j}{L}(\tilde{b}_n-a_n)-i}-\frac{1}{a_n-i}\right )\nai{\xi_n}{\ \cdot\ }\xi_n\right \|\\
&=\sup_{N+1\le n\le p}\left |\frac{1}{a_n+\frac{j}{L}(\tilde{b}_n-a_n)-i}-\frac{1}{a_n-i}\right |\\
&\le \sup_{N+1\le n\le p}\left |\frac{1}{\tilde{b}_n-i}-\frac{1}{a_n-i}\right |\\
&=\|(C_{N,p}-i)^{-1}-(A-i)^{-1}\|\\
&<\delta.
}
Therefore $A_j\in U$ for each $0\le j\le L$, whence $C_{N,p}\in \mathcal{O}(A,U,V)$ and the claim is proved. This shows that the action is weakly generically turbulent, so it is generically turbulent.  
\end{proof}
\section{More Borel Equivalence Relations}
In this section, we consider several Borel equivalence relations related to the Weyl-von Neumann equivalence relation. 
\subsection{$E_{\rm{u.c.res}}^{{\rm{SA}}(H)}$ is Smooth}
\begin{definition}
We define an equivalence relation $E_{\rm{u.c.res}}^{{\rm{SA}}(H)}$ (``unitary equivalence modulo compact difference of resolvents") on ${\rm{SA}}(H)$ by 
$AE_{\rm{u.c.res}}^{{\rm{SA}}(H)}B$ if and only if there exists $u\in \mathcal{U}(H)$ such that $u(A-i)^{-1}u^*-(B-i)^{-1}\in \mathbb{K}(H)$. 
\end{definition}
It is easy to see that $E_{\rm{u.c.res}}^{{\rm{SA}}(H)}$ is an equivalence relation. Note that Weyl-von Neumann equivalence relation $E_G^{{\rm{SA}}(H)}$ is ``stronger" than $E_{\rm{u.c.res}}^{{\rm{SA}}(H)}$: 
\begin{lemma}\label{lem: von Neuman is stronger than E_res.c}
$E_G^{{\rm{SA}}(H)}\subset E_{\rm{u.c.res}}^{{\rm{SA}}(H)}$ holds. 
\end{lemma}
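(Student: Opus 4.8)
The statement to prove is $E_G^{{\rm{SA}}(H)}\subset E_{\rm{u.c.res}}^{{\rm{SA}}(H)}$. The plan is to take $A,B\in{\rm{SA}}(H)$ with $AE_G^{{\rm{SA}}(H)}B$, so that $B=uAu^*+K$ for some $u\in\mathcal{U}(H)$ and $K\in\mathbb{K}(H)_{\rm{sa}}$, and show that $u(A-i)^{-1}u^*-(B-i)^{-1}\in\mathbb{K}(H)$. First I would note $(uAu^*-i)^{-1}=u(A-i)^{-1}u^*$, so writing $A':=uAu^*\in{\rm{SA}}(H)$ it suffices to show that $(A'-i)^{-1}-(A'+K-i)^{-1}$ is compact whenever $K\in\mathbb{K}(H)_{\rm{sa}}$; the unitary conjugation has been absorbed and plays no further role.

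The key computation is the second resolvent (resolvent difference) identity: for self-adjoint $A'$ and bounded self-adjoint $K$, both $A'-i$ and $A'+K-i$ are boundedly invertible (since $\pm i$ are off the real axis), and
\begin{equation*}
(A'-i)^{-1}-(A'+K-i)^{-1}=(A'-i)^{-1}\,K\,(A'+K-i)^{-1}.
\end{equation*}
This is the identity quoted in the excerpt as \cite[$\S$2.2, (2.4)]{Schmudgen} and already used in the proof of Proposition \ref{prop: B(H)_sa acts on SA(H) continuously}. The right-hand side is a product of a bounded operator, the compact operator $K$, and another bounded operator; since $\mathbb{K}(H)$ is a two-sided ideal in $\mathbb{B}(H)$, this product is compact. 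Hence $u(A-i)^{-1}u^*-(B-i)^{-1}=(A'-i)^{-1}K(B-i)^{-1}\in\mathbb{K}(H)$, which is exactly the condition defining $AE_{\rm{u.c.res}}^{{\rm{SA}}(H)}B$ with the same $u$.

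There is essentially no obstacle here: the only points requiring a word of care are that $\dom{A'}=\dom{uAu^*}=u\cdot\dom{A}$ so $A'+K$ is genuinely self-adjoint with domain $\dom{A'}$ (this is the remark, recalled in $\S$\ref{subsec: operator theory}, that adding a bounded self-adjoint operator to a self-adjoint operator keeps it self-adjoint), and that one should verify $K$ maps $\operatorname{Ran}((A'+K-i)^{-1})=\dom{A'}$ into $H$, which is automatic since $K$ is everywhere-defined and bounded. With these trivial verifications, the resolvent identity and the ideal property of $\mathbb{K}(H)$ finish the proof.
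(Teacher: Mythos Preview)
Your proof is correct and follows essentially the same route as the paper: both reduce to the second resolvent identity applied to $uAu^*$ and $B=uAu^*+K$, obtaining a bounded--compact--bounded product which lies in the ideal $\mathbb{K}(H)$. The only cosmetic difference is the order in which the resolvents appear (you write $(A'-i)^{-1}K(B-i)^{-1}$ while the paper writes $-(B-i)^{-1}K(uAu^*-i)^{-1}$), which is immaterial.
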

\begin{proof}
Let $A,B\in {\rm{SA}}(H)$ be such that $AE_G^{{\rm{SA}}(H)}B$. Then there exist $u\in \mathcal{U}(H)$ and $K\in \mathbb{K}(H)_{\rm{sa}}$ such that $B=uAu^*+K$. Then by the resolvent identity \cite[$\S$2.2, (2.4)]{Schmudgen}
\eqa{
(B-i)^{-1}-u(A-i)^{-1}u^*&=(B-i)^{-1}-(uAu^*-i)^{-1}\\
&=(B-i)^{-1}(uAu^*-B)(uAu^*-i)^{-1}\\
&=-(B-i)^{-1}K(uAu^*-i)^{-1}\in \mathbb{K}(H),
}
whence $AE_{\rm{u.c.res}}^{{\rm{SA}}(H)}B$. 
\end{proof}
It turns out that the restriction of $E_{\rm{u.c.res}}^{{\rm{SA}}(H)}$ to the $F_{\sigma}$ subset $\mathbb{B}(H)_{\rm{sa}}$ coincides with $E_G^{\mathbb{B}(H)_{\rm{sa}}}$. 
\begin{lemma}\label{lem: Matsuzawa relation coincides with von Neumann's on B(H)}
The restriction of $E_{\rm{u.c.res}}^{{\rm{SA}}(H)}$ to $\mathbb{B}(H)_{\rm{sa}}$ coincides with $E_G^{\mathbb{B}(H)_{\rm{sa}}}$.
\end{lemma}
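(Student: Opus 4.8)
The plan is to establish the two inclusions separately, the substantive one being reduced to the scalar Weyl--von Neumann Theorem \ref{thm: von Neumann theorem}. The inclusion of $E_G^{\mathbb{B}(H)_{\rm{sa}}}$ into the restriction of $E_{\rm{u.c.res}}^{{\rm{SA}}(H)}$ is already contained in Lemma \ref{lem: von Neuman is stronger than E_res.c}. For the converse, suppose $A,B\in\mathbb{B}(H)_{\rm{sa}}$ and $u(A-i)^{-1}u^{*}-(B-i)^{-1}\in\mathbb{K}(H)$ for some $u\in\mathcal{U}(H)$. By Theorem \ref{thm: von Neumann theorem} it suffices to prove $\sigma_{\rm{ess}}(A)=\sigma_{\rm{ess}}(B)$, because then Weyl--von Neumann produces $v\in\mathcal{U}(H)$ and $K\in\mathbb{K}(H)_{\rm{sa}}$ with $vAv^{*}+K=B$, i.e. $A\,E_G^{\mathbb{B}(H)_{\rm{sa}}}\,B$.

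First I would use that the essential spectrum of a bounded \emph{normal} operator --- defined via non-Fredholmness, which for normal operators is the set of accumulation points of the spectrum together with the eigenvalues of infinite multiplicity, hence agrees with the definition used in the paper --- is invariant under unitary conjugation (trivially) and under compact perturbations (Weyl/Atkinson). Applied to the normal operators $(A-i)^{-1}$, $u(A-i)^{-1}u^{*}$ and $(B-i)^{-1}$, the hypothesis gives $\sigma_{\rm{ess}}((A-i)^{-1})=\sigma_{\rm{ess}}((B-i)^{-1})$.

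It then remains to run a spectral-mapping argument relating $\sigma_{\rm{ess}}((A-i)^{-1})$ back to $\sigma_{\rm{ess}}(A)$. The map $g(t):=(t-i)^{-1}$ is a homeomorphism of $\mathbb{R}$ onto the punctured circle $\{z\in\mathbb{C}:|z|^{2}={\rm{Im}}\,z\}\setminus\{0\}$, in particular injective; by the spectral theorem $\sigma((A-i)^{-1})=g(\sigma(A))$, which does not contain $0$ since $A$ is bounded, and the pushforward identity $E_{(A-i)^{-1}}(S)=E_{A}(g^{-1}(S))$ shows that $\mu\in\sigma_{\rm{ess}}((A-i)^{-1})$ if and only if $g^{-1}(\mu)\in\sigma_{\rm{ess}}(A)$. (Equivalently, one may invoke Weyl's criterion Theorem \ref{thm: Weyl criterion} through the identity $g(A)-g(\lambda)=-g(\lambda)(A-i)^{-1}(A-\lambda)$ and the boundedness, hence invertibility, of $(A-i)^{-1}$.) Either way $\sigma_{\rm{ess}}((A-i)^{-1})=g(\sigma_{\rm{ess}}(A))$, and likewise for $B$; applying the injective $g^{-1}$ yields $\sigma_{\rm{ess}}(A)=\sigma_{\rm{ess}}(B)$, and Theorem \ref{thm: von Neumann theorem} finishes the proof.

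The difficulty here is careful bookkeeping rather than a genuine obstacle: one must be precise about the notion of essential spectrum for the non-self-adjoint (but normal) resolvents $(A-i)^{-1}$, and about the compact-perturbation invariance and spectral-mapping statements in that setting. The hypothesis that $A$ and $B$ are bounded is used crucially --- exactly to keep $0$ out of $\sigma_{\rm{ess}}((A-i)^{-1})$; for unbounded operators $0$ lies in that set and both the argument and the conclusion fail, as Example \ref{ex: ess spec=0 counterexample} already illustrates. If one prefers to avoid non-self-adjoint operators entirely, the cleanest route is to feed the displayed identity directly into a Weyl-sequence argument, which then only manipulates the self-adjoint operators $A,B$ and the bounded invertible operators $(A-i)^{-1},(B-i)^{-1}$.
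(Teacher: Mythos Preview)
Your argument is correct, but it takes a noticeably different route from the paper's proof. The paper proceeds by direct algebra: given $K:=(B-i)^{-1}-u(A-i)^{-1}u^{*}\in\mathbb{K}(H)$, one simply multiplies on the left by $(B-i)$ and on the right by $(uAu^{*}-i)$, both of which are \emph{bounded} because $A,B\in\mathbb{B}(H)_{\rm{sa}}$, and reads off $B-uAu^{*}=-(B-i)K(uAu^{*}-i)\in\mathbb{K}(H)_{\rm{sa}}$. This yields the compact perturbation with the \emph{same} unitary $u$, with no appeal to essential spectra, normal-operator Weyl theory, or Theorem~\ref{thm: von Neumann theorem}. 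Your approach instead passes through $\sigma_{\rm{ess}}((A-i)^{-1})$, spectral mapping, and then re-invokes Weyl--von Neumann to produce a (possibly different) unitary $v$. Both are valid; the paper's route is shorter and more elementary, while yours is essentially the strategy the paper later deploys in Proposition~\ref{prop: Matsuzawa relation is smooth} (via Corollary~\ref{cor: ess spec of resolvent} and the Weyl--von Neumann--Berg Theorem) to handle the full $E_{\rm{u.c.res}}^{{\rm{SA}}(H)}$ relation. So your argument is conceptually aligned with the smoothness proof to come, but for this bounded case it overshoots---the resolvent identity plus the ideal property of $\mathbb{K}(H)$ already does the job in two lines.
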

\begin{proof}
Let $A,B\in \mathbb{B}(H)_{\rm{sa}}$. If $AE_G^{\mathbb{B}(H)_{\rm{sa}}}B$, then $AE_{\rm{u.c.res}}^{{\rm{SA}}(H)}B$ by Lemma \ref{lem: von Neuman is stronger than E_res.c}. Conversely, assume that $AE_{\rm{u.c.res}}^{{\rm{SA}}(H)}B$ holds. Then there exists $u\in \mathcal{U}(H)$ such that 
\[
(B-i)^{-1}-(uAu^*-i)^{-1}=(B-i)^{-1}(uAu^*-B)(uAu^*-i)^{-1}\in \mathbb{K}(H).
\]
Let $K:=(B-i)^{-1}-(uAu^*-i)^{-1}$. Then because $A,B$ are bounded and self-adjoint, we have
\[B-uAu^*=-(B-i)K(uAu^*-i)\in \mathbb{K}(H)_{\rm{sa}}.\]
This shows that $AE_G^{\mathbb{B}(H)_{\rm{sa}}}B$. 
\end{proof}
Therefore $E_{\rm{u.c.res}}^{{\rm{SA}}(H)}$ is considered to be another generalization of the smooth equivalence relation  $E_G^{\mathbb{B}(H)_{\rm{sa}}}$ to general self-adjoint operators. We have seen that $E_G^{{\rm{SA}}(H)}$ is unclassifiable by countable structure. However, it turns out that apparently similar equivalence relation  $E_{\rm{u.c.res}}^{{\rm{SA}}(H)}$ is actually smooth:  
\begin{theorem}\label{thm: Matsuzawa relation is smooth}
$E_{\rm{u.c.res}}^{{\rm{SA}}(H)}$ is a smooth equivalence relation.
\end{theorem}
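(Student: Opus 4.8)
The plan is to produce a concrete complete invariant, namely the essential spectrum of the resolvent, and then to check that it depends on $A$ in a Borel way. Concretely, I would consider the map
\[\Phi\colon {\rm{SA}}(H)\ni A\longmapsto \sigma_{\rm{ess}}\bigl((A-i)^{-1}\bigr)\in \mathcal{F}(\mathbb{C}),\]
and aim to show that $\Phi$ is a Borel reduction of $E_{\rm{u.c.res}}^{{\rm{SA}}(H)}$ to ${\rm{id}}_{\mathcal{F}(\mathbb{C})}$. This requires two things: first, that $AE_{\rm{u.c.res}}^{{\rm{SA}}(H)}B\Leftrightarrow \sigma_{\rm{ess}}((A-i)^{-1})=\sigma_{\rm{ess}}((B-i)^{-1})$; second, that $\Phi$ is Borel. (Note that $\Phi(A)$ genuinely differs from a mere transform of $\sigma_{\rm{ess}}(A)$: for unbounded $A$ the point $0$ always lies in $\sigma_{\rm{ess}}((A-i)^{-1})$, which is exactly why $\sigma_{\rm{ess}}(A)$ alone is not complete, cf. Example \ref{ex: bounded vs unbounded}.)

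For the first point, the forward direction is routine: if $u(A-i)^{-1}u^*-(B-i)^{-1}$ is compact, then since the essential spectrum of a bounded operator (equivalently, the complement of its Fredholm domain) is invariant under unitary conjugation and under compact perturbations, $\sigma_{\rm{ess}}((A-i)^{-1})=\sigma_{\rm{ess}}((B-i)^{-1})$. The converse is the substantial step, and I expect it to be the main obstacle. Here one uses that $(A-i)^{-1}$ and $(B-i)^{-1}$ are \emph{normal} bounded operators on $H$, and invokes the Weyl--von Neumann theorem in the normal case (Berg; see also Brown--Douglas--Fillmore): two normal operators on a separable infinite-dimensional Hilbert space are unitarily equivalent modulo compacts if and only if they have the same essential spectrum and the same Fredholm index at every point off it. For a normal operator $N$, any $N-\lambda$ that is Fredholm is again normal, hence of index $0$ (because $\ker(N-\lambda)=\ker(N-\lambda)^*$), so the index obstruction is automatically trivial; thus equality of the essential spectra of the resolvents already yields $u\in \mathcal{U}(H)$ and $K\in \mathbb{K}(H)$ with $u(A-i)^{-1}u^*+K=(B-i)^{-1}$, i.e. $AE_{\rm{u.c.res}}^{{\rm{SA}}(H)}B$. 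The delicate part is simply to locate and apply this normal-operator statement correctly, since only the self-adjoint version is developed in the text.

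For the Borelness of $\Phi$, I would first record the explicit description
\[\sigma_{\rm{ess}}\bigl((A-i)^{-1}\bigr)=\overline{f\bigl(\sigma_{\rm{ess}}(A)\bigr)}\ \cup\ \bigl(\{0\}\ \text{ if }A\notin \mathbb{B}(H)_{\rm{sa}}\bigr),\qquad f(\mu):=(\mu-i)^{-1},\]
which follows from the spectral mapping property of the resolvent together with the fact that $0\in \sigma_{\rm{ess}}((A-i)^{-1})$ exactly when ${\rm{Ran}}((A-i)^{-1})=\dom{A}$ fails to be closed, i.e. when $A$ is unbounded (compare Lemma \ref{lem: ess=empty iff compact resolvent}). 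Then Borelness follows by assembling Borel pieces: $A\mapsto \sigma_{\rm{ess}}(A)\in \mathcal{F}(\mathbb{R})$ is Borel by Theorem \ref{thm: taking ess spectrum is Borel}; $\mathbb{B}(H)_{\rm{sa}}$ is Borel in ${\rm{SA}}(H)$ by Lemma \ref{lem: B(H)_sa is standard}, so $A\mapsto(\{0\}$ or $\emptyset)$ is a Borel map into $\mathcal{F}(\mathbb{C})$; the map $\mathcal{F}(\mathbb{R})\ni F\mapsto \overline{f(F)}\in \mathcal{F}(\mathbb{C})$ is Borel because $f$ extends to a homeomorphism of the one-point compactification $\mathbb{R}\cup\{\infty\}$ onto the circle $f(\mathbb{R})\cup\{0\}$, under which $\overline{f(F)}$ corresponds to the closure of $F$ in $\mathbb{R}\cup\{\infty\}$, and $F\mapsto \overline{F}$ (closure in $\mathbb{R}\cup\{\infty\}$) is easily checked to be Borel from the Effros generators; and the union map $\mathcal{F}(\mathbb{C})\times \mathcal{F}(\mathbb{C})\to \mathcal{F}(\mathbb{C})$ is Borel (this is the easy direction, needing no $\sigma$-compactness hypothesis, in contrast to Christensen's Theorem \ref{thm: Christensen} that was needed for the intersection). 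Composing these, $\Phi$ is Borel, and together with the first point this gives $E_{\rm{u.c.res}}^{{\rm{SA}}(H)}\le_B{\rm{id}}_{\mathcal{F}(\mathbb{C})}$, i.e. $E_{\rm{u.c.res}}^{{\rm{SA}}(H)}$ is smooth.
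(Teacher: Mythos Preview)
Your proof is correct and follows essentially the same strategy as the paper: both use the Weyl--von Neumann--Berg theorem for bounded normal operators to show that $\sigma_{\rm{ess}}((A-i)^{-1})$ is a complete invariant, and both derive Borelness from Theorem \ref{thm: taking ess spectrum is Borel} and Lemma \ref{lem: B(H)_sa is standard}. The only cosmetic difference is that the paper packages the invariant as the pair $\overline{\sigma}_{\rm{ess}}(A)=(\sigma_{\rm{ess}}(A),\ 1_{\{A\notin\mathbb{B}(H)_{\rm{sa}}\}})\in \mathcal{F}(\mathbb{R})\times\{0,1\}$ rather than working directly in $\mathcal{F}(\mathbb{C})$---and the paper's remark immediately after the proof explicitly acknowledges your version as an equivalent alternative.
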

Before going to the proof, note that the essential spectra is not a complete invariant for $E_{\rm{u.c.res}}^{{\rm{SA}}(H)}$:
\begin{example}\label{ex: bounded vs unbounded}
Consider $H=H_0\oplus H_0$ where $H_0$ is a separable infinite-dimensional Hilbert space, and let $A_0\in {\rm{EES}}(H_0)$. Then $A:=A_0\oplus 0,B:=0\oplus 0\in {\rm{SA}}(H)$ satisfy $\sigma_{\rm{ess}}(A)=\sigma_{\rm{ess}}(B)=\{0\}$, but for any $u\in \mathcal{U}(H)$, 
\[
(A-i)^{-1}-u(B-i)^{-1}u^*=[(A_0-i1_{H_0})^{-1}-i1_{H_0}]\oplus 0\notin \mathbb{K}(H),
\]
because $(A_0-i1_{H_0})^{-1}\in \mathbb{K}(H)_{\rm{sa}}$ (Lemma \ref{lem: ess=empty iff compact resolvent}) and $1_{H_0}\notin \mathbb{K}(H_0)$. Therefore $(A,B)\notin  E_{\rm{u.c.res}}^{{\rm{SA}}(H)}$. 
\end{example}
Note that in Example \ref{ex: bounded vs unbounded}, $A$ is unbounded, while $B$ is bounded. It turns out that if we add to $\sigma_{\rm{ess}}(\cdot)$ the additional information of boundedness/unboundedness of the operator, then it becomes a complete invariant for $E_{\rm{u.c.res}}^{{\rm{SA}}(H)}$.
\begin{definition}
For each $A\in {\rm{SA}}(H)$, we define $\overline{\sigma}_{\rm{ess}}(A)\in \mathcal{F}(\mathbb{R})\times \{0,1\}$ by
\[\overline{\sigma}_{\rm{ess}}(A):=\begin{cases}\ (\sigma_{\rm{ess}}(A),0)& (A\text{ is bounded\ })\\
\ (\sigma_{\rm{ess}}(A),1)& (A\text{ is unbounded\ })\end{cases}.\]
\end{definition}
$\overline{\sigma}_{\rm{ess}}(A)$ is something like a compactification of $\sigma_{\rm{ess}}(A)$. Note that since $\mathbb{B}(H)_{\rm{sa}}$ is a Borel subset of ${\rm{SA}}(H)$ (Lemma \ref{lem: B(H)_sa is standard}), the map $\overline{\sigma}_{\rm{ess}}\colon {\rm{SA}}(H)\to \mathcal{F}(\mathbb{R})\times \{0,1\}$ is Borel by the Borelness of $\sigma_{\rm{ess}}(\cdot)$ (Theorem \ref{thm: taking ess spectrum is Borel}). Now Theorem \ref{thm: Matsuzawa relation is smooth} is proved by the next Proposition: 
\begin{proposition}\label{prop: Matsuzawa relation is smooth}
$\overline{\sigma}_{\rm{ess}}$ is a Borel reduction of $E_{\rm{u.c.res}}^{{\rm{SA}}(H)}$ to ${\rm{id}}_{\mathcal{F}(\mathbb{R})\times \{0,1\}}$. In particular, $E_{\rm{u.c.res}}^{{\rm{SA}}(H)}$ is smooth. 
\end{proposition}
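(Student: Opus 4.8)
The map \(\overline{\sigma}_{\rm ess}\) has already been shown to be Borel, so the whole content of the proposition is the equivalence
\[
A\,E_{\rm u.c.res}^{{\rm SA}(H)}\,B\iff\overline{\sigma}_{\rm ess}(A)=\overline{\sigma}_{\rm ess}(B),\qquad A,B\in{\rm SA}(H).
\]
The plan is to read this off from the corresponding statement for the bounded normal operators \(N_A:=(A-i)^{-1}\) and \(N_B:=(B-i)^{-1}\), whose spectra lie on the circle \(C=\{(t-i)^{-1}:t\in\mathbb{R}\}\cup\{0\}\). Let \(\Phi\colon\overline{\mathbb{R}}\to C\) be the homeomorphism \(t\mapsto(t-i)^{-1}\), \(\infty\mapsto 0\), and identify \(\overline{\sigma}_{\rm ess}(A)\) with the compact set \(T_A\subseteq\overline{\mathbb{R}}\) equal to \(\sigma_{\rm ess}(A)\) if \(A\) is bounded and to \(\sigma_{\rm ess}(A)\cup\{\infty\}\) if \(A\) is unbounded. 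The first step is the spectral mapping identity \(\sigma_{\rm ess}(N_A)=\Phi(T_A)\): for \(\lambda\in C\setminus\{0\}\) one has \(\ker(N_A-\lambda)=\ker\!\big(A-(i+\lambda^{-1})\big)\), and \(\lambda\) is a limit point of \(\sigma(N_A)\) iff \(\Phi^{-1}(\lambda)\) is a limit point of \(\sigma(A)\), so by Weyl's criterion (Theorem~\ref{thm: Weyl criterion}) \(\lambda\in\sigma_{\rm ess}(N_A)\) iff \(\Phi^{-1}(\lambda)\in\sigma_{\rm ess}(A)\); while \(0\in\sigma_{\rm ess}(N_A)\) iff \(A\) is unbounded (if \(A\) is bounded then \(0\notin\sigma(N_A)\), and if \(A\) is unbounded then \(\sigma(A)\), hence \(\sigma(N_A)\), accumulates at \(0\); cf. Lemma~\ref{lem: ess=empty iff compact resolvent}). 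Granting this, the direction \(\Rightarrow\) is immediate: if \(uN_Au^*-N_B\in\mathbb{K}(H)\), passing to the Calkin algebra gives \(\sigma_{\rm ess}(N_A)=\sigma_{\rm ess}(uN_Au^*)=\sigma_{\rm ess}(N_B)\) (for normal operators the Calkin essential spectrum agrees with the accumulation-point/infinite-multiplicity-eigenvalue one, again by Weyl's criterion, and the latter is invariant under unitary conjugation and compact perturbation), whence \(T_A=T_B\) by injectivity of \(\Phi\), i.e. \(\overline{\sigma}_{\rm ess}(A)=\overline{\sigma}_{\rm ess}(B)\).

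For \(\Leftarrow\), assume \(\overline{\sigma}_{\rm ess}(A)=\overline{\sigma}_{\rm ess}(B)\) and set \(S:=\sigma_{\rm ess}(A)=\sigma_{\rm ess}(B)\). If \(A,B\) are both bounded, Theorem~\ref{thm: von Neumann theorem} gives \(A\,E_G^{\mathbb{B}(H)_{\rm sa}}\,B\), hence \(A\,E_{\rm u.c.res}^{{\rm SA}(H)}\,B\) by Lemma~\ref{lem: von Neuman is stronger than E_res.c} (equivalently Lemma~\ref{lem: Matsuzawa relation coincides with von Neumann's on B(H)}), and we are done. So suppose \(A,B\) are both unbounded. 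By the Weyl--von Neumann Theorem~\ref{thm: Weyl-von Neumann} we may subtract compact operators from \(A\) and from \(B\) to make both diagonal, \(A=\sum_n a_n\,\nai{\xi_n}{\ \cdot\ }\xi_n\), \(B=\sum_n b_n\,\nai{\eta_n}{\ \cdot\ }\eta_n\); this changes each of \(N_A,N_B\) only by a compact (resolvent identity), hence changes neither side of the equivalence, and it preserves boundedness and \(\sigma_{\rm ess}\). Then the limit sets of the eigenvalue sequences, computed in \(\overline{\mathbb{R}}\), are \(\bigcap_N\overline{\{a_n:n\ge N\}}=\bigcap_N\overline{\{b_n:n\ge N\}}=S\cup\{\infty\}\) (the \(\infty\) present because \(A,B\) are unbounded). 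If we can find a permutation \(\pi\) of \(\mathbb{N}\) with \((a_{\pi(n)}-i)^{-1}-(b_n-i)^{-1}\to 0\), then \(u\in\mathcal{U}(H)\) defined by \(u\xi_{\pi(n)}:=\eta_n\) satisfies \(uAu^*=\sum_n a_{\pi(n)}\,\nai{\eta_n}{\ \cdot\ }\eta_n\) and \(u(A-i)^{-1}u^*-(B-i)^{-1}=\sum_n\big((a_{\pi(n)}-i)^{-1}-(b_n-i)^{-1}\big)\nai{\eta_n}{\ \cdot\ }\eta_n\), a diagonal operator with entries tending to \(0\), hence compact; so \(A\,E_{\rm u.c.res}^{{\rm SA}(H)}\,B\).

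Everything thus reduces to a combinatorial lemma, which I expect to be the main obstacle: if \((a_n),(b_n)\) are real sequences with the same limit set in the compact metric space \((\overline{\mathbb{R}},\rho)\), \(\rho(s,t):=|(s-i)^{-1}-(t-i)^{-1}|\), then there is a permutation \(\pi\) of \(\mathbb{N}\) with \(\rho(a_{\pi(n)},b_n)\to 0\). This is the analogue of the permutation Lemma~\ref{lem: permutation lemma}, but now the ``only finitely many isolated points'' hypothesis — shown to be indispensable over \(\mathbb{R}\) by the Remark following that lemma — is no longer needed, precisely because in the compactification a tail of large eigenvalues is close to the single point \(\infty\); this is also why the families in that Remark and in Example~\ref{ex: non-unitarily equivalent but have same ess spec}, which are not \(E_G^{{\rm SA}(H)}\)-equivalent, do become \(E_{\rm u.c.res}^{{\rm SA}(H)}\)-equivalent. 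To prove the lemma, first replace each \(a_n\) and each \(b_n\) by a nearest point of the common limit set \(S'\subseteq\overline{\mathbb{R}}\); since for every open \(U\supseteq S'\) only finitely many terms of each sequence lie outside \(U\), this moves terms by \(\rho\to 0\) and preserves limit sets, so one may assume \(a_n,b_n\in S'\), in which case every point of \(S'\) is a limit point of each sequence. Now pick scales \(\delta_k\downarrow 0\), partition \(S'\) (compact) into finitely many Borel cells of \(\rho\)-diameter \(<\delta_k\) refining the previous partition, and note each cell contains \(a_n\) for infinitely many \(n\) and \(b_n\) for infinitely many \(n\). A block-by-block back-and-forth bookkeeping — at stage \(k\), extending the partial bijection built so far so that from some \(N_k\) on, \(a_{\pi(n)}\) and \(b_n\) lie in one common stage-\(k\) cell, possible because each of the finitely many cells is visited infinitely often by both sequences — produces \(\pi\) with \(\rho(a_{\pi(n)},b_n)<2\delta_k\) for \(n\ge N_k\), hence \(\to 0\). (Alternatively, the unbounded case of the proposition follows by applying the Weyl--von Neumann theorem for normal operators, \cite{Berg}, to \(N_A\) and \(N_B\).) Combining both directions, \(\overline{\sigma}_{\rm ess}\) is a Borel reduction of \(E_{\rm u.c.res}^{{\rm SA}(H)}\) to \({\rm id}_{\mathcal{F}(\mathbb{R})\times\{0,1\}}\), and \(E_{\rm u.c.res}^{{\rm SA}(H)}\) is smooth; beyond the combinatorial matching, the argument is routine manipulation with resolvent identities, Weyl's criterion, and the spectral mapping above.
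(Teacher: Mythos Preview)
Your proposal is correct, and the parenthetical alternative you mention at the end --- applying the Weyl--von Neumann--Berg theorem for bounded normal operators to the resolvents $N_A=(A-i)^{-1}$ and $N_B=(B-i)^{-1}$ --- is precisely the paper's proof. The paper packages your spectral mapping identity as Corollary~\ref{cor: ess spec of resolvent}, uses it (via Lemma~\ref{lem: A,B equiv then sigma bar are equal}) for the direction $(\Rightarrow)$, and then for $(\Leftarrow)$ invokes Berg's theorem (Theorem~\ref{thm:Weyl-von Neumann-Berg}) uniformly in both the bounded and the unbounded case to produce $u\in\mathcal{U}(H)$ with $u(A-i)^{-1}u^*-(B-i)^{-1}\in\mathbb{K}(H)$.

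Your main route for $(\Leftarrow)$ is different and more elementary: for bounded $A,B$ you go through $E_G^{\mathbb{B}(H)_{\rm sa}}$ via the self-adjoint Weyl--von Neumann theorem and Lemma~\ref{lem: von Neuman is stronger than E_res.c}, and for unbounded $A,B$ you diagonalize and prove a permutation-matching lemma in the compactification $(\overline{\mathbb{R}},\rho)$. This avoids quoting Berg's extension entirely, at the cost of the combinatorial work; what it buys is a proof that stays within the self-adjoint Weyl--von Neumann circle of ideas and makes explicit why the counterexamples to Lemma~\ref{lem: permutation lemma} disappear after compactification. One small point in your sketch: the assertion that each Borel cell in the partition of $S'$ is visited by infinitely many $a_n$ can fail if a cell has empty relative interior in $S'$ (a boundary point of a cell may be approached only through neighbouring cells). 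This is easily repaired by choosing the cells to have nonempty relative interior --- e.g.\ take a finite open cover of $S'$ by $\rho$-balls of radius $\delta_k/2$ and disjointify --- or by running the back-and-forth directly without a partition.
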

We need preparations. We use the following well-known variant of Weyl's criterion (Theorem \ref{thm: Weyl criterion}):
\begin{lemma}\label{lem: Weyl criterion modified}
Let $A\in {\rm{SA}}(H)$ and $\lambda \in \mathbb{R}$. The following conditions are equivalent: 
\begin{list}{}{}
\item[{\rm{(i)}}] $\lambda \in \sigma_{\rm{ess}}(A)$.
\item[{\rm{(ii)}}] There exists a sequence $\{\xi_n\}_{n=1}^{\infty}$ of unit vectors in $H$ which converges weakly to 0, such that $\|(A-i)^{-1}\xi_n-(\lambda-i)^{-1}\xi_n\|\stackrel{n\to \infty}{\to}0$.
\end{list}
\end{lemma}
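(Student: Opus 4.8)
The plan is to deduce Lemma~\ref{lem: Weyl criterion modified} from the classical Weyl criterion (Theorem~\ref{thm: Weyl criterion}) by transporting approximate eigenvectors back and forth through the resolvent $(A-i)^{-1}$. The algebraic backbone is the identity
\[
(A-i)^{-1}-(\lambda-i)^{-1}=(\lambda-i)^{-1}(\lambda-A)(A-i)^{-1},
\]
which is valid on all of $H$ (it is merely the resolvent identity with the scalar $\lambda-i$ in place of a second spectral parameter, as one checks by applying both sides to an arbitrary $\eta\in H$ and using $A(A-i)^{-1}\eta=\eta+i(A-i)^{-1}\eta$), together with the elementary estimate $\|(A-i)^{-1}\|\le 1$.

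For the implication (i)$\Rightarrow$(ii) I would start from a Weyl sequence: by Theorem~\ref{thm: Weyl criterion} there is a sequence $\{\xi_n\}_{n=1}^{\infty}\subset \dom{A}$ of unit vectors converging weakly to $0$ with $\|A\xi_n-\lambda\xi_n\|\to 0$. Applying the displayed identity to each $\xi_n$ and using $\|(A-i)^{-1}\|\le 1$ gives
\[
\|(A-i)^{-1}\xi_n-(\lambda-i)^{-1}\xi_n\|\le |\lambda-i|^{-1}\,\|A\xi_n-\lambda\xi_n\|\stackrel{n\to\infty}{\longrightarrow}0,
\]
so the very same sequence witnesses (ii).

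For the converse (ii)$\Rightarrow$(i), the idea is to push the given sequence through the resolvent and renormalise. Given $\{\xi_n\}$ as in (ii), set $\zeta_n:=(A-i)^{-1}\xi_n\in\dom{A}$. Since $(A-i)^{-1}$ is bounded, hence weak--weak continuous, $\zeta_n\to 0$ weakly; and from $\|\zeta_n-(\lambda-i)^{-1}\xi_n\|\to 0$ together with $\|(\lambda-i)^{-1}\xi_n\|=|\lambda-i|^{-1}$ we get $\|\zeta_n\|\to|\lambda-i|^{-1}>0$, so the $\zeta_n$ are bounded away from $0$ for large $n$. Multiplying the hypothesis by the scalar $\lambda-i$ and using $(A-i)\zeta_n=\xi_n$ yields
\[
(A-\lambda)\zeta_n=(A-i)\zeta_n-(\lambda-i)\zeta_n=\xi_n-(\lambda-i)\zeta_n\stackrel{n\to\infty}{\longrightarrow}0.
\]
Thus the normalised vectors $\tilde{\zeta}_n:=\zeta_n/\|\zeta_n\|$ (defined for all large $n$) lie in $\dom{A}$, are unit vectors, still tend weakly to $0$ because $\|\zeta_n\|$ converges to a nonzero constant, and satisfy $\|(A-\lambda)\tilde{\zeta}_n\|=\|(A-\lambda)\zeta_n\|/\|\zeta_n\|\to 0$. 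Weyl's criterion (Theorem~\ref{thm: Weyl criterion}) then gives $\lambda\in\sigma_{\rm{ess}}(A)$.

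The only genuinely delicate point is this last step: one has to verify that renormalising to unit length neither destroys weak convergence to $0$ nor spoils the approximate-eigenvector estimate, which is exactly why it matters that $\|\zeta_n\|$ stays bounded below away from $0$. Everything else is routine bookkeeping with the resolvent, so I do not anticipate a serious obstacle.
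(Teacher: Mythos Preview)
Your argument is correct. For (i)$\Rightarrow$(ii) both you and the paper start from a Weyl sequence for $\lambda$ and bound the resolvent difference by $\|(A-\lambda)\xi_n\|$; the paper does this via spectral calculus, writing $\|\{(A-i)^{-1}-(\lambda-i)^{-1}\}\xi_n\|^2$ as an integral against $d\|E_A(\mu)\xi_n\|^2$, while you use the resolvent identity. One small remark: the identity as you display it gives $(\lambda-i)^{-1}(\lambda-A)(A-i)^{-1}\xi_n$ on the right, and to bound this by $|\lambda-i|^{-1}\|(A-\lambda)\xi_n\|$ using $\|(A-i)^{-1}\|\le 1$ you need the factors in the other order, i.e.\ $(\lambda-i)^{-1}(A-i)^{-1}(\lambda-A)\xi_n$. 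This is harmless because $\xi_n\in\dom{A}$ and $A$ commutes with its own resolvent there, but it is worth saying explicitly.

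For (ii)$\Rightarrow$(i) your route is genuinely different from the paper's and arguably cleaner. The paper argues by contradiction: assuming $\lambda\notin\sigma_{\rm ess}(A)$, it takes the finite-rank spectral projection $P=E_A((\lambda-\varepsilon,\lambda+\varepsilon))$, uses a spectral-integral lower bound to force $\|(A-i)^{-1}(1-P)\xi_n\|\to 0$, combines this with $\|P\xi_n\|\to 0$ (compactness of $P$) to get $\|(A-i)^{-1}\xi_n\|\to 0$, and then contradicts $\|(\lambda-i)^{-1}\xi_n\|=|\lambda-i|^{-1}$. Your approach is direct: you manufacture a Weyl sequence $\tilde\zeta_n$ by pushing $\xi_n$ through $(A-i)^{-1}$ and renormalising, then invoke Theorem~\ref{thm: Weyl criterion}. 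This avoids spectral-measure computations entirely and stays at the level of operator algebra; the only point requiring care is that the renormalisation preserves weak convergence to $0$ and the approximate-eigenvector estimate, which follows exactly because $\|\zeta_n\|\to|\lambda-i|^{-1}>0$, as you noted.
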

\begin{proof}
(i)$\Rightarrow$(ii): By Theorem \ref{thm: Weyl criterion}, there exists 
a sequence $\{\xi_n\}_{n=1}^{\infty}$ of unit vectors in $\dom{A}$ converging weakly to 0, such that $\|(A-\lambda)\xi_n\|\stackrel{n\to \infty}{\to}0$. Then we have 
\eqa{
\|(A-i)^{-1}\xi_n-(\lambda-i)^{-1}\xi_n\|^2&=\int_{\mathbb{R}}\left |\frac{1}{\mu-i}-\frac{1}{\lambda-i}\right |^2d\|E_A(\mu)\xi_n\|^2\\
&=\int_{\mathbb{R}}\frac{(\lambda-\mu)^2}{(\mu^2+1)(\lambda^2+1)}d\|E_A(\mu)\xi_n\|^2\\
&\le \int_{\mathbb{R}}(\lambda-\mu)^2d\|E_A(\mu)\xi_n\|^2=\|(A-\lambda)\xi_n\|^2\\
&\stackrel{n\to \infty}{\to}0.
}
(ii)$\Rightarrow $(i) Assume by contradiction that $\lambda\notin \sigma_{\rm{ess}}(A)$. Let $\{\xi_n\}_{n=1}^{\infty}$ be a sequence of unit vectors in $H$ satisfying (ii). By assumption, there exists $\varepsilon>0$ such that $\text{rank} E_A(B_{\varepsilon}(\lambda))<\infty$, where $B_{\varepsilon}(\lambda)=(\lambda-\varepsilon,\lambda+\varepsilon)$. Then $P:=E_A(B_{\varepsilon}(\lambda))$ is compact, whence $\|P\xi_n\|\stackrel{n\to \infty}{\to}0$. It then follows that 
\eqa{
\|\{(A-i)^{-1}-(\lambda-i)^{-1}\}\xi_n\|^2&=\int_{\mathbb{R}}\frac{(\lambda-\mu)^2}{(\mu^2+1)(\lambda^2+1)}d\|E_A(\mu)\xi_n\|^2\\
&\ge \int_{|\mu-\lambda|\ge \varepsilon}\frac{(\lambda-\mu)^2}{(\mu^2+1)(\lambda^2+1)}d\|E_A(\mu)\xi_n\|^2\\
&\ge \frac{\varepsilon^2}{\lambda^2+1}\int_{|\mu-\lambda|\ge \varepsilon}\frac{1}{\mu^2+1}d\|E_A(\mu)\xi_n\|^2\\
&=\frac{\varepsilon^2}{\lambda^2+1}\|(A-i)^{-1}(1-P)\xi_n\|^2.
}
Since $\|P\xi_n\|\to 0$ and $(A-i)^{-1}$ is bounded, by condition (ii) it holds that 
$\|(A-i)^{-1}\xi_n\|\stackrel{n\to \infty}{\to}0$. However, this implies by $\|\{(A-i)^{-1}-(\lambda-i)^{-1}\}\xi_n\|\stackrel{n\to \infty}{\to}0$ and $\|\xi_n\|=1\ (n\in \mathbb{N})$ that 
\[0\neq |(\lambda-i)^{-1}|=\|(\lambda-i)^{-1}\xi_n\|\stackrel{n\to \infty}{\to}0,\]
which is a contradiction. 
\end{proof}
We also use Weyl's criterion for bounded normal operators. Recall that the essential spectra $\sigma_{\rm{ess}}(x)$ for a bounded normal operator $x\in \mathbb{B}(H)$ is defined in the same way as the case of self-adjoint operators: $\sigma_{\rm{ess}}(x)=\sigma(x)\setminus \sigma_{\rm{d}}(x)$, where $\sigma_{\rm{d}}(x)$ is the set of all eigenvalues of $x$ of finite multiplicity. The next lemma can be proved by the same argument as Weyl's criterion (Theorem \ref{thm: Weyl criterion}): 
\begin{lemma}[Weyl's criterion for normal operators]\label{lem: Weyl criterion for normal operators}
Let $x\in \mathbb{B}(H)$ be a normal operator, and let $\lambda \in \mathbb{C}$. Then $\lambda \in \sigma_{\rm{ess}}(x)$ if and only if there exists a sequence $\{\xi_n\}_{n=1}^{\infty}$ of unit vectors in $H$ converging weakly to 0, such that $\|x\xi_n-\lambda\xi_n\|\stackrel{n\to \infty}{\to}0$. 
\end{lemma}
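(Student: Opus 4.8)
The plan is to follow the proof of Weyl's criterion for self-adjoint operators (Theorem \ref{thm: Weyl criterion}) essentially verbatim, with the spectral measure on $\mathbb{R}$ replaced by the spectral measure $E_x$ of the normal operator $x$, which is now a projection-valued measure on the Borel subsets of $\mathbb{C}$. Writing $B_\varepsilon(\lambda):=\{z\in\mathbb{C};\ |z-\lambda|<\varepsilon\}$, the first step I would carry out is to record the standard reformulation: $\lambda\in\sigma_{\rm{ess}}(x)$ if and only if $\operatorname{rank}E_x(B_\varepsilon(\lambda))=\infty$ for every $\varepsilon>0$. For this, note that if $E_x(B_{\varepsilon_0}(\lambda))$ has finite rank for some $\varepsilon_0>0$, then $x$ restricted to the finite-dimensional reducing subspace $\operatorname{Ran}E_x(B_{\varepsilon_0}(\lambda))$ has finite spectrum, so $\sigma(x)\cap B_{\varepsilon_0}(\lambda)$ consists of finitely many eigenvalues of finite multiplicity; hence $\lambda\notin\sigma(x)$ or $\lambda\in\sigma_{\rm{d}}(x)$, i.e. $\lambda\notin\sigma_{\rm{ess}}(x)$, and the converse is the contrapositive of the same computation.

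For the implication (i)$\Rightarrow$(ii), I would use that $\operatorname{Ran}E_x(B_{1/n}(\lambda))$ is infinite-dimensional for each $n$ to choose inductively a unit vector $\xi_n\in\operatorname{Ran}E_x(B_{1/n}(\lambda))$ orthogonal to $\xi_1,\dots,\xi_{n-1}$. The resulting sequence is orthonormal, hence converges weakly to $0$, and the functional calculus gives $\|x\xi_n-\lambda\xi_n\|=\|(x-\lambda)E_x(B_{1/n}(\lambda))\xi_n\|\le\sup_{z\in B_{1/n}(\lambda)}|z-\lambda|\le 1/n\to 0$.

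For (ii)$\Rightarrow$(i), I would argue by contradiction: assume $\lambda\notin\sigma_{\rm{ess}}(x)$ and pick, by the reformulation above, $\varepsilon>0$ with $P:=E_x(B_\varepsilon(\lambda))$ of finite rank, hence compact. If $\{\xi_n\}$ is a weakly null sequence of unit vectors, then $\|P\xi_n\|\to 0$, so $\|(1-P)\xi_n\|^2=1-\|P\xi_n\|^2\to 1$. Since $x$ commutes with $P$, $x-\lambda$ preserves the orthogonal subspaces $\operatorname{Ran}P$ and $\operatorname{Ran}(1-P)$, so $\|x\xi_n-\lambda\xi_n\|^2\ge\|(x-\lambda)(1-P)\xi_n\|^2$; and on $\operatorname{Ran}(1-P)=\operatorname{Ran}E_x(\mathbb{C}\setminus B_\varepsilon(\lambda))$ the spectrum of the normal operator $x-\lambda$ lies in $\{|z|\ge\varepsilon\}$, giving $\|(x-\lambda)(1-P)\xi_n\|\ge\varepsilon\|(1-P)\xi_n\|$. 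Hence $\|x\xi_n-\lambda\xi_n\|^2\ge\varepsilon^2\|(1-P)\xi_n\|^2\to\varepsilon^2>0$, contradicting (ii).

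The only point that needs genuine care—and the one I would regard as the main (though minor) obstacle—is the spectral-projection reformulation of $\sigma_{\rm{ess}}$ for a normal operator over $\mathbb{C}$ rather than $\mathbb{R}$, in particular pinning down the convention for $\sigma_{\rm{d}}(x)$ and checking that an isolated point of $\sigma(x)$ with finite-rank spectral projection is an eigenvalue of finite multiplicity. Once this bookkeeping is in place, both implications are direct translations of the self-adjoint argument, with no further analytic input.
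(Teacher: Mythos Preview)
Your proposal is correct and is exactly the approach the paper has in mind: the paper does not give a proof of this lemma at all, merely stating that it ``can be proved by the same argument as Weyl's criterion (Theorem~\ref{thm: Weyl criterion}),'' which is precisely what you have spelled out. Your flag about the bookkeeping for $\sigma_{\rm d}(x)$ is apt---the paper's definition just before the lemma omits the word ``isolated''---but with the intended convention (isolated eigenvalues of finite multiplicity) your spectral-projection reformulation and both implications go through as written.
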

\begin{corollary}\label{cor: ess spec of resolvent}
Let $A\in {\rm{SA}}(H)$. Then $A$ is bounded if and only if $0\notin \sigma_{\rm{ess}}((A-i)^{-1})$. 
Moreover, it holds that 
\[\sigma_{\rm{ess}}((A-i)^{-1})=\begin{cases}
\{(\lambda-i)^{-1};\ \lambda\in \sigma_{\rm{ess}}(A)\}& (A{\rm{\ is\ bounded}})\\
\{(\lambda-i)^{-1};\ \lambda\in \sigma_{\rm{ess}}(A)\}\cup \{0\} & (A{\rm{\ is\ ubounded}})
\end{cases}
\]
\end{corollary}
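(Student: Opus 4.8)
The plan is to transport the essential spectrum across the resolvent map $\lambda\mapsto(\lambda-i)^{-1}$ by playing the two Weyl criteria against each other, and then to settle separately whether the extra point $0$ belongs to $\sigma_{\rm{ess}}((A-i)^{-1})$. Write $x:=(A-i)^{-1}\in\mathbb{B}(H)$, a bounded normal operator. First I would record two elementary facts about $\sigma(x)$. Since $x$ is injective (if $x\xi=0$ then $\xi=(A-i)x\xi=0$) with $\text{Ran}(x)=\dom{A}$, the operator $x$ is invertible in $\mathbb{B}(H)$ if and only if $\dom{A}=H$, i.e., since self-adjoint operators are closed, if and only if $A$ is bounded; equivalently $0\in\sigma(x)$ if and only if $A$ is unbounded. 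Moreover $\sigma(x)$ is contained in the circle $\{z\in\mathbb{C};\,|z-\tfrac{i}{2}|=\tfrac12\}$, whose points other than $0$ are exactly those of the form $(\lambda-i)^{-1}$ with $\lambda\in\mathbb{R}$; hence every nonzero point of $\sigma(x)$, a fortiori of $\sigma_{\rm{ess}}(x)\subset\sigma(x)$, has the form $(\lambda-i)^{-1}$ for a unique $\lambda\in\mathbb{R}$.

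Next, for $\lambda\in\mathbb{R}$ the assertion ``$(\lambda-i)^{-1}\in\sigma_{\rm{ess}}(x)$'' unwinds, via Weyl's criterion for normal operators (Lemma \ref{lem: Weyl criterion for normal operators}) applied to $x$ at the point $(\lambda-i)^{-1}$, to the existence of unit vectors $\xi_n\to0$ weakly with $\|x\xi_n-(\lambda-i)^{-1}\xi_n\|\to0$; but this is verbatim condition (ii) of Lemma \ref{lem: Weyl criterion modified}, which holds if and only if $\lambda\in\sigma_{\rm{ess}}(A)$. Thus
\[\lambda\in\sigma_{\rm{ess}}(A)\iff(\lambda-i)^{-1}\in\sigma_{\rm{ess}}((A-i)^{-1}),\qquad\lambda\in\mathbb{R}.\]
Together with the previous paragraph, this identifies the nonzero part of $\sigma_{\rm{ess}}((A-i)^{-1})$ as exactly $\{(\lambda-i)^{-1};\ \lambda\in\sigma_{\rm{ess}}(A)\}$.

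It remains to decide whether $0\in\sigma_{\rm{ess}}((A-i)^{-1})$. If $A$ is bounded, then $0\notin\sigma(x)\supseteq\sigma_{\rm{ess}}(x)$ by the first step. If $A$ is unbounded, I would build an explicit weakly-null Weyl sequence for $x$ at $0$: choose $\lambda_n\in\sigma(A)$ with $|\lambda_n|$ strictly increasing to $\infty$, then pairwise disjoint intervals $(\lambda_n-\delta_n,\lambda_n+\delta_n)$ with $0<\delta_n\le 2^{-n}$ and $E_A((\lambda_n-\delta_n,\lambda_n+\delta_n))\neq0$, and a unit vector $\xi_n$ in the range of this spectral projection. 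The $\xi_n$ are pairwise orthogonal, hence $\xi_n\to0$ weakly, and $\|(A-\lambda_n)\xi_n\|\le\delta_n$. Using the spectral estimate already employed in the proof of Lemma \ref{lem: Weyl criterion modified},
\[\|x\xi_n-(\lambda_n-i)^{-1}\xi_n\|^2=\int_{\mathbb{R}}\frac{(\lambda_n-\mu)^2}{(\mu^2+1)(\lambda_n^2+1)}\,d\|E_A(\mu)\xi_n\|^2\le\|(A-\lambda_n)\xi_n\|^2\le\delta_n^2,\]
so $\|x\xi_n\|\le\delta_n+(\lambda_n^2+1)^{-1/2}\to0$, and Lemma \ref{lem: Weyl criterion for normal operators} gives $0\in\sigma_{\rm{ess}}((A-i)^{-1})$. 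Combining the two cases yields the equivalence ``$A$ bounded $\iff 0\notin\sigma_{\rm{ess}}((A-i)^{-1})$'' and the displayed formula for $\sigma_{\rm{ess}}((A-i)^{-1})$.

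The main obstacle is precisely the unbounded case of this last step: one must manufacture a weakly-null Weyl sequence for $(A-i)^{-1}$ at $0$ even when $A$ has empty essential spectrum (every $\lambda_n$ may be an isolated eigenvalue of multiplicity one), so no single spectral window can be reused; this is what forces the passage to mutually orthogonal unit vectors supported on shrinking, pairwise disjoint spectral windows centered at points of $\sigma(A)$ escaping to infinity. Everything else is a direct translation through the two Weyl criteria and the elementary description of $\sigma((A-i)^{-1})$.
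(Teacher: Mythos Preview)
Your proposal is correct and follows essentially the same approach as the paper: both arguments combine Lemma~\ref{lem: Weyl criterion modified} with Weyl's criterion for normal operators (Lemma~\ref{lem: Weyl criterion for normal operators}) to match up the nonzero part of $\sigma_{\rm{ess}}((A-i)^{-1})$ with $\{(\lambda-i)^{-1};\lambda\in\sigma_{\rm{ess}}(A)\}$, and treat the point $0$ separately via boundedness of $A$. Your treatment of the unbounded case is in fact more explicit than the paper's, which simply asserts ``by Lemma~\ref{lem: Weyl criterion modified}, $0\in\sigma_{\rm{ess}}((A-i)^{-1})$'' without spelling out the weakly-null sequence; your construction using pairwise disjoint spectral windows around points $\lambda_n\in\sigma(A)$ escaping to infinity is exactly what is needed to fill that in.
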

\begin{proof}
Assume that $A$ is bounded. Then we have $\|(A-i)^{-1}\xi \|^2\ge \frac{1}{\|A\|^2+1}\|\xi\|^2\ (\xi \in H)$ (see (\ref{eq: bounded operator and resolvent}) in Lemma \ref{lem: B(H)_sa is standard}). In particular, $0\notin \sigma((A-i)^{-1})$, whence $0\notin \sigma_{\rm{ess}}((A-i)^{-1})$. If $A$ is unbounded, then By Lemma \ref{lem: Weyl criterion modified}, $0\in \sigma_{\rm{ess}}((A-i)^{-1})$ holds. Therefore the first claim follows. The rest of the statements are immediate corollaries of Lemma \ref{lem: Weyl criterion modified} in combination with Weyl's criterion (Lemma \ref{lem: Weyl criterion for normal operators}). 
\end{proof}
\begin{lemma}\label{lem: A,B equiv then sigma bar are equal}
Let $A,B\in {\rm{SA}}(H)$ be such that $AE_{\rm{u.c.res}}^{{\rm{SA}}(H)}B$. Then $\overline{\sigma}_{\rm{ess}}(A)=\overline{\sigma}_{\rm{ess}}(B)$. 
\end{lemma}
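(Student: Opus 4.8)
The plan is to push the hypothesis down to the bounded normal operators $(A-i)^{-1}$ and $(B-i)^{-1}$, which by assumption are unitarily equivalent modulo a compact operator, and then read off both coordinates of $\overline{\sigma}_{\rm{ess}}$ from their common essential spectrum via Corollary \ref{cor: ess spec of resolvent}. First I would record two elementary facts about normal operators. (a) For normal $x\in\mathbb{B}(H)$ and $u\in\mathcal{U}(H)$ one has $\sigma_{\rm{ess}}(uxu^*)=\sigma_{\rm{ess}}(x)$: since $uxu^*-\lambda = u(x-\lambda)u^*$, conjugating a Weyl sequence for $x$ at $\lambda$ by $u$ (which preserves norms and weak convergence to $0$) yields a Weyl sequence for $uxu^*$ at $\lambda$, and conversely, so Lemma \ref{lem: Weyl criterion for normal operators} applies. (b) If $x,y\in\mathbb{B}(H)$ are normal and $x-y\in\mathbb{K}(H)$, then $\sigma_{\rm{ess}}(x)=\sigma_{\rm{ess}}(y)$: given a Weyl sequence $\{\xi_n\}$ for $x$ at $\lambda$, weak convergence $\xi_n\to 0$ together with compactness of $x-y$ gives $\|(x-y)\xi_n\|\to 0$, hence $\|y\xi_n-\lambda\xi_n\|\le\|x\xi_n-\lambda\xi_n\|+\|(x-y)\xi_n\|\to 0$, so $\lambda\in\sigma_{\rm{ess}}(y)$ by Lemma \ref{lem: Weyl criterion for normal operators}; the situation is symmetric in $x$ and $y$.

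Next I would apply (a) and (b) to $x=u(A-i)^{-1}u^*$ and $y=(B-i)^{-1}$, where $u\in\mathcal{U}(H)$ realizes $AE_{\rm{u.c.res}}^{{\rm{SA}}(H)}B$, so that $x-y\in\mathbb{K}(H)$. This gives the key identity
\[
\sigma_{\rm{ess}}\big((A-i)^{-1}\big)=\sigma_{\rm{ess}}\big((B-i)^{-1}\big).
\]
Both coordinates of $\overline{\sigma}_{\rm{ess}}$ then follow from Corollary \ref{cor: ess spec of resolvent}. For the $\{0,1\}$-component: $A$ is bounded iff $0\notin\sigma_{\rm{ess}}((A-i)^{-1})$, and likewise for $B$; since the two essential spectra agree, $A$ is bounded iff $B$ is. For the $\mathcal{F}(\mathbb{R})$-component: the Möbius transformation $\varphi\colon\lambda\mapsto(\lambda-i)^{-1}$ is a homeomorphism of $\mathbb{R}$ onto $\varphi(\mathbb{R})$, and Corollary \ref{cor: ess spec of resolvent} says exactly that $\sigma_{\rm{ess}}((A-i)^{-1})\setminus\{0\}=\varphi(\sigma_{\rm{ess}}(A))$, and similarly for $B$; removing $0$ from both sides of the displayed identity and applying $\varphi^{-1}$ gives $\sigma_{\rm{ess}}(A)=\sigma_{\rm{ess}}(B)$. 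Combining the two components yields $\overline{\sigma}_{\rm{ess}}(A)=\overline{\sigma}_{\rm{ess}}(B)$.

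I do not expect a serious obstacle here: the proof is bookkeeping around the lemmas already established. The only point needing care is fact (b), the stability of the essential spectrum of a normal operator under compact perturbations --- in particular, one must make sure the ``only if'' direction of Weyl's criterion (Lemma \ref{lem: Weyl criterion for normal operators}) is legitimately available, which it is since both $u(A-i)^{-1}u^*$ and $(B-i)^{-1}$ are normal. It is also worth noting that the possible membership of $0$ in $\sigma_{\rm{ess}}((A-i)^{-1})$ is precisely the datum encoded by the second coordinate of $\overline{\sigma}_{\rm{ess}}$, so no information is lost in passing between $A$ and its resolvent; this is what makes $\overline{\sigma}_{\rm{ess}}$, rather than $\sigma_{\rm{ess}}$ alone, the right invariant.
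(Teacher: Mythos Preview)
Your proof is correct and follows essentially the same route as the paper's: both establish $\sigma_{\rm{ess}}((A-i)^{-1})=\sigma_{\rm{ess}}((B-i)^{-1})$ from unitary invariance and compact-perturbation invariance of the essential spectrum of bounded normal operators, then invoke Corollary~\ref{cor: ess spec of resolvent} to read off both coordinates of $\overline{\sigma}_{\rm{ess}}$. The only difference is that the paper cites a reference for facts (a) and (b), whereas you supply the short Weyl-sequence arguments directly.
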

\begin{proof}
By assumption, there exists $u\in \mathcal{U}(H)$ such that $u(A-i)^{-1}u^*-(B-i)^{-1}\in \mathbb{K}(H)$. 
Since the essential spectra of a bounded normal operators is invariant under compact perturbations (see e.g., \cite[Propositions 4.2 and 4.6]{Conway90}),  $\sigma_{\rm{ess}}((A-i)^{-1})=\sigma_{\rm{ess}}(u(A-i)^{-1}u^*)=\sigma_{\rm{ess}}((B-i)^{-1})$. 
Therefore by Corollary \ref{cor: ess spec of resolvent}, $\sigma_{\rm{ess}}(A)=\sigma_{\rm{ess}}(B)$ and $A$ is bounded if and only if so is $B$.  
\end{proof}
Finally, Proposition \ref{prop: Matsuzawa relation is smooth} follows from the following Berg's generalization (see \cite[Theorem 39.8]{Conway} and \cite{Berg}) of Theorem \ref{thm: von Neumann theorem}, which is usually called the Weyl-von Neumann-Berg Theorem. 
\begin{theorem}[Weyl-von Neumann-Berg]\label{thm:Weyl-von Neumann-Berg}
Let $A,B$ be bounded normal operators on $H$. Then $\sigma_{\rm{ess}}(A)=\sigma_{\rm{ess}}(B)$ if and only if there exists $u\in \mathcal{U}(H)$ and $K\in \mathbb{K}(H)$ such that $uAu^*+K=B$. 
\end{theorem}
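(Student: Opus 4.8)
This is Berg's theorem; a complete proof can be found in \cite{Berg} or \cite[Theorem 39.8]{Conway}, so I only sketch the plan. The direction ($\Leftarrow$) --- that a relation $uAu^*+K=B$ forces $\sigma_{\rm{ess}}(A)=\sigma_{\rm{ess}}(B)$ --- is immediate, since the essential spectrum of a bounded operator is invariant under unitary conjugation and under compact perturbation (for normal operators, \cite[Propositions 4.2 and 4.6]{Conway90}): thus $\sigma_{\rm{ess}}(B)=\sigma_{\rm{ess}}(uAu^*)=\sigma_{\rm{ess}}(A)$. The content is the converse ($\Rightarrow$), which I would carry out in three steps.

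\emph{Step 1 (reduce to diagonal operators).} First I would invoke Berg's normal-operator version of the Weyl-von Neumann Theorem \ref{thm: Weyl-von Neumann} (recalled in $\S$1) to write each of $A$ and $B$ as a compact perturbation of a diagonalizable normal operator; since a compact perturbation changes neither $\sigma_{\rm{ess}}$ nor the solvability of ``$uAu^*+K=B$'', I may then assume $A=\sum_n a_n\nai{\xi_n}{\ \cdot\ }\xi_n$ and $B=\sum_n b_n\nai{\eta_n}{\ \cdot\ }\eta_n$ for CONSs $\{\xi_n\}_n,\{\eta_n\}_n$ and $a_n,b_n\in\mathbb{C}$. \emph{Step 2 (remove the discrete spectrum).} Put $M:=\sigma_{\rm{ess}}(A)=\sigma_{\rm{ess}}(B)$. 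For $\varepsilon>0$, the spectral projection of $A$ onto $\{z\in\mathbb{C}:\operatorname{dist}(z,M)\ge\varepsilon\}$ has finite rank, since this set meets $\sigma(A)$ in a compact subset of the discrete spectrum; hence only finitely many $a_n$ (counted with multiplicity) lie outside any neighborhood of $M$, so the isolated finite-multiplicity eigenvalues of $A$ accumulate only at $M$. I would then replace each such eigenvalue by a nearest point of $M$ --- a further compact perturbation --- after which $\sigma(A)=M$ and the eigenvalue sequence of $A$ has no isolated points, with accumulation set exactly $M$; likewise for $B$.

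\emph{Step 3 (match eigenvalues and build the unitary).} Finally I would apply the complex-scalar analogue of Lemma \ref{lem: permutation lemma} (the argument of \cite[$\S$94]{AkhiezerGlazman} works verbatim over $\mathbb{C}$) to $\{a_n\}_n$ and $\{b_n\}_n$, obtaining a permutation $\pi$ of $\mathbb{N}$ with $a_n-b_{\pi(n)}\to 0$; defining $u\in\mathcal{U}(H)$ by $u\xi_n:=\eta_{\pi(n)}$ gives $uAu^*=\sum_m a_{\pi^{-1}(m)}\nai{\eta_m}{\ \cdot\ }\eta_m$, so $K:=B-uAu^*=\sum_m(b_m-a_{\pi^{-1}(m)})\nai{\eta_m}{\ \cdot\ }\eta_m$ is diagonal with eigenvalue sequence $(b_m-a_{\pi^{-1}(m)})_m\to 0$, hence compact, and $uAu^*+K=B$. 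I expect the main obstacle to be Step 2 together with the matching lemma: one must accommodate possibly infinitely many isolated eigenvalues of finite multiplicity and reconcile their multiplicities between $A$ and $B$, which is the combinatorial core of Berg's argument; the remaining steps are routine, and boundedness of $A$ and $B$ is used only through the compactness of $\sigma(A)$, which is what makes each set $\{z:\operatorname{dist}(z,M)\ge\varepsilon\}\cap\sigma(A)$ finite.
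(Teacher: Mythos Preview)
The paper does not supply its own proof of this theorem: it simply records Berg's result with the references \cite{Berg} and \cite[Theorem 39.8]{Conway} and then uses it as a black box in the proof of Proposition~\ref{prop: Matsuzawa relation is smooth}. So there is nothing to compare your argument against beyond those citations.

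That said, your sketch is a faithful and correct outline of the standard proof. The reduction to diagonal operators via Berg's normal-operator Weyl--von Neumann theorem is exactly the first move; your Step~2 is sound because boundedness makes $\{z\in\sigma(A):\operatorname{dist}(z,M)\ge\varepsilon\}$ a finite set of finite-multiplicity eigenvalues, so $\operatorname{dist}(a_n,M)\to 0$ and the replacement is indeed compact; after it every $a_n$ lies in $M$, so the sequences have \emph{no} isolated points and the permutation lemma (whose proof in \cite[$\S$94]{AkhiezerGlazman} carries over to $\mathbb{C}$ unchanged) applies directly. Your remark that the combinatorial matching is the genuine content is accurate. In short: the paper omits the proof by design, and your proposal would serve as a correct substitute if one were desired.
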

\begin{proof}[Proof of Proposition \ref{prop: Matsuzawa relation is smooth}]
We already know that $\overline{\sigma}_{\rm{ess}}$ is Borel. Therefore we have only to show that $AE_{\rm{u.c.res}}^{{\rm{SA}}(H)}B\Leftrightarrow \overline{\sigma}_{\rm{ess}}(A)=\overline{\sigma}_{\rm{ess}}(B)$ holds. $(\Rightarrow )$ holds by Lemma \ref{lem: A,B equiv then sigma bar are equal}. To show ($\Leftarrow $), assume that $\overline{\sigma}_{\rm{ess}}(A)=\overline{\sigma}_{\rm{ess}}(B)$ holds. If $A$, $B$ are bounded, then by Corollary \ref{cor: ess spec of resolvent}, we have $\sigma_{\rm{ess}}((A-i)^{-1})=\{(\lambda-i)^{-1};\lambda\in \sigma_{\rm{ess}}(A)\}=\sigma_{\rm{ess}}((B-i)^{-1})$, whence by Theorem \ref{thm:Weyl-von Neumann-Berg}, there exists $u\in \mathcal{U}(H)$ such that $(B-i)^{-1}-u(A-i)^{-1}u^*\in \mathbb{K}(H)$ holds. Hence $AE_{\rm{u.c.res}}^{{\rm{SA}}(H)}B$. If both $A$, $B$ are unbounded, then again by Corollary \ref{cor: ess spec of resolvent}, $\sigma_{\rm{ess}}((A-i)^{-1})=\sigma_{\rm{ess}}((B-i)^{-1})$ holds, whence $AE_{\rm{u.c.res}}^{{\rm{SA}}(H)}B$ by the same argument. 
\end{proof}
\begin{remark}
As can be expected from Corollary \ref{cor: ess spec of resolvent} and the proof of Proposition \ref{prop: Matsuzawa relation is smooth}, it is possible to introduce a Polish topology on the space Nor$(H)$ of normal operators on $H$ which extends the SRT on ${\rm{SA}}(H)$, such that ${\rm{Nor}}(H)\ni A\mapsto \sigma_{\rm{ess}}(A)\in \mathcal{F}(\mathbb{C})$ is Borel. Thus one can prove that ${\rm{SA}}(H)\ni A\mapsto \sigma_{\rm{ess}}((A-i)^{-1})\in \mathcal{F}(\mathbb{C})$ is a Borel reduction of $E_{\rm{u.c.res}}^{{\rm{SA}}(H)}$ to $\text{id}_{\mathcal{F}(\mathbb{C})}$. However, since this approach makes the arguments rather lengthy, we have decided to introduce $\overline{\sigma}_{\rm{ess}}$ instead. 
\end{remark}
\subsection{$E_{\mathbb{K}(H)_{\rm{sa}}}^{{\rm{SA}}(H)}$ is Generically Turbulent}
We have shown that $G\curvearrowright {\rm{SA}}(H)$ is not generically turbulent (Theorem \ref{thm: unbounded case: comeager orbit exists}), but if one restricts the action to its abelian subgroup $\mathbb{K}(H)_{\rm{sa}}$, it is generically turbulent: 
\begin{theorem}\label{thm: action of K(H) on SA(H) is turbulent}
The action of $\mathbb{K}(H)_{\rm{sa}}$ on ${\rm{SA}}(H)$ by addition is continuous, and 
$E_{\mathbb{K}(H)_{\rm{sa}}}^{{\rm{SA}}(H)}$ is a Borel equivalence relation. Moreover, the action is generically turbulent. 
\end{theorem}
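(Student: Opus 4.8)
The plan has three ingredients: continuity, Borelness, and generic turbulence. Continuity is immediate, since $\mathbb{K}(H)_{\rm{sa}}$ is a norm-closed subgroup of $(\mathbb{B}(H)_{\rm{sa}},\|\cdot\|)$ and the action is the restriction of the continuous action of Proposition~\ref{prop: B(H)_sa acts on SA(H) continuously}. For Borelness of $E:=E_{\mathbb{K}(H)_{\rm{sa}}}^{{\rm{SA}}(H)}$, the key observation is that the witnessing perturbation is unique: if $A+K_1=A+K_2$ with $K_i\in\mathbb{K}(H)_{\rm{sa}}$, then $K_1=K_2$ on the dense subspace $\dom{A}$, hence $K_1=K_2$. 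Therefore $E$ is the image of $\widetilde{E}:=\{(A,B,K)\in{\rm{SA}}(H)^2\times\mathbb{K}(H)_{\rm{sa}}:A+K=B\}$ under the projection $(A,B,K)\mapsto(A,B)$; the set $\widetilde{E}$ is closed (hence Polish) by continuity of the action, and the projection is injective on $\widetilde{E}$, so $E$ is Borel by the Lusin--Souslin theorem (see \cite{Kechris96}).

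For generic turbulence I would apply Hjorth's Theorem~\ref{thm: weak turbulence=turbulence}, so it is enough to show that every orbit is meager, that some orbit is dense, and condition (b) of Definition~\ref{def: weak turbulence}. First, every orbit $[A]_G=A+\mathbb{K}(H)_{\rm{sa}}$ is SRT-dense: given $B$, reduce by Weyl--von Neumann (Theorem~\ref{thm: Weyl-von Neumann}) to $B=\sum_{n}b_n\nai{f_n}{\ \cdot\ }f_n$ diagonal, and for a basic neighbourhood determined by $f_1,\dots,f_N$ choose orthonormal $\widetilde{f}_1,\dots,\widetilde{f}_N\in\dom{A}$ close to the $f_j$ together with a finite-rank self-adjoint $K$ with $(A+K)\widetilde{f}_j=b_j\widetilde{f}_j$ for $j\le N$ --- the prescription on $\text{span}\{\widetilde{f}_j\}$ is compatible with self-adjointness since the $\widetilde{f}_j$ are orthonormal --- so that $A+K\in[A]_G$ lies in that neighbourhood. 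Next, every orbit is meager: $[A]_G\subseteq\{B:\dom{B}=\dom{A}\}$, which equals $\mathbb{B}(H)_{\rm{sa}}$ (meager by Lemma~\ref{lem: B(H)_sa is standard}) when $A$ is bounded, and when $A$ is unbounded one checks that $\{B:\dom{B}\subseteq\dom{A}\}=\bigcup_{C\in\mathbb{N}}\bigcap_{m\in\mathbb{N}}\{B:\|AE_A([-m,m])(B-i)^{-1}\|\le C\}$ is a countable union of SRT-closed sets with dense complement (near any diagonal operator one can rotate the eigenbasis on a tail to produce an eigenvector outside the meager $F_\sigma$ subspace $\dom{A}$, which contains no closed infinite-dimensional subspace), so each piece is nowhere dense.

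For condition (b) --- $\forall^{*}A\,\forall^{*}B\,\forall U\ni A\,\forall V\ni 1,\ \overline{\mathcal{O}(A,U,V)}\cap[B]_G\ne\emptyset$ --- since every orbit is dense it suffices to prove that for generic $A$ and every $U\ni A$, $V\ni 1$ the set $\overline{\mathcal{O}(A,U,V)}$ has nonempty SRT-interior; it then automatically meets the dense orbit $[B]_G$ for every $B$. Using that the turbulence locus is $G$-invariant and that, by Weyl--von Neumann, every element of the comeager set ${\rm{SA}}_{\rm{full}}(H)=\{A:\sigma_{\rm{ess}}(A)=\mathbb{R}\}$ is a compact perturbation of a diagonal operator with the same essential spectrum, it is enough to treat $A=\sum_n a_n\nai{\psi_n}{\ \cdot\ }\psi_n$ diagonal with $\sigma_{\rm{ess}}(A)=\mathbb{R}$. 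Write $U=\{C:\|(C-i)^{-1}v_j-(A-i)^{-1}v_j\|<\delta,\ j\le N\}$, $V=\{K:\|K\|<r\}$, and put $\zeta_j=(A-i)^{-1}v_j$, $\mathcal{Z}=\text{span}\{\zeta_1,\dots,\zeta_N\}\subseteq\dom{A}$. By the resolvent identity there are two moves that remain inside $U$: (i) $A+K\in\mathcal{O}(A,U,V)$ whenever $\|K\|<\min(r,\delta/\max_j\|v_j\|)$; and (ii) if $F\in\mathbb{K}(H)_{\rm{sa}}$ kills $\mathcal{Z}$ (that is, $F\zeta_j=0$ for all $j$) then $(A+sF-i)^{-1}v_j=\zeta_j$ for every $s\ge 0$, so $A+F\in\mathcal{O}(A,U,V)$ for \emph{arbitrary} such $F$ (split $F$ into small steps), and such $A+F$ sits strictly inside $U$, so it may be followed by a further small perturbation in any direction without leaving $\mathcal{O}(A,U,V)$. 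Combining (i), (ii), the sign-preserving large-diagonal move of Lemma~\ref{lem: nonlinearity of resolvent does not appear if ab>-1} where helpful, and the density of $\mathbb{K}(\,\cdot\,)_{\rm{sa}}$-orbits established above, and iterating while following the moving finite-dimensional constraint subspace, one builds inside $\overline{\mathcal{O}(A,U,V)}$ a set dense in a genuine SRT-ball.

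The main obstacle --- and the heart of the proof --- is this last step. The difficulty is a clash of coarsenesses: a basic SRT-neighbourhood $U$ pins $(A-i)^{-1}$ on only finitely many vectors, while $V$ is a genuinely small norm-ball, so linking a point of $\mathcal{O}(A,U,V)$ to a distant point of $U$ forces a long chain of steps, all of which must stay in $U$. In the proof of Theorem~\ref{thm: action is weakly turbulent} this was sidestepped because $G$ contained $\mathcal{U}(H)$, so the required eigenbasis alignment was absorbed into the target orbit $[B]_G$; with $G=\mathbb{K}(H)_{\rm{sa}}$ alone, $[B]_G$ is too rigid, and the alignment must instead be carried out step by step inside the local orbit, which forces one to orchestrate the three kinds of perturbation --- small ones, those supported off $\mathcal{Z}$, and sign-preserving diagonal ones --- so that the running operator never approaches the boundary of $U$ while its orbit-closure still sweeps out an open set.
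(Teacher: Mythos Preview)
The continuity and Borelness arguments are fine (the paper handles Borelness by the same freeness observation, citing \cite[Proposition 3.4.8]{Gao09}). But your turbulence argument is incomplete: the final two paragraphs announce a strategy for condition~(b) and then explicitly name the obstacle rather than overcoming it. The sentence ``combining (i), (ii), \dots\ one builds inside $\overline{\mathcal{O}(A,U,V)}$ a set dense in a genuine SRT-ball'' is not a proof, and it is not clear that the iterated moving-constraint scheme can actually be carried out.

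The paper sidesteps the whole difficulty. Once every orbit is dense and meager, a result of Kechris \cite[Proposition~8.7]{Kechris02} says generic turbulence follows from turbulence at a \emph{single} point. The paper chooses $A=0$, where a clean monotonicity trick works: given $U=\bigcap_{j\le m}\{B:\|(B-i)^{-1}\xi_j+i\xi_j\|<\varepsilon\}$ and $V=\{K:\|K\|<\delta\}$, it suffices (after Weyl--von Neumann and SRT-approximation by finite-rank operators in $U$) to show that every finite-rank $B\in U$ lies in $\mathcal{O}(0,U,V)$. For $B=\sum_{k=1}^n\lambda_kp_k$ and $0\le s\le 1$ one computes
\[
\bigl\|(sB-i)^{-1}\xi_j+i\xi_j\bigr\|^2=\sum_{k=1}^n\frac{s^2\lambda_k^2}{s^2\lambda_k^2+1}\,\|p_k\xi_j\|^2,
\]
which is increasing in $s$, so $sB\in U$ for every $s\in[0,1]$. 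Choosing $N$ with $\|B\|/N<\delta$ gives the chain $0,\tfrac{1}{N}B,\dots,B$ inside $U$ with increments in $V$; hence $U\subset\overline{\mathcal{O}(0,U,V)}$. No eigenbasis alignment, no Lemma~\ref{lem: nonlinearity of resolvent does not appear if ab>-1}, no iteration.

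Your density and meagerness arguments are also more laborious than needed. For density the paper again exploits $0$: diagonalise $A$ up to a compact, truncate to reach $0$ in SRT, then translate by an arbitrary $K\in\mathbb{K}(H)_{\rm{sa}}$; thus $\mathbb{K}(H)_{\rm{sa}}\subset\overline{[A]}$, and $\mathbb{K}(H)_{\rm{sa}}$ is already SRT-dense. For meagerness in the unbounded case, instead of analysing $\{B:\dom{B}\subseteq\dom{A}\}$ (whose nowhere-density you leave as a one-line sketch), the paper observes that $\{[sA]_{\mathbb{K}(H)_{\rm{sa}}}:s\in\mathbb{R}\setminus\{0\}\}$ is an uncountable disjoint family of pairwise homeomorphic Borel orbits; if one were non-meager they all would be, which is impossible in a second-countable space.
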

The action is continuous by Proposition \ref{prop: B(H)_sa acts on SA(H) continuously}. Borelness of $E_{\mathbb{K}(H)_{\rm{sa}}}^{{\rm{SA}}(H)}$ then follows from the fact that the $\mathbb{K}(H)_{\rm{sa}}$-action is free (i.e., $A+K\neq A$ for every $K\neq 0$ and $A\in {\rm{SA}}(H)$) \cite[Proposition 3.4.8]{Gao09}.  
The rest of the proof is divided into steps, but compared to the proof of Theorem \ref{thm: action is weakly turbulent}, the arguments here are much simpler because for proving meagerness of orbits, there are many homeomorphic $\mathbb{K}(H)_{\rm{sa}}$-orbits and one can directly prove the turbulence of the $\mathbb{K}(H)_{\rm{sa}}$-action at $0\in {\rm{SA}}(H)$.
\begin{lemma}\label{lem: every orbit is dense}
For every $A\in {\rm{SA}}(H)$, the orbit $[A]_{\mathbb{K}(H)_{\rm{sa}}}$ is dense in ${\rm{SA}}(H)$.
\end{lemma}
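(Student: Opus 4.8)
The plan is to mimic, in the weaker SRT setting, the density arguments that already appeared for $\mathbb{B}(H)_{\rm sa}$ in Lemma~\ref{lem: B(H)_sa is standard} and for the $G$-action in Proposition~\ref{prop: yasu G-orbit is dense meager}. Fix $A\in{\rm SA}(H)$ and let $B\in{\rm SA}(H)$ be arbitrary; I want to approximate $B$ in SRT by operators of the form $A+K$ with $K\in\mathbb{K}(H)_{\rm sa}$. Since SRT is metrized by $d$ (Proposition~\ref{prop: SA(H) is Polish}), it suffices to produce, for each neighborhood of $B$, a compact self-adjoint $K$ with $d(A+K,B)$ small, equivalently (by Lemma~\ref{lem: SRT-convergence at two points}) with $(A+K-i)^{-1}$ close to $(B-i)^{-1}$ in SOT.

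The key step is a finite-rank surgery. By the Weyl--von Neumann Theorem~\ref{thm: Weyl-von Neumann} applied to $B$, there is a compact perturbation making $B$ diagonalizable; but more simply, choose a CONS $\{\eta_n\}_{n=1}^\infty$ and write $B=\sum_n b_n\langle\eta_n,\cdot\rangle\eta_n$ after such a perturbation (which only changes $[B]_G$, not the target class here, so I should instead argue directly). Actually the clean route: it is enough to show every orbit is dense, and since $[A]_{\mathbb{K}(H)_{\rm sa}}\subset[A]_G$ and $G$-orbits need not be dense, I cannot quote Proposition~\ref{prop: yasu G-orbit is dense meager}. Instead, fix a CONS $\{\xi_n\}$ diagonalizing neither operator and proceed as follows. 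First reduce to the case where $A$ is diagonalizable: by Theorem~\ref{thm: Weyl-von Neumann} there is $K_0\in\mathbb{K}(H)_{\rm sa}$ with $A+K_0=\sum_n a_n\langle\xi_n,\cdot\rangle\xi_n$, and since $A+K_0\in[A]_{\mathbb{K}(H)_{\rm sa}}$ it suffices to show the orbit of a diagonalizable operator is dense. Likewise it suffices to approximate diagonalizable $B=\sum_n b_n\langle\xi_n,\cdot\rangle\xi_n$ written in the \emph{same} CONS (finite-rank compact corrections let me rotate $B$ into any prescribed orthonormal frame on any finite-dimensional piece, at SRT-cost going to zero). Then set $K_N:=\sum_{n=1}^N(b_n-a_n)\langle\xi_n,\cdot\rangle\xi_n\in\mathbb{K}(H)_{\rm sa}$, so that $A+K_0+K_N=\sum_{n\le N}b_n\langle\xi_n,\cdot\rangle\xi_n+\sum_{n>N}a_n\langle\xi_n,\cdot\rangle\xi_n$, and estimate
\[
\|(A+K_0+K_N-i)^{-1}\xi-(B-i)^{-1}\xi\|^2=\sum_{n>N}\Big|\tfrac{1}{a_n-i}-\tfrac{1}{b_n-i}\Big|^2|\langle\xi_n,\xi\rangle|^2\le 4\sum_{n>N}|\langle\xi_n,\xi\rangle|^2\xrightarrow[N\to\infty]{}0
\]
for every $\xi\in H$, using $|(a-i)^{-1}|\le 1$. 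Hence $A+K_0+K_N\to B$ in SRT.

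The main obstacle is the reduction to a common diagonalizing basis: unlike in Proposition~\ref{prop: yasu G-orbit is dense meager} (NRT, where I had a unitary freedom in $G$), here I only have the abelian group $\mathbb{K}(H)_{\rm sa}$ and cannot conjugate. The fix is that I do not need $A$ and $B$ literally diagonalized in one CONS; I only need, for a given tolerance, a \emph{finite-dimensional} coincidence of frames, which is achieved by a finite-rank self-adjoint perturbation (replacing $A$ on a large finite-dimensional invariant subspace by a diagonal operator in the eigenbasis of $B$ restricted there, with matching tail). Making this precise is a routine but slightly fussy spectral-projection argument: pick a spectral projection $E_A((-M,M))$ of large finite rank, approximate $A$ in SRT by a bounded truncation, then further perturb that bounded truncation by a finite-rank operator so that it diagonalizes in an orthonormal set compatible with $B$'s eigenvectors, and finally glue. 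I expect this bookkeeping — and not the resolvent estimate above — to be where the real work lies.
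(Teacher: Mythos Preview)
Your core resolvent estimate (once both operators are diagonal in the \emph{same} CONS) is fine, and you correctly identify that the whole difficulty is the reduction to a common diagonalizing basis. But your proposed fix does not work as written. First, operators diagonal in a \emph{fixed} CONS $\{\xi_n\}$ are not SRT-dense in ${\rm{SA}}(H)$: if $B$ is a nontrivial $2\times 2$ self-adjoint block on ${\rm{span}}\{\xi_1,\xi_2\}$ and zero elsewhere, then $(B-i)^{-1}\xi_1\notin\mathbb{C}\xi_1$, whereas $(D-i)^{-1}\xi_1\in\mathbb{C}\xi_1$ for every $D$ diagonal in $\{\xi_n\}$; hence $\|(D-i)^{-1}\xi_1-(B-i)^{-1}\xi_1\|$ is bounded below by a positive constant independent of $D$. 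So ``it suffices to approximate $B$ diagonal in the same CONS'' is false. Second, your surgery idea (``replace $A$ on a large finite-dimensional invariant subspace by a diagonal operator in the eigenbasis of $B$'') runs into a domain obstruction: the eigenvectors $\eta_j$ of $B$ need not lie in $\dom{A+K_0}$ at all, so the finite-rank correction $K\eta_j=b_j\eta_j-(A+K_0)\eta_j$ you implicitly want is not even defined. Additive compact perturbations cannot ``rotate'' frames; that genuinely requires the unitary part of $G$, which you do not have here. (The remark that $E_A((-M,M))$ has ``large finite rank'' is also unwarranted for general $A$.)

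The paper sidesteps all of this with a one-line trick. After diagonalizing $A+K_0=\sum_n a_n e_n$ via Weyl--von Neumann, the truncations $A_m:=(A+K_0)-\sum_{n\le m}a_n e_n$ lie in $[A]_{\mathbb{K}(H)_{\rm{sa}}}$ and converge to $0$ in SRT (exactly your tail estimate, with $b_n=0$). Then Lemma~\ref{lem: Arai's lemma} gives $A_m+K\stackrel{\rm{SRT}}{\to}K$ for every fixed $K\in\mathbb{K}(H)_{\rm{sa}}$, so $\mathbb{K}(H)_{\rm{sa}}\subset\overline{[A]_{\mathbb{K}(H)_{\rm{sa}}}}$, and one finishes since $\mathbb{K}(H)_{\rm{sa}}$ is itself SRT-dense. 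The point is that you never need to align $A$ with a \emph{given} target $B$; it is enough to reach the single operator $0$, after which the continuity of the $\mathbb{K}(H)_{\rm{sa}}$-action (Proposition~\ref{prop: B(H)_sa acts on SA(H) continuously}) and the density of $\mathbb{K}(H)_{\rm{sa}}$ do the rest.
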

\begin{proof}
By Weyl-von Neumann Theorem \ref{thm: Weyl-von Neumann}, there exists $K_0\in \mathbb{K}(H)_{\rm{sa}}$ such that $A+K_0$ is of the form $A+K_0=\sum_{n=1}^{\infty}a_ne_n$, where $a_n\in \mathbb{R}$ and $\{e_n\}_{n=1}^{\infty}$ is a sequence of mutually orthogonal rank one projections with sum equal to 1. Then the sequence $A_m:=(A+K_0)-\sum_{n=1}^ma_me_m\in [A]_{\mathbb{K}(H)_{\rm{sa}}}\ (m\in \mathbb{N})$ satisfies (by a similar argument to (\ref{eq: SRT approximation by unbounded operators}) in Lemma \ref{lem: B(H)_sa is standard})\ $A_m\stackrel{m\to \infty}{\to}0$ (SRT). 
Therefore by Lemma \ref{lem: Arai's lemma}, we have for each $K\in \mathbb{K}(H)_{\rm{sa}}$, $A_m+K\in [A]_{\mathbb{K}(H)_{\rm{sa}}}$ converges to $K$ in SRT. This shows that $\mathbb{K}(H)_{\rm{sa}}\subset \overline{[A]_{\mathbb{K}(H)_{\rm{sa}}}}$, and since $\mathbb{K}(H)_{\rm{sa}}$ is dense in ${\rm{SA}}(H)$, so is $[A]_{\mathbb{K}(H)_{\rm{sa}}}$.
\end{proof}

We next show that every orbit $[A]_{\mathbb{K}(H)_{\rm{sa}}}$ is meager. We first treat the case where $A$ is bounded.
\begin{lemma}\label{lem: orbit of bounded op is meager}
Let $B\in \mathbb{B}(H)_{\rm{sa}}$. Then its orbit $[B]_{\mathbb{K}(H)_{\rm{sa}}}$ is meager in ${\rm{SA}}(H)$.
\end{lemma}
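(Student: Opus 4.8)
The plan is to observe that this case is essentially immediate from work already done, namely Lemma \ref{lem: B(H)_sa is standard}. The key point is that adding a compact self-adjoint operator to a bounded self-adjoint operator keeps us inside $\mathbb{B}(H)_{\rm{sa}}$: for any $K\in \mathbb{K}(H)_{\rm{sa}}$, the operator $B+K$ is bounded (it is a sum of two bounded operators) and self-adjoint (with full domain $H$), so $B+K\in \mathbb{B}(H)_{\rm{sa}}$. Hence
\[
[B]_{\mathbb{K}(H)_{\rm{sa}}}=\{B+K;\ K\in \mathbb{K}(H)_{\rm{sa}}\}\subseteq \mathbb{B}(H)_{\rm{sa}}.
\]
By Lemma \ref{lem: B(H)_sa is standard}, $\mathbb{B}(H)_{\rm{sa}}$ is a meager $F_{\sigma}$ subset of ${\rm{SA}}(H)$; since any subset of a meager set is meager, it follows that $[B]_{\mathbb{K}(H)_{\rm{sa}}}$ is meager in ${\rm{SA}}(H)$.

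There is no real obstacle here: the entire difficulty is already encapsulated in Lemma \ref{lem: B(H)_sa is standard}, whose proof used Weyl--von Neumann approximation (Theorem \ref{thm: Weyl-von Neumann}) to show that any bounded self-adjoint operator is an SRT-limit of unbounded diagonalizable operators, so that $\mathbb{B}(H)_{\rm{sa}}$ has empty interior. Thus the present lemma requires only the trivial containment above, and the genuine work in proving meagerness of all $\mathbb{K}(H)_{\rm{sa}}$-orbits is concentrated in the complementary (unbounded) case, which will be handled separately.
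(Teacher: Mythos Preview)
Your proof is correct and follows exactly the same approach as the paper: the orbit is contained in $\mathbb{B}(H)_{\rm{sa}}$, which is meager by Lemma \ref{lem: B(H)_sa is standard}. The paper's proof is essentially the same two-line argument, without the additional commentary.
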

\begin{proof}
Since $B$ is bounded, we have $[B]_{\mathbb{K}(H)_{\rm{sa}}}\subset \mathbb{B}(H)_{\rm{sa}}$. By Lemma \ref{lem: B(H)_sa is standard}, $\mathbb{B}(H)_{\rm{sa}}$ is meager in ${\rm{SA}}(H)$, whence so is $[B]_{\mathbb{K}(H)_{\rm{sa}}}$. 
\end{proof}
To prove the meagerness of $[A]_{\mathbb{K}(H)_{\rm{sa}}}$ for an unbounded $A$, we need easy lemmata. 
\begin{lemma}\label{lem: uncountable non-meager BPsets is impossible}
In a Polish space $X$, there is no uncountable disjoint family of non-meager subsets of $X$ each of which has the Baire property. 
\end{lemma}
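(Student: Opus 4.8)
The plan is to argue by contradiction, exploiting second countability of $X$ together with the Baire category theorem, which says precisely that a non-empty open subset of a Polish space is non-meager (an open subspace of a Polish space is again Polish). So suppose $\{A_i\}_{i\in I}$ is an uncountable pairwise disjoint family of non-meager subsets of $X$, each having the Baire property. First I would use the Baire property to write $A_i=U_i\bigtriangleup M_i$ with $U_i$ open and $M_i$ meager in $X$; since $A_i$ is non-meager and $A_i\subseteq U_i\cup M_i$, the set $U_i$ cannot be empty.

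Next, fix a countable basis $\{V_n\}_{n\ge 1}$ for $X$, and for each $i\in I$ choose an index $n(i)$ with $\emptyset\neq V_{n(i)}\subseteq U_i$. Because $I$ is uncountable while there are only countably many basic open sets, the pigeonhole principle yields a single basic open set $V$ and an uncountable subset $J\subseteq I$ such that $V\subseteq U_i$ for every $i\in J$. For $i\in J$ we then have the inclusion $V\setminus A_i\subseteq U_i\setminus A_i\subseteq U_i\bigtriangleup A_i=M_i$, so $V\setminus A_i$ is meager in $X$.

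Finally, choose two distinct indices $i,j\in J$. By disjointness of the family, $(V\setminus A_i)\cup(V\setminus A_j)=V\setminus(A_i\cap A_j)=V$, so the non-empty open set $V$ is a union of two meager sets, hence meager in $X$ — contradicting the Baire category theorem. I do not expect a real obstacle; the only points requiring care are that meagerness of $M_i$ is genuinely witnessed in $X$ (not merely relative to $A_i$), so that the inclusion $V\setminus A_i\subseteq M_i$ indeed forces $V\setminus A_i$ meager in $X$, and that ``$V$ non-empty open $\Rightarrow$ $V$ non-meager in $X$'' is invoked for the basic open set $V$, not for an arbitrary open set.
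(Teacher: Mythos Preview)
Your proof is correct and follows essentially the same approach as the paper's: both argue by contradiction, use the Baire property to associate a nonempty open set to each $A_i$, exploit second countability to find two (or more) indices whose open sets overlap, and then derive a contradiction from disjointness via the Baire category theorem. The only cosmetic difference is that you pigeonhole onto a single basic open set $V$ contained in uncountably many $U_i$, whereas the paper merely finds two indices $i_1,i_2$ with $U_{i_1}\cap U_{i_2}\neq\emptyset$; the remaining steps are the same.
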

\begin{proof}
Assume by contradiction that there exists an uncountable disjoint family of non-meager subsets $\{X_i\}_{i\in I}$ of $X$ such that each $X_i$ has the Baire property $(i\in I)$. Then for each $i\in I$, there exists a nonempty open subset $U_i$ of $X$ such that $U_i\setminus X_i$ is meager in $X$, (this is equivalent to that $U_i\setminus X_i$ is meager in $U_i$ with subspace topology, since $U_i$ is open). Since $\{U_i\}_{i\in I}$ is an uncountable family of nonempty open sets in a second countable space $X$, there exists $i_1,i_2\in I\ (i_1\neq i_2)$ such that $V:=U_{i_1}\cap U_{i_2}\neq \emptyset$. For $k=1,2$, $V\setminus X_{i_k}\subset U_{i_k}\setminus X_{i_k}$ is meager in $X$, whence $V\cap X_{i_k}$ is comeager in $V$. Therefore $(V\cap X_{i_1})\cap (V\cap X_{i_2})$ is also comeager in $V$. Since $V$ is open hence Baire, this shows in particular that $V\cap X_{i_1}\cap X_{i_2}\neq \emptyset$, which is a contradiction.    
\end{proof}
\begin{lemma}\label{lem: uncountably many homeomorphic orbits}
Let $A\in {\rm{SA}}(H)$ be unbounded. Then for each $s,t\in \mathbb{R}\setminus \{0\}$ with $s\neq t$, $[sA]_{\mathbb{K}(H)_{\rm{sa}}}$ and $[tA]_{\mathbb{K}(H)_{\rm{sa}}}$ are disjoint and homeomorphic.
\end{lemma}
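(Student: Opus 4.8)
The plan is to treat the two assertions of the lemma separately: first the disjointness of $[sA]_{\mathbb{K}(H)_{\rm{sa}}}$ and $[tA]_{\mathbb{K}(H)_{\rm{sa}}}$, and then the existence of a homeomorphism between them.

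For disjointness, I would argue by contradiction. If the two orbits meet, then $sA+K_1=tA+K_2$ for some $K_1,K_2\in \mathbb{K}(H)_{\rm{sa}}$. Since $K_1,K_2$ are bounded, both sides of this operator identity have domain $\dom{A}$, and restricting to $\dom{A}$ gives $(s-t)A\xi=(K_2-K_1)\xi$ for all $\xi\in \dom{A}$. As $s\neq t$, this exhibits $A$ as agreeing on its (dense) domain with the bounded operator $T:=(s-t)^{-1}(K_2-K_1)$; since $A$ is self-adjoint, hence closed, a short limiting argument (approximate $\xi\in H$ by $\xi_n\in \dom{A}$, note $A\xi_n=T\xi_n\to T\xi$, and invoke closedness) forces $\dom{A}=H$ and $A=T\in \mathbb{B}(H)_{\rm{sa}}$, contradicting the unboundedness of $A$.

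For the homeomorphism, set $c:=t/s\in \mathbb{R}\setminus\{0\}$ and consider scalar multiplication $m_c\colon {\rm{SA}}(H)\to {\rm{SA}}(H)$, $m_c(B)=cB$. I would first verify that $m_c$ is a homeomorphism of $({\rm{SA}}(H),{\rm{SRT}})$ onto itself: it is a bijection with inverse $m_{1/c}$, and its continuity follows from Trotter's Lemma \ref{lem: Trotter lemma}, because $e^{ir(cB)}=e^{i(rc)B}$ and $r\mapsto rc$ carries compact subsets of $\mathbb{R}$ to compact subsets. Then, since $c(sA+K)=tA+cK$ and $K\mapsto cK$ is a bijection of $\mathbb{K}(H)_{\rm{sa}}$ onto itself, $m_c$ maps $[sA]_{\mathbb{K}(H)_{\rm{sa}}}$ bijectively onto $[tA]_{\mathbb{K}(H)_{\rm{sa}}}$; restricting $m_c$ to these orbits (each carrying the subspace SRT-topology) yields the required homeomorphism.

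The only genuinely delicate point is the disjointness step: one must observe that equality of the two unbounded operators $sA+K_1$ and $tA+K_2$ entails equality of domains, and then use closedness of $A$ to rule out $A$ being (the restriction of) a bounded operator. Everything else is routine verification.
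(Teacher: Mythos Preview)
Your proof is correct and follows essentially the same route as the paper: disjointness by contradiction (rewriting $sA+K_1=tA+K_2$ as $A\xi=(t-s)^{-1}(K_1-K_2)\xi$ on $\dom{A}$ to force boundedness), and the homeomorphism via scalar multiplication $B\mapsto cB$ carrying one orbit onto the other. The only difference is that you supply the continuity argument for $m_c$ via Trotter's Lemma, whereas the paper simply declares the map $A+K\mapsto sA+sK$ a well-defined homeomorphism without further justification.
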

\begin{proof}
Suppose by contradiction that $[sA]_{\mathbb{K}(H)_{\rm{sa}}}\cap [tA]_{\mathbb{K}(H)_{\rm{sa}}}\neq \emptyset$ for $s\neq t$. Then there exist $K_1,K_2\in \mathbb{K}(H)_{\rm{sa}}$ such that $sA+K_1=tA+K_2$. Therefore for $\xi \in \dom{A}$, $\frac{1}{t-s}(K_1-K_2)\xi=A\xi$. Since $\dom{A}$ is dense and $K_1-K_2$ is bounded, this implies that $A$ is also bounded, a contradiction. Therefore $[sA]_{\mathbb{K}(H)_{\rm{sa}}}\cap [tA]_{\mathbb{K}(H)_{\rm{sa}}}=\emptyset$. To show the latter claim, it is enough to show that for each $s\neq 0$, $[A]_{\mathbb{K}(H)_{\rm{sa}}}$ and $[sA]_{\mathbb{K}(H)_{\rm{sa}}}$ are homeomorphic. Define $\varphi:[A]_{\mathbb{K}(H)_{\rm{sa}}}\to [sA]_{\mathbb{K}(H)_{\rm{sa}}}$ by $\varphi(A+K):= sA+sK\ (K\in \mathbb{K}(H)_{\rm{sa}})$. It is straightforward to see that this is a well-defined homeomorphism. 
\end{proof}

\begin{proposition}\label{prop: meager orbit of unbounded A}
Let $A\in {\rm{SA}}(H)$ be unbounded. Then $[A]_{\mathbb{K}(H)_{\rm{sa}}}$ is meager in ${\rm{SA}}(H)$. 
\end{proposition}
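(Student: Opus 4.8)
The plan is to argue by contradiction, combining the rigidity of the scaling action $B\mapsto sB$ on ${\rm{SA}}(H)$ with the combinatorial Lemma \ref{lem: uncountable non-meager BPsets is impossible}: an uncountable pairwise disjoint family of non-meager sets with the Baire property cannot exist in a Polish space. Since Lemma \ref{lem: uncountably many homeomorphic orbits} already supplies, for unbounded $A$, an uncountable pairwise disjoint family $\{[sA]_{\rm{sa}}\}$ of orbits (here I abbreviate $[\,\cdot\,]_{\rm{sa}}:=[\,\cdot\,]_{\mathbb{K}(H)_{\rm{sa}}}$), it will suffice to show that all these orbits have the Baire property and that non-meagerness of one of them propagates to all of them.

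First I would record that for every $s\in\mathbb{R}\setminus\{0\}$ the scaling map $\Phi_s\colon {\rm{SA}}(H)\to {\rm{SA}}(H)$, $\Phi_s(B):=sB$, is an SRT-homeomorphism of ${\rm{SA}}(H)$. Indeed, SRT-convergence $B_n\to B$ is equivalent to SOT-convergence $(B_n-z)^{-1}\to (B-z)^{-1}$ for some (hence every) $z\in\mathbb{C}\setminus\mathbb{R}$ (Lemma \ref{lem: SRT-convergence at two points} together with the resolvent identity), and $(sB-i)^{-1}=s^{-1}(B-is^{-1})^{-1}$ with $is^{-1}\in\mathbb{C}\setminus\mathbb{R}$; thus $\Phi_s$ is continuous, with continuous inverse $\Phi_{1/s}$. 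Moreover $\Phi_s$ carries $[A]_{\rm{sa}}$ onto $[sA]_{\rm{sa}}$, since $s(A+K)=sA+sK$ and $sK\in\mathbb{K}(H)_{\rm{sa}}$. In particular, because homeomorphisms of ${\rm{SA}}(H)$ preserve the ideal of meager sets, $[A]_{\rm{sa}}$ is meager in ${\rm{SA}}(H)$ if and only if $[sA]_{\rm{sa}}$ is.

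To assemble the contradiction, note that each orbit $[sA]_{\rm{sa}}$ is the continuous image of the Polish group $\mathbb{K}(H)_{\rm{sa}}$ under $K\mapsto sA+K$ (equivalently, it is a Borel set, since the free $\mathbb{K}(H)_{\rm{sa}}$-action has Borel orbit equivalence relation), hence is analytic and so has the Baire property. Now suppose, towards a contradiction, that $[A]_{\rm{sa}}$ is non-meager. By the previous paragraph $[sA]_{\rm{sa}}$ is non-meager for every $s\ne 0$, and by Lemma \ref{lem: uncountably many homeomorphic orbits} the family $\{[sA]_{\rm{sa}}\}_{s\in\mathbb{R}\setminus\{0\}}$ is pairwise disjoint, hence uncountable. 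This produces an uncountable disjoint family of non-meager subsets of the Polish space ${\rm{SA}}(H)$, each with the Baire property, contradicting Lemma \ref{lem: uncountable non-meager BPsets is impossible}; therefore $[A]_{\rm{sa}}$ is meager. I do not expect a serious obstacle here: the only points needing (brief) care are the verification that $\Phi_s$ is an \emph{ambient} homeomorphism of ${\rm{SA}}(H)$ rather than merely a homeomorphism of the abstract orbit spaces, and the observation that the orbits genuinely have the Baire property — both of which are immediate from the material already developed.
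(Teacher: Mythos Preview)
Your proof is correct and follows essentially the same strategy as the paper's: assume non-meagerness, invoke Lemma \ref{lem: uncountably many homeomorphic orbits} to obtain an uncountable pairwise disjoint family of orbits, observe that they all have the Baire property, and contradict Lemma \ref{lem: uncountable non-meager BPsets is impossible}. The one genuine improvement in your write-up is that you explicitly verify that scaling $\Phi_s\colon B\mapsto sB$ is an \emph{ambient} SRT-homeomorphism of ${\rm{SA}}(H)$; the paper's Lemma \ref{lem: uncountably many homeomorphic orbits} only records a homeomorphism between the orbits as subspaces, and subspace homeomorphism alone does not transfer non-meagerness relative to the ambient space, so your extra sentence is exactly what makes the ``thus each $[sA]$ is non-meager'' step rigorous.
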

\begin{proof}
Suppose that $[A]_{\mathbb{K}(H)_{\rm{sa}}}$ were non-meager. Since any orbit of a continuous action of a Polish group on a Polish is Borel, $[A]_{\mathbb{K}(H)_{\rm{sa}}}$ is a Borel subset of ${\rm{SA}}(H)$ by \cite[Proposition 3.1.10]{Gao09}. By Lemma \ref{lem: uncountably many homeomorphic orbits}, $\{[sA]_{\mathbb{K}(H)_{\rm{sa}}}\}_{s\in \mathbb{R}\setminus \{0\}}$ would be an uncountable family of disjoint Borel subsets of ${\rm{SA}}(H)$ any two of which are homeomorphic. Thus, each $[sA]_{\mathbb{K}(H)_{\rm{sa}}}\ (s\neq 0)$ would be non-meager and has the Baire property. This is a contradiction to Lemma \ref{lem: uncountable non-meager BPsets is impossible}. Therefore $[A]_{\mathbb{K}(H)_{\rm{sa}}}$ is meager.  
\end{proof}

\begin{proof}[Proof of Theorem \ref{thm: action of K(H) on SA(H) is turbulent}]
We have shown that every orbit is dense (Lemma \ref{lem: every orbit is dense}) and meager (Lemma \ref{lem: orbit of bounded op is meager} and Proposition \ref{prop: meager orbit of unbounded A}). Therefore to show the generic turbulence, it suffices to show that there exists at least one orbit on which the action is turbulent \cite[Proposition 8.7]{Kechris02}. We thus show that every local orbit of $0\in {\rm{SA}}(H)$ is somewhere-dense.  Let $U$ be an open neighborhood of 0 in ${\rm{SA}}(H)$, $V$ be an open neighborhood of 0 in $\mathbb{K}(H)_{\rm{sa}}$. We may and do assume that $U,V$ are of the following form
\eqa{
U&=\bigcap_{j=1}^m\{B\in {\rm{SA}}(H); \|(B-i)^{-1}\xi_j-(0-i)^{-1}\xi_j\|<\varepsilon\},\\
V&=\{K\in \mathbb{K}(H)_{\rm{sa}};\ \|K\|<\delta\}
}
for some unit vectors $\xi_1,\cdots,\xi_m\in H$ and $\varepsilon,\delta>0$. We show that $U\subset \overline{\mathcal{O}(0,U,V)}$. Let $B\in U$. By Weyl-von Neumann Theorem \ref{thm: Weyl-von Neumann} and spectral Theorem, there exists a sequence $\{B_n\}_{n=1}^{\infty}$ of finite-rank self-adjoint operators contained in $U$ such that $B_n\stackrel{\rm{SRT}}{\to}B$ in ${\rm{SA}}(H)$. Therefore to show that $B\in \overline{\mathcal{O}(0,U,V)}$, it suffices to prove that $B_n\in \mathcal{O}(0,U,V)$ for each $n\in \mathbb{N}$. Thus we may assume that $B$ is of finite-rank. Let $B=\sum_{k=1}^n\lambda_kp_k$ be the spectral decomposition of $B$. Choose $N\in \mathbb{N}$ so that $\frac{1}{N}\|B\|<\delta$. Then for each $1\le j\le m$ and $1\le l\le N$, 
\eqa{
\left \|\left (\frac{l}{N}B-i\right )^{-1}\xi_j-(0-i)^{-1}\xi_j\right \|^2&=
\sum_{k=1}^n\left |\frac{1}{\frac{l}{N}\lambda_k-i}-\frac{1}{0-i}\right |^2\|p_k\xi_j\|^2\\
&=\sum_{k=1}^n\frac{\lambda_k^2}{\lambda_k^2+\frac{N^2}{l^2}}\|p_j\xi_j\|^2\\
&\le \|(B-i)^{-1}\xi_j-(0-i)^{-1}\xi_j\|^2\\
&<\varepsilon^2.
}
Therefore $\frac{l}{N}B\in U$ for each $0\le l\le N$. Since $\frac{1}{N}B\in V$, this shows that $B=\frac{1}{N}B+\cdots +\frac{1}{N}B\in \mathcal{O}(0,U,V)$. Therefore $\mathcal{O}(0,U,V)$ is somewhere-dense, and the action is turbulent at 0. This finishes the proof.
\end{proof}

\subsection{$E_{\rm{dom},u}^{{\rm{SA}}(H)}$ is Borel}
Finally, we can also consider the unitary equivalence of domains of self-adjoint operators:
\begin{definition}
We define the domain unitary equivalence relation $E_{\rm{dom},u}^{{\rm{SA}}(H)}$ on ${\rm{SA}}(H)$ by 
$AE_{\rm{dom},u}^{{\rm{SA}}(H)}B$ if and only if there exists $u\in \mathcal{U}(H)$ such that $u\cdot \dom{A}=\dom{B}$.  
\end{definition}
This equivalence relation turns out to be Borel:
\begin{proposition}\label{prop: E_{dom,u} is Borel}
$E_{\rm{dom},u}^{{\rm{SA}}(H)}$ is a Borel equivalence relation. 
\end{proposition}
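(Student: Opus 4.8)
The plan is to express $E_{\rm{dom},u}^{{\rm{SA}}(H)}$ through the Fillmore--Williams classification of dense operator ranges (Theorem \ref{thm: unitary equivalence of operator ranges}) and then to verify that the resulting combinatorial condition on spectral subspaces is Borel in $(A,B)$. The starting observation is that $\dom{A}=\dom{|A|}=\mathrm{Ran}\bigl((|A|+c)^{-1}\bigr)$ for every $c>0$. Fixing $c=3$ and putting $T_A:=(|A|+3)^{-1}$, the operator $T_A$ is self-adjoint and injective, satisfies $0\le T_A\le 1$, and has $\mathrm{Ran}(T_A)=\dom{A}$; so its associated subspaces in the sense of \S\ref{subsec: operator theory} are
\[
H_n^{(A)}:=E_{T_A}\bigl((2^{-n-1},2^{-n}]\bigr)H=E_{|A|}\bigl([\gamma_n,\gamma_{n+1})\bigr)H,\qquad \gamma_j:=2^{j}-3\ \ (j\in\mathbb{Z}),
\]
so that $H_n^{(A)}=0$ for $n<1$, the bottom nontrivial block is $H_1^{(A)}=E_A((-1,1))H\supseteq\ker A$, and $H_n^{(A)}\oplus\cdots\oplus H_{n+l}^{(A)}=E_{|A|}\bigl([\gamma_n,\gamma_{n+l+1})\bigr)H$. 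By Theorem \ref{thm: unitary equivalence of operator ranges}, $A\,E_{\rm{dom},u}^{{\rm{SA}}(H)}\,B$ holds if and only if there is $k\ge 0$ with
\[
\dim E_{|A|}\bigl([\gamma_n,\gamma_{n+l+1})\bigr)H\ \le\ \dim E_{|B|}\bigl([\gamma_{n-k},\gamma_{n+l+1+k})\bigr)H\qquad(n,l\ge 0)
\]
and the symmetric inequality. (Bounded operators require no separate treatment here: if $A$ is bounded then only finitely many $H_n^{(A)}$ are nonzero, and the displayed condition still correctly separates such $A$ from every unbounded $B$; alternatively one may peel off the Borel set $\mathbb{B}(H)_{\rm{sa}}$ using Lemma \ref{lem: B(H)_sa is standard}.)

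The difficulty is that these are dimensions of \emph{half-open} spectral intervals whose endpoints $\gamma_j$ may be eigenvalues of $A$ or $B$, and the multiplicity of a fixed eigenvalue is not a function of the operator that one can readily see to be Borel for the strong resolvent topology. The key step will be to replace the condition above by its \emph{open-interval} version: there is $k\ge 0$ such that, for all $n,l\ge 0$,
\[
\dim E_{|A|}\bigl((\gamma_{n-1},\gamma_{n+l+2})\bigr)H\ \le\ \dim E_{|B|}\bigl((\gamma_{n-k-1},\gamma_{n+l+k+2})\bigr)H
\]
together with its symmetric counterpart. The equivalence of the two versions is elementary: one has the inclusions $[\gamma_n,\gamma_{n+l+1})\subseteq(\gamma_{n-1},\gamma_{n+l+2})\subseteq[\gamma_{n-2},\gamma_{n+l+3})$ (valid because $\{\gamma_j\}$ is a geometric grid), so each quantity of one version is squeezed between two quantities of the other after shifting the index by at most $2$; since the parameter $k$ in Theorem \ref{thm: unitary equivalence of operator ranges} supplies exactly this kind of slack, enlarging $k$ by $2$ turns one condition into the other. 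The gain is that the open interval $(\gamma_{n-1},\gamma_{n+l+2})$ has every $\gamma_n,\dots,\gamma_{n+l+1}$ in its \emph{interior}, so no spectral mass at a grid point is ever lost; in particular $\ker A$ need not be isolated, since it lies in the interior of an open interval about $0$ (note $\gamma_j\neq 0$ for all $j$, and $\gamma_{n-1}<0$ when $n\le 2$).

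It then remains to see that each open-interval quantity is a Borel function of the operator. For any $a<b$ the set $\{A\in{\rm{SA}}(H):\mathrm{rank}\,E_A((a,b))\le k\}$ is closed for SRT, by the argument used for the sets $S_{n,k}$ in the proof of Proposition \ref{prop: essential spec contains lamda is dnese G_delta}: if $A_m\to A$ in SRT and $f$ is continuous with $\mathrm{supp}\,f\subseteq(a,b)$, then $f(A_m)\to f(A)$ in SOT by \cite[Theorem VIII.20 (b)]{ReedSimonI}, hence $\mathrm{rank}\,f(A)\le k$ by Lemmas \ref{lem: rank of indicator function less than k} and \ref{lem: strong limit does not increase rank}, and then $\mathrm{rank}\,E_A((a,b))\le k$ again by Lemma \ref{lem: rank of indicator function less than k}. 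Thus $A\mapsto\mathrm{rank}\,E_A((a,b))\in\{0,1,\dots,\infty\}$ is lower semicontinuous, hence Borel; and since $E_{|A|}((\alpha,\beta))H$ equals $E_A((-\beta,\beta))H$ when $\alpha<0$ and $E_A((\alpha,\beta))H\oplus E_A((-\beta,-\alpha))H$ when $\alpha>0$, each map $A\mapsto\dim E_{|A|}((\gamma_i,\gamma_j))H$ is Borel. Consequently, for fixed $n,l,k$ the displayed open-interval inequality defines a Borel subset of ${\rm{SA}}(H)\times{\rm{SA}}(H)$ (a comparison of two Borel $\{0,1,\dots,\infty\}$-valued maps); intersecting over $n,l$, taking the union over $k$, and intersecting with the symmetric condition produces a Borel set, which by the previous paragraph equals $E_{\rm{dom},u}^{{\rm{SA}}(H)}$.

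I expect the second paragraph to be the main obstacle: the Fillmore--Williams invariant is genuinely built from half-open spectral intervals (equivalently, it records eigenvalue multiplicities at the grid points), and trading these for open intervals without altering the equivalence relation is what makes the whole argument Borel. This succeeds only because of the $\pm k$ slack in Theorem \ref{thm: unitary equivalence of operator ranges}, and turning the informal ``squeeze'' into a rigorous proof requires a short but careful bookkeeping of index shifts and of the low-index boundary cases where $\gamma_{n-j}<0$.
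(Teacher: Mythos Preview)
Your proof is correct and follows the same overall route as the paper: reduce $E_{\rm{dom},u}^{{\rm{SA}}(H)}$ to the Fillmore--Williams criterion (Theorem \ref{thm: unitary equivalence of operator ranges}) applied to $T_A=(|A|+c)^{-1}$, and then check that the resulting countable Boolean combination of dimension inequalities is Borel in $(A,B)$.

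The difference lies in how Borelness of the dimension functions is established. The paper (Lemma \ref{lem: rank function is Borel}) asserts that $A\mapsto{\rm rank}\,E_A(I)$ is Borel also for half-open $I=[a,b)$, claiming that $\{A:{\rm rank}\,E_A([a,b))\le n\}$ is SRT-closed ``by a similar argument'' to the open case. That closedness claim is in fact false: take $A_k=-k^{-1}\cdot 1\to 0$ in SRT and $I=[0,1)$; then ${\rm rank}\,E_{A_k}([0,1))=0$ for every $k$, but ${\rm rank}\,E_0([0,1))=\infty$. Your open-interval reformulation, which exploits the $\pm k$ slack in Theorem \ref{thm: unitary equivalence of operator ranges} to absorb a two-step shift of the dyadic grid, neatly sidesteps this: for open intervals the sublevel sets \emph{are} SRT-closed (exactly the argument behind the sets $S_{n,k}$ in Proposition \ref{prop: essential spec contains lamda is dnese G_delta}), so Borelness follows. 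The index bookkeeping you flag as the main obstacle is routine once written out: from $[\gamma_n,\gamma_{n+l+1})\subseteq(\gamma_{n-1},\gamma_{n+l+2})\subseteq[\gamma_{n-2},\gamma_{n+l+3})$ one gets that the half-open condition with parameter $k$ implies the open one with parameter $k+2$, and conversely. Your choice $c=3$ (so that no grid point $\gamma_j=2^j-3$ equals $0$) is also a nice touch, ensuring $\ker A$ never sits on an interval endpoint. In short, your argument not only works but repairs a genuine gap in the paper's proof of Lemma \ref{lem: rank function is Borel}.
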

\begin{lemma}\label{lem: rank function is Borel}
Let $a,b\in \mathbb{R}, a<b$, and let $I=(a,b), [a,b)$ or $(a,b]$. Then the map ${\rm{SA}}(H)\ni A\mapsto {\rm{rank}}(E_A(I))\in \mathbb{N}\cup \{\infty\}$ is Borel. 
\end{lemma}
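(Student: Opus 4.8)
The plan is to factor the map $A\mapsto {\rm{rank}}(E_A(I))$ through the spectral projection itself,
\[
({\rm{SA}}(H),{\rm{SRT}})\ \xrightarrow{\ A\mapsto E_A(I)\ }\ (\mathbb{B}_1(H),{\rm{SOT}})\ \xrightarrow{\ x\mapsto {\rm{rank}}(x)\ }\ \mathbb{N}\cup\{\infty\},
\]
where $\mathbb{B}_1(H)=\{x\in \mathbb{B}(H);\ \|x\|\le 1\}$ is given the strong operator topology, which makes it a Polish (in particular standard Borel) space since $H$ is separable. It then suffices to prove that each of the two arrows is Borel and compose.

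For the first arrow, fix $I$ of one of the three stated types with endpoints $a<b$ and choose continuous functions $\psi_n\colon \mathbb{R}\to[0,1]$ converging pointwise to the indicator $1_I$: for $I=[a,b)$ take $\psi_n$ equal to $1$ on $[a,b-\tfrac1n]$ and to $0$ off $(a-\tfrac1n,b)$, and symmetrically for $(a,b)$ and $(a,b]$ (with $\psi_n\equiv 0$ for small $n$). Since each $\psi_n$ is bounded and continuous, $A\mapsto \psi_n(A)$ is SOT-continuous from $({\rm{SA}}(H),{\rm{SRT}})$ into $\mathbb{B}_1(H)$ by \cite[Theorem VIII.20 (b)]{ReedSimonI} — precisely the continuity of the functional calculus already used in the proof of Proposition \ref{prop: essential spec contains lamda is dnese G_delta}. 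On the other hand, by the spectral theorem and dominated convergence, $\psi_n(A)\to E_A(I)$ in SOT for every fixed $A$. Hence $A\mapsto E_A(I)$ is a pointwise limit of continuous maps into the metrizable space $\mathbb{B}_1(H)$, so it is of Baire class $1$, in particular Borel.

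For the second arrow, Lemma \ref{lem: strong limit does not increase rank} shows that for each $k\in \mathbb{N}$ the set $\{x\in \mathbb{B}_1(H);\ {\rm{rank}}(x)\le k\}$ is sequentially closed, hence closed, in $(\mathbb{B}_1(H),{\rm{SOT}})$, whose topology is metrizable. Therefore each $\{x;\ {\rm{rank}}(x)=k\}$ is a difference of two closed sets and $\{x;\ {\rm{rank}}(x)=\infty\}=\bigcap_{k}\{x;\ {\rm{rank}}(x)>k\}$ is $G_\delta$, so $x\mapsto {\rm{rank}}(x)$ is Borel; composing with the first arrow yields that $A\mapsto {\rm{rank}}(E_A(I))$ is Borel. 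The one point needing care — and the reason one cannot simply imitate Lemma \ref{lem: rank of indicator function less than k} — is the behaviour at a half-open endpoint: every continuous $f$ with ${\rm{supp}}(f)\subset[a,b)$ vanishes at $a$, so $f(A)$ is blind to the eigenprojection $E_A(\{a\})$ and the identity ``${\rm{rank}}(E_A(\text{interval}))=\sup_f{\rm{rank}}(f(A))$'' fails. Approximating $1_I$ itself by a bounded pointwise-convergent sequence of continuous functions, combined with the facts that pointwise limits of Borel maps into a metric space are Borel and that ${\rm{rank}}(\cdot)$ is SOT-Borel on the ball, circumvents this issue; the remaining verifications are routine.
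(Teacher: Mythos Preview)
Your proof is correct and takes a genuinely different route from the paper's. The paper argues that each level set $S_n=\{A:\operatorname{rank}(E_A(I))\le n\}$ is SRT-closed, by invoking ``a similar argument'' to the one used for open intervals in Proposition~\ref{prop: essential spec contains lamda is dnese G_delta}: there, Lemma~\ref{lem: rank of indicator function less than k} converts the rank bound into a condition on $f(A)$ for continuous $f$ supported in $(a,b)$, which is preserved under SRT limits via Lemma~\ref{lem: strong limit does not increase rank}. You instead factor
\[
A\ \longmapsto\ E_A(I)\ \longmapsto\ \operatorname{rank}(E_A(I)),
\]
show the first arrow is Baire class~$1$ into $(\mathbb{B}_1(H),\mathrm{SOT})$ by pointwise approximation of $1_I$ by continuous $\psi_n$ together with \cite[Theorem~VIII.20(b)]{ReedSimonI}, and show the second is Borel using Lemma~\ref{lem: strong limit does not increase rank}.

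Your approach buys uniformity over all three interval types and, as you correctly observe, sidesteps a real obstacle at half-open endpoints: any continuous $f$ supported in $[a,b)$ must vanish at $a$, so $f(A)$ does not see $E_A(\{a\})$ and the characterization of Lemma~\ref{lem: rank of indicator function less than k} fails. In fact the level sets are \emph{not} closed in the half-open case (a simple eigenvalue sliding onto $a$ from the left shows $S_0$ is not closed for $I=[a,b)$), so the paper's sketch cannot be taken literally there; your argument only yields Borelness of the level sets, but that is exactly what the lemma requires. The paper's route has the advantage of giving the sharper conclusion that the $S_n$ are genuinely closed in the open-interval case, which is occasionally useful elsewhere.
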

\begin{proof}
We show the case for $I=[a,b)$. Let $S_n:=\{A\in {\rm{SA}}(H);\ \text{rank}(E_A([a,b)))\le n\}\ (n\in \mathbb{N}\cup \{0\})$, $S_{\infty}:=\{A\in {\rm{SA}}(H);\ \text{rank}(E_A([a,b)))=\infty\}$. Then 
by a similar argument to the proof of  Proposition \ref{prop: essential spec contains lamda is dnese G_delta} (especially that $S_{n,k}$ defined there is SRT-closed), it can be shown that $S_n$ is SRT-closed. 
Therefore $\{A\in {\rm{SA}}(H); \text{rank}(E_A([a,b)))=n\}=S_n\setminus S_{n-1}\ (n\ge 1)$ and $S_0$ are Borel. Then $S_{\infty}={\rm{SA}}(H)\setminus \bigcup_{n\ge 0}S_n$ is Borel too. Thus the map $A\mapsto {\rm{rank}}(E_A(I))$ is Borel.
\end{proof}
\begin{proof}[Proof of Proposition \ref{prop: E_{dom,u} is Borel}]
It is easy to see that $\dom{A}=\dom{|A|+1}$ for every $A\in {\rm{SA}}(H)$, and $\dom{A}={\rm{Ran}}((|A|+1)^{-1})$. The associated subspace for $T_A=(|A|+1)^{-1}$ is 
\[H_n(T_A)=E_{T_A}((2^{-n-1},2^n])H,\ \ \ \ n\ge 0.\]
Note that for $\lambda \in \sigma(A)$, 
\[(|\lambda|+1)^{-1}\in (2^{-n-1},2^n]\Leftrightarrow \lambda \in \underbrace{(1-2^{n+1},1-2^n]\cup [2^n-1,2^{n+1}-1)}_{=:I_n\cup J_n}.\]
Let $d_0(A):=\text{rank}(E_A(1/2,1))$ and 
\[d_n(A):=\dim H_n(T_A)=\text{rank}(E_A(I_n))+\text{rank}(E_A(J_n)),\ \ \ \ \ n\ge 1.\]
By Lemma \ref{lem: rank function is Borel}, $d_n\colon {\rm{SA}}(H)\to \mathbb{N}\cup \{\infty\}$ is Borel for each $n\ge 0$.
 Define for $k,l,n\in \mathbb{N}\cup \{0\}$ a subset $\mathcal{B}_{k,l,n}$ of ${\rm{SA}}(H)\times {\rm{SA}}(H)$ by
 \[\mathcal{B}_{k,l,n}:=\left \{ (A,B); \sum_{i=0}^ld_{n+i}(A)\le \sum_{j=-k}^{l+k}d_{n+j}(B),\ 
\sum_{i=0}^ld_{n+i}(B)\le \sum_{j=-k}^{l+k}d_{n+j}(A)\right \}\]
Then each $\mathcal{B}_{k,l,n}$ is Borel. Therefore by Theorem \ref{thm: unitary equivalence of operator ranges}, 
\[E_{\rm{dom},u}^{{\rm{SA}}(H)}=\bigcup_{k\ge 0}\bigcap_{n\ge 0}\bigcap_{l\ge 0}\mathcal{B}_{k,l.n},\]
which is Borel.
\end{proof}
Note however that it is not clear whether the domain equivalence relation $AE_{\rm{dom}}^{{\rm{SA}}(H)}B\Leftrightarrow \dom{A}=\dom{B}$ is Borel or not. We show that it is co-analytic in Proposition \ref{prop: E_dom is coanalytic}.
\section{Concluding Remarks and Questions}\label{sec: Concluding remarks}
In this paper we have studied various equivalence relations on ${\rm{SA}}(H)$. 
There are also many other interesting equivalence relations involving the structure of self-adjoint operators. Let us finally state some equivalence relations and pose some related questions which we hope to come back in a future project. First of all we do not know if the Weyl-von Neumann equivalence relation is Borel. 
\begin{question}Is $E_G^{{\rm{SA}}(H)}$ Borel? 
\end{question}
Note that $E_G^{\mathbb{B}(H)_{\rm{sa}}}$ is Borel (because it is smooth) and that $E_{\mathbb{K}(H)_{\rm{sa}}}^{{\rm{SA}}(H)}$ is Borel (Theorem \ref{thm: action of K(H) on SA(H) is turbulent}). 
\begin{definition}
The {\it domain equivalence relation} $E_{\rm{dom}}^{{\rm{SA}}(H)}$ is the equivalence relation on ${\rm{SA}}(H)$ given by $AE_{\rm{dom}}^{{\rm{SA}}(H)}B$ if and only if $\dom{A}=\dom{B}$. 
\end{definition}
Although at the moment we do not know whether $E_{\rm{dom}}^{{\rm{SA}}(H)}$ is Borel or not, it is co-analytic: 
\begin{proposition}\label{prop: E_dom is coanalytic}
$E_{\rm{dom}}^{{\rm{SA}}(H)}$ is a co-analytic equivalence relation. 
\end{proposition}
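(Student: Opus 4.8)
The plan is to express the relation $E_{\rm{dom}}^{{\rm{SA}}(H)}$ explicitly in terms of quantifiers over natural numbers and a countable dense set, and then argue that the resulting formula is $\Pi^1_1$. Recall that $\dom{A}=\dom{|A|+1}={\rm{Ran}}((|A|+1)^{-1})$, and more usefully that $\xi\in \dom{A}$ if and only if $\sup_{n}\|nE_A((-n,n))\,(A-i)^{-1}\xi\|^2$ together with the norm $\|(A-i)^{-1}\xi\|$ stays finite; equivalently, $\xi\in\dom{A}$ iff $\lim_{n\to\infty}\int_{|\lambda|>n}(\lambda^2+1)\,d\|E_A(\lambda)\xi\|^2=0$, which in turn can be written in terms of the resolvent as: $\xi\in{\rm{Ran}}((A-i)^{-1})$ iff there is $\eta\in H$ with $(A-i)^{-1}\eta=\xi$ and $\|\eta\|$ finite (automatic). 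The cleanest route is: fix a countable dense $\mathbb{Q}$-linear subset $D=\{\zeta_k\}_{k\ge 1}$ of $H$, and observe that $\dom{A}=\dom{B}$ if and only if for every $k$, $\zeta_k\in\dom{A}\Leftrightarrow\zeta_k\in\dom{B}$ AND a condition ensuring the containment propagates to all of the (closed-under-limits) domain — but since domains are not closed, one must instead quantify over sequences. Concretely, $\dom{A}\subseteq\dom{B}$ iff for every $\eta\in H$ we have $(A-i)^{-1}\eta\in{\rm{Ran}}((B-i)^{-1})$, i.e. $\exists\zeta\in H\ (B-i)^{-1}\zeta=(A-i)^{-1}\eta$; by separability this existential over $H$ is an existential over a Polish space, hence analytic, and the matrix condition $(B-i)^{-1}\zeta=(A-i)^{-1}\eta$ is Borel in $(A,B,\eta,\zeta)$ since resolvents depend continuously (SOT, hence Borel) on $A,B$ in SRT.

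The steps, in order, are as follows. First I would verify the reduction $\dom{A}=\dom{B}\iff\dom{A}\subseteq\dom{B}\ \wedge\ \dom{B}\subseteq\dom{A}$, so it suffices to show $\{(A,B):\dom{A}\subseteq\dom{B}\}$ is co-analytic. Second, I would write
\[
\dom{A}\subseteq\dom{B}\iff \forall \eta\in H\ \exists\zeta\in H\ \ (A-i)^{-1}\eta=(B-i)^{-1}\zeta,
\]
using that ${\rm{Ran}}((A-i)^{-1})=\dom{A}$ and likewise for $B$. Third, I would check that the inner relation $R(A,B,\eta,\zeta):\Leftrightarrow (A-i)^{-1}\eta=(B-i)^{-1}\zeta$ is Borel: the maps ${\rm{SA}}(H)\times H\ni(A,\eta)\mapsto (A-i)^{-1}\eta\in H$ are continuous by the very definition of SRT, so $R$ is the preimage of the diagonal of $H\times H$ under a continuous map, hence closed, in particular Borel. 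Fourth, since the set $\{(A,B,\eta):\exists\zeta\ R(A,B,\eta,\zeta)\}$ is a continuous image of a Borel set (projection along the Polish space $H$), it is analytic; and then $\{(A,B):\forall\eta\ (A,B,\eta)\in\text{that set}\}$ is the complement of a projection of a co-analytic set, hence co-analytic. Fifth and last, intersecting the two co-analytic sets (one for each inclusion) keeps it co-analytic, and reflexivity/symmetry/transitivity are immediate, so $E_{\rm{dom}}^{{\rm{SA}}(H)}$ is a co-analytic equivalence relation.

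The only genuinely delicate point is step two: one must be sure that $\dom{A}\subseteq\dom{B}$ is faithfully captured by the stated $\forall\exists$ formula, i.e. that "every vector in the range of $(A-i)^{-1}$ lies in the range of $(B-i)^{-1}$" really is the inclusion of domains and not something weaker. This is fine because $\dom{A}={\rm{Ran}}((A-i)^{-1})$ exactly (the resolvent is a bijection of $H$ onto $\dom{A}$), so the equivalence is literal, with no closure issues. A secondary point worth a line is that the quantifier $\forall\eta\in H$ may equivalently be taken over the countable dense set $D$ only if the inner analytic set is suitably closed in $\eta$; to avoid this subtlety I would simply keep the quantifier ranging over all of $H$, which is harmless since $H$ is Polish and the projection/complementation operations used to compute the pointclass are insensitive to whether the quantified variable ranges over a countable or a Polish parameter space. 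I expect the main obstacle a careful reader will probe is precisely the Borelness claimed in step three — but it reduces to the defining continuity property of SRT together with joint continuity of the evaluation map $(\text{bounded operator},\text{vector})\mapsto\text{vector}$ on the relevant bounded set, so it goes through without difficulty.
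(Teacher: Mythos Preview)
Your pointclass bookkeeping in step five is wrong, and this is a genuine gap. You correctly identify the intermediate set
\[
S=\{(A,B,\eta):\exists\,\zeta\in H\ (A-i)^{-1}\eta=(B-i)^{-1}\zeta\}
\]
as analytic, and then write $\{(A,B):\forall\eta\ (A,B,\eta)\in S\}$ as the complement of the projection of $S^c$. But $S^c$ is \emph{co}-analytic, and the projection of a co-analytic set is in general only $\Sigma^1_2$, not $\Sigma^1_1$; so your argument as written yields only $E_{\rm{dom}}^{{\rm{SA}}(H)}\in\Pi^1_2$. The class $\Pi^1_1$ is closed under universal quantification over a Polish space, but $\Sigma^1_1$ is not, and you have applied $\forall\eta$ to an analytic set.

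Your approach can be repaired, but it needs an extra ingredient you did not supply: for fixed $(A,B,\eta)$ the witness $\zeta$ is \emph{unique} (since $(B-i)^{-1}$ is injective), so the projection $R\to S$ is one-to-one on the closed set $R$, and Lusin--Souslin then gives that $S$ is actually Borel, not merely analytic. With $S$ Borel, $\forall\eta$ does produce a co-analytic set and the rest goes through. The paper takes a different and more direct route: it avoids the existential over $H$ altogether by proving that $\{(A,\xi):\xi\in\dom{A}\}$ is Borel, using Stone's theorem --- $\xi\in\dom{A}$ iff $t\mapsto e^{itA}\xi$ is differentiable at $0$, which is a Cauchy condition expressible with only countable quantifiers over $\mathbb{Q}$. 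Then the complement of $E_{\rm{dom}}^{{\rm{SA}}(H)}$ is the projection of the Borel set $\{(A,B,\xi):\xi\in\dom{A}\triangle\dom{B}\}$, hence analytic. This sidesteps any appeal to uniqueness-of-witness theorems and makes the $\Pi^1_1$ bound transparent.
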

\begin{lemma}\label{lem: xi in dom(A) is Borel}
The set $\{(A,\xi);\ \xi\in \dom{A}\}$ is a Borel subset of ${\rm{SA}}(H)\times H$.
\end{lemma}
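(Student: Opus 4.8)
The plan is to characterise membership in $\dom{A}$ by a countable condition built from the bounded continuous functional calculus, which behaves continuously for the SRT by \cite[Theorem VIII.20 (b)]{ReedSimonI}. For each $n\in \mathbb{N}$ let $f_n\colon \mathbb{R}\to \mathbb{R}$ be the truncation $f_n(\lambda):=\lambda$ for $|\lambda|\le n$ and $f_n(\lambda):=n\,\mathrm{sgn}(\lambda)$ for $|\lambda|>n$; thus $f_n$ is continuous, bounded (with $\|f_n\|_{\infty}=n$), and $f_n(\lambda)^2=\min(\lambda^2,n^2)\nearrow \lambda^2$ pointwise as $n\to \infty$. By the spectral theorem, $\|f_n(A)\xi\|^2=\int_{\mathbb{R}}f_n(\lambda)^2\,d\|E_A(\lambda)\xi\|^2$, and monotone convergence gives $\|f_n(A)\xi\|^2\nearrow \int_{\mathbb{R}}\lambda^2\,d\|E_A(\lambda)\xi\|^2$, the latter being finite exactly when $\xi\in \dom{A}$. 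Since $\|f_n(A)\xi\|$ is nondecreasing in $n$, this yields
\[\{(A,\xi)\in {\rm{SA}}(H)\times H;\ \xi\in \dom{A}\}=\bigcup_{m=1}^{\infty}\bigcap_{n=1}^{\infty}\{(A,\xi);\ \|f_n(A)\xi\|\le m\}.\]

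Second, I would verify that each set $\{(A,\xi);\ \|f_n(A)\xi\|\le m\}$ is closed in ${\rm{SA}}(H)\times H$. Since $f_n$ is bounded and continuous, $A\mapsto f_n(A)$ is a continuous map from $({\rm{SA}}(H),{\rm{SRT}})$ into the norm ball $\{T\in \mathbb{B}(H);\ \|T\|\le n\}$ equipped with SOT, by \cite[Theorem VIII.20 (b)]{ReedSimonI}. On that norm-bounded ball the evaluation $(T,\xi)\mapsto \|T\xi\|$ is jointly continuous for the $({\rm{SOT}})\times (\|\cdot\|)$ topology, as one sees from $\|T_k\xi_k-T\xi\|\le n\|\xi_k-\xi\|+\|T_k\xi-T\xi\|$. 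Composing these, $(A,\xi)\mapsto \|f_n(A)\xi\|$ is continuous on ${\rm{SA}}(H)\times H$, so its sublevel sets are closed. Consequently the displayed set is $F_{\sigma}$ in ${\rm{SA}}(H)\times H$, in particular Borel.

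The only point that requires a little care is the choice of the truncations $f_n$ rather than the unbounded function $\lambda\mapsto \lambda$ itself: SRT-convergence only guarantees SOT-convergence of $f(A_k)$ for $f$ bounded continuous, so one works with the $f_n$ and then recovers the information about $\dom{A}$ through the monotone-convergence step. I do not expect any further obstacle.
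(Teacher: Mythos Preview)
Your proof is correct and complete. The key ingredients---the spectral characterisation $\xi\in\dom{A}\Leftrightarrow \sup_n\|f_n(A)\xi\|<\infty$, the SRT$\to$SOT continuity of the bounded continuous functional calculus from \cite[Theorem VIII.20 (b)]{ReedSimonI}, and joint continuity of evaluation on norm-bounded sets---are all in order.

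Your route differs from the paper's. The paper invokes Stone's theorem: $\xi\in\dom{A}$ iff the difference quotients $t^{-1}(e^{itA}\xi-\xi)$ converge as $t\to 0$, rewritten as a Cauchy condition over rational $s,t$; this yields the domain set as $\bigcap_{\varepsilon}\bigcup_{\delta}\bigcap_{s,t}F_{s,t}$ with each $F_{s,t}$ closed. You instead use spectral truncations $f_n$ and the integrability criterion $\int\lambda^2\,d\|E_A(\lambda)\xi\|^2<\infty$, obtaining $\bigcup_m\bigcap_n\{\|f_n(A)\xi\|\le m\}$. Both arguments ultimately rest on the same fact---that SRT makes the bounded continuous functional calculus SOT-continuous---but they encode $\dom{A}$ through different bounded families ($\{e^{itA}\}$ versus $\{f_n(A)\}$). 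Your version is slightly sharper in that it exhibits the set as $F_{\sigma}$ rather than merely Borel, and it avoids the Cauchy-condition bookkeeping; the paper's version has the mild aesthetic advantage of using the same unitary groups $e^{itA}$ that already define the complete metric on ${\rm{SA}}(H)$.
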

\begin{proof}
Let $\xi\in H, A\in {\rm{SA}}(H)$. Then we have
\eqa{
\xi\in \dom{A}&\Leftrightarrow \lim_{t\to 0}\frac{e^{itA}\xi-\xi}{t}\ {\rm{exists}}\\
&\Leftrightarrow \forall \varepsilon\in \mathbb{Q}_+\ \exists \delta\in \mathbb{Q}_+\ {\rm{s.t.}}\ \forall s,t\in \mathbb{Q}^{\times}\cap (-\delta,\delta),\\
&\ \ \ \ \ \left \|\frac{e^{isA}\xi-\xi}{s}-\frac{e^{itA}\xi-\xi}{t}\right \|\le \varepsilon.
}
We note that 
\[F_{s,t}:=\left \{(A,\xi); \left \|\frac{e^{isA}\xi-\xi}{s}-\frac{e^{itA}\xi-\xi}{t}\right \|\le \varepsilon\right \}\]
is a closed subset of ${\rm{SA}}(H)\times H$. But this is clear, since the map ${\rm{SA}}(H)\times H\ni (A,\xi)\mapsto \frac{1}{t}(e^{itA}\xi-\xi)\in H$ is jointly continuous for each $t\in \mathbb{R}\setminus \{0\}$.
Therefore
\[\{(A,\xi);\ \xi\in \dom{A}\}=\bigcap_{\varepsilon \in \mathbb{Q}_+}\bigcup_{\delta \in \mathbb{Q}_+}\bigcap_{s,t\in \mathbb{Q}^{\times}\cap (-\delta,\delta)}F_{s,t}\]
is Borel. 
\end{proof}
\begin{proof}[Proof of Proposition \ref{prop: E_dom is coanalytic}]
Consider the following subsets $\mathcal{B},\mathcal{B}_1,\mathcal{B}_2$ of ${\rm{SA}}(H)\times {\rm{SA}}(H)\times H$, given by 
\eqa{
\mathcal{B}&:=\mathcal{B}_1\cup \mathcal{B}_2,\\
\mathcal{B}_1&:=\{(A,B,\xi);\ \xi\in \dom{A},\xi\notin \dom{B}\},\\
\mathcal{B}_2&:=\{(A,B,\xi);\ \xi\notin \dom{A},\xi\in \dom{B}\}.
}
Then it is clear that the complement of $E_{\rm{dom}}^{{\rm{SA}}(H)}$ is the projection of $\mathcal{B}$ onto the first two components ${\rm{SA}}(H)\times {\rm{SA}}(H)$. Therefore it suffices to prove that $\mathcal{B}$ is Borel. But
$\mathcal{B}_1=\{(A,B,\xi);\xi \in \dom{A}\}\cap \{(A,B,\xi);\xi \in \dom{B}\}^c$, which is Borel by Lemma \ref{lem: xi in dom(A) is Borel} and the definition of the product Borel structure. Similarly, $\mathcal{B}_2$ is Borel, and so is $\mathcal{B}$. Therefore $E_{\rm{dom}}^{{\rm{SA}}(H)}$ is co-analytic.
\end{proof}
\begin{question}
Is $E_{\rm{dom}}^{{\rm{SA}}(H)}$ Borel?
\end{question}
Note that since $E_{\rm{dom}}^{{\rm{SA}}(H)}$ is co-analytic, by Lusin Theorem it is enough to verify whether it is analytic or not. Before going to the next example, let us point out that:
\begin{proposition}\label{prop: Edom class is dense and meager}
Every $E_{\rm{dom}}^{{\rm{SA}}(H)}$-class is dense and meager.
\end{proposition}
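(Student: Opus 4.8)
The plan is to handle density and meagerness separately, in both cases exploiting the fact that $\dom{A}$ is a dense operator range and that operator ranges are meager $F_\sigma$ subspaces of $H$ (as recorded after the definition of operator ranges).

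For density, fix $A\in {\rm{SA}}(H)$ and an arbitrary $B\in {\rm{SA}}(H)$ together with a basic SRT-neighborhood $\mathcal{V}$ of $B$. By Weyl-von Neumann (Theorem \ref{thm: Weyl-von Neumann}) we may assume $B=\sum_{n=1}^\infty b_n e_n$ is diagonalizable with respect to some CONS, and by the usual truncation trick (as in \eqref{eq: SRT approximation by unbounded operators} of Lemma \ref{lem: B(H)_sa is standard}, replacing the tail eigenvalues $b_n$ by $a$-compatible ones) it suffices to approximate $B$ in SRT by operators whose domain equals $\dom{A}$. Concretely, writing $\dom{A}={\rm{Ran}}((|A|+1)^{-1})$, one modifies $B$ on a cofinite set of the basis vectors so that the resulting operator $B_k$ has the same domain as $A$ (e.g.\ by matching the growth rate of the eigenvalues outside a finite set); since only finitely many eigenvalues are changed at each stage and $\sum_{n>k}\|e_n\xi\|^2\to0$, we get $B_k\stackrel{\rm SRT}{\to}B$. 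Hence $[A]_{E_{\rm{dom}}^{{\rm{SA}}(H)}}$ meets $\mathcal{V}$, so the class is dense.

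For meagerness, fix $A$ and let $\mathcal{R}:=\dom{A}$, a dense operator range. Since $\mathcal{R}\neq H$, it is a meager $F_\sigma$ subspace of $H$; write $\mathcal{R}=\bigcup_{m=1}^\infty C_m$ with each $C_m$ closed, convex, symmetric and nowhere dense in $H$ (the standard presentation: $C_m = m\cdot \overline{T_A(B_H)}$ for $T_A=(|A|+1)^{-1}$ and $B_H$ the closed unit ball). Fix a countable dense set $\{\eta_l\}_{l\ge1}\subset H$, chosen so that each $\eta_l\notin \mathcal{R}$ is witnessed — more precisely, for each $l$ with $\eta_l\notin\mathcal{R}$ and each $m$, since $\eta_l\notin C_m$ and $C_m$ is closed there is a rational $r_{l,m}>0$ with the open ball $B(\eta_l,r_{l,m})$ disjoint from $C_m$. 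The idea is to cover $[A]_{E_{\rm{dom}}^{{\rm{SA}}(H)}}$ by countably many SRT-nowhere-dense sets indexed by data expressing ``$B$ kills $\eta_l$'', i.e.\ $\eta_l\notin\dom{B}$. Using Lemma \ref{lem: xi in dom(A) is Borel}'s description of $\dom{B}$ via the difference quotients $\frac1t(e^{itB}\eta_l-\eta_l)$, the set $\{B:\eta_l\notin\dom{B}\}$ is a countable union (over $m$) of the SRT-closed sets $Z_{l,m}:=\{B:\ \sup_{0<|s|,|t|<1/m}\|\tfrac1s(e^{isB}\eta_l-\eta_l)-\tfrac1t(e^{itB}\eta_l-\eta_l)\|\ge m\}$. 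Each $Z_{l,m}$ is SRT-closed by the joint continuity of $(B,\xi)\mapsto\frac1t(e^{itB}\xi-\xi)$ used in Lemma \ref{lem: xi in dom(A) is Borel}, and it is nowhere dense: any SRT-neighborhood of $B\in Z_{l,m}$ contains, by the density argument above applied with target domain $H$-dense but strictly larger than $\mathcal{R}$ (e.g.\ a diagonal operator with slowly growing eigenvalues whose domain contains $\eta_l$), an operator $B'$ with $\eta_l\in\dom{B'}$, hence $B'\notin Z_{l,m}$. Since $[A]_{E_{\rm{dom}}^{{\rm{SA}}(H)}}\subseteq\bigcup\{B:\eta_l\notin\dom{B}\}$ where the union is over those $l$ with $\eta_l\notin\mathcal{R}$ (such $l$ exist because $\mathcal{R}\neq H$ and $\{\eta_l\}$ is dense, so some $\eta_l\notin\mathcal{R}$; and every $B$ with $\dom{B}=\mathcal{R}$ misses every such $\eta_l$), the class is contained in a countable union of SRT-nowhere-dense sets, hence meager.

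I expect the main obstacle to be the nowhere-density of the sets $Z_{l,m}$: one must show that starting from a self-adjoint $B$ that fails to contain a given vector $\eta_l$ in its domain, one can perturb $B$ within an arbitrarily small SRT-neighborhood to a self-adjoint operator whose domain does contain $\eta_l$ (and, for the covering to be tight, one should also verify the reverse density used in the first paragraph, that one can push $\eta_l$ out of the domain). This is a domain-engineering argument: decompose $\eta_l=\sum c_n\xi_n$ in an eigenbasis of the approximating diagonal operator and choose the eigenvalues $\lambda_n$ so that $\sum\lambda_n^2|c_n|^2<\infty$ while still $|\lambda_n|\to\infty$ (possible whenever $\eta_l\neq0$, by a standard diagonalization), keeping the resolvent close to that of $B$ on finitely many test vectors. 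The remaining steps — SRT-closedness of $Z_{l,m}$, and the Baire-category bookkeeping — are routine given Lemma \ref{lem: xi in dom(A) is Borel} and the material already developed.
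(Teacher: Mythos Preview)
Your proposal has genuine gaps in both halves.

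\textbf{Density.} You diagonalize $B$ in a CONS $\{\xi_n\}$ and then try to choose tail eigenvalues so that the resulting diagonal operator has domain exactly $\dom A$. But an arbitrary operator range need not be realizable as a diagonal domain in a \emph{prescribed} basis: any operator diagonal in $\{\xi_n\}$ has every $\xi_n$ in its domain, whereas $\dom A$ may well omit $\xi_1$. The paper avoids this by working from $A$'s side: diagonalize $A$ (via Weyl--von Neumann) as $A_0=\sum_n a_n e_n$, and for bounded $B$ set $A_N=\sum_{n\ge N}a_n e_n+B$; then $\dom{A_N}=\dom{A_0}=\dom A$ and $A_N\stackrel{\rm SRT}{\to}B$. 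Since $\mathbb B(H)_{\rm sa}$ is SRT-dense, this suffices.

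\textbf{Meagerness.} The strategy fails outright: for any fixed nonzero $\eta$, the set $\{B:\eta\notin\dom B\}$ is \emph{comeager}, not meager, so it cannot be written as a countable union of nowhere-dense sets and covering $[A]$ by it proves nothing. To see comeagerness, note that $\{B:\eta\in\dom B\}=\bigcup_N F_N$ with $F_N=\{B:\eta\in\dom B,\ \|B\eta\|\le N\}$ SRT-closed (pass to the limit in $\|t^{-1}(e^{itB}\eta-\eta)\|\le N$ and apply Fatou), and each $F_N$ is nowhere dense: pick any $A'$ with $\eta\notin\dom{A'}$; by Lemma~\ref{lem: every orbit is dense} the $\mathbb K(H)_{\rm sa}$-orbit of $A'$ is dense, yet every $A'+K$ still omits $\eta$ from its domain, so no open set is contained in $F_N$. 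Consequently your sets $Z_{l,m}$ cannot all be nowhere dense. (There is also a smaller slip: $\eta_l\in\dom{B'}$ does not force $B'\notin Z_{l,m}$, since your sup ranges over $|s|,|t|$ bounded away from $0$.)

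The paper's meagerness argument is completely different. For bounded $A$ one has $[A]=\mathbb B(H)_{\rm sa}$, meager by Lemma~\ref{lem: B(H)_sa is standard}. For unbounded $A$ one invokes Dixmier's theorem to produce a one-parameter unitary group $\{u(t)\}$ with $\dom{u(s)Au(s)^*}\cap\dom{u(t)Au(t)^*}=\{0\}$ for $s\neq t$, giving uncountably many pairwise disjoint, mutually homeomorphic $E_{\rm dom}$-classes; since each class is co-analytic (projection argument using Lemma~\ref{lem: xi in dom(A) is Borel}) it has the Baire property, and Lemma~\ref{lem: uncountable non-meager BPsets is impossible} then forces each to be meager.
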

\begin{proof}
Let $A\in {\rm{SA}}(H)$. Denote by $[A]$ the $E_{\rm{dom}}^{{\rm{SA}}(H)}$-class of $A$. We show that $[A]$ is dense. By Weyl-von Neumann Theorem \ref{thm: Weyl-von Neumann}, there exists $A_0\in [A]$ of the form $A_0=\sum_{n=1}^{\infty}a_ne_n$ where $\{a_n\}_{n=1}^{\infty}\subset \mathbb{R}$ and $\{e_n\}_{n=1}^{\infty}$ is a mutually orthogonal family or rank one projections with sum equal to 1. Let $B\in \mathbb{B}(H)_{\rm{sa}}$. Consider $A_N:=\sum_{n=N}^{\infty}a_ne_n+B$. Then $\dom{A_N}=\dom{A_0}=\dom{A}$, so $A_N\in [A]$. 
Moreover, $A_N\stackrel{N\to \infty}{\to}B$ (SRT). Therefore $\mathbb{B}(H)_{\rm{sa}}\subset [A]$. Therefore $[A]$ is dense in ${\rm{SA}}(H)$ because so is $\mathbb{B}(H)_{\rm{sa}}$.\\
We next show that $[A]$ is meager. If $A$ is bounded, then $[A]=\mathbb{B}(H)_{\rm{sa}}$, which is meager by Lemma \ref{lem: B(H)_sa is standard}. 
If $A$ is unbounded, then by Dixmier's Theorem (\cite{Dixmier49-1} and \cite[p.273, Corollary 1]{FillmoreWilliams}), there exists a strongly continuous one-parameter unitary group $\{u(t)\}_{t\in \mathbb{R}}$ such that $\dom{u(s)Au(s)^*}\cap \dom{u(t)Au(t)^*}=\{0\}$ for $s\neq t$. In particular, $[u(s)Au(s)^*]\neq [u(t)Au(t)^*]$ for $s\neq t$. Moreover, $[u(t)Au(t)^*]\ni B\mapsto u(s-t)Bu(s-t)^*\in [u(s)Au(s)^*]$ defines a homeomorphism for all $s,t\in \mathbb{R}$. Therefore if we show that each class $[A]$ has the Baire property, they must be meager by Lemma \ref{lem: uncountable non-meager BPsets is impossible}.\\ \\
\textbf{Claim}. $[A]$ is co-analytic, whence it has the Baire property.\\
 Since $\dom{A}$ is an operator range, it is $F_{\sigma}$. In particular it is Borel. Moreover, $\{(\xi,B);\ \xi \in \dom{B}\}$ is Borel in $H\times {\rm{SA}}(H)$ by Lemma \ref{lem: xi in dom(A) is Borel}. 
Therefore 
\eqa{
\mathcal{S}_1&:=\{(\xi,B);\xi\in \dom{B},\xi\notin \dom{A}\}\\
&=\dom{A}^c\times {\rm{SA}}(H)\cap \{(\xi,B);\xi \in \dom{B}\}
}
is Borel. 
Similarly, $\mathcal{S}_2:=\{(\xi,B);\xi\notin \dom{B},\xi \in \dom{A}\}$ 
is Borel, and so is $\mathcal{S}:=\mathcal{S}_1\cup \mathcal{S}_2$.\\
Now for $B\in {\rm{SA}}(H)$,
\eqa{\dom{A}\neq \dom{B}&\Leftrightarrow \exists \xi \ \ [(\xi\in \dom{A}\wedge \xi\notin \dom{B})\vee (\xi \notin \dom{A}\wedge \xi\in \dom{B})]\\
&\Leftrightarrow \exists \xi \ \ [(\xi,B)\in \mathcal{S}],
}
so $[A]^c$, which is the projection of $\mathcal{S}$ onto the second component, is analytic. This finishes the proof.
\end{proof}

Note that if one replaces ``equality of domains" by ``unitary equivalence of domains", it is indeed a Borel equivalence relation (see Proposition \ref{prop: E_{dom,u} is Borel}).

One can also consider an equivalence relation coming from relatively compact perturbations and unitary equivalence. Recall that a symmetric operator $K$ is {\it relatively $A$-compact} for $A\in {\rm{SA}}(H)$, if $\dom{K}\supset \dom{A}$ and $K|_{\dom{A}}$ is compact as a map $(\dom{A},\|\cdot \|_A)\to H$, where $\|\cdot \|_A$ is the graph norm of $A$. In this case, $A+K$ is self-adjoint (see e.g. \cite[$\S$8]{Schmudgen} for details). 
\begin{definition}
The equivalence relation $E_{\rm{u,rel.c}}^{{\rm{SA}}(H)}$ on ${\rm{SA}}(H)$ is given by $AE_{\rm{u,rel.c}}^{{\rm{SA}}(H)}B$ if and only if there exists $u\in \mathcal{U}(H)$ and $K\in {\rm{SA}}(H)$ which is relatively $A$-compact, such that $u(A+K)u^*=B$.  
\end{definition}
It is easy to see that $E_{\rm{u,rel.c}}^{{\rm{SA}}(H)}$ is an equivalence relation. It seems to be a more appropriate criterion of classifying unbounded self-adjoint operators than $E_G^{{\rm{SA}}(H)}$, in view of the next observation:
\begin{example}[Example \ref{ex: unitarily quivalent domains but not uAu*+K=B} continued]
 Consider $B_s,B_{t}\ (s,t\in [0,1],s\neq t)$ from Example \ref{ex: unitarily quivalent domains but not uAu*+K=B}. 
We have seen that $\dom{B_s}=\dom{B_t}$ and $\sigma_{\rm{ess}}(B_s)=\sigma_{\rm{ess}}(B_t)=\mathbb{N}$, but $(B_s,B_t)\notin E_G^{{\rm{SA}}(H)}$.  
 On the other hand, $B_sE_{\rm{u,rel.c}}^{{\rm{SA}}(H)}B_t$ holds:  indeed, let
\[K_0:=-\sum_{k,m=1}^{\infty}\frac{s-t}{m+2}e_{\nai{k}{m}}.\]
Then $K_0$ is not compact (since $\sigma_{\rm{ess}}(K_0)=\{0\}\cup \{-\frac{s-t}{m+2}|\ m\in \mathbb{N}\}\neq \{0\}$), but it is actually relatively $B_s$-compact, and $B_s+K_0=B_t$ holds. To see this, it suffices to show that $K_0(B_s-i)^{-1}$ is compact (cf. \cite[Proposition 8.14]{Schmudgen}). Note that since 
\[K_0(B_s-i)^{-1}=-\sum_{k,m=1}^{\infty}\frac{s-t}{(m+2)(k+\frac{s}{m+2}-i)}e_{\nai{k}{m}},\]
the spectra $\sigma(K_0(B_s-i)^{-1})$ is the closure $\overline{\mathcal{M}}$ of $\mathcal{M}$,
\[\mathcal{M}:=\left \{\gamma_{k,m}:=\frac{t-s}{(m+2)(k+\frac{s}{m+2}-i)};\ k,m\in \mathbb{N}\right \}.\]
It is easy to see that the only accumulation point of $\overline{\mathcal{M}}$ is $\{0\}$. Therefore $K_0(B_s-i)^{-1}$ is compact, so $K_0$ is relatively $B_s$-compact. Therefore, the family $\{B_t\}_{t\in [0,1]}$ belongs to a single $E_{\rm{u,rel.c}}^{{\rm{SA}}(H)}$-class. 
\end{example}
At present stage we do not know the answers to:
\begin{question}
Is $E_{\rm{u,rel.c}}^{{\rm{SA}}(H)}$ Borel? Is it Borel reducible to $E_{G'}^Y$ for some Polish group $G'$ and a Polish $G'$-space $Y$? 
\end{question}
Finally, let us remark that different way of perturbing self-adjoint operators may give rise to distinct equivalence relations. 
\begin{definition}
Let $1\le p<\infty$, and let $S^p(H)_{\rm{sa}}$ be the additive Polish group of self-adjoint Schatten $p$-class operators on $H$  equipped with Schatten $p$-norm. $S^p(H)_{\rm{sa}}$ acts on ${\rm{SA}}(H)$ by addition , and we may consider an action of $G_p:=S^p(H)_{\rm{sa}}\rtimes \mathcal{U}(H)$ on ${\rm{SA}}(H)$ analogous to $G\curvearrowright {\rm{SA}}(H)$. 
\end{definition}
It is especially of interest to know whether one of $E_{G_1}^{{\rm{SA}}(H)}$ and $E_G^{\rm{SA}}(H)$ is Borel reducible to the other (note that by Kato-Rosenblum Theorem \cite{Kato57,Rosenblum}, trace class perturbation is rather different from other Schatten class or compact perturbations). Note also that the orbit equivalence relation of $S^p(H)_{\rm{sa}}$-action on ${\rm{SA}}(H)$ can be thought of as a non-commutative version of the $\ell^p$-action on $\mathbb{R}^{\mathbb{N}}$ studied by Dougherty-Hjorth \cite{DoughertyHjorth99}.   
\section*{Acknowledgments}
We would like to thank Professor Asao Arai for communicating us his proof of Lemma \ref{lem: Arai's lemma}, Professors Alexander Kechris and  Asger T\"ornquist  for useful comments regarding turbulence argument and pointing us to the literature.   
HA is supported by EPDI/JSPS/IH\'ES Fellowship. He also thanks Kyoto University GCOE program from which he received financial support for traveling to attend the workshop ``Set theory and C$^*$-algebras" held in American Institute of Mathematics in 2012, where he learned the basic ideas of descriptive set theory and obtained the motivation for the current project.  

Hiroshi Ando\\
Erwin Schr\"odinger International Institute for Mathematical Physics,\\
2. Stock, Boltzmanngasse 9\\
1090 Wien Austria\\
hiroshi.ando@univie.ac.at\\@\\
New contact address (from March 1 2014):\\
Department of Mathematical Sciences,\\
University of Copenhagen\\
Universitetsparken 5\\
2100 Copenhagen \O \ Denmark\\
http://andonuts.miraiserver.com/index.html\\ \\
Yasumichi Matsuzawa\\
Department of Mathematics, Faculty of Education, Shinshu University\\
6-Ro, Nishi-nagano, Nagano, 380-8544, Japan\\
myasu@shinshu-u.ac.jp\\
https://sites.google.com/site/yasumichimatsuzawa/home

\begin{thebibliography}{99}
\bibitem[AG61]{AkhiezerGlazman} N. I. Akhiezer and I. M. Glazman, Theory of linear operators in Hilbert space, Unar, New York (1961). 
\bibitem[Ber71]{Berg} I. D. Berg, An extension of the Weyl-von Neumann theorem to normal operators, {\it Trans. AMS} \textbf{160} (1971), 365--371.
\bibitem[CN98]{ChokskiNadkarni} J. R. Chokski and M. G. Nadkarni, Genericity of certain classes of unitary and self-adjoint operators, {\it Canad. Bull. Math.} \textbf{41} (1998), 137--139.
\bibitem[Chr71]{Christensen71} J. P. R. Christensen, On some properties of Effros Borel Structure on Spaces of Closed Subsets, {\it Math. Ann.} \textbf{195} (1971), 17--23.
\bibitem[Chr74]{Christensen74} J. P. R. Christensen, Topology and Borel structure, North-Holland mathematics studies \textbf{10} (1974).
\bibitem[Con90]{Conway90} J. B. Conway, A course in functional analysis, Springer-Verlag (1990). 
\bibitem[Con99]{Conway} J. B. Conway, A course in operator theory, American Mathematical Society (1999).
\bibitem[DG13]{DingGao} L. Ding and S. Gao, On the Borelness of the intersection operation, {\it Israel J. Math}, \textbf{195} (2013), 783--800.
\bibitem[DH99]{DoughertyHjorth99} R. Dougherty and G. Hjorth, Reducibility and nonreducibility between $\ell^p$ equivalence relations, {\it Trans. Amer. Math. Soc.} \textbf{351} (1999), 1835--1844.
\bibitem[Dix49-1]{Dixmier49-1}
J. Dixmier, \'Etude sur les vari\'et\'es et les operateurs de Julia, avec quelques applications. (French), 
{\it Bull. Soc. Math. France} \textbf{77} (1949), 11--101. 
\bibitem[Dix49-2]{Dixmier49-2}
J. Dixmier, Sur les vari\'et\'es $J$ d'un espace de Hilbert. (French), {\it J. Math. Pures Appl.} (9) \textbf{28}, (1949), 321--358.
\bibitem[FTT]{FarahTomsTornquist} I. Farah, A. Toms and A. T\"ornquist, The descriptive set theory of C$^*$-algebra invariants, to appear in {\it Int. Math. Res. Notices.} 
\bibitem[FW71]{FillmoreWilliams} P. A. Fillmore and J. P. Williams, On operator ranges, {\it Adv. Math.} \textbf{7} (1971), 254--281. 
\bibitem[Gao09]{Gao09} S. Gao, Invariant descriptive set theory, CRC Press (2009).
\bibitem[Hjo00]{Hjorth00} G. Hjorth, Classification and orbit equivalence relations, Mathematical Surveys and Monographs, \textbf{75},  American Mathematical Society, Providence, RI (2000). 
\bibitem[Isr04]{Israel04} R. B. Israel, Some Generic Results in Mathematical Physics, {\it Markov Processes and Related Fields} \textbf{10} (2004), 517--521.
\bibitem[Kat57]{Kato57} T. Kato, Perturbation of continuous spectra by trace class operators, {\it  Proc. Japan Acad.} \textbf{33} (1957), 260--264.
\bibitem[Kec96]{Kechris96} A. S. Kechris, Classical descriptive set theory, Springer-Verlag (1996).
\bibitem[Kec02]{Kechris02} A. S. Kechris, Actions of Polish groups and classification problems, Analysis and logic, Cambridge University Press (2002). 
\bibitem[KS01]{KechrisSofronidis01} A. S. Kechris and N. Sofronidis, A strong generic ergodicity property of unitary and self-adjoint operators, {\it Ergodic Theory and Dynam. Systems} \textbf{21} (2001), 1459--1479. 
\bibitem[KTD13]{KechrisTuckerDrob13} A.S. Kechris and R. Tucker-Drob, The complexity of classification problems in ergodic theory, Appalachian Set Theory, London Math. Society Lecture Note Series \textbf{406}, Cambridge University Press (2013), 265-299. 
\bibitem[KLP10]{KerrLiPichot10} D. Kerr, H. Li and M. Pichot, Turbulence, representations, and trace-preserving actions, {\it Proc. Lond. Math. Soc.} (3) \textbf{100} (2010), 459--484.
\bibitem[K\"o36]{Koethe} G. K\"othe, Das Tr\"agheitsgesetz der quadratischen Formen im Hilbertschen Raum, {\it Math. Z.} \textbf{41} (1936), 137--152.
\bibitem[Kur58]{Kuroda58} S. Kuroda, On a theorem of Weyl-von Neumann. {\it Proc. Japan
Acad.} \textbf{34} (1958),11--15.
\bibitem[LT76]{LassnerTimmermann} G. Lassner and W. Timmermann, Classification of domains of closed operators, {\it Report on Math. Phys.} \textbf{9} (1976), 157--170. 
\bibitem[Put67]{Putnam} C. R. Putnam, Commutation properties of Hilbert space operators and related topics, Springer-Verlag (1967).
\bibitem[RS81]{ReedSimonI} M. Reed and B. Simon, Methods of modern mathematical physics, vol I: Functional Analysis, Academic Press (1981).
\bibitem[Ros57]{Rosenblum} M. Rosenblum, Perturbation of the continuous spectrum and unitary equivalence, 
{\it Pacific J. Math.} \textbf{7} (1957), 997--1010.
\bibitem[ST08]{SasykTornquist08} R. Sasyk and A. T\"ornquist, The classification problem for von Neumann factors, {\it J. Funct. Anal.} \textbf{256} (2009), 2710--2724. 
\bibitem[Sch10]{Schmudgen} K. Schm\"udgen, Unbounded self-adjoint operators on Hilbert space, Graduate Texts in Mathematics \textbf{265}, Springer-Verlag (2010). 
\bibitem[Sim95]{Simon95} B. Simon, Operators with singular continuous spectrum: I. General operators, {\it Ann. Math.} vol. \textbf{141}, no. 1 (1995), 131--145.
\bibitem[Wey09]{Weyl} H. Weyl, \"Uber beschr\"ankte quadratischen formen deren differenz vollstetig ist, {\it Rend. Circ. Mat. Palermo} \textbf{27} (1909), 373--392.
\bibitem[Wey10]{Weyl10} H. Weyl,  \"Uber gew\"ohnliche Differentialgleichungen mit Singularit\"aten und die zugeh\"origen Entwicklungen willk\"urlicher Funktionen, {\it Math. Annalen} \textbf{68} (1910), 220--269.
\bibitem[vN35]{von Neumann} J. von Neumann, Charakterisierung des spektrums eines integraloperators, {\it Actualit\'es Sci. Indust.}, \textbf{229}, Hermann, Paris (1935).
\end{thebibliography}
\end{document}